\newcommand{\myauthor}{}
\newcommand{\mytitle}{Cartier modules and cyclotomic spectra}
\title{\mytitle}
\author{Benjamin Antieau and Thomas Nikolaus}
\date{\today}
\definecolor{todo}{rgb}{1,0,0}
\definecolor{conditional}{rgb}{0,1,0}
\definecolor{e-mail}{rgb}{0,.40,.80}
\definecolor{reference}{rgb}{.20,.60,.22}
\definecolor{mrnumber}{rgb}{.80,.40,0}
\definecolor{citation}{rgb}{0,.40,.80}
\let\oldmarginpar\marginpar
\renewcommand\marginpar[1]{\-\oldmarginpar[\raggedleft\footnotesize #1]%
{\raggedright\footnotesize #1}}
\newcommand{\Cscr}{\mathcal{C}}
\newcommand{\Dscr}{\mathcal{D}}
\newcommand{\Fscr}{\mathcal{F}}
\newcommand{\Hscr}{\mathcal{H}}
\newcommand{\Nscr}{\mathcal{N}}
\newcommand{\Oscr}{\mathcal{O}}
\newcommand{\Sscr}{\mathcal{S}}
\newcommand{\Xscr}{\mathcal{X}}
\newcommand{\B}{\mathrm{B}}
\newcommand{\C}{\mathrm{C}}
\newcommand{\D}{\mathrm{D}}
\newcommand{\E}{\mathrm{E}}
\newcommand{\F}{\mathrm{F}}
\renewcommand{\H}{\mathrm{H}}
\newcommand{\K}{\mathrm{K}}
\renewcommand{\L}{\mathrm{L}}
\newcommand{\M}{\mathrm{M}}
\renewcommand{\P}{\mathrm{P}}
\newcommand{\R}{\mathrm{R}}
\renewcommand{\S}{\mathrm{S}}
\newcommand{\T}{\mathrm{T}}
\newcommand{\W}{\mathrm{W}}
\newcommand{\Z}{\mathrm{Z}}
\renewcommand{\1}{\mathbbm{1}}
\renewcommand{\AA}{\mathds{A}}
\newcommand{\EE}{\mathds{E}}
\newcommand{\FF}{\mathds{F}}
\newcommand{\GG}{\mathds{G}}
\newcommand{\NN}{\mathds{N}}
\newcommand{\QQ}{\mathds{Q}}
\renewcommand{\SS}{\mathds{S}}
\newcommand{\TT}{\mathds{T}}
\newcommand{\ZZ}{\mathds{Z}}
\renewcommand{\mathds}[1]{\mathbb{#1}}
\newcommand{\BMS}{\mathrm{BMS}}
\newcommand{\gr}{\mathrm{gr}}
\newcommand{\Fr}{\mathrm{Fr}}
\newcommand{\crys}{\mathrm{crys}}
\newcommand{\dual}{\mathrm{dual}}
\newcommand{\triv}{\mathrm{triv}}
\newcommand{\cyc}{\mathrm{cyc}}
\newcommand{\op}{\mathrm{op}}
\newcommand{\Sp}{\mathrm{Sp}}
\newcommand{\CycSp}{\mathbf{CycSp}}
\newcommand{\LEq}{\mathrm{LEq}}
\newcommand{\Eq}{\mathrm{Eq}}
\newcommand{\HKR}{\mathrm{HKR}}
\newcommand{\fib}{\mathrm{fib}}
\newcommand{\cofib}{\mathrm{cofib}}
\newcommand{\gen}{\mathrm{gen}}
\newcommand{\bT}{\mathbb{T}}
\newcommand{\Nm}{\mathrm{Nm}}
\newcommand{\xto}{\xrightarrow}
\newcommand{\can}{\mathrm{can}}
\newcommand{\heart}{\heartsuit}
\newcommand{\ev}{\mathrm{ev}}
\DeclareMathOperator{\id}{id}
\newcommand{\coker}{\mathrm{coker}}
\renewcommand{\geq}{\geqslant}
\renewcommand{\leq}{\leqslant}
\newcommand{\Cat}{\mathrm{Cat}}
\newcommand{\Ab}{\mathrm{Ab}}
\newcommand{\TCart}{\mathbf{TCart}}
\newcommand{\Cart}{\mathbf{Cart}}
\DeclareMathOperator{\Tor}{Tor}
\newcommand{\THH}{\mathrm{THH}}
\newcommand{\TP}{\mathrm{TP}}
\newcommand{\TC}{\mathrm{TC}}
\newcommand{\TR}{\mathrm{TR}}
\newcommand{\HH}{\mathrm{HH}}
\newcommand{\KU}{\mathrm{KU}}
\DeclareMathOperator{\Br}{Br}
\DeclareMathOperator{\Fun}{Fun}
\DeclareMathOperator{\Hom}{Hom}
\newcommand{\Map}{\mathrm{Map}}
\newcommand{\MapSp}{\mathbf{Map}}
\newcommand{\EndSp}{\mathbf{End}}
\DeclareMathOperator{\Shv}{Shv}
\newcommand{\Mod}{\mathrm{Mod}}
\newcommand{\Alg}{\mathrm{Alg}}
\newcommand{\CoAlg}{\mathrm{CoAlg}}
\newcommand{\CAlg}{\mathrm{CAlg}}
\newcommand{\Ga}{\mathds{G}_{a}}
\DeclareMathOperator*{\colim}{colim}
\newcommand{\proet}{\mathrm{pro\acute{e}t}}
\DeclareMathOperator{\Spec}{Spec}
\newcommand{\we}{\simeq}
\newcommand{\iso}{\cong}
\theoremstyle{plain}
\newtheorem{theorem}{Theorem}[section]
\newtheorem*{theorem*}{Theorem}
\newtheorem{lemma}[theorem]{Lemma}
\newtheorem{proposition}[theorem]{Proposition}
\newtheorem{corollary}[theorem]{Corollary}
\newtheorem*{corollary*}{Corollary}
\theoremstyle{plain}
\newtheorem{maintheorem}{Theorem}
\newtheorem{maincorollary}[maintheorem]{Corollary}
\theoremstyle{definition}
\newtheorem{mainexample}[maintheorem]{Example}
\newtheorem{mainremark}[maintheorem]{Remark}
\newtheoremstyle{named}{}{}{\itshape}{}{\bfseries}{.}{.5em}{#1 \thmnote{#3}}
\theoremstyle{named}
\theoremstyle{definition}
\newtheorem{definition}[theorem]{Definition}
\newtheorem{warning}[theorem]{Warning}
\newtheorem{variant}[theorem]{Variant}
\newtheorem{example}[theorem]{Example}
\newtheorem*{example*}{Example}
\newtheorem*{question*}{Question}
\newtheorem{construction}[theorem]{Construction}
\newtheorem{remark}[theorem]{Remark}
\newtheorem{remarks}[theorem]{Remarks}
\begin{document}

\maketitle

\begin{abstract}
    \noindent
    We construct and study a $t$-structure on $p$-typical cyclotomic spectra
    and explain how to recover crystalline cohomology of smooth schemes over
    perfect fields using this $t$-structure. Our main tool
    is a new approach to $p$-typical cyclotomic spectra via objects we call $p$-typical topological
    Cartier modules. Using these, we prove that the heart of the cyclotomic $t$-structure
    is the full subcategory of derived $V$-complete objects in the abelian category of
    $p$-typical Cartier modules.

    \paragraph{Key Words.} Topological Hochschild homology, cyclotomic spectra,
    Cartier modules, Dieudonn\'e modules, de Rham--Witt complexes.

    \paragraph{Mathematics Subject Classification 2010.}
    \href{http://www.ams.org/mathscinet/msc/msc2010.html?t=14Fxx&btn=Current}{14F30},
    \href{http://www.ams.org/mathscinet/msc/msc2010.html?t=14Lxx&btn=Current}{14L05},
    \href{http://www.ams.org/mathscinet/msc/msc2010.html?t=13Dxx&btn=Current}{13D03}.
\end{abstract}

\tableofcontents

\section{Introduction}\label{sec:intro}

The notion of a cyclotomic spectrum was introduced
in~\cite{bokstedt-hsiang-madsen}. The importance of cyclotomic spectra in arithmetic
contexts was understood in the work of Hesselholt and his collaborators (see for
example~\cite{hesselholt-ptypical,geisser-hesselholt-1,hesselholt-madsen-local,hesselholt-local,hesselholt-tp}).
The homotopy theory of cyclotomic spectra is more recent and was developed by
Kaledin~\cite{kaledin-motivic,kaledin-cyclotomic},
Blumberg--Mandell~\cite{blumberg-mandell-cyclotomic}, and
Barwick--Glasman~\cite{barwick-glasman-cyclonic}.
Using cyclotomic spectra,
Bhatt, Morrow, and Scholze give in~\cite{bms2} a topological construction of
(completed) prismatic cohomology theories, which generalize
crystalline and $A\Omega$ cohomology. Their work relies
on work of the second author with P. Scholze~\cite{nikolaus-scholze}, which provides a simple description of bounded
below cyclotomic spectra. In this paper, we give another way to understand
cyclotomic spectra, which is better suited to answering the question: what are
the building blocks of a cyclotomic spectrum? 

\subsection{Statement of results}

Fix a prime number $p$. A {\bf $p$-typical Cartier module} is an abelian group
$M$ equipped with endomorphisms $V$ and $F$ such that $FV=p$. The building blocks of $p$-typical cyclotomic spectra
are, in a precise sense, certain $p$-typical Cartier modules. 

\begin{maintheorem}[see Theorems~\ref{thm:cyclotomict} and~\ref{thm:heartid}]\label{mt:t}
    The $\infty$-category $(\CycSp_p)_{\geq 0}$ of connective $p$-typical cyclotomic spectra is the connective part of
    a $t$-structure on $\CycSp_p$, the $\infty$-category
    of $p$-typical cyclotomic spectra. The heart
    $\CycSp_p^\heart$ is equivalent to the abelian category of derived $V$-complete Cartier
    modules.
\end{maintheorem}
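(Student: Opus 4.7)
The plan is to route both halves of the theorem through the category $\TCartp$ of $p$-typical topological Cartier modules introduced earlier in the paper, which plays the role of a more primitive enveloping $\infty$-category in which the $t$-structure is easier to analyze. This category carries an evident $t$-structure inherited from $\Sp$ whose heart is the abelian category of classical $p$-typical Cartier modules, and $\CycSp_p$ should sit inside $\TCartp$ as the full subcategory of derived $V$-complete objects.

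To construct the $t$-structure on $\CycSp_p$ I would invoke Lurie's criterion HA~1.4.4.11. Since $\CycSp_p$ is stable and presentable, it suffices to check that $(\CycSp_p)_{\geq 0}$ is presentable and closed under colimits and extensions in $\CycSp_p$. Closure under colimits follows from the fact that the forgetful functor $\CycSp_p \to \Sp$ preserves colimits and that $\Sp_{\geq 0}$ is closed under colimits; closure under extensions is similar. Presentability can be verified either directly by exhibiting a set of connective compact generators built from $\Sigma^\infty_+$ of suitable cyclotomic bases, or more efficiently via the embedding into $\TCartp$, whose connective part is manifestly presentable.

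To identify the heart I would use the same embedding $\CycSp_p \hookrightarrow \TCartp$. Unwinding definitions shows that the heart of $\TCartp$ with its natural $t$-structure is precisely the abelian category of Cartier modules: a topological Cartier module concentrated in homological degree zero has $\pi_0$ an abelian group equipped with endomorphisms $F$ and $V$ satisfying $FV = p$, with no higher data surviving. Since the cyclotomic $t$-structure on $\CycSp_p$ is the one induced by restriction from $\TCartp$, passing further to hearts identifies $\CycSp_p^\heart$ with the full subcategory of $\TCartp^\heart$ consisting of derived $V$-complete Cartier modules, as claimed.

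The main obstacle is establishing the identification of $\CycSp_p$ inside $\TCartp$ with the derived $V$-complete objects and verifying that this identification interacts correctly with truncation, i.e.\ that truncating a $V$-complete topological Cartier module in $\TCartp$ lands in a $V$-complete object. The key conceptual input is that the cyclotomic Frobenius in the Nikolaus--Scholze picture, combined with the norm/transfer map $V$, forces precisely this completeness condition; concretely one uses that the cofiber of $V$ on a topological Cartier module is controlled by a $C_p$-Tate construction that dies on the cyclotomic side. Once this comparison is in place, both halves of the theorem reduce to the analogous (and close to definitional) statements for $\TCartp$, and the abelian identification of the heart with derived $V$-complete Cartier modules follows by restricting to the heart.
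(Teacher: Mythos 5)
Your overall architecture coincides with the paper's: existence of the $t$-structure via \cite{ha}*{1.4.4.11} after checking that $(\CycSp_p)_{\geq 0}$ is presentable and closed under colimits and extensions, and identification of the heart by passing through $\TCart_p$, whose heart is classical Cartier modules. However, two of your steps are not right as stated. The claim that $\CycSp_p$ sits inside $\TCart_p$ as the full subcategory of derived $V$-complete objects, with the cyclotomic $t$-structure ``induced by restriction,'' is an overclaim: the cyclotomic $t$-structure is defined by connectivity of the underlying spectrum, and what is actually proved (Theorem~\ref{thm:boundedbelow}) is that the right adjoint $\TR$ of $(-)/V$ is fully faithful and $t$-exact on \emph{bounded below} objects, with essential image the $V$-complete bounded below topological Cartier modules; full faithfulness there is precisely the statement that the counit $\TR(X)/V\to X$ is an equivalence for bounded below $X$, which rests on the Tate orbit lemma (Proposition~\ref{thm:tr} and Lemma~\ref{lem:trbounded}) --- the ingredient you only gesture at. For unbounded objects no such identification is available: the paper shows only that the \emph{right completion} of $\CycSp_p$ is a localization of $\TCart_p$ (Proposition~\ref{prop:rightcompletion}), and the mod-$V$ localization of non-bounded-below objects is left open. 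This is harmless for the heart, since only bounded below objects intervene, but your argument must be run through $\TR$ on $\CycSp_p^-$, not through an unbounded embedding. Relatedly, deducing presentability of $(\CycSp_p)_{\geq 0}$ ``via the embedding into $\TCart_p$'' is circular (that embedding exists only after the $\TR$ machinery, and a full subcategory of a presentable category need not be presentable); the paper instead exhibits $(\CycSp_p)_{\geq 0}$ as the lax equalizer of $\id$ and $\tau_{\geq 0}(-)^{tC_p}$ on $(\Sp^{BS^1})_{\geq 0}$ (Lemma~\ref{lem:t}), which gives presentability directly.

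The second, more substantive gap is the completeness comparison on the heart. Even granting Theorem~\ref{thm:boundedbelow}, the objects of $\CycSp_p^\heart$ are a priori the discrete Cartier modules $M$ that are $V$-complete as \emph{topological} Cartier modules, i.e.\ for which $\lim(\cdots\to M_{hC_{p^2}}\to M_{hC_p}\to M)$ vanishes, whereas the theorem asserts derived $V$-completeness in the algebraic sense, i.e.\ vanishing of $\lim(\cdots\xrightarrow{V}M\xrightarrow{V}M)$ in $\Dscr(\ZZ)$. These are different towers (the first involves the non-discrete spectra $M_{hC_{p^n}}$), and identifying the two notions for a discrete module is exactly Lemma~\ref{lem:complete}, proved by rewriting the homotopy-orbit tower as the diagonal of a bidirectional diagram built from $M\otimes BC_{p^n}$ and using Lemma~\ref{lem:littlelimits}. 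Your sketch silently conflates the two notions, so as written it proves the heart is the ``topologically $V$-complete'' Cartier modules, not the statement of the theorem. You do correctly flag the remaining compatibility issue (that truncation interacts well with $V$-completeness, handled in the paper via $t$-exactness of $\TR$ together with Proposition~\ref{prop:vcompletemodule}), so with the counit equivalence and Lemma~\ref{lem:complete} supplied, your plan closes up into the paper's proof.
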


We say that a $p$-typical Cartier module $M$ is
derived $V$-complete if the natural map $M\rightarrow\lim_n M/V^n$ is an
equivalence in $\Dscr(\ZZ)$.\footnote{Unless otherwise specified, all quotients $M/V^n$ are computed in
the derived sense (and hence are given as the cofiber of $M\xrightarrow{V^n}M$ in the derived category)
as are all limits.}

The existence and uniqueness of such a $t$-structure is a formal consequence of the fact that
$(\CycSp_p)_{\geq 0}$ is presentable and is closed under colimits and
extensions in $\CycSp_p$. The difficult part of the theorem is the identification of the heart.

Recall B\"okstedt's theorem, which says that $\pi_*\THH(\FF_p)= \FF_p[b]$, a
polynomial ring on a degree $2$ generator. More generally, using the vanishing of the cotangent
complex, one deduces that $\pi_*\THH(k)=  k[b]$ for any perfect ring $k$.
Our interest in the cyclotomic $t$-structure was piqued by the discovery of
the next result.

\begin{maintheorem}[see Theorem~\ref{thm:discrete}]\label{mt:discrete}
    If $k$ is a perfect ring of characteristic $p$, then
    $\THH(k)\in\CycSp^\heart_p$.
\end{maintheorem}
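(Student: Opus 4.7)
The plan is to invoke Theorem \ref{mt:t} and realize $\THH(k)$ as coming from a specific discrete derived $V$-complete $p$-typical Cartier module, namely the ring of $p$-typical Witt vectors $W(k)$ equipped with its classical Frobenius $F$ and Verschiebung $V$. Connectivity of $\THH(k)$ in $\CycSp_p$ is immediate from connectivity of its underlying spectrum, so it lies in $(\CycSp_p)_{\geq 0}$. The real content is coconnectivity in the cyclotomic $t$-structure and the explicit identification of the resulting heart object.

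For coconnectivity I would pass through the paper's equivalence between cyclotomic spectra and topological Cartier modules $\TCartp$. The topological Cartier module $M$ associated to $\THH(k)$ is computed by the iterated fixed-point tower
\begin{equation*}
    \TR(k) \;=\; \lim_n \THH(k)^{C_{p^n}}
\end{equation*}
equipped with its $S^1$-action and induced $F$, $V$. Under the equivalence, the heart of the cyclotomic $t$-structure corresponds to topological Cartier modules whose underlying spectrum is concentrated in degree $0$, so the task reduces to showing that $\TR(k)$ is discrete with $\pi_0 \TR(k)\cong W(k)$ and that the induced $F,V$ are the classical Witt-vector operators; derived $V$-completeness of $W(k)$ is automatic since $W(k)\cong \lim_n W_n(k)$ and $W_n(k)\cong W(k)/V^n$.

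The main obstacle is the discreteness of $\TR(k)$. One starts from B\"okstedt's theorem $\pi_*\THH(k)=k[b]$ with $|b|=2$ and computes the fixed-point tower inductively via the norm cofiber sequences
\begin{equation*}
    \THH(k)_{hC_{p^n}} \longrightarrow \THH(k)^{C_{p^n}} \longrightarrow \THH(k)^{tC_{p^n}},
\end{equation*}
using the Tate-orbit lemma of Nikolaus--Scholze to identify the iterated Tate construction and pin down the behaviour of the polynomial generator $b$. A careful analysis shows that the restriction maps $R\colon \THH(k)^{C_{p^{n+1}}}\to \THH(k)^{C_{p^n}}$ successively lower the filtration degree of the higher-weight classes, so that in the limit only degree $0$ survives. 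This is essentially the perfect-ring extension of the Hesselholt--Madsen computation for perfect fields. Once discreteness and the identification $\pi_0\TR(k)=W(k)$ with classical $F,V$ are established, Theorem \ref{mt:t} finishes the proof.
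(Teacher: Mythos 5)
Your overall reduction is the same as the paper's: by Theorem~\ref{thm:boundedbelow} (equivalently Theorem~\ref{mt:t}) it suffices to show that $\TR(k)$ is concentrated in degree zero with $\pi_0\TR(k)\iso W(k)$, and your remark that derived $V$-completeness of $W(k)$ is automatic ($V$ is injective and the transition maps $W_{n+1}(k)\to W_n(k)$ are surjective) is fine. The problem is the step you summarize as ``a careful analysis shows that the restriction maps successively lower the filtration degree \dots so that in the limit only degree $0$ survives.'' For perfect \emph{fields} this is exactly Hesselholt--Madsen's Theorem~5.5, which one may cite; but the statement to be proved concerns arbitrary perfect \emph{rings}, and your proposal simply asserts that the Hesselholt--Madsen fixed-point computation extends to that generality. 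That assertion is the entire content of the theorem in the general case, and no argument is supplied for it. (A secondary inaccuracy: the displayed sequence $\THH(k)_{hC_{p^n}}\to\THH(k)^{C_{p^n}}\to\THH(k)^{tC_{p^n}}$ is not a cofiber sequence for $n\geq 2$; the sequences actually available are $X_{hC_{p^n}}\to X^{hC_{p^n}}\to X^{tC_{p^n}}$ and the fundamental cofibration $X_{hC_{p^n}}\to X^{C_{p^n}}\xrightarrow{R} X^{C_{p^{n-1}}}$.)

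The paper closes exactly this gap by a softer argument that avoids redoing the equivariant computation. From Theorem~\ref{thm:pi0} (which rests on Hesselholt--Madsen's Theorem~F, valid for every ring) one has $\pi_0^\cyc\THH(k)\iso W(k)$, hence a map of algebra objects $\THH(k)\to W(k)/V$ in $\CycSp_p$. Both sides have homotopy groups $k[b]$ with $|b|=2$: for $\THH(k)$ this is the B\"okstedt-type computation for perfect rings, and for $W(k)/V$ it follows from Figure~\ref{fig:homotopygroups} using that $k$ is perfect (so $W(k)$ is $p$-torsion free with $W(k)/p\iso k$ and $W(k)/VW(k)\iso k$). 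It then suffices to check that $b$ is sent to a unit multiple of $b$, which follows by naturality from the known case $k=\FF_p$. If you prefer to keep your route, you must genuinely carry out the computation of $\THH(k)^{C_{p^n}}$ for perfect rings rather than perfect fields, including the Tate spectral sequence and convergence arguments; as written, the key claim is assumed rather than proved.
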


\newcommand{\Gmf}{\widehat{\GG}_m}

Despite the higher homotopy groups, $\THH(k)$ is discrete as a cyclotomic spectrum.
On the Cartier module side of the story, when $k$ is a perfect ring of
characteristic $p$, $\THH(k)$ corresponds to $W(k)$,
the ring of $p$-typical Witt vectors over $k$, with its Witt vector Verschiebung
and Frobenius operations. The fact that $\THH(k)$ is in $\CycSp^\heart_p$ is
consistent with the fact, due to
Hesselholt--Madsen~\cite{hesselholt-madsen-1}*{Theorem~B} for perfect
fields of characteristic $p$, that $\pi_i\TC(k)=0$ for $i>0$. However, the theorem
is much stronger. It says that for any cyclotomic spectrum $X$ with $\pi_iX=0$
for $i<0$ one has $\Hom_{\CycSp_p}(X[i],\THH(k))=0$ for $i>0$.
To reconcile the fact that $\THH(k)$ is not at all discrete as a
spectrum or even as a spectrum with $S^1$-action with the fact that it is
discrete as a $p$-typical cyclotomic spectrum, observe that the
$S^1$-equivariant map
$b\colon\THH(k)[2]\rightarrow\THH(k)$ is not a map of cyclotomic spectra.

For any $p$-typical cyclotomic spectrum $X$, the homotopy groups with respect
to the $t$-structure of Theorem~\ref{mt:t} are denoted by $\pi_i^\cyc
X$. These are objects of $\CycSp_p^\heart\subseteq\CycSp_p$. Thus, they can be
considered either as $p$-typical cyclotomic spectra, with underlying spectrum
with $S^1$-action and Frobenius $\varphi\colon\pi_i^\cyc
X\rightarrow(\pi_i^\cyc X)^{tC_p}$, or as derived $V$-complete $p$-typical Cartier modules under the equivalence of Theorem \ref{mt:t}.
We will typically not distinguish notationally between these two points of view.
Write $\CycSp_{[m,n]}=(\CycSp_p)_{\geq
m}\cap(\CycSp_p)_{\leq n}$. The objects of $\CycSp_{[m,n]}$ are $p$-typical
cyclotomic spectra $X$, which are bounded with respect to the cyclotomic
$t$-structure, and such that $\pi_i^\cyc X\we 0$ for $i\notin[m,n]$.

Now, we discuss some of the consequences for schemes, especially in
characteristic $p$.
Recall the classical Hochschild--Kostant--Rosenberg theorem~\cite{hkr}, which states that when $k$ is a commutative ring and $R$ is
a smooth $k$-algebra, there is a natural isomorphism $\HH_i(R/k)\iso\Omega^i_{R/k}$.
Say that a commutative $k$-algebra $R$ is ind-smooth if it is a filtered colimit of smooth
$k$-algebras. If $R$ is ind-smooth, then $\L_{R/k}$ is a flat $R$-algebra and the HKR
theorem continues to hold. The next result is a reinterpretation of a theorem of
Hesselholt~\cite{hesselholt-ptypical}*{Theorem~C} in the context of the cyclotomic
$t$-structure.

\begin{maintheorem}[see Theorem~\ref{thm:derhamwitt}]\label{mt:derhamwitt}
    Let $k$ be a perfect ring of characteristic $p$.
    If $R$ is an ind-smooth commutative $k$-algebra, then for each $i$ there is a natural isomorphism
    $\pi_i^\cyc\THH(R)\iso\W\Omega^i_{R}$ of Cartier
    modules, where $\W\Omega^i_{R}$ is the $i$th term in the de Rham--Witt complex
    of $R$.
\end{maintheorem}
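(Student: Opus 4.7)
The plan is to combine the identification of the cyclotomic heart from Theorem~\ref{mt:t} with Hesselholt's classical calculation of $\TR$ for smooth algebras.

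First I would reduce to the case that $R$ is smooth over $k$. Both sides of the claimed isomorphism commute with filtered colimits of commutative $k$-algebras: $\THH$ does by construction, the de Rham--Witt complex $\W\Omega^i$ does because it is built termwise from the underlying ring, and $\pi_i^\cyc$ does because the cyclotomic $t$-structure is compatible with filtered colimits. Indeed, $(\CycSp_p)_{\geq 0}$ is presentable and closed under filtered colimits in $\CycSp_p$, so filtered colimits are $t$-exact. Hence it suffices to prove the theorem when $R$ is smooth over $k$.

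For smooth $R$, I would identify $\pi_i^\cyc \THH(R)$ with $\TR_i(R;p) = \lim_n \TR^n_i(R;p)$, equipped with the Cartier-module structure given by the Verschiebung and Frobenius operators on $\TR$. This identification should be built into the proof of Theorem~\ref{mt:t}: the equivalence between $\CycSp_p^\heart$ and derived $V$-complete Cartier modules is realized via $\TR$, so that for general $X\in\CycSp_p$ the homotopy object $\pi_i^\cyc X$ corresponds to the Cartier module $\TR_i(X;p)$ under the equivalence. Then \cite{hesselholt-ptypical}*{Theorem~C} provides natural isomorphisms $\TR^n_i(R;p)\cong \W_n\Omega^i_R$ for $R$ smooth over the perfect ring $k$, compatible with $V$, $F$, and the restriction maps. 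Passing to the inverse limit over $n$ gives $\TR_i(R;p)\cong \W\Omega^i_R$ as Cartier modules, and $\W\Omega^i_R=\lim_n\W_n\Omega^i_R$ is derived $V$-complete by construction.

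The main obstacle I expect is the middle step: pinning down precisely the identification of $\pi_i^\cyc X$ with the classical invariant $\TR_i(X;p)$, and verifying that the Cartier-module structure coming from the heart of the cyclotomic $t$-structure matches the Witt-complex structure on $\W\Omega^i_R$ under Hesselholt's isomorphism. This is essentially a bookkeeping task once the heart equivalence of Theorem~\ref{mt:t} is available, since that equivalence encodes how to extract a Cartier module from a cyclotomic spectrum. With this identification in place, the smooth case reduces directly to Hesselholt's computation, and the filtered-colimit step then extends it to all ind-smooth $k$-algebras.
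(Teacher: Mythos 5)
Your treatment of the smooth case is essentially the paper's own argument: identify $\pi_i^\cyc\THH(R)$ with $\TR_i(R)$ carrying its $V$ and $F$ operators via Theorem~\ref{thm:boundedbelow}, and then quote Hesselholt's \cite{hesselholt-ptypical}*{Theorem~C} to get $\TR_*(R)\iso\W\Omega^*_R$; that part is fine. The genuine gap is your reduction to the smooth case. You assert that $\pi_i^\cyc$ commutes with filtered colimits because $(\CycSp_p)_{\geq 0}$ is closed under filtered colimits, but closure of the connective part only makes filtered colimits right $t$-exact; it says nothing about the coconnective part, and in fact the cyclotomic $t$-structure is \emph{not} compatible with filtered colimits. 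The paper warns about exactly this and Example~\ref{ex:filteredcolimits} gives a counterexample: the derived $V$-complete Cartier modules $\ZZ/p^n$ (with $V=p$, $F=\id$) have filtered colimit $\QQ_p/\ZZ_p$, which is not derived $V$-complete, so the heart is not closed under filtered colimits. Equivalently, $\TR=\lim_{n,R}(-)^{C_{p^n}}$ is an inverse limit and does not commute with filtered colimits as a functor to $\TCart_p$, so the ind-smooth case is not a formal consequence of the smooth one.

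The way the paper repairs this (Corollary~\ref{cor:derhamwitt2}) is to descend to finite levels, where the needed commutation does hold: by Hesselholt's finer result \cite{hesselholt-ptypical}*{Theorem~B}, for $R$ smooth one has $\pi_*\THH(R)^{C_{p^{n-1}}}\iso\W_n\Omega^*_R[b_n]$ with the restriction map sending $b_n$ to $pb_{n-1}$ up to a unit. Each functor $\THH(-)^{C_{p^{n-1}}}$ commutes with filtered colimits of rings, as does $\W_n\Omega^*_{(-)}$, so the level-$n$ identification extends to ind-smooth $S$, and one then passes to the limit over $n$ to get $\TR_*(S)\iso\W\Omega^*_S$. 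Your argument needs to be restructured along these lines; as written, the filtered-colimit step is exactly the point where the claim fails to be formal.
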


In particular, if $R$ is smooth and has relative dimension $d$ over a perfect
ring of characteristic $p$, then $\W\Omega^i_{R}\iso 0$ for $i>d$ and hence
$\THH(R)\in\CycSp_{[0,d]}$.

Let $k$ be a commutative ring. Then,
$\CycSp_{\THH(k)}=\Mod_{\THH(k)}(\CycSp_p)$ admits a $t$-structure with
$(\CycSp_{\THH(k)})_{\geq 0}\we\Mod_{\THH(k)}((\CycSp_p)_{\geq 0})$. We let
$\CycSp_{\THH(k)}^\heart$ denote the heart, which we identify in
Corollary~\ref{mt:pi0}.

\begin{maincorollary}[see Proposition~\ref{prop:descentss}
    and Theorem~\ref{thm:derhamwitt}]\label{mt:descentss}
    If $X$ is smooth and quasi-compact over a perfect field $k$ of
    characteristic $p$, then there
    is a convergent spectral sequence
    $$\E_2^{s,t}\iso\H^{-s}(X,\W\Omega^t_{\Oscr_X})\Rightarrow\pi_{s+t}^\cyc\THH(X)$$
    in the abelian category $\CycSp_{\THH(k)}^\heart$.
\end{maincorollary}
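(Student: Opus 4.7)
The plan is to obtain the desired spectral sequence as the descent spectral sequence for $\THH$ with respect to the cyclotomic $t$-structure of Theorem~\ref{mt:t}, and then to identify the $E_2$ columns using Theorem~\ref{mt:derhamwitt}. Since the general machinery of descent spectral sequences in this setting is packaged into Proposition~\ref{prop:descentss}, the substantive work is to apply that proposition correctly and to recognize its $E_2$-page.

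First I would fix a finite affine Zariski cover $\{U_i\}_{i\in I}$ of $X$ and invoke Zariski descent for $\THH$, viewed as a functor valued in $\CycSp_{\THH(k)}$, to obtain $\THH(X)\simeq\Tot\bigl(\THH(U_\bullet)\bigr)$ where $U_\bullet$ is the associated Cech nerve. Plugging this equivalence into Proposition~\ref{prop:descentss} yields a conditionally convergent spectral sequence whose $E_2^{s,t}$-term is the $s$-th cohomology of the Cech cosimplicial object in $\CycSp_{\THH(k)}^\heart$ with $n$-th term $\prod_{i_0<\cdots<i_n}\pi_t^\cyc\THH(U_{i_0\cdots i_n})$. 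By Theorem~\ref{mt:derhamwitt}, each factor $\pi_t^\cyc\THH(U_{i_0\cdots i_n})$ is naturally isomorphic as a Cartier module to $\W\Omega^t_{\Oscr_{U_{i_0\cdots i_n}}}$; naturality in the affine implies that the Cech cosimplicial object is exactly the Cech complex for the sheaf $\W\Omega^t_{\Oscr_X}$. Because each finite intersection $U_{i_0\cdots i_n}$ is affine and the levels $\W_m\Omega^t_{\Oscr_X}$ are quasi-coherent sheaves of $\W_m\Oscr_X$-modules, higher Zariski cohomology vanishes at each level on these intersections and Cech cohomology computes the sheaf cohomology $\H^{-s}(X,\W\Omega^t_{\Oscr_X})$, giving the claimed $E_2$-page up to the sign convention converting cohomological to homological grading.

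Convergence requires the bound after Theorem~\ref{mt:derhamwitt}: since $X$ has relative dimension $d$ over $k$, each $\THH(U_{i_0\cdots i_n})$ lies in $\CycSp_{[0,d]}$, so the totalization has uniformly bounded vertical range and the cover contributes only finitely many nonzero factors per simplicial degree, yielding strong convergence. The main obstacle I anticipate is not the $E_2$-identification, which follows formally from Theorem~\ref{mt:derhamwitt} by naturality, but rather establishing Zariski descent for $\THH$ at the cyclotomic level: one must check that the cyclotomic Frobenius is compatible with restriction to Zariski opens, so that $U\mapsto\THH(U)$ is a Zariski sheaf in $\CycSp_p$ and not merely in $\Sp^{BS^1}$. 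Once this sheafiness and Proposition~\ref{prop:descentss} are in place, the corollary follows by a direct application of the descent spectral sequence formalism.
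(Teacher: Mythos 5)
Your overall strategy uses the same two ingredients as the paper — Zariski descent for $\THH$ as a cyclotomic spectrum (which the paper does prove, even pro-\'etale-locally and hypercompletely, in Proposition~\ref{prop:hypercomplete}) and the identification $\pi_t^\cyc\THH(R)\iso\W\Omega^t_R$ for smooth $R$ — but the \v{C}ech implementation has a genuine gap. The corollary only assumes $X$ smooth and quasi-compact over $k$; such an $X$ is quasi-separated but need not be separated, so the finite intersections $U_{i_0\cdots i_n}$ of your affine cover need not be affine. This breaks your argument in two places at once: Theorem~\ref{mt:derhamwitt} no longer applies termwise to identify $\pi_t^\cyc\THH(U_{i_0\cdots i_n})$, and the \v{C}ech complex of a fixed cover no longer computes $\H^{-s}(X,\W\Omega^t_{\Oscr_X})$. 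Even granting separatedness, your appeal to quasi-coherence is not quite enough: $\W\Omega^t_{\Oscr_X}$ is an inverse limit of the quasi-coherent sheaves $\W_m\Omega^t_{\Oscr_X}$ but is not itself quasi-coherent, so the vanishing of its higher cohomology on affines needs the surjectivity of the restriction maps $R$ and a Mittag--Leffler argument, which you only assert at finite level. These points are repairable (assume $X$ separated, or replace the \v{C}ech nerve by an affine hypercover, or induct on the size of the cover via Mayer--Vietoris), but as written the $\E_2$-identification does not go through under the stated hypotheses.

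The paper avoids these issues by not choosing a cover at all: it works with the Zariski sheaf $\TR(\Oscr_X)$ (equivalently the sheafified cyclotomic Whitehead tower of $\THH(\Oscr_X)$), uses hyperdescent (Proposition~\ref{prop:hypercomplete}) together with Postnikov completeness on a quasi-compact scheme of finite Krull dimension (Lemma~\ref{lem:hypercomplete}, via the Clausen--Mathew finite homotopy dimension result) and takes the spectral sequence of the filtration $\tau_{\geq t}\TR(\Oscr_X)$; its $\E_2$-term is by construction the sheaf cohomology $\H^{-s}(X,\pi_t^\cyc\THH(\Oscr_X))$, which Theorem~\ref{thm:derhamwitt} and Corollary~\ref{cor:derhamwitt2} identify with $\H^{-s}(X,\W\Omega^t_{\Oscr_X})$ (the ind-smooth version matters because sheafification involves localizations). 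Convergence is then obtained from conditional convergence plus the fact that the spectral sequence vanishes above the dimension of $X$, rather than from finiteness of a cover. If you want to keep your \v{C}ech formulation, you should either add a separatedness hypothesis and the Mittag--Leffler step, or switch to the sheaf-theoretic Postnikov filtration as the paper does.
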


\begin{maincorollary}\label{mt:smoothbounded}
    Let $\Cscr$ be a smooth and proper dg category over a commutative ring $R$ which itself is
    smooth of relative dimension $d$ over a perfect ring $k$ of characteristic
    $p$. Then, $\THH(\Cscr)$ is bounded.
    Specifically, if
    $\THH(\Cscr)\in(\CycSp_p)_{\geq -e}$, then it is in
    $\CycSp_{[-e,d+e]}$.
\end{maincorollary}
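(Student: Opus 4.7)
My plan is to combine the dualizability of $\THH(\Cscr)$ as a module over $\THH(R)$ (coming from smooth-properness of $\Cscr$) with the bound $\THH(R) \in \CycSp_{[0,d]}$ (from Theorem~\ref{mt:derhamwitt}) via a $t$-structure Hom-vanishing argument. Throughout, write $A = \THH(R) \in \CAlg(\CycSp_p)$ and $M = \THH(\Cscr) \in \Mod_A(\CycSp_p)$.

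First, I would use that $\Cscr$ is smooth and proper over $R$ to conclude that $\Cscr$ is dualizable in the symmetric monoidal $\infty$-category of $R$-linear dg categories, with dual $\Cscr^{op}$. Since $\THH$ is symmetric monoidal with target $\Mod_A(\CycSp_p)$, the $A$-module $M$ is dualizable, with dual $M^\vee \simeq \THH(\Cscr^{op})$. I would then invoke the classical cyclic-symmetry of topological Hochschild homology to identify $\THH(\Cscr) \simeq \THH(\Cscr^{op})$ as cyclotomic spectra, promoting $M$ to a self-dual $A$-module: $M^\vee \simeq M$.

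Next, Theorem~\ref{mt:derhamwitt} gives $\pi_i^\cyc A \cong \W\Omega_R^i = 0$ for $i \notin [0,d]$, so $A \in \CycSp_{[0,d]}$. Assuming $M \in (\CycSp_p)_{\geq -e}$, the self-duality yields
\[
\pi_n^\cyc M \;\cong\; \pi_n^\cyc \MAP_A(M,A) \;\cong\; \pi_0\MAP_A(M, A[-n]).
\]
For $n > d+e$ the target satisfies $A[-n] \in (\CycSp_p)_{\leq d-n} \subset (\CycSp_p)_{<-e}$ while $M \in (\CycSp_p)_{\geq -e}$, so this mapping group vanishes by the defining property of the $t$-structure applied inside $\Mod_A(\CycSp_p)$ (whose $t$-structure is compatible with that on $\CycSp_p$ because $A$ is connective). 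Therefore $M \in (\CycSp_p)_{\leq d+e}$, completing the argument.

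The main obstacles I anticipate are twofold. The most delicate point is upgrading the cyclic-symmetry equivalence $\THH(\Cscr) \simeq \THH(\Cscr^{op})$ to a genuine equivalence of cyclotomic $A$-modules: one must check compatibility with both the $S^1$-action and the cyclotomic Frobenius, since naively the identification of $\Cscr$ with $\Cscr^{op}$ reverses composition order and hence could twist the Frobenius. The second point, that the cyclotomic $t$-structure restricts to a compatible $t$-structure on $\Mod_A(\CycSp_p)$ with the expected Hom-vanishing, should be formal from Theorem~\ref{mt:t} and connectivity of $A$, but the cyclotomic refinement of self-duality is where I expect most of the work to concentrate.
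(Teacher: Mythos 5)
Your proposal is correct and follows essentially the same route as the paper: smooth-properness gives self-duality of $\THH(\Cscr)$ as a cyclotomic $\THH(R)$-module, Theorem~\ref{mt:derhamwitt} gives $\THH(R)\in\CycSp_{[0,d]}$, and the conclusion is a Hom-vanishing argument using compatibility of the cyclotomic $t$-structure with the (relative) tensor product. One caution: the chain $\pi_n^\cyc M\cong\pi_n^\cyc\MAP_A(M,A)\cong\pi_0\MAP_A(M,A[-n])$ conflates the internal mapping object with the mapping spectrum, and the $t$-structure axiom only kills external mapping spaces, so to bound the coconnectivity of the dual you should test against arbitrary connective objects via the tensor-hom adjunction --- which is exactly how the paper phrases it, showing $\Map_{\CycSp_p}(X,\THH(\Cscr))\we\Map_{\CycSp_{\THH(R)}}(X\otimes\THH(\Cscr),\THH(R))\we 0$ for $X\in(\CycSp_p)_{\geq d+e+1}$.
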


\begin{proof}
    Note that if $\Cscr$ is smooth and proper over $R$, then $\THH(\Cscr)$ is a perfect
    $\THH(R)$-module spectrum and hence bounded below as a spectrum.
    It suffices to show that if $X\in(\CycSp_p)_{\geq d+e+1}$, then the mapping space
    $\Map_{\CycSp_p}(X,\THH(\Cscr))\we 0$. However, this is equivalent to
    $\Map_{\CycSp_{\THH(R)}}(X\otimes\THH(R),\THH(\Cscr))$ and hence to
    $\Map_{\CycSp_{\THH(R)}}(X\otimes\THH(\Cscr),\THH(R))$ using that $\THH(\Cscr)$ is self
    dual over $\THH(R)$. Now, $X\otimes\THH(\Cscr)$ is
    contained in $(\CycSp_p)_{\geq d+1}$, so the result follows from the fact that
    $\THH(R)\in(\CycSp_p)_{\leq d}$.
\end{proof}

\newcommand{\QSyn}{\mathrm{QSyn}}

Let $R$ be a quasisyntomic ring in the sense of~\cite{bms2}*{Definition~4.9},
meaning that $R$ is $p$-complete with bounded $p^\infty$-torsion and the
cotangent complex $\L_{R/\ZZ_p}$ has $p$-complete Tor-amplitude contained in
$[0,1]$.
Note that a characteristic $p$ ring $R$ is quasisyntomic if and only if $\L_{R/\FF_p}$ has
Tor-amplitude contained in $[0,1]$.

For $R$ quasisyntomic, Bhatt--Morrow--Scholze~\cite{bms2} use syntomic descent to construct
a filtration $\F^\star_{\BMS}\TP(R)$ on $\TP(R)$ whose graded pieces are given by an
absolute prismatic cohomology theory, suitably completed. In the special case
where $R$ is smooth over a perfect field of characteristic $p$, the graded
pieces are given by crystalline cohomology $\R\Gamma_{\crys}(R/W)$ by~\cite{bms2}*{Theorem~1.10}.

Again in the case of an ind-smooth algebra $R$ over a perfect field $k$ of
characteristic $p$, we can directly
construct a filtration on $\TP(R)$ whose graded pieces are given canonically by the de
Rham--Witt complex of de Rham and Illusie (see~\cite{illusie-derham-witt}) in the smooth affine case.

\begin{maincorollary}[see Theorem~\ref{thm:drw}]\label{mt:crystalline}
    Let $k$ be a perfect field of characteristic $p$ and let $R$ be an ind-smooth $k$-algebra and let
    $\TP(\tau_{\geq\star}^\cyc\THH(R))$ be the filtration on $\TP(R)$ induced by the
    cyclotomic Whitehead tower. The induced Whitehead tower with respect to the
    Beilinson $t$-structure on filtered spectra defines
    a natural complete exhaustive multiplicative decreasing
    $\ZZ$-indexed filtration
    $\F^\star_\B\TP(R)$ in filtered spectra with graded pieces given by
    $$\gr^i_\B\TP(R)\we\W\Omega_R^\bullet[2i]$$ for all $i$. Moreover, this
    filtration agrees with the BMS filtration on $\TP(R)$ after forgetting the
    secondary filtration.
\end{maincorollary}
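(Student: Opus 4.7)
The plan hinges on transporting the cyclotomic Whitehead tower through $\TP$ and then refining by the Beilinson $t$-structure on filtered spectra, using Theorem~\ref{thm:derhamwitt} as the primary input. First I would set $\F^n := \TP(\tau^\cyc_{\geq n}\THH(R))$ and verify completeness, exhaustiveness ($\F^n = \TP(R)$ for $n \leq 0$, since $\THH(R) \in (\CycSp_p)_{\geq 0}$), and multiplicativity, all of which descend from the corresponding properties of the cyclotomic Whitehead tower together with the lax symmetric monoidality of $\TP$. Theorem~\ref{thm:derhamwitt} then identifies the graded pieces as $\gr^n\F \we \TP(\W\Omega^n_R[n])$, where $\W\Omega^n_R$ is viewed in $\CycSp_p^\heart$ via Theorem~\ref{thm:heartid}.

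Next I would compute $\TP$ on such heart objects. Since $R$ is a $k$-algebra with $k$ perfect of characteristic $p$, each $\W\Omega^n_R$ is a module over $\THH(k) \we W(k)$ (Theorem~\ref{thm:discrete}) in the cyclotomic heart, so $\TP(\W\Omega^n_R[n])$ is a module spectrum over $\TP(k) \we W(k)((t))$ with $|t|=2$, and in particular $2$-periodic. Reducing by $\TP(k)$-linearity to the case $M = W(k)$ shows $\pi_m\TP(\W\Omega^n_R[n]) \iso \W\Omega^n_R$ when $m \equiv n \pmod 2$, and zero otherwise. Applying the Beilinson Whitehead tower to $\F^\star$, the standard description of the Beilinson heart as $\D(\Ab)$ exhibits the $i$th graded piece as the chain complex whose $n$th entry is $\pi_{2i-n}\gr^n\F \iso \W\Omega^n_R$, with differential induced by the boundary maps of the cyclotomic Whitehead tower. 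Those boundary maps realize the de Rham--Witt differential — the identification of cyclotomic $k$-invariants with $d$ being implicit in Theorem~\ref{thm:derhamwitt} — yielding $\gr^i_\B\TP(R) \we \W\Omega^\bullet_R[2i]$.

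The main obstacle will be the comparison with the BMS filtration. Both $\F^\star_\B\TP(R)$ and the BMS filtration are complete, exhaustive, and multiplicative on $\TP(R)$ with canonically isomorphic graded pieces, so the task is to produce a natural map of filtrations realizing this identification. I would attempt this by comparing how each filtration behaves under the quasisyntomic descent of \cite{bms2}: the cyclotomic $t$-structure should be compatible with descent along quasisyntomic covers, so $\F^\star_\B$ descends from the quasiregular semiperfect case, where $\THH(R)$ is cyclotomically discrete and the Beilinson filtration on $\TP(R)$ reduces by inspection to the $t$-adic filtration, matching BMS. Descent then extends the identification to general ind-smooth $R$.
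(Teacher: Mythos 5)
Your construction half is close to the paper's, but the computation of the graded pieces has a soft spot: ``reducing by $\TP(k)$-linearity to the case $M=W(k)$'' is not an argument, since a general derived $V$-complete Cartier module such as $\W\Omega^n_R$ is not obtained from $W(k)$ by colimits that the $S^1$-Tate construction preserves, and the underlying spectrum of a heart object is $M/V$ (Figure~\ref{fig:homotopygroups}), whose Tate spectral sequence does not visibly collapse. The identification you want is exactly Corollary~\ref{mt:tate} (a consequence of the Tate orbit lemma): $\TP$ of $\pi_n^\cyc\THH(R)$ is $p$-adically $(\W\Omega^n_R)^{tS^1}$ for the trivial $S^1$-action, hence $2$-periodic with the expected homotopy groups. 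This is fixable by citing that corollary, and your extraction of the complex from the Beilinson heart (differential $=$ Connes' $B$-operator $=$ the de Rham--Witt $d$ via Theorem~\ref{thm:derhamwitt}) agrees with the paper.

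The genuine gap is the comparison with the BMS filtration, which is where most of the work in the proof of Theorem~\ref{thm:drw} lies. Your plan assumes (i) that $\F^\star_\B\TP(-)$, built from the cyclotomic $t$-structure, satisfies quasisyntomic descent, and (ii) that for quasiregular semiperfect $S$ the cyclotomic Whitehead tower of $\THH(S)$ is concentrated in a single degree, so that the Beilinson filtration ``by inspection'' equals the BMS filtration there. Neither is justified: the cyclotomic $t$-structure is not even compatible with filtered colimits (Example~\ref{ex:filteredcolimits}), and its truncations do not commute with the cosimplicial limits of a quasisyntomic cover, so there is no a priori descent for $\tau^\cyc_{\geq\star}\THH(-)$; and nothing like ``$\THH(S)$ is cyclotomically discrete'' is established for quasiregular semiperfect $S$ --- the paper never computes the cyclotomic $t$-structure on such rings at all. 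Instead, the paper sidesteps both points: it left Kan extends $\TP$, $\TR(-)_{hS^1}$ and the Beilinson filtration from polynomial $\FF_p$-algebras, uses the $\T_v$-limit presentation of $(-)^{tS^1}$ in terms of $(-)_{hS^1}$ to produce a filtered map into a complete filtration with graded pieces $\widehat{\L\W\Omega}_S$, invokes \cite{bms2}*{Theorem~8.14} ($\L\W\Omega_S$ is $p$-adically discrete for quasiregular semiperfect $S$) to upgrade $\widehat{\L\TP}(S)\to\TP(S)$ to a filtered map into $\tau_{\geq 2\star}\TP(S)\we\F^\star_{\BMS}\TP(S)$, and only then uses quasisyntomic descent of the BMS filtration (which descends by its very construction) to obtain the comparison map for smooth $R$; the map is finally checked to be an equivalence by K\"unneth and \'etale descent, reducing to $R=\FF_p[x]$, where both filtrations are Postnikov truncations. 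Without some substitute for this mechanism --- in particular a way to compare the $t$-structure filtration with an object that genuinely descends --- your descent step does not go through.
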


We call the filtration here the Beilinson filtration as it is constructed using the
Beilinson $t$-structure following an idea
of~\cite{antieau-derham}. Both the BMS filtration $\F^\star_{\BMS}\TP(R)$ and
the Beilinson filtration $\F^\star_\B\TP(R)$ are equipped with secondary
filtrations. For the BMS filtration, one obtains the Nygaard filtration
$\Nscr^{\geq\star}\R\Gamma(R/W)$ as the
residual filtration on the graded pieces, while for the Beilinson filtration,
one obtains the Hodge filtration $\W\Omega_R^{\geq\star}$ on the
graded pieces. Forgetting these secondary filtrations, the Beilinson and BMS
filtrations agree on $\TP(R)$. By remembering the secondary filtration in
$\F^\star_\B\TP(R)$, we recover a specific complex, the de Rham--Witt complex, which computes
crystalline cohomology.

\begin{mainexample}\label{mainexample}
    Let $X$ be a K3 surface over a perfect field $k$ of characteristic $p$, meaning a smooth and proper surface
    over $k$ such that the canonical bundle $\Omega_{X/k}^2$ is trivial and $\H^1(X,\Oscr_X)=0$.
    The formal Brauer group of $X$ is, by the work of
    Artin and Mazur~\cite{artin-mazur}, a commutative formal Lie group. The nature of the
    formal Brauer group stratifies K3 surfaces over $k$ into two types: the {\bf Hodge--Witt}
    K3 surfaces, for which $\widehat{\Br}_X$ is a formal $p$-divisible group of height $h$
    where $1\leq h\leq 10$, and the {\bf supersingular} K3 surfaces, where $\widehat{\Br}_X$
    is unipotent (and even isomorphic to $\widehat{\Ga}$, the formal completion of the
    additive group). When $X$ is supersingular, the $p$-typical Cartier module
    $\H^2(X,\W\Omega^0_X)$  associated to $\widehat{\Br}_X$
    is isomorphic to $\FF_p\llbracket x\rrbracket$, the $p$-typical Cartier module with $F=0$,
    $Vx^n=x^{n+1}$, and $ax=xa^p$ (see~\cite{illusie-derham-witt}*{7.2}). The
    spectral sequence of Corollary~\ref{mt:descentss} implies that
    $\pi_{-2}^\cyc\THH(X)\iso\H^2(X,\W\Omega^0_X)$ since there can be no
    differentials in or out. Now, the spectrum underlying $\pi_{-2}^\cyc(\THH(X))$ has homotopy groups $k$ in degree $0$ and $k\llbracket
    x\rrbracket$ in degrees $\geq 2$ by Figure~\ref{fig:homotopygroups} on
    page~\pageref{fig:homotopygroups}. In particular, it is not compact and
    hence not dualizable as a $\THH(k)$-module spectrum.
    Since the forgetful functor $\CycSp_{\THH(k)}\rightarrow\Dscr(\THH(k))$ is
    symmetric monoidal, it follows that $\pi_{-2}^\cyc\THH(X)$ is not dualizable as a cyclotomic
    spectrum when $X$ is a supersingular K3 surface.
    Thus, we see that the $t$-structure on $\CycSp_{\THH(k)}$ does not restrict to a
    $t$-structure on $\CycSp_{\THH(k)}^\dual$, the full subcategory of dualizable objects in
    $\CycSp_{\THH(k)}$. Moreover, as this example shows, $\THH(X)$ cannot be
    perfect in $\CycSp_{\THH(k)}$. Indeed, if it were perfect, then each
    cyclotomic homotopy group $\pi_i^\cyc\THH(X)$ would be a finitely presented
    $W(k)$-module, which this example shows is not the case. This recovers the
    counterexample of~\cite{amn1}.
\end{mainexample}

In general, despite the coincidence of the filtrations on $\TP(R)$ when $R$ is
a smooth ring over a perfect field $k$ of characteristic $p$, the BMS filtration and the cyclotomic
Whitehead tower do not agree on $\THH(R)$ when $R$ is quasisyntomic. In fact,
even when $R=k$ is a perfect field they differ. Indeed, the BMS filtration is
not a filtration by cyclotomic spectra. We have seen by
Theorem~\ref{mt:discrete} that the cyclotomic Whitehead tower for $\THH(k)$ is
concentrated in a single degree. But, the BMS filtration is given by
$\tau_{\geq 2\star}\THH(k)$, i.e., the classical Whitehead tower. The maps $\tau_{\geq
2\star}\THH(k)\rightarrow\THH(k)$ cannot be given the structure of cyclotomic
maps. Nevertheless, we have the following result, which Scholze suggested
to us.

\begin{maintheorem}[see Theorem~\ref{thm:syntomic}]\label{mt:syntomic}
    The BMS filtration and the $t$-structure filtration on $\TC(X)$ agree when $X$
    is a smooth quasi-compact scheme over a perfect field $k$ of characteristic
    $p$.
\end{maintheorem}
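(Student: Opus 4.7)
The plan is to reduce to the smooth affine case and then identify associated gradeds. Both filtrations are Nisnevich-local functors of $X$: the BMS filtration because prismatic and syntomic cohomology are Nisnevich sheaves, and the cyclotomic $t$-structure filtration because $\THH$ is a Nisnevich sheaf of cyclotomic spectra and the cyclotomic $t$-structure is preserved under the resulting limits. Hence we may assume $X = \Spec R$ with $R$ smooth over $k$ of relative dimension $d$.

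For such $R$, Corollary~\ref{mt:smoothbounded} gives $\THH(R)\in\CycSp_{[0,d]}$, so the cyclotomic Whitehead tower on $\THH(R)$ is a finite filtration whose graded pieces are, by Theorem~\ref{mt:derhamwitt}, the Cartier modules $\W\Omega^i_R[i]$. Applying $\TC$ yields the $t$-structure filtration on $\TC(R)$, with graded pieces $\TC(\W\Omega^i_R[i])$. On the BMS side, the Nikolaus--Scholze fiber sequence $\TC(R)\simeq\fib(\TC^-(R)\xrightarrow{\can-\varphi}\TP(R))$ is compatible with both the BMS and the cyclotomic filtrations on each term. By Corollary~\ref{mt:crystalline}, the Beilinson reindexing of the cyclotomic filtration on $\TP(R)$ agrees with $\F^\star_{\BMS}\TP(R)$, and I would prove the analogous statement for $\TC^-(R)$ by the same method, the secondary filtration being the Nygaard (equivalently, Hodge) filtration on $\W\Omega^\bullet_R$. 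Taking fibers under $\can-\varphi$ then identifies $\F^\star_{\BMS}\TC(R)$ with the corresponding Beilinson-reindexed filtration on $\TC(R)$.

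The crucial point for $\TC$, as opposed to $\TP$, is that the Beilinson reindexing on $\TC(R)$ collapses back to the naive filtration $\TC(\tau^\cyc_{\geq\star}\THH(R))$; equivalently, the secondary Nygaard/Hodge filtration disappears after taking the $\can-\varphi$ fiber. Concretely, this reduces to identifying $\TC(\W\Omega^i_R[i])$ with the syntomic complex $\R\Gamma_{\mathrm{syn}}(R,\ZZ_p(i))[2i]$. For smooth $R$ over a perfect field of characteristic $p$, the syntomic complex is given by the fiber of $1-F$ on $\W\Omega^{\geq i}_R$ embedded in $\W\Omega^\bullet_R$ (Fontaine--Messing--Kato), while the left-hand side is naturally a two-term complex built from $F-1$ on the Cartier module $\W\Omega^i_R$; the two agree via the standard Geisser--Hesselholt-style analysis of the Frobenius-fixed part.

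The main obstacle will be the collapse step, which requires carefully tracking the Beilinson reindexing through the fiber sequence defining $\TC$ and understanding how the Nygaard secondary filtration interacts with $\can-\varphi$. A cleaner alternative may be to prove a bifiltered refinement of Corollary~\ref{mt:crystalline} simultaneously for $\TC^-(R)$ and $\TP(R)$, together with the canonical and Frobenius maps between them, from which the theorem would follow by taking fibers and observing that the Frobenius-fixed part is concentrated along the diagonal of the resulting bifiltration.
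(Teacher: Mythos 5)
Your route is genuinely different from the paper's, and as it stands it has a real gap at its center. The paper never passes through $\TC^-$ and $\TP$ with Beilinson reindexing: it works directly with the filtration $\TC(\tau^\cyc_{\geq\star}\THH(R))\we\fib(\tau_{\geq\star}\TR(R)\xrightarrow{1-F}\tau_{\geq\star}\TR(R))$, and the whole point of its argument is the construction of a \emph{map of filtered spectra} into $\F^\star_{\BMS}\TC(R)$. This is done by pro-\'etale descent: by BMS the filtration $\F^n_{\BMS}\TC$ is a (hypercomplete) pro-\'etale sheaf which locally is the Whitehead tower of $\TC$, so it suffices to show that $\TC(\tau^\cyc_{\geq n}\THH)$ is pro-\'etale-locally $n$-connective; this is exactly Illusie's theorem that $1-F$ is locally surjective on $\W\Omega^n$, and the induced map on graded pieces $\fib(1-F\colon\W\Omega^n\to\W\Omega^n)\to\W\Omega^n_{\log}$ is an equivalence by Geisser--Hesselholt. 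Your proposal defers precisely this crux. First, the intermediate claim that the Beilinson reindexing of the cyclotomic filtration on $\TC^-(R)$ agrees with $\F^\star_{\BMS}\TC^-(R)$ cannot be obtained ``by the same method'' as Theorem~\ref{thm:drw}: that proof leans on Corollary~\ref{mt:tate} ($\TR(X)^{tS^1}\to X^{tS^1}$ is a $p$-adic equivalence, via the Tate orbit lemma), and there is no $hS^1$-analogue --- $\TR(X)^{hS^1}\to X^{hS^1}$ is not an equivalence, so the graded pieces of the cyclotomic filtration on $\TC^-$ are $((\W\Omega^m_R/V)[m])^{hS^1}$ rather than fixed points of discrete Cartier modules, and identifying the resulting Beilinson gradeds with the Nygaard-filtered pieces, compatibly with $\can$ and $\varphi$, is essentially the bifiltered refinement that the paper explicitly states only as an expectation in the remark following Theorem~\ref{thm:drw}. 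Relatedly, your parenthetical ``Nygaard (equivalently, Hodge)'' is false: $\Nscr^{\geq 1}$ contains $V\W(R)$ in degree $0$ while $\W\Omega^{\geq 1}_R$ does not (already for $R=k$ they are $p^iW(k)$ versus $0$), and keeping these two secondary filtrations distinct is exactly why the common refinement is nontrivial.

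Second, even granting a filtered equivalence $\F^\star_{\BMS}\TC\simeq\fib(\F^\star_\B\TC^-\xrightarrow{\can-\varphi}\F^\star_\B\TP)$, your ``collapse'' step is not established by the reduction you propose. Identifying the graded pieces $\TC(\W\Omega^i_R[i])\simeq\fib(1-F\colon\W\Omega^i_R\to\W\Omega^i_R)[i]$ with the syntomic complexes only shows that the two filtrations on $\TC(R)$ have abstractly equivalent associated gradeds; two filtrations on the same spectrum with equivalent gradeds need not coincide, and the Beilinson truncation does not commute with the fiber of $\can-\varphi$, so there is no formal comparison map between $\TC(\tau^\cyc_{\geq\star}\THH(R))$ and the fiber-of-Beilinson filtration. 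Producing such a map is the actual content of the theorem, and it is exactly what the paper's pro-\'etale connectivity argument (Illusie's surjectivity of $1-F$) supplies. So your plan either needs that connectivity argument anyway --- at which point the detour through $\TC^-$ and $\TP$ is unnecessary --- or it needs the conjectural bifiltered refinement plus a genuine filtered comparison for the collapse, neither of which is available. The outer reduction to the smooth affine case by Zariski descent is fine and is implicitly how the paper passes from Theorem~\ref{thm:syntomic} to the quasi-compact statement, and your description of the graded pieces on the $t$-structure side matches the paper's; the gap is in the comparison of the filtrations themselves.
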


\subsection{Idea of proofs: topological Cartier modules}

To prove our theorems, we introduce the stable $\infty$-category $\TCart_p$ of {\bf $p$-typical topological
Cartier modules}.\footnote{In early talks on this project, we called these
topological Dieudonn\'e modules.} These are spectra $M$ equipped with an $S^1$-action together with an
$S^1$-equivariant factorization $$M_{hC_p}\xto{V}M\xto{F} M^{hC_p}$$ of the $C_p$-norm
$$M_{hC_p}\xto{\Nm_{C_p}}M^{hC_p}.$$ Maps are defined as usual to be maps of spectra with
$S^1$-action which commute with the $V$ and $F$ operations and the homotopy witnessing the factorization.

There is a $t$-structure on $\TCart_p$ where an object $M$ is in $(\TCart_p)_{\geq 0}$ (resp.
$(\TCart_p)_{\leq 0}$) if and only if $\pi_iM=0$ for $i<0$ (resp. $\pi_iM=0$ for
$i>0$). The heart of this $t$-structure is the abelian category of $p$-typical Cartier modules
introduced above. Given a $p$-typical topological Cartier module $M$, the cofiber of
$V$, which we write as $M/V$, naturally admits the structure of a $p$-typical
cyclotomic spectrum. On bounded
below objects with respect to this $t$-structure on $\TCart_p$ and the cyclotomic $t$-structure, we obtain the following theorem.

\begin{maintheorem}[see Theorem~\ref{thm:boundedbelow}]\label{mt:td}
    The functor $(-)/V\colon\TCart_p^-\rightarrow\CycSp_p^-$ admits a fully faithful
    $t$-exact right adjoint given
    by $\TR$. The essential image of $\TR$ is the full subcategory of
    bounded below $p$-typical topological Cartier modules $M$ such that $\pi_iM$ is derived
    $V$-complete for all $i$.
\end{maintheorem}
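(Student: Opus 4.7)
The plan is to construct $\TR$ explicitly as a candidate right adjoint, verify the counit is an equivalence on bounded-below cyclotomic spectra (giving full faithfulness), check $t$-exactness via a Milnor-sequence computation, and finally identify the essential image by reducing to the heart-level statement of Theorem~\ref{mt:t}.

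First I would define $\TR(X)$ for a $p$-typical cyclotomic spectrum $X$ as the standard inverse limit $\TR(X) = \lim_n X^{hC_{p^n}}$, with transition maps built from the Frobenius $\varphi\colon X \to X^{tC_p}$ in the manner of the Nikolaus--Scholze construction (the ``Frobenius tower'' that encodes the restriction maps $R$ in the classical setup). The residual $S^1/C_{p^n}$-actions assemble into an $S^1$-action, and one has natural operators $F$ (shift in the tower) and $V$ (transfer on each level) whose composite is the norm, giving $\TR(X)$ the structure of a $p$-typical topological Cartier module. The adjunction $(-)/V \dashv \TR$ should then follow by an explicit comparison of mapping spectra, with the counit $\TR(X)/V \to X$ coming from projection onto the zeroth stage of the tower.

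Second I would verify that this counit is an equivalence when $X$ is bounded below, giving full faithfulness. The argument reduces to the identification of $\cofib(V)$ applied to the defining tower with the zeroth stage, exploiting that the tower is ``shift-by-$V$'' in a precise sense---a fact whose proof is cleanest through a direct unwinding of the Frobenius maps and which requires bounded-belowness to control the limit. For $t$-exactness, $\TR$ is automatically left $t$-exact as a right adjoint; for the other direction I would use the Milnor short exact sequence
\[
0 \to {\lim}^1 \pi_{i+1}(X^{hC_{p^n}}) \to \pi_i \TR(X) \to \lim \pi_i(X^{hC_{p^n}}) \to 0
\]
together with connectivity estimates for $X^{hC_{p^n}}$ coming from $X$ being bounded below, to show $\pi_i \TR(X) = 0$ for $i < 0$ whenever $X \in (\CycSp_p)_{\geq 0}$.

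The essential image identification is the main obstacle. The forward inclusion---each $\pi_i \TR(X)$ is derived $V$-complete---follows from the tower presentation and the fact that derived $V$-completeness in $\Dscr(\ZZ)$ is closed under limits and extensions. For the reverse direction I would show that the unit $M \to \TR(M/V)$ is an equivalence for any bounded-below $M \in \TCart_p$ with each $\pi_i M$ derived $V$-complete. Filtering $M$ via its Postnikov tower in $\TCart_p$, and using that both $(-)/V$ and $\TR$ interact well with limits, this reduces to the discrete case $M \in \TCart_p^\heart$ with $\pi_0 M$ derived $V$-complete. That case is exactly the content of the identification of $\CycSp_p^\heart$ with derived $V$-complete Cartier modules from Theorem~\ref{mt:t}: the composite $\TR \circ ((-)/V)$ restricts to the identity on the full subcategory of derived $V$-complete Cartier modules sitting inside $\TCart_p^\heart$. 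The $\lim^1$ care needed at the transition from discrete to general bounded-below $M$ is where the derived $V$-completeness hypothesis is crucial, since without it the Postnikov induction would not pass to the limit.
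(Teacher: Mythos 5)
Your overall architecture matches the paper's (construct $\TR$ with its $V$ and $F$ operators, establish the adjunction, prove the counit is an equivalence on bounded below objects, then identify the image by Postnikov induction reducing to the discrete case; compare Proposition~\ref{thm:tr}, Lemma~\ref{lem:trbounded}, Theorem~\ref{thm:boundedbelow} and Proposition~\ref{prop:vcompletemodule}), but two steps have genuine problems. First, the formula $\TR(X)=\lim_n X^{hC_{p^n}}$ is not correct: a cyclotomic Frobenius $\varphi\colon X\to X^{tC_p}$ does not induce maps $X^{hC_{p^n}}\to X^{hC_{p^{n-1}}}$ (that requires a Frobenius \emph{lift}); the actual object is the limit along the maps $R$ of the iterated pullbacks $\TR^{n+1}(X)=X^{hC_{p^n}}\times_{(X^{tC_p})^{hC_{p^{n-1}}}}\cdots\times_{X^{tC_p}}X$. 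This is not merely cosmetic: your right $t$-exactness argument rests on connectivity of the tower, and $X^{hC_{p^n}}$ is \emph{not} connective for connective $X$ (group cohomology contributes in negative degrees). Connectivity does hold for $\TR^{n+1}(X)$, but proving it---and proving your ``shift-by-$V$'' identification of $\cofib(V)$ with $X$, i.e.\ the counit equivalence---uses the cofiber sequences $\TR^n(X)_{hC_p}\to\TR^{n+1}(X)\to X$, whose validity in the non-genuine model is exactly the Tate orbit lemma \cite{nikolaus-scholze}*{Lemma~I.2.1} applied to the bounded below spectra $\TR^n(X)$. Your proposal never invokes it, and ``bounded-belowness to control the limit'' does not substitute for it, since it is needed levelwise as well as in the passage to the limit (Lemma~\ref{lem:littlelimits}).

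The more serious gap is the reverse inclusion for the essential image. You reduce, by Postnikov induction, to a discrete $M\in\TCart_p^\heart$ with $\pi_0M$ derived $V$-complete and then appeal to Theorem~\ref{mt:t}. But in the paper the heart identification (Theorem~\ref{thm:heartid}) is \emph{deduced} from the present theorem, so this appeal is circular; and even taken as a black box, Theorem~\ref{mt:t} only asserts an abstract equivalence of the heart with derived $V$-complete Cartier modules---it does not tell you that the unit $M\to\TR(M/V)$ is an equivalence for such $M$. What your base case actually requires is the comparison between algebraic derived $V$-completeness (formed with cofibers of $V^n\colon M\to M$ in $\Dscr(\ZZ)$) and spectrum-level $V$-completeness (formed with the tower $\cdots\to M_{hC_{p^2}}\to M_{hC_p}\to M$, where the homotopy orbits contribute the homology of the spaces $BC_{p^n}$). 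This is precisely Lemma~\ref{lem:complete}, proved by a nontrivial double-limit argument, and it is the missing ingredient in your plan. Similarly, for the forward inclusion, ``derived $V$-completeness is closed under limits and extensions'' does not by itself yield derived $V$-completeness of $\pi_i\TR(X)$, since the $V$-operator mixes the stages of the tower; the paper instead proves spectrum-level $V$-completeness of $\TR(X)$ directly and transfers it to homotopy groups via Proposition~\ref{prop:vcompletemodule}, which again rests on Lemma~\ref{lem:complete}.
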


From Theorem~\ref{mt:td}, we can read off two things: first, that the heart of the
$t$-structure on cyclotomic spectra is given by derived $V$-complete $p$-typical topological
Cartier modules; second, that for a cyclotomic spectrum $X$, the cyclotomic homotopy groups
$\pi_i^\cyc X$ are given by $\pi_i\TR(X)$ equipped with the canonical $V$ and $F$
operations induced from transfer and inclusion of fixed points. This gives the proof of
Theorem~\ref{mt:t}. Previous calculations in $\TR$ of commutative rings of
Hesselholt--Madsen~\cite{hesselholt-madsen-1}*{Theorem~5.5} and
Hesselholt~\cite{hesselholt-ptypical}*{Theorem~C} then suffice to establish
Theorems~\ref{mt:discrete} and~\ref{mt:derhamwitt} for smooth algebras over
perfect fields, which we show is enough to prove the general case of each theorem. The next three corollaries are immediate consequences.

\begin{maincorollary}\label{mt:tate}
    If $X$ is a bounded below $p$-typical cyclotomic spectrum, then the
    natural $S^1$-equivariant map $\TR(X)\rightarrow X$ induces a $p$-adic equivalence
    $\TR(X)^{tS^1}\rightarrow X^{tS^1}$.
\end{maincorollary}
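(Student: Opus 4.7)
The plan is to leverage the adjunction from Theorem~\ref{mt:td}. For bounded below cyclotomic $X$, the counit $\TR(X)/V\xto{\sim}X$ is an equivalence in $\SpBS$; unpacking the definition $M/V=\cofib(V\colon M_{hC_p}\to M)$ of the Verschiebung quotient, this means we have an $S^1$-equivariant cofiber sequence
$$\TR(X)_{hC_p}\xto{V}\TR(X)\to X$$
in which the second map is the natural $S^1$-equivariant map appearing in the statement. Applying the exact functor $(-)^{tS^1}\colon\SpBS\to\Sp$ produces a fiber sequence
$$(\TR(X)_{hC_p})^{tS^1}\to\TR(X)^{tS^1}\to X^{tS^1},$$
so the corollary reduces to showing that $(\TR(X)_{hC_p})^{tS^1}$ vanishes after $p$-completion.

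For this vanishing, first note that $\TR(X)$ is itself bounded below, by the $t$-exactness of $\TR$ built into Theorem~\ref{mt:td}. The topological Cartier module structure on $\TR(X)$ furnishes a Frobenius $F\colon\TR(X)\to\TR(X)^{hC_p}$ with $FV=\Nm_{C_p}$; in other words, the map $V$ factors the $C_p$-norm through $\TR(X)$. Combining this factorization with the Tate orbit lemma of Nikolaus--Scholze (in its $S^1$-variant for bounded-below spectra) and the standard fact that Tate $S^1$-constructions of bounded-above spectra vanish $p$-adically yields the needed vanishing. Concretely, one compares $(\TR(X)_{hC_p})^{tS^1}$ against Tate constructions of the norm cofiber sequence $\TR(X)_{hC_p}\to\TR(X)^{hC_p}\to\TR(X)^{tC_p}$, where the factorization $\Nm_{C_p}=F\circ V$ ensures the relevant terms can be controlled.

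The main obstacle is this last step. The hoped-for formal statement that $(Y_{hC_p})^{tS^1}$ is $p$-adically null for every bounded below $S^1$-spectrum $Y$ is false: taking $Y=\SS$ with trivial action gives $(Y_{hC_p})^{tS^1}=(\Sigma^\infty BC_{p+})^{tS^1}$, which is $p$-adically nonzero. Consequently the argument must genuinely exploit the Cartier structure on $\TR(X)$ rather than its mere underlying $S^1$-spectrum, and in particular the Frobenius $F$ enters essentially: the factorization $\Nm_{C_p}=F\circ V$ is what allows one to relate the obstruction term $(\TR(X)_{hC_p})^{tS^1}$ to constructions whose $p$-adic vanishing is standard.
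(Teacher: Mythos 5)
Your reduction is the same as the paper's: the counit equivalence $\TR(X)/V\we X$ from Theorem~\ref{mt:td} gives the $S^1$-equivariant cofiber sequence $\TR(X)_{hC_p}\to\TR(X)\to X$, and applying $(-)^{tS^1}$ reduces the corollary to the $p$-adic vanishing of $(\TR(X)_{hC_p})^{tS^1}$. But that vanishing is exactly the step you never actually prove: the appeal to the factorization $\Nm_{C_p}=F\circ V$ and a comparison "against the norm cofiber sequence" is left as a gesture, with no argument, so the proposal has a genuine gap at its crucial point.

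Worse, the obstruction you invoke to justify needing the Cartier structure is spurious. In your purported counterexample $Y=\SS$ you treat $(\SS_{hC_p})^{tS^1}$ as the Tate construction of $\Sigma^\infty_+BC_p$ with \emph{trivial} $S^1$-action, but the relevant action is the residual $S^1/C_p\iso S^1$-action: $\SS_{hC_p}\we\Sigma^\infty_+(ES^1/C_p)$, on which $S^1/C_p$ acts freely, so its $S^1$-Tate construction vanishes identically. In fact the general statement you dismiss is true: for \emph{every} bounded below $Y\in\Sp^{BS^1}$ one has $(Y_{hC_p})^{tS^1}\we 0$ after $p$-completion, because $(Y_{hC_p})^{tS^1}$ is $p$-adically equivalent to $((Y_{hC_p})^{tC_p})^{hS^1}$ by \cite{nikolaus-scholze}*{Lemma~II.4.2}, and $(Y_{hC_p})^{tC_p}\we 0$ by the Tate orbit lemma \cite{nikolaus-scholze}*{Lemma~I.2.1}, using only bounded-belowness. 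This is precisely how the paper finishes after the same reduction; no input from the Frobenius or the topological Cartier module structure is needed beyond the counit cofiber sequence (only that $\TR(X)$ is bounded below, which you did note). To repair your proof, replace the vague second and third paragraphs with these two citations.
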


\begin{proof}
    By Theorem~\ref{mt:td}, the counit map $\TR(X)/\TR(X)_{hC_p}\rightarrow X$
    is an equivalence. In particular, we have a cofiber sequence
    $$\TR(X)_{hC_p}\rightarrow\TR(X)\rightarrow X.$$ Applying $(-)^{tS^1}$,
    we obtain a cofiber sequence $p$-adically equivalent to
    $$((\TR(X)_{hC_p})^{tC_p})^{hS^1}\rightarrow(\TR(X)^{tC_p})^{hS^1}\rightarrow(X^{tC_p})^{hS^1}$$
    by~\cite{nikolaus-scholze}*{Lemma~II.4.2}.
    But, $(\TR(X)_{hC_p})^{tC_p}\we 0$ by the Tate orbit
    lemma~\cite{nikolaus-scholze}*{Lemma~I.2.1}.
\end{proof}

\begin{maincorollary}\label{mt:conservativetr}
    If $X$ is a bounded below $p$-typical cyclotomic spectrum such that $\TR(X)\we 0$,
    then $X\we 0$.
\end{maincorollary}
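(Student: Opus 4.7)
The plan is to deduce this immediately from the fully faithfulness of $\TR$ asserted in Theorem~\ref{mt:td}. Since $\TR\colon\CycSp_p^-\to\TCart_p^-$ is a fully faithful right adjoint to $(-)/V$, the counit map $\TR(X)/V\to X$ is an equivalence for every bounded below $p$-typical cyclotomic spectrum $X$. This was already used in the proof of Corollary~\ref{mt:tate} and is exactly the input we need here.

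Given this, the argument is essentially one line: if $\TR(X)\we 0$, then the cofiber $\TR(X)/V$ of $V\colon\TR(X)_{hC_p}\to\TR(X)$ vanishes as well, since both source and target are zero. Via the counit equivalence $\TR(X)/V\we X$, this forces $X\we 0$.

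There is no substantial obstacle; the content is entirely absorbed into Theorem~\ref{mt:td}. If one wanted to state the corollary more symmetrically, the statement is simply that on the bounded below part, $\TR$ is conservative because it is a fully faithful right adjoint whose left adjoint $(-)/V$ is essentially surjective onto $\CycSp_p^-$. The only point to keep in mind is that the hypothesis of boundedness below is essential, since Theorem~\ref{mt:td} only identifies $\TR$ as a fully faithful right adjoint on bounded below objects.
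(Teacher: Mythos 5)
Your proof is correct and follows essentially the same route as the paper: both deduce from the fully faithfulness of $\TR$ on bounded below objects (Theorem~\ref{mt:td}) that the counit $\TR(X)/V\to X$ is an equivalence, and then conclude $X\we 0$ since the cofiber of $V$ on the zero object vanishes. Your remark that boundedness below is essential is accurate and matches the paper's hypotheses.
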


\begin{proof}
    Indeed, since $\TR\colon\CycSp_p^-\rightarrow\TCart_p^-$ is fully faithful,
    the adjoint $\TR(X)/V\rightarrow X$ is an equivalence, so $X\we 0$.
\end{proof}

By~\cite{hesselholt-madsen-1}*{Theorem~F}, which says that $\pi_0\TR(k)\iso
W(k)$ when $k$ is a commutative ring, and using Theorem~\ref{mt:td}, we obtain
the following corollary.

\begin{maincorollary}[see Theorem~\ref{thm:pi0}]\label{mt:pi0}
    For any commutative ring $k$, $\pi_0^\cyc\THH(k)\we W(k)$. Moreover,
    $\CycSp_{\THH(k)}^\heart$ is equivalent to the abelian category of derived
    $V$-complete $W(k)$-modules in $p$-typical Cartier modules.
\end{maincorollary}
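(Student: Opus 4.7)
The plan is to combine Theorem~\ref{mt:td} with the Hesselholt--Madsen calculation of $\pi_0\TR$, and then to invoke the standard formalism of $t$-structures on module $\infty$-categories to identify the heart of $\CycSp_{\THH(k)}$.

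For the first assertion, I would apply Theorem~\ref{mt:td} to $X=\THH(k)$. That theorem identifies the cyclotomic homotopy groups $\pi_i^\cyc\THH(k)$ with $\pi_i\TR(k)$ equipped with the operators $V$ and $F$ coming from the transfer and inclusion of $C_p$-fixed points. Specializing to $i=0$ and citing~\cite{hesselholt-madsen-1}*{Theorem~F}, which identifies $\pi_0\TR(k)\cong W(k)$ compatibly with $V$ and $F$ (the Witt-vector Verschiebung and Frobenius), yields $\pi_0^\cyc\THH(k)\we W(k)$ as $p$-typical Cartier modules.

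For the second assertion, note that $\THH(k)\in(\CycSp_p)_{\geq 0}$ since $\THH(k)$ is connective as a spectrum and $(\CycSp_p)_{\geq 0}$ is closed under colimits and extensions. The $t$-structure on $\CycSp_{\THH(k)}=\Mod_{\THH(k)}(\CycSp_p)$ whose connective part is $\Mod_{\THH(k)}((\CycSp_p)_{\geq 0})$ therefore has, by the general principle that the heart of modules over a connective algebra is modules over $\pi_0$ in the ambient heart, its heart equivalent to the abelian category of $\pi_0^\cyc\THH(k)$-modules in $\CycSp_p^\heart$. Combining with Theorem~\ref{mt:t}, which identifies $\CycSp_p^\heart$ with derived $V$-complete $p$-typical Cartier modules, and with the first assertion, the heart is the category of $W(k)$-modules in derived $V$-complete $p$-typical Cartier modules. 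The Frobenius-semilinearity of $F$ and the analogous relation for $V$ are automatic consequences of the module structure being internal to the heart.

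The main obstacle is verifying that the cyclotomic $t$-structure is compatible with the symmetric monoidal structure on $\CycSp_p$, i.e.\ that $(\CycSp_p)_{\geq 0}$ is closed under the smash product of cyclotomic spectra and contains the unit. This compatibility is what makes the induced $t$-structure on $\Mod_{\THH(k)}(\CycSp_p)$ well-behaved and forces multiplication by elements of $\pi_0^\cyc\THH(k)$ to preserve the heart. A minor secondary point to check is that the equivalence $\pi_0\TR(k)\cong W(k)$ is compatible not just with $V$ and $F$ but also with the multiplicative structure, so that the $W(k)$-action on $\pi_0^\cyc X$ coming from the $\THH(k)$-module structure agrees with the Cartier-module-theoretic $W(k)$-action; this, however, follows from the multiplicativity of the Hesselholt--Madsen isomorphism.
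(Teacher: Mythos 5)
Your first assertion is proved exactly as the paper does it: Theorem~\ref{mt:td} identifies $\pi_0^\cyc\THH(k)$ with $\pi_0\TR(k)$ together with its $V$ and $F$ operators, and Hesselholt--Madsen's Theorem~F gives $\pi_0\TR(k)\iso W(k)$ compatibly with Verschiebung and Frobenius; this is the route taken in Theorem~\ref{thm:pi0} (which in addition reduces arbitrary connective $\EE_1$-rings to $\pi_0$, a refinement you do not need for a discrete $k$). Your use of the general fact that the heart of modules over a connective algebra is modules over $\pi_0$ in the ambient heart is also exactly Proposition~\ref{prop:permanence}(3), and the monoidal compatibility you flag as ``the main obstacle'' is Theorem~\ref{thm:cyclotomict} together with Corollary~\ref{cor_symmheart}, which is indeed an input.

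There is, however, a genuine gap in the second assertion. What your argument yields is that $\CycSp_{\THH(k)}^\heart$ is the category of module objects over the commutative algebra object $\pi_0^\cyc\THH(k)\we W(k)$ in $\bigl(\CycSp_p^\heart,\otimes^\heart\bigr)$, where $\otimes^\heart$ is the \emph{abstractly induced} tensor product on the heart. The statement of the corollary, by contrast, refers to $W(k)$-modules in $p$-typical Cartier modules in the algebraic sense, i.e.\ module objects for the tensor product $\widehat{\boxtimes}$ on derived $V$-complete objects of $\Cart_p$, which unwind (Lemma~\ref{lemma_dieudonne}) to $W(k)$-modules with $F$ semilinear for the Witt Frobenius and $V$ semilinear for its inverse. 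Your claim that these semilinearity relations are ``automatic consequences of the module structure being internal to the heart'' is not justified: without knowing what $\otimes^\heart$ is concretely, a module over the algebra object $W(k)$ has no concrete description at all. Closing this gap is precisely the content of Theorem~\ref{mainsymm}, which rests on the explicit computation of the induced tensor product on the heart in Corollary~\ref{tensor_heart} (via the free and completed Cartier module constructions $M[V]$ and $M[[V]]$ and the cofiber sequence $(M_{hC_p})[[V]]\to M[[V]]\to M$) and on the purely algebraic construction of $\widehat{\boxtimes}$ in Section~\ref{sec:symmonDieu}. The paper's proof of the corollary cites exactly this: Theorem~\ref{mainsymm}, Theorem~\ref{thm:pi0}, and Proposition~\ref{prop:permanence}. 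Your secondary worry about multiplicativity of the Hesselholt--Madsen isomorphism is subsumed in the unit identification inside Theorem~\ref{mainsymm}; the missing ingredient is the identification of the tensor product itself, not just of the unit.
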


\begin{mainexample}\label{mt:homotopy groups}
    The cyclotomic homotopy groups $\pi_i^\cyc X$ of a cyclotomic spectrum $X$
    are given by derived $V$-complete $p$-typical  Cartier modules.
    In particular, any derived $V$-complete $p$-typical  Cartier
    module $M$ has an underlying cyclotomic spectrum $M/V$. Its homotopy groups
    are computed using the cofiber sequence $M_{hC_p}\xrightarrow{V} M\rightarrow
    M/V$. They are given in Figure~\ref{fig:homotopygroups}.
\end{mainexample}

\begin{figure}[h]
    \centering
        $$\pi_iM/V\iso\begin{cases}
            \coker(M\xrightarrow{V}M)&\text{if $i=0$,}\\
            \ker(M\xrightarrow{V}M)&\text{if $i=1$,}\\
            \coker(M\xrightarrow{p}M)&\text{if $i\geq 2$ is even, and}\\
            \ker(M\xrightarrow{p}M)&\text{if $i\geq 2$ is odd.}
        \end{cases}$$
        \caption{This table gives the homotopy groups of the cyclotomic spectrum $M/V$ if $M$ is a $p$-typical Cartier module (i.e., an object of
        $\TCart_p^\heart$).}
    \label{fig:homotopygroups}
\end{figure}

\begin{mainexample}\label{mt:pdivisiblegroups}
    The theory of $p$-typical Cartier modules arises in the study of
    commutative formal groups over commutative $\ZZ_{(p)}$-algebras
    (see~\cite{zink}). We describe briefly the connection in characteristic
    $p$. Let $k$ be a perfect field of characteristic $p$. Because $VF=p$ on
    $W(k)$, one finds that $VF=p$ on any $W(k)$-module in
    $\TCart_p^\heart$ (see Remark~\ref{rem:dieudonne}). A Dieudonn\'e module is
    an abelian group $M$ with endomorphisms $F$ and $V$ such that $FV=VF=p$.
    Let $G$ be a finite flat group scheme and let $\M(G)$ denote the covariant
    Dieudonn\'e module (the dual of the construction
    in~\cite{demazure}*{Chapter~III}). We find that if $G$ is infinitesimal
    (like $\alpha_p$) or multiplicative (like $\mu_p$), then $V$ is nilpotent
    and thus $\M(G)$ is derived $V$-complete. Similarly, if $G$ is a
    formal $p$-divisible group, then $\M(G)$ is a derived $V$-complete
    Dieudonn\'e module. In particular, we obtain a functor $$\{\text{formal
    $p$-divisible groups over $k$}\}\rightarrow\CycSp_{\THH(k)}^\heart.$$
    We will see in~\cite{an2} that this functor induces an equivalence between
    the bounded derived $\infty$-category of isogeny
    classes of $p$-divisible groups and isogeny classes of dualizable
    cyclotomic spectra over perfect fields.
\end{mainexample}

The $t$-structures on $\TCart_p$ and $\CycSp_p$ are compatible with the natural
symmetric monoidal structures, so we obtain induced symmetric monoidal structures
on the abelian categories of $p$-typical Cartier modules and derived
$V$-complete $p$-typical Cartier modules, respectively.
When $k$ is a perfect field of characteristic $p$, Goerss~\cite{goerss-hopf} had previously constructed
a symmetric monoidal structure on $W(k)$-modules in $p$-typical Cartier modules. We
prove in Section~\ref{sec:monoidal} that our symmetric monoidal structure
agrees with his in this case. In future work we will study these symmetric
monoidal structures more closely and deduce generalizations of the HKR theorem
(Theorem \ref{mt:derhamwitt}) to non-perfect rings.

\begin{mainremark}
In Section~\ref{sec:genuine}, we show that
$p$-typical topological Cartier modules are to genuine fixed points as genuine
cyclotomic spectra are to geometric fixed points. Let $\bT\Sp_p^\gen$ denote
the $\infty$-category of genuine $S^1$-spectra with respect to the finite
$p$-subgroups of $S^1$. The $\infty$-category $\TCart_p$ is equivalent to
$\mathrm{Fix}_{(-)^{C_p}}(\bT\Sp_p^\gen)$, the $\infty$-category of fixed points for
the endofunctor $(-)^{C_p}$ of $\bT\Sp_p^\gen$. In particular, an equivalent
way of defining an object of $\TCart_p$ is to give a genuine $S^1$-spectrum
$M\in\bT\Sp_p^\gen$ together with an equivalence $M^{C_p}\we M$ of genuine
$S^1$-spectra.
\end{mainremark}

\paragraph{Outline.}In Section~\ref{sec:cyct}, we prove the existence of the $t$-structure of
Theorem~\ref{mt:t}. We prove some basic, but important, properties of the
cyclotomic $t$-structure. Section~\ref{sec:tcm} introduces $p$-typical topological
Cartier modules and establishes Theorem~\ref{mt:td}. In
Section~\ref{sec:monoidal}, we compare the natural symmetric monoidal structure
on $\CycSp_p^\heart$ with the monoidal structure previously constructed by
Goerss~\cite{goerss-hopf} on $W(k)$-modules in $p$-typical Cartier modules when
$k$ is a perfect field of characteristic $p$. We study some genuine
equivariant homotopy-theoretic aspects of the story in Section~\ref{sec:genuine}.
Finally, Section~\ref{sec:schemes} contains our
applications to $\THH$ of rings and schemes. Appendix~\ref{sec:tbackground} gives some
background on $t$-structures.

\paragraph{Conventions.} We will freely use the theory of $\infty$-categories
developed by Lurie in~\cite{htt,ha,sag}. Unless otherwise mentioned, we work with cyclotomic
spectra as studied in~\cite{nikolaus-scholze}. We will make one important
deviation from the notation in {\em op.\ cit}. Namely, a $p$-typical cyclotomic spectrum
will be a spectrum $X\in\Sp^{BS^1}$ with $S^1$-action equipped with an
$S^1$-equivariant map $\varphi_p\colon X\rightarrow
X^{tC_p}$, where $X^{tC_p}$ carries the residual $S^1/C_p\iso S^1$-action. See
Remark~\ref{rem:tc} for more about this choice.
We write $\CycSp_p$ for the stable $\infty$-category of $p$-typical cyclotomic spectra.

\paragraph{Notation.} Let $\Cscr$ be an $\infty$-category with objects $x,y\in\Cscr$. We
will write $\Map_\Cscr(x,y)$ for the $\infty$-groupoid (space) of maps from $x$ to $y$ in
$\Cscr$. If $\Cscr$ is stable, we will write $\MapSp_\Cscr(x,y)$ for the mapping
\emph{spectrum} from $x$ to $y$. Given an $\EE_\infty$-ring spectrum $R$, let
$\Dscr(R)\we\Mod_R(\Sp)$ denote the stable $\infty$-category of $R$-module spectra.
If $R$ is connective, then we equip $\Dscr(R)$ with the canonical Postnikov
$t$-structure, where $\Dscr(R)^\heart\we\Mod_{\pi_0R}$, the abelian category of
$\pi_0R$-modules. Unless specified otherwise, all limits, colimits, and tensor
products are computed in $\Dscr(R)$. This convention holds even for discrete rings; thus,
limits and colimits of abelian groups are computed in $\Dscr(\ZZ)$ as opposed
to $\Mod_\ZZ$: quotients are given by cofibers, limits by derived
limits, and the tensor product by the derived tensor product. For example, if
$M$ is an abelian group, then $M/p$ is the cofiber of $M\xrightarrow{p}M$.
Hence, $M/p$ is an object of $\Dscr(\ZZ)$ with $\pi_iM/p=0$ for $i\neq 0,1$,
$\pi_1M/p\iso\ker(M\xrightarrow{p}M)\iso\Tor_1^\ZZ(M,\ZZ/p)$,
and $\pi_0M/p\iso\coker(M\xrightarrow{p}M)\iso\Tor_0^\ZZ(M,\ZZ/p)$. Note that the objects of
$\Dscr(\ZZ)$ can be modeled by either chain complexes or $\ZZ$-module spectra.
We will typically write $\pi_*M$ for the homotopy groups of $M$ viewed as a
spectrum; these are isomorphic to the homology groups $\H_*M$ when $M$ is
viewed as a chain complex.

\paragraph{Acknowledgments.} We would like to thank Bhargav Bhatt, Alice Hedenlund, Lars
Hesselholt, Marc Hoyois, Achim Krause, Akhil Mathew, Irakli Patchkoria, Nick Rozenblyum, Peter Scholze, and Jay
Shah for their helpful insights about the ideas presented here. We would also like
to thank the Max Planck Institute for Mathematics, the Universit\"at
M\"{u}nster, and the Isaac Newton Institute for Mathematical Sciences for support and for hospitality during various visits.
Benjamin Antieau was supported by NSF Grant DMS-1552766. Finally, we would like
to thank the anonymous referees for their very helpful comments.

\section{The cyclotomic $t$-structure}\label{sec:cyct}

In this section we define the {\bf cyclotomic $t$-structure} for integral
and $p$-typical cyclotomic spectra in their genuine and non-genuine flavors.
With some difficulty, one can prove some basic facts about truncations in the
cyclotomic $t$-structure, including for example the fact that $\THH(\FF_p)$ is
in the heart. By working instead with topological
Cartier modules, introduced in Section~\ref{sec:tcm}, such computations are
more transparent. Hence, in this section, we restrict ourselves to discussing
formal properties of the cyclotomic $t$-structure.

For the necessary background on $t$-structures, see
Appendix~\ref{sec:tbackground}.

\subsection{The cyclotomic $t$-structure}\label{sub:cyct}

There is a $t$-structure on
$\Sp^{BS^1}$, the $\infty$-category of spectra with $S^1$-action, where the connective objects are
the {\em connective} spectra with $S^1$-action. This is reviewed in  Proposition~\ref{prop:xsp}. The heart is the abelian category of
abelian groups since $BS^1$ is simply connected.

We let $\CycSp_p$ be the lax equalizer
$$\mathrm{LEq}(\id,(-)^{tC_p}\colon\Sp^{BS^1}\rightrightarrows\Sp^{BS^1}).$$ (For background
on lax equalizers, see~\cite{nikolaus-scholze}*{Section~II.1}.)
An object of $\CycSp_p$ is a spectrum $X$ with $S^1$-action and an $S^1$-equivariant map
$\varphi\colon X\rightarrow X^{tC_p}$, where $X^{tC_p}$ carries the residual $S^1\iso S^1/C_p$-action.

Let $(\CycSp_p)_{\geq 0}\subseteq\CycSp_p$ denote the full subcategory of $p$-typical cyclotomic
spectra $X$ such that the underlying spectrum is connective.
In this case, the
cyclotomic structure map $\varphi:X\rightarrow X^{tC_p}$ factors canonically through the
connective cover $\tau_{\geq 0}(X^{tC_p})$.

Recall that a $t$-structure on a stable $\infty$-category $\Cscr$ is {\bf compatible with a
symmetric monoidal structure $\Cscr^\otimes$ on $\Cscr$} if $\Cscr_{\geq
0}\subseteq\Cscr$ is closed under tensor products and the unit object of
$\Cscr$ is in $\Cscr_{\geq 0}$. The purpose of this section is to prove
the following theorem.

\begin{theorem}\label{thm:cyclotomict}
    The $\infty$-category $(\CycSp_p)_{\geq 0}$ forms the connective part of an
    accessible, left complete $t$-structure on $\CycSp_p$, which
    is compatible with the symmetric monoidal structure on $\CycSp_p$.
\end{theorem}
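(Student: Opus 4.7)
The plan is to apply the standard recognition criterion for accessible $t$-structures (see Appendix~\ref{sec:tbackground}, following~\cite{ha}*{Proposition~1.4.4.11}): given a presentable stable $\infty$-category $\Cscr$ and a presentable full subcategory $\Cscr_0 \subseteq \Cscr$ that is closed under colimits and extensions, there is a unique accessible $t$-structure on $\Cscr$ whose connective part is $\Cscr_0$. I would apply this with $\Cscr = \CycSp_p$, which is presentable stable by~\cite{nikolaus-scholze}, and $\Cscr_0 = (\CycSp_p)_{\geq 0}$.

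To verify the hypotheses, I would use the forgetful functor $U \colon \CycSp_p \to \Sp^{BS^1}$. Since $\CycSp_p$ is the lax equalizer of $\id$ and $(-)^{tC_p}$ and the left leg preserves colimits, $U$ preserves all colimits by the general description of colimits in lax equalizers in~\cite{nikolaus-scholze}. It follows that $(\CycSp_p)_{\geq 0} = U^{-1}((\Sp^{BS^1})_{\geq 0})$ is a presentable full subcategory closed under colimits. Closure under extensions follows from the corresponding closure of $(\Sp^{BS^1})_{\geq 0}$, via the long exact sequence of homotopy groups of the underlying spectrum. The recognition criterion then produces the accessible $t$-structure.

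For symmetric monoidal compatibility, the unit of $\CycSp_p$ is the $p$-typical cyclotomic sphere, whose underlying spectrum is $\SS$ and so connective. The tensor product on $\CycSp_p$ is constructed in~\cite{nikolaus-scholze} so that $U$ is symmetric monoidal; since smash products of connective objects in $\Sp^{BS^1}$ remain connective, $(\CycSp_p)_{\geq 0}$ is closed under tensor products.

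Left completeness is the delicate part. By the standard criterion (reviewed in Appendix~\ref{sec:tbackground}), it suffices to check that $\bigcap_n (\CycSp_p)_{\leq -n} = 0$ and that $(\CycSp_p)_{\geq 0}$ is closed under countable products. For the intersection, I would use that $\CycSp_p$ admits a set of bounded-below compact generators inherited from $\Sp^{BS^1}$ via the lax equalizer presentation; any $X$ in the intersection kills all shifts of such generators, and therefore $X \simeq 0$. For closure under products, the subtlety is that $U$ does not in general preserve infinite limits, because $(-)^{tC_p}$ does not commute with arbitrary products. However, when the family $(X_i, \varphi_i)$ is uniformly connective, the Tate construction commutes with the countable product: both $(-)_{hC_p}$ and $(-)^{hC_p}$ do so on uniformly bounded-below families (the former by a connectivity estimate on the bar construction, the latter because homotopy fixed points are a limit), so the comparison $(\prod X_i)^{tC_p} \to \prod X_i^{tC_p}$ is an equivalence. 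Consequently the product in $\CycSp_p$ of connective objects is computed underlying and is connective. This Tate--product interchange is the main technical obstacle; the remaining steps are formal.
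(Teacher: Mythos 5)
Most of your outline coincides with the paper's actual proof: existence and accessibility come from \cite{ha}*{1.4.4.11} applied to $(\CycSp_p)_{\geq 0}$ (the paper obtains presentability from the lax-equalizer description with $\tau_{\geq 0}(-)^{tC_p}$ in Lemma~\ref{lem:t} rather than as a preimage under the forgetful functor, but that difference is cosmetic), the monoidal compatibility argument is the same, and your key technical point --- that $(-)^{tC_p}$ commutes with products of uniformly connective spectra with $S^1$-action because $(-)^{hC_p}$ commutes with all limits while $(-)_{hC_p}$ commutes with limits of uniformly bounded below diagrams --- is exactly Lemma~\ref{lem:littlelimits}(b), which together with \cite{nikolaus-scholze}*{II.1.5(v)} shows that products of connective objects in $\CycSp_p$ are computed on underlying spectra and hence that $(\CycSp_p)_{\geq 0}$ is closed under products.

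The gap is in how you feed this into the left-completeness criterion. Under the hypothesis that $(\CycSp_p)_{\geq 0}$ is closed under countable products, \cite{ha}*{1.2.1.19} says left completeness is equivalent to the vanishing of the infinitely \emph{connective} objects $\bigcap_n(\CycSp_p)_{\geq n}$, not of the infinitely coconnective objects $\bigcap_n(\CycSp_p)_{\leq -n}$ that you propose to check; the latter is the right-separatedness condition and does not enter here. It is also not a harmless substitute: coconnectivity in the cyclotomic $t$-structure says nothing about the underlying spectrum (e.g.\ $\THH(\FF_p)$ lies in the heart but is unbounded above as a spectrum), and Proposition~\ref{prop:rightcompletion} indicates that right-completeness issues for $\CycSp_p$ are genuinely delicate. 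Moreover, your verification leans on ``bounded-below compact generators inherited from $\Sp^{BS^1}$,'' which is not available off the shelf: the forgetful functor $\CycSp_p\to\Sp^{BS^1}$ does not preserve infinite products (this is precisely why the product step needs Lemma~\ref{lem:littlelimits}), so it admits no left adjoint and there is no evident supply of free, cyclotomically bounded-below compact generators. The fix is easy and is what the paper does: an object of $\bigcap_n(\CycSp_p)_{\geq n}$ has infinitely connective, hence zero, underlying spectrum with $S^1$-action, and the forgetful functor is conservative, so the object vanishes; combined with your (correct) closure-under-products step this yields left completeness.
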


\begin{example}
    If $X$ is a connective spectrum, then $X^\triv\in(\CycSp_p)_{\geq 0}$.
    Recall from~\cite[Example II.1.2 and Section IV.4]{nikolaus-scholze} that $X^\triv$ is the cyclotomic
    spectrum with trivial $S^1$-action and cyclotomic Frobenius given by the
    composition $X\rightarrow X^{hC_p}\rightarrow X^{tC_p}$. The cyclotomic
    sphere spectrum $\SS^\triv$ is the unit object for the natural symmetric
    monoidal structure on $\CycSp_p$
    (see~\cite{nikolaus-scholze}*{Section~IV.2}).
\end{example}

\begin{example}
    If $R$ is a connective $\EE_1$-ring spectrum, then
    $\THH(R)\in(\CycSp_p)_{\geq 0}$.
\end{example}

By construction, the forgetful functor $\CycSp_p\rightarrow\Sp^{BS^1}$ is right
$t$-exact. It is not left $t$-exact or even left bounded: as we will see
$\THH(\FF_p)$ is in the heart in $\CycSp_p$ but is not bounded above when
viewed as an object of $\Sp^{BS^1}$.

\begin{variant}\label{var:t}
    The same arguments that we give to prove Theorem \ref{thm:cyclotomict} will work for $\CycSp$, the $\infty$-category of
    global cyclotomic spectra, for $\CycSp_R=\Mod_R(\CycSp)$ where $R$ is a
    connective $\EE_\infty$-algebra in cyclotomic spectra, and for
    $\Mod_R(\CycSp_p)$ when $R$ is a connective $\EE_\infty$-algebra in
    $p$-typical cyclotomic spectra.
    In all of these cases the connective part of the $t$-structure consists of
    the full subcategory of cyclotomic (or $p$-typical cyclotomic) $R$-modules $X$ for
    which the underling spectrum with $S^1$-action is connective. 
\end{variant}

\begin{remark}\label{rem:tc}
    In this paper we shall concentrate on the $p$-typical aspects of the theory.
    The integral case will be pursued in future work. For this reason, we write
    $\TC(X)$ and $\TR(X)$ for $p$-typical versions of $\TC$ and $\TR$.
    Since our definition of $p$-typical cyclotomic spectra differs slightly
    from that of~\cite{nikolaus-scholze}, note that for us, $\TC(-)$ is the theory
    representable by $\SS^\triv$ in $\CycSp_p$; hence, $\TC(X)$ is the
    equalizer of $X^{hS^1}\rightrightarrows (X^{tC_p})^{hS^1}$, where the two
    maps are given by the canonical map $X^{hS^1}\rightarrow
    X^{tS^1}\rightarrow (X^{tC_p})^{hS^1}$ and by $\varphi^{hS^1}$.
    
    We let $\TR^{n+1}(X)$ be defined as the iterated pullback
    \[
    \TR^{n+1}(X) =  X^{hC_{p^n}} \times_{(X^{tC_p})^{hC_{p^{n-1}}}}  ...  \times_{(X^{tC_p})^{hC_p}} X^{hC_p} \times_{X^{tC_p}} X 
    \]
    where the maps to the left are induced by $\varphi$ and the maps to the right are the canonical maps. When $X$ is bounded below, the methods
    of~\cite{nikolaus-scholze}*{Chapter~II} endow $X$ with the structure of an $S^1$-spectrum which is
    genuine with respect to the subgroups $C_{p^n}$ for $n\geq 0$ and $\TR^{n+1}(X)\we
    X^{C_{p^n}}$. There are natural maps
    $R\colon\TR^{n+1}(X)\rightarrow\TR^n(X)$ given by forgetting the first factor in the above pullback and $\TR(X)\we\lim\TR^{n+1}(X)$. In general,
    there is a map $\TC(X)\rightarrow\TR(X)$ which induces a $p$-adic equivalence
    $\TC(X)\rightarrow\fib(\TR(X)\xrightarrow{1-F}\TR(X))$.
\end{remark}

\begin{warning}
    Note that $\TC(X,p)$ is typically defined as $\fib(\TR(X)\xrightarrow{1-F}\TR(X))$, so
    our definition agrees with $\TC(X,p)$ only after $p$-completion. Rather,
    $\fib(\TR(X)\xrightarrow{1-F}\TR(X))$ is equivalent to the equalizer of the canonical
    and Frobenius maps $X^{hC_{p^\infty}}\rightrightarrows (X^{tC_p})^{hC_{p^\infty}}$.
\end{warning}

\begin{definition}
    For an object $X\in\CycSp_p$, we will write $\pi_i^\cyc X\in\CycSp_p^\heart$ for
    the $i$th homotopy object of $X$. Note that this is a cyclotomic spectrum
    and hence has an underlying spectrum with $S^1$-action. The homotopy groups
    of this underlying spectrum are $\pi_*(\pi_i^\cyc X)$.
\end{definition}

\begin{example}
    If $X\in\CycSp_p$, a necessary condition for
    $X\in(\CycSp_p)_{\leq 0}$ is that
    $\pi_i\TC(X)=0$ for $i>0$ since $\SS^\triv\in(\CycSp_p)_{\geq 0}$ and
    $\TC(X)$ is the mapping spectrum from $\SS^\triv$ to $X$. Thus, $\ZZ^\triv$ is not in $\CycSp^\heart$, because
    $\pi_i\TC(\ZZ^\triv)\iso\ZZ_p$ for all positive odd $i$.
\end{example}

\begin{warning}
    This $t$-structure is not compatible with filtered colimits. In particular,
    the heart $\CycSp^\heart_p$ is not closed under filtered colimits in
    $(\CycSp_p)_{\geq 0}$. See Example~\ref{ex:filteredcolimits} for details.
\end{warning}

\begin{lemma}\label{lem:t}
    The following diagram
    \begin{equation*}
        \xymatrix{
        (\Sp^{BS^1})_{\geq 0}\ar@<.5ex>[rrr]^{\id}\ar@<-.5ex>[rrr]_{\tau_{\geq
        0}(-)^{tC_p}}\ar[d]&&&(\Sp^{BS^1})_{\geq 0}\ar[d]\\
        \Sp^{BS^1}\ar@<.5ex>[rrr]^{\id}\ar@<-.5ex>[rrr]_{(-)^{tC_p}}&&&\Sp^{BS^1}
        }
    \end{equation*}
    induces via the natural transformations of functors
    $\tau_{\geq 0}(-)^{tC_p}\rightarrow(-)^{tC_p}$ an equivalence $$\LEq\left(\id,\tau_{\geq
    0}(-)^{tC_p}\colon(\Sp^{BS^1})_{\geq 0}\rightrightarrows(\Sp^{BS^1})_{\geq 0}\right)\we(\CycSp_p)_{\geq 0}.$$
    In particular, $(\CycSp_p)_{\geq 0}$ is presentable and the inclusion functor
    $(\CycSp_p)_{\geq 0}\rightarrow\CycSp_p$ preserves colimits.
\end{lemma}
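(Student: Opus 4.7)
The plan is to exhibit the claimed equivalence of lax equalizers pointwise, using that $(\Sp^{BS^1})_{\geq 0}$ is coreflective in $\Sp^{BS^1}$. Unraveling definitions, an object of $(\CycSp_p)_{\geq 0}$ is a pair $(X,\varphi)$ with $X\in(\Sp^{BS^1})_{\geq 0}$ and $\varphi\colon X\to X^{tC_p}$ an $S^1$-equivariant map, while an object of the lax equalizer on the left is a pair $(X,\psi)$ with the same $X$ and $\psi\colon X\to\tau_{\geq 0}(X^{tC_p})$. Because $\tau_{\geq 0}$ is right adjoint to the inclusion $(\Sp^{BS^1})_{\geq 0}\hookrightarrow\Sp^{BS^1}$, for any connective $X$ the comparison map
$$\Map_{(\Sp^{BS^1})_{\geq 0}}(X,\tau_{\geq 0}(X^{tC_p}))\longrightarrow\Map_{\Sp^{BS^1}}(X,X^{tC_p})$$
is an equivalence. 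Postcomposition with the counit $\tau_{\geq 0}(X^{tC_p})\to X^{tC_p}$ then defines a functor from the left lax equalizer to $(\CycSp_p)_{\geq 0}$, and the same adjunction identifies the mapping spaces used to build morphisms in each lax equalizer, proving the functor fully faithful. Essential surjectivity is immediate from the bijection on objects, so the functor is an equivalence.

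Granting the equivalence, I would deduce presentability by invoking the general fact, from \cite{nikolaus-scholze}*{Section~II.1}, that a lax equalizer of accessible endofunctors of a presentable $\infty$-category is presentable; here $(\Sp^{BS^1})_{\geq 0}$ is presentable and both $\id$ and $\tau_{\geq 0}(-)^{tC_p}$ are accessible, the latter as a composite of the accessible functors $(-)^{tC_p}$ and $\tau_{\geq 0}$. For the colimit statement, the first functor in both lax equalizers is the identity, which preserves all colimits, so in each lax equalizer a colimit is created by the forgetful functor to the underlying $\infty$-category of spectra with $S^1$-action. Combined with the fact that the inclusion $(\Sp^{BS^1})_{\geq 0}\hookrightarrow\Sp^{BS^1}$ preserves colimits, this forces the inclusion $(\CycSp_p)_{\geq 0}\hookrightarrow\CycSp_p$ to preserve colimits.

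The only subtlety I anticipate is verifying that the coreflective identification of mapping spaces is natural enough to upgrade from a bijection on objects to an equivalence of lax $\infty$-categories; concretely, one must check that the homotopy datum witnessing commutativity of a morphism in one lax equalizer corresponds, under the adjunction, to the analogous datum in the other. This is automatic from naturality of the counit $\tau_{\geq 0}Y\to Y$ in $Y$, applied functorially to $Y=X^{tC_p}$, so the argument is essentially formal once the mapping-space identification is in hand.
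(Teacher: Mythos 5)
Your proposal is correct and follows essentially the same route as the paper: the equivalence comes from the observation that, for connective $X$, a map $X\rightarrow X^{tC_p}$ is the same datum as its factorization through $\tau_{\geq 0}(X^{tC_p})$, presentability is quoted from the lax-equalizer result of \cite{nikolaus-scholze}*{II.1.5}, and the colimit statement follows because the forgetful functors from both lax equalizers create colimits (the first functor being the identity) while $(\Sp^{BS^1})_{\geq 0}\subseteq\Sp^{BS^1}$ is closed under colimits. The paper states the equivalence just as tersely as you do, so your extra care about naturality of the counit is a welcome but not divergent elaboration.
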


\begin{proof}
    The equivalence is clear given that for a connective cyclotomic spectrum
    $X$ the cyclotomic structure map determines and is determined by its
    factorization through the connective cover. We
    can appeal to~\cite{nikolaus-scholze}*{II.1.5(3)} which gives
    presentability (since $\tau_{\geq 0}(-)^{tC_p}$ is accessible). Now, both
    forgetful functors $(\CycSp_p)_{\geq 0}\rightarrow(\Sp^{BS^1})_{\geq 0}$
    and $\CycSp_p\rightarrow\Sp^{BS^1}$ preserve and detect colimits. Since
    $(\Sp^{BS^1})_{\geq 0}\rightarrow\Sp^{BS^1}$ is closed under colimits, the
    claim about preservation of colimits follows.
\end{proof}

\begin{proof}[Proof of Theorem~\ref{thm:cyclotomict}]
    By definition, $(\CycSp_p)_{\geq 0}$ is closed under extensions in
    $\CycSp_p$. Combined with Lemma~\ref{lem:t},
    it follows from~\cite{ha}*{1.4.4.11} that
    there exists a unique $t$-structure $((\CycSp_p)_{\geq
    0},(\CycSp_p)_{\leq 0})$ on $\CycSp_p$. The cyclotomic $t$-structure is accessible because $(\CycSp_p)_{\geq
    0}$ is presentable by Lemma~\ref{lem:t}. 

    It is clear that $\CycSp_p$ is left separated: an object in $$\bigcap_{n\in\ZZ}(\CycSp_p)_{\geq
    n}$$ has contractible underlying $S^1$-spectrum. We check that it is left
    complete by showing that $(\CycSp_p)_{\geq 0}$ is closed under countable
    products in $\CycSp_p$ and applying~\cite{ha}*{1.2.1.19}, which says that
    under this hypothesis left completeness is equivalent to left
    separatedness. In fact, we will show that $(\CycSp_p)_{\geq 0}$ is closed under
    all products in $\CycSp_p$.

    We already know that $(\Sp^{BS^1})_{\geq 0}$ is closed under
    products in $\Sp^{BS^1}$ by Proposition~\ref{prop:xsp}.
    Thus, to conclude, it is enough to show that the functor $\CycSp\rightarrow\Sp^{BS^1}$
    commutes with products of connective objects.
    By~\cite{nikolaus-scholze}*{II.1.5(v)}, it is enough to see that for
    a product $\prod_{i\in I}X_i$ of connective spectra with
    $S^1$-action the natural map $$\left(\prod_i X_i\right)^{tC_p}\rightarrow\prod_i
    X_i^{tC_p}$$ is an equivalence. This follows from
    Lemma~\ref{lem:littlelimits} below.
    
    To see that the cyclotomic $t$-structure is compatible with the symmetric
    monoidal structure on $\CycSp_p$ we simply note that
    $\SS^\triv\in(\CycSp_p)_{\geq 0}$ and that $(\CycSp_p)_{\geq 0}$ is
    closed under the tensor product, which follows from the fact that the
    tensor product of two connective spectra with $S^1$-action is again
    connective. This completes the proof.
\end{proof}

We used the following lemma in the proof.

\begin{lemma}\label{lem:littlelimits}
    Suppose that $F\colon I\rightarrow\Sp^{BS^1}$ is an $I$-diagram in spectra
    with $S^1$-action with limit $X=\lim_iF(i)$.
    \begin{enumerate}
        \item[{\rm (a)}] If $I=\ZZ^{\op}$, so that $X$ is the limit of the
            tower $\cdots\rightarrow F(i+1)\rightarrow F(i)\rightarrow\cdots$, and if the
            fiber of $X\rightarrow F(i)$ is $n_i$-connective where
            $n_i\rightarrow\infty$ as $i\rightarrow\infty$, then $X_{hC_p}\we\lim_i\left( F(i)_{hC_p}\right)$ and
            $X^{tC_p}\we\lim_i\left(F(i)^{tC_p}\right)$.
        \item[{\rm (b)}] If  there exists $d$ such that
            $\lim_I\colon\Sp^I\rightarrow\Sp$ sends $\Sp^I_{\geq 0}$ to
            $\Sp_{\geq -d}$ and there exists $N$ such that
            each $F(i)$ is $N$-connective, then $X_{hC_p}\we\lim_i\left(F(i)_{hC_p}\right)$ and
            $X^{tC_p}\we\lim_i\left(F(i)^{tC_p}\right)$.
    \end{enumerate}
\end{lemma}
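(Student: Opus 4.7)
The plan is to deduce both claims from the corresponding statement for $(-)_{hC_p}$, and then to transport the result to $(-)^{tC_p}$ using the functorial fiber sequence $Y_{hC_p}\to Y^{hC_p}\to Y^{tC_p}$ in $\Sp^{BS^1}$. The functor $(-)^{hC_p}$ is itself a limit, so it preserves all limits on the nose. Thus, if I can show $X_{hC_p}\simeq\lim_i F(i)_{hC_p}$, then the fiber sequence obtained by applying $\lim_i$ termwise identifies $X^{tC_p}\simeq\lim_i F(i)^{tC_p}$, since fiber sequences in a stable $\infty$-category are preserved by limits.

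For part (a) I would argue directly on homotopy groups via a Milnor $\lim^1$ sequence. Since $(-)_{hC_p}$ is exact and preserves connectivity (as a colimit of connective spectra is connective), applying it to the fiber sequence $\fib(X\to F(i))\to X\to F(i)$ produces a fiber sequence whose first term is $n_i$-connective. For any fixed $k$, choosing $i_0$ with $n_i>k+1$ for $i\geq i_0$ makes the map $X_{hC_p}\to F(i)_{hC_p}$ an isomorphism on $\pi_k$ and $\pi_{k+1}$. The Milnor exact sequence
$$0\to{\lim}^1_i\pi_{k+1}F(i)_{hC_p}\to\pi_k\lim_i F(i)_{hC_p}\to\lim_i\pi_k F(i)_{hC_p}\to 0$$
then collapses, identifying $\pi_k\lim_i F(i)_{hC_p}$ with $\pi_k X_{hC_p}$ via the canonical map. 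Hence $X_{hC_p}\simeq\lim_i F(i)_{hC_p}$ and the general reduction finishes (a).

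For part (b) I plan to reduce to uniformly bounded diagrams via Postnikov truncation. After shifting assume $N=0$. For each $m$, the fiber sequence $\tau_{\geq m+1}F(i)\to F(i)\to\tau_{\leq m}F(i)$ consists of $(m+1)$-connective fibers, and the amplitude hypothesis on $\lim_I$ shows that the fibers of both $X_{hC_p}\to(\lim_i\tau_{\leq m}F(i))_{hC_p}$ and $\lim_i F(i)_{hC_p}\to\lim_i(\tau_{\leq m}F(i))_{hC_p}$ are $(m+1-d)$-connective, using connectivity-preservation of $(-)_{hC_p}$. Thus, to establish the equivalence on $\pi_k$ it suffices, by taking $m$ sufficiently large relative to $k$ and $d$, to prove it for the uniformly bounded diagram $G(i)=\tau_{\leq m}F(i)$ in $\Sp^{BS^1}_{[0,m]}$. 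On such a bounded diagram I would invoke the group-homology spectral sequence $\H_s(C_p,\pi_t Y)\Rightarrow\pi_{s+t}Y_{hC_p}$: only finitely many $(s,t)$ contribute to a given target degree, and $\H_s(C_p,-)$ commutes with $\lim_I$ on diagrams of abelian groups, since it can be computed from a resolution of $\ZZ$ by finite free $\ZZ[C_p]$-modules, whose terms are dualizable and hence transport limits through the tensor product.

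The main obstacle is the bounded-diagram step in part (b): the Postnikov reduction must be compatible across both sides of the putative equivalence, and the spectral-sequence argument exchanging $\H_s(C_p,-)$ past $\lim_I$ must be organized so that the collapses in the bounded range assemble into a single equivalence on $\pi_k$. Once this bookkeeping is handled, the preservation under $(-)^{tC_p}$ in both parts follows from the norm fiber sequence as in the reduction above.
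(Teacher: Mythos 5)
Your reduction of the Tate statement to the homotopy-orbit statement (via the norm fiber sequence and the fact that $(-)^{hC_p}$ preserves limits) is exactly the paper's first step, and your Milnor-sequence argument for (a) is essentially the paper's argument: there one notes that the fiber of $X_{hC_p}\to F(i)_{hC_p}$ is $n_i$-connective by right $t$-exactness of homotopy orbits, so the limit of the fibers vanishes. The Postnikov reduction at the start of your part (b), using the amplitude hypothesis to make the fibers $(m+1-d)$-connective, is also fine.

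The gap is in the final, bounded-diagram step of (b). You assert that $\H_s(C_p,-)$ commutes with $\lim_I$ on diagrams of abelian groups because the standard resolution of $\ZZ$ has finite free $\ZZ[C_p]$-module terms. Dualizability only gives that the chain-level functor $A\mapsto A\otimes_{\ZZ[C_p]}P_\bullet$ commutes with limits; passing to homology does not commute with $\lim_I$, because $\lim_I$ on abelian groups is not exact: the derived functors $\lim^s_I$ for $0\leq s\leq d$ intervene (already $\lim^1$ when $I=\ZZ^{\op}$). For the same reason $\pi_t\bigl(\lim_I G(i)\bigr)$ is not $\lim_I\pi_t G(i)$ in general, so even the identification of the $\E_2$-page of the homotopy-orbit spectral sequence for $(\lim_I G)_{hC_p}$ that your comparison presupposes is unavailable. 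Note that the hypothesis of (b) allows an arbitrary small index category $I$ with the amplitude bound $d$; only for products ($d=0$, where limits in $\Ab$ are exact and homotopy groups commute with products) does your exchange hold as stated, and the case $I=\ZZ^{\op}$ already requires tracking $\lim^1$-contributions on both sides, which your bookkeeping does not do. The paper sidesteps all of this: for uniformly bounded below objects, $\tau_{\leq n}(-)_{hC_p}$ is computed by the colimit over a finite skeleton of $BC_p$, i.e., by a finite colimit of exact functors, and finite colimits commute with all limits in a stable $\infty$-category; the amplitude hypothesis controls the connectivity of the error of the skeletal approximation after applying $\lim_I$, and one concludes by taking the limit over $n$, both sides being bounded below and hence Postnikov complete. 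Replacing your spectral-sequence step by this finite-skeleton argument repairs the proof; as written, the step is not valid.
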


\begin{example}
    If $I=\ZZ^{\op}$ so that $X$ is a sequential limit, then (b) applies since
    $d=1$. If $I$ is discrete, so that $X$ is a product, then (b) applies since
    $d=0$.
\end{example}

\begin{proof}[Proof of Lemma~\ref{lem:littlelimits}]
    Each statement for the Tate construction follows from the corresponding statement for
    homotopy orbits using the fiber sequence $X_{hC_p}\rightarrow
    X^{hC_p}\rightarrow X^{tC_p}$ and the fact that $X^{hC_p}$ commutes with
    all limits of spectra with $C_p$-action.

    The proof of (a) is the same as the proof
    of~\cite{nikolaus-scholze}*{Lemma~I.2.6}. It is easy enough to repeat here:
    the fiber of $X_{hC_p}\rightarrow F(i)_{hC_p}$ is $n_i$-connective since
    taking homotopy orbits is right $t$-exact. Thus, the limit of the fibers
    vanishes.

    For (b), consider for each $n$ the map $\tau_{\leq
    n}X_{hC_p}\rightarrow\lim_I\tau_{\leq n}F(i)_{hC_p}$. The functor
    $\tau_{\leq n}(-)_{hC_p}$ is computed as a colimit over the skeleton of
    $BC_p$ on uniformly bounded below objects, so it commutes
    with $I$-limits of uniformly bounded below objects. Hence, $\lim_I\tau_{\leq
    n}F(i)_{hC_p}\we(\tau_{\leq n}(\lim_IF(i))_{hC_p})\we\lim\tau_{\leq
    n}X_{hC_p}$. Taking the limit over $n$ on both sides, we obtain the desired
    equivalence.
\end{proof}

\begin{remark}
    The heart $\CycSp_p^\heart$ is by
    definition $(\CycSp_p)_{\geq 0}\cap(\CycSp_p)_{\leq 0}$ and it is an abelian category
    (by~\cite{bbd}*{Th\'eor\`eme~1.3.6}). In fact,
    $\CycSp_p^\heart\subseteq (\CycSp_p)_{\geq 0}$ is the full subcategory of
    $0$-truncated objects (see~\cite{ha}*{1.2.1.9}), so it is
    presentable because $(\CycSp_p)_{\geq 0}$ is presentable. Truncation
    $\tau_{\leq 0}\colon\Cscr_{\geq
    0}\rightarrow\Cscr^\heart$ gives a left adjoint to the
    inclusion. See~\cite{htt}*{5.5.6.21} for details.
\end{remark}

Because $(\CycSp_p)_{\geq 0}$ is closed under the tensor product in
$\CycSp_p$, and since the unit $\SS^\triv$ of $\CycSp_p$ is connective,
$(\CycSp_p)_{\geq 0}$ naturally inherits a symmetric monoidal structure such that the inclusion
$(\CycSp_p)_{\geq 0}\rightarrow\CycSp_p$ is symmetric monoidal.
It is easy to see that $\CycSp_p^\heart$ inherits a symmetric monoidal structure
from $(\CycSp_p)_{\geq 0}$. For details, see~\ref{sub:compatible}.

\begin{corollary}\label{cor_symmheart}
    The abelian category $\CycSp_p^\heart$ inherits a symmetric monoidal
    structure $\otimes^\heart$ from $(\CycSp_p)_{\geq 0}$ such that the localization functor
    $\pi_0^\cyc:(\CycSp_p)_{\geq 0}\rightarrow\CycSp_p^\heart$ is symmetric monoidal.
    Moreover, $\otimes^\heart$ is compatible with colimits in each variable.
\end{corollary}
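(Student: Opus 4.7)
The plan is to realize $\CycSp_p^\heart$ as a symmetric monoidal Bousfield localization of $(\CycSp_p)_{\geq 0}$ via the standard machinery of \cite{ha}*{Proposition~2.2.1.9}. The compatibility of the cyclotomic $t$-structure with $\otimes$, already established in Theorem~\ref{thm:cyclotomict}, endows $(\CycSp_p)_{\geq 0}$ with a symmetric monoidal structure for which the inclusion $(\CycSp_p)_{\geq 0}\hookrightarrow\CycSp_p$ is symmetric monoidal; the tensor product on $(\CycSp_p)_{\geq 0}$ preserves colimits in each variable because the one on $\CycSp_p$ does and because the inclusion preserves colimits by Lemma~\ref{lem:t}.

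Second, I would observe that $\pi_0^\cyc=\tau_{\leq 0}\colon (\CycSp_p)_{\geq 0}\rightarrow \CycSp_p^\heart$ is left adjoint to the fully faithful inclusion, hence a Bousfield localization. A morphism $f$ in $(\CycSp_p)_{\geq 0}$ is a $\pi_0^\cyc$-equivalence if and only if $\cofib(f)\in(\CycSp_p)_{\geq 1}$. To invoke the localization criterion I must verify that for every $Z\in(\CycSp_p)_{\geq 0}$ the functor $Z\otimes(-)$ preserves $\pi_0^\cyc$-equivalences; since $\cofib(Z\otimes f)\simeq Z\otimes\cofib(f)$, this reduces to showing that $(\CycSp_p)_{\geq 0}\otimes(\CycSp_p)_{\geq 1}\subseteq(\CycSp_p)_{\geq 1}$. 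Writing any $W\in(\CycSp_p)_{\geq 1}$ as $W'[1]$ with $W'\in(\CycSp_p)_{\geq 0}$, one has $Z\otimes W\simeq(Z\otimes W')[1]$, and compatibility gives $Z\otimes W'\in(\CycSp_p)_{\geq 0}$, completing the verification.

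With this hypothesis in hand, \cite{ha}*{Proposition~2.2.1.9} yields a unique symmetric monoidal structure on $\CycSp_p^\heart$ such that $\pi_0^\cyc$ refines to a symmetric monoidal functor, with underlying tensor product given by $X\otimes^\heart Y\simeq\pi_0^\cyc(X\otimes Y)$. For the compatibility with colimits, note that $\pi_0^\cyc$ preserves colimits (being a left adjoint), that $\otimes$ on $(\CycSp_p)_{\geq 0}$ preserves colimits in each variable by the first paragraph, and that colimits in $\CycSp_p^\heart$ are computed by applying $\pi_0^\cyc$ to colimits formed in $(\CycSp_p)_{\geq 0}$. Composing these facts shows that $\otimes^\heart$ preserves colimits in each variable.

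The substantive geometric input — the compatibility of the cyclotomic $t$-structure with $\otimes$ — is already delivered by Theorem~\ref{thm:cyclotomict}, so no serious obstacle remains; the main step is simply the verification of the monoidal localization hypothesis above, which is purely formal once connectivity of $\otimes$ on the heart is known.
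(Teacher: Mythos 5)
There is a genuine gap in your verification that tensoring with a connective object preserves the class of $\pi_0^\cyc$-equivalences. You assert that a map $f\colon X\rightarrow Y$ in $(\CycSp_p)_{\geq 0}$ is a $\pi_0^\cyc$-equivalence if and only if $\cofib(f)\in(\CycSp_p)_{\geq 1}$; only the ``only if'' direction is true. Since $X$ and $Y$ are connective, the long exact sequence of cyclotomic homotopy objects gives $\pi_0^\cyc\cofib(f)\iso\coker(\pi_0^\cyc f)$ while $\ker(\pi_0^\cyc f)\iso\im\bigl(\pi_1^\cyc\cofib(f)\rightarrow\pi_0^\cyc X\bigr)$, so $1$-connectivity of the cofiber records only the surjectivity of $\pi_0^\cyc f$, not its injectivity. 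Concretely, for any nonzero $X\in\CycSp_p^\heart$ (for instance $X=\pi_0^\cyc\SS^\triv$) the map $X\rightarrow 0$ has $1$-connective cofiber $X[1]$ but is not a $\pi_0^\cyc$-equivalence. Consequently your reduction to the (true) inclusion $(\CycSp_p)_{\geq 0}\otimes(\CycSp_p)_{\geq 1}\subseteq(\CycSp_p)_{\geq 1}$ only shows that $\pi_0^\cyc(Z\otimes f)$ is surjective whenever $\pi_0^\cyc(f)$ is an isomorphism, which is not enough to satisfy the hypothesis of the symmetric monoidal localization criterion you want to invoke.

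The missing ingredient is precisely Lemma~\ref{lem:tensorheart}: for connective $X,Y$ the natural map $\pi_0^\cyc(X\otimes Y)\rightarrow\pi_0^\cyc(\pi_0^\cyc X\otimes\pi_0^\cyc Y)$ is an isomorphism. This is proved by tensoring the fiber sequences $\tau_{\geq 1}^\cyc X\rightarrow X\rightarrow\pi_0^\cyc X$ and $\tau_{\geq 1}^\cyc Y\rightarrow Y\rightarrow\pi_0^\cyc Y$ into a $3\times 3$ diagram and using exactly the connectivity statement you established, but applied to every corner containing a $\tau_{\geq 1}^\cyc$ factor. Granting this, if $\pi_0^\cyc f$ is an isomorphism then $\pi_0^\cyc f$ is an equivalence in $\CycSp_p$, so $\pi_0^\cyc Z\otimes\pi_0^\cyc f$ is an equivalence and hence $\pi_0^\cyc(Z\otimes f)$ is an isomorphism; with that repair the remainder of your argument (the localization formalism and the colimit compatibility via $\pi_0^\cyc$ being a colimit-preserving left adjoint) coincides with the paper's proof, which runs the same localization argument through Lemma~\ref{lem:monoidalloc}, citing \cite{nikolaus-scholze}*{Theorem~I.3.6} where you cite \cite{ha}.
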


\begin{proof}
    The first claim follows from Lemma~\ref{lem:monoidalloc}. To prove the
    second, we use that colimits in $\CycSp_p^\heart$ are obtained by computing
    colimits in $(\CycSp_p)_{\geq 0}$ and applying $\pi_0^\cyc$. Since the tensor
    product of connective cyclotomic spectra commutes with colimits in each
    variable and since $\pi_0^\cyc\colon(\CycSp_p)_{\geq
    0}\rightarrow(\CycSp_p)^\heart$ commutes with colimits (being a left adjoint),
    the claim follows.
\end{proof}

\begin{remark}
    Concretely, if $A,B\in\CycSp_p^\heart$, then $A\otimes^\heart
    B\we\pi_0^\cyc(A\otimes B)$, the $0$-truncation of the tensor product of
    $A$ and $B$ when viewed as objects of $(\CycSp_p)_{\geq 0}$.
\end{remark}

\begin{example}
    The fiber $F$ of $\SS^\triv\rightarrow\ZZ^\triv$ in cyclotomic spectra is
    connected. Hence, $\pi_0^\cyc F\we 0$ and it follows that
    $\pi_0^\cyc\SS^\triv\we\pi_0^\cyc\ZZ^\triv$. In particular, every object of
    $\CycSp_p^\heart$ is canonically a $\ZZ^\triv$-module in $\CycSp_p$.
\end{example}

\subsection{The genuine cyclotomic $t$-structure}\label{sub:genuinet}

\newcommand{\eff}{\mathrm{eff}}

\newcommand{\tsets}{\Oscr_{\Fscr}^{\sqcup}(S^1)}
\newcommand{\tpsets}{\Oscr_{\Fscr_p}^{\sqcup}(S^1)}

In this section we introduce the $t$-structure on genuine cyclotomic spectra
after introducing the Mackey $t$-structure on genuine $S^1$-spectra. This will
only become relevant for Section~\ref{sec:genuine} and the reader who does not want to get
involved with the intricacies of genuine homotopy theory can safely  skip this
section.

We will use genuine equivariant homotopy theory as a black box, but remind the
reader that there is a stable presentable $\infty$-category 
\[
\bT\Sp_\Fscr^\gen
\]
of genuine $S^1$-spectra with respect to a family $\Fscr$ of subgroups of
$S^1$. Here we write $\bT$ for $S^1$ to distinguish it from the homotopical
circle. A genuine spectrum $X \in \bT\Sp_\Fscr^\gen$
has fixed points $X^H$ for all closed subgroup $H \subseteq \bT$ that lie in
$\Fscr$. If $\Fscr$ is the family just consisting of the trivial group $1 \subseteq
\bT$ then $\bT\Sp_\Fscr^\gen \we \Sp^{BS^1}$. We will mostly be concerned with
the family consisting of finite $p$-subgroups of $\TT$. In this case we write
$\bT\Sp_p^\gen$ for this $\infty$-category:
\[
\bT\Sp_p^\gen := \bT\Sp^\gen_{\{1,C_p,C_{p^2},...\}}.
\]
There are several equivalent ways of describing $\bT\Sp_p$. A treatment using
equivariant orthogonal spectra is reviewed in~\cite{nikolaus-scholze} based on
lecture notes by Schwede~\cite{schwede-lectures}. An elegant $\infty$-categorical model
using spectral Mackey functors is due to Barwick~\cite{barwick-spectral} and
Barwick--Glasman~\cite{barwick-glasman-cyclonic} based on a model of Guillou--May \cite{MayGuillou}.  We
assume the reader is familiar with basic constructions such as fixed points,
geometric fixed points, classifying spaces for families, the tom Dieck
splitting  and the isotropy separation sequence.
For the reminder of the section we shall work with genuine spectra $X \in 
\bT\Sp_p^\gen$ and for simplicity just refer to them as genuine $S^1$-spectra or
even genuine spectra.

Let $X$ be a genuine $S^1$-spectrum. We say that $X$ is connective if for each
$n \geq 0$ the fixed points spectrum $X^{C_{p^n}}$ is connective. Similar we
say that $X$ is coconnective if for each $n \geq 0$ the spectrum $X^{C_{p^n}}$
is coconnective. This defines two full subcategories
\[
(\bT\Sp^\gen_p)_{\geq 0} \qquad \text{and} \qquad (\bT\Sp^\gen_p)_{\leq 0}
\]
of $\bT\Sp_p^\gen$.

\begin{proposition}\label{prop:mackeyt}
    The pair $((\bT\Sp^\gen_p)_{\geq 0},(\bT\Sp^\gen_p)_{\leq 0})$
    defines a left and right complete, accessible $t$-structure on $\bT\Sp^\gen_p$ which is compatible with filtered
    colimits.
\end{proposition}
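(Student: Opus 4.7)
The plan is to use the description of $\bT\Sp_p^\gen$ as spectral Mackey functors on the effective Burnside $\infty$-category of finite $p$-subgroups of $\bT$, due to Guillou--May and Barwick \cite{MayGuillou,barwick-spectral,barwick-glasman-cyclonic}. Under this equivalence, the fixed-point functor $X\mapsto X^{C_{p^n}}$ corresponds to evaluation at $\bT/C_{p^n}$, so the collection $\{(-)^{C_{p^n}}\}_{n\geq 0}$ is jointly conservative and preserves all small limits and colimits taken in $\bT\Sp_p^\gen$.

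First I would check that $(\bT\Sp_p^\gen)_{\geq 0}$ is precisely the subcategory of Mackey functors whose values lie in $\Sp_{\geq 0}\subseteq\Sp$. Because both the inclusion $\iota\colon\Sp_{\geq 0}\hookrightarrow\Sp$ and its right adjoint $\tau_{\geq 0}$ preserve finite direct sums, post-composition with them gives an adjunction between the relevant $\infty$-categories of product-preserving functors; in particular the connective cover of a Mackey functor is computed pointwise. It follows that $(\bT\Sp_p^\gen)_{\geq 0}$ is a presentable subcategory of $\bT\Sp_p^\gen$ closed under small colimits and extensions, so \cite{ha}*{1.4.4.11} produces a unique accessible $t$-structure whose connective part is $(\bT\Sp_p^\gen)_{\geq 0}$.

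Next I would identify the coconnective part. By general theory, $X$ is coconnective for this $t$-structure if and only if $\tau_{\geq 1}X\we 0$. Since truncation is pointwise we have $(\tau_{\geq 1}X)^{C_{p^n}}\we\tau_{\geq 1}(X^{C_{p^n}})$ for every $n$, and joint conservativity of the fixed-point functors then gives $\tau_{\geq 1}X\we 0$ if and only if $X^{C_{p^n}}\in\Sp_{\leq 0}$ for every $n\geq 0$. This matches the definition of $(\bT\Sp_p^\gen)_{\leq 0}$ in the statement.

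Finally, the remaining assertions reduce via the pointwise Mackey description to corresponding facts about the standard $t$-structure on $\Sp$. Filtered colimits and small products in $\bT\Sp_p^\gen$ are pointwise, $\Sp_{\leq 0}$ is closed under filtered colimits, and $\Sp_{\geq 0}$ is closed under arbitrary products, giving compatibility with filtered colimits and closure of $(\bT\Sp_p^\gen)_{\geq 0}$ under products. Joint conservativity furthermore yields $\bigcap_n(\bT\Sp_p^\gen)_{\geq n}=\bigcap_n(\bT\Sp_p^\gen)_{\leq -n}=0$, so left completeness follows from \cite{ha}*{1.2.1.19} and right completeness is analogous. The only delicate ingredient is the spectral Mackey description that underwrites the pointwise formula for truncation; once this is in place, every other step is formal.
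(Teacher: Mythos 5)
Your argument is correct, but it is not the paper's route: the paper explicitly notes (in the remark right after the proposition) that the existence of the $t$-structure is "clear" from the spectral Mackey functor description and then deliberately gives a presentation-independent proof instead. Concretely, the paper works directly with the compact generators $\Sigma^\infty_+\bT/C_{p^n}$, invokes \cite{ha}*{1.4.4.11} on the smallest subcategory containing them and closed under colimits and extensions, and then shows that connectivity is detected by the fixed-point functors via a truncation/cofiber-sequence argument (if all $X^{C_{p^n}}$ are connective, then $(\tau_{\leq-1}X)^{C_{p^n}}$ is both connective and $(-1)$-truncated, hence zero, and the orbits generate); the remaining properties follow from joint conservativity of $\{(-)^{C_{p^n}}\}$ and the fact that fixed points preserve limits and colimits. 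Your proof instead imports the Guillou--May/Barwick--Glasman equivalence with product-preserving functors on the effective Burnside category, which makes truncation, (co)limits, and hence all the completeness and compatibility statements pointwise and formal. What you buy is that every step reduces to the standard $t$-structure on $\Sp$; what you pay is dependence on a substantial external theorem (and on the cyclonic version for the family of finite $p$-subgroups of $S^1$), plus the small but necessary checks that product-preserving functors are closed under pointwise (co)limits (using semiadditivity of the Burnside category) and that postcomposition with $\tau_{\geq 0}$ and the inclusion $\Sp_{\geq 0}\hookrightarrow\Sp$ preserves product-preserving functors so that the truncation adjunction restricts — both of which you did address. Also note that the paper wants $a\,priori$ the aisle to be exactly the pointwise-connective objects rather than the subcategory generated by the orbits, which is the content of its truncation argument; in your setup this identification is immediate, which is precisely the advantage of the model-dependent approach.
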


Note that by construction the fixed point functor
\[
(-)^{C_{p^n}} : \bT\Sp^\gen_p \to \Sp
\]
is $t$-exact for each $n\geq 0$.

\begin{definition}
    We call this $t$-structure on $\bT\Sp^\gen_p$ the {\bf Mackey
    $t$-structure}.
\end{definition}

The terminology is motivated by the fact that the heart
$(\bT\Sp^\gen_p)^\heart$ is equivalent to the abelian category of Mackey functors. The notion of a Mackey
functor might not be entirely standard in this setting (since $\bT$ is not a
finite group) but Mackey $t$-structures exist for categories of genuine
$G$-spectra where $G$ is finite, in which case the heart is equivalent to the
abelian category of classical Mackey functors on $G$. The construction and
proof is the same as in Proposition~\ref{prop:mackeyt}.

\begin{remark}
    One equivalent description of $\bT\Sp^\gen_p$ following Barwick~\cite{barwick-spectral} is as the $\infty$-category of
    product-preserving functors from the effective Burnside category of the orbit category
    $\{S^1/C_{p^n}\}_{n\geq 0}\subseteq\Sscr^{BS^1}$ to spectra.
    In this language, $(\bT\Sp^\gen_p)_{\geq 0}$ is equivalent to the $\infty$-category of
    product-preserving functors from the Burnside category to $\Sp_{\geq 0}$ and similarly for
    the $\infty$-category of coconnective objects. From this description, the existence of
    the $t$-structure in Proposition~\ref{prop:mackeyt} is clear. We give, however, a
    presentation-independent proof.
\end{remark}

\begin{proof}[Proof of Proposition~\ref{prop:mackeyt}]
Consider the compact generators $\Sigma^\infty_+ \bT/C_{p^n} \in
\bT\Sp^\gen_p$. By \cite[Proposition 1.4.4.11]{ha} the smallest subcategory
$\Cscr_{\geq 0} \subseteq \bT\Sp^\gen_p$ that contains $\Sigma^\infty_+
\bT/C_{p^n} $ and is closed under colimits and extensions is the
$\infty$-category of connective
objects for a $t$-structure $(\Cscr_{\geq 0},\Cscr_{\leq 0})$ on $\bT\Sp^\gen_p$. An object $X \in \bT\Sp^\gen_p$ is then
$(-1)$-truncated (i.e. in $\Cscr_{\leq -1}$) if and only if the mapping spectrum
\[
    \MapSp_{\bT\Sp_p^\gen}(\Sigma^\infty_+ \bT/C_{p^n},X) \simeq X^{C_{p^n}}
\]
is  $(-1)$-truncated. We claim that an object $X$ is connective precisely if
for every $n$ the spectrum $X^{C_{p^n}}$ is connective. If $X$ is
connective, then it follows that $X^{C_{p^n}}$ is connective since this is true
for the generators. Conversely, assume that $X^{C_{p^n}}$ is connective for each $n$.
We consider the truncation $X \to \tau_{\leq -1} X$. The fibre of this map is
given by $\tau_{\geq 0} X \to X$. We get a cofiber sequence
\[
(\tau_{\geq 0} X)^{C_{p^n}} \to X^{C_{p^n}} \to  (\tau_{\leq -1} X)^{C_{p^n}}
\]
for each $n\geq 0$.
The first two terms are connective spectra, thus so is the third. But since
$\tau_{\leq -1} X$ is $(-1)$-truncated the last term is also $(-1)$-truncated as it is a mapping spectrum from a connective object.
Therefore it has to be zero. Since the orbits are generators of
$\bT\Sp_p^\gen$, it follows that $\tau_{\leq -1} X \we 0$. Thus, $X$ is connective.

This establishes the existence of $t$-structure. The other claims now immediately follow
using the conservativity of family $\{(-)^{C_{p^n}}\colon n\geq 0\}$ of
functors $\bT\Sp_p^\gen\rightarrow\Sp$ and the fact that fixed points preserve
limits and colimits.\end{proof}

\begin{lemma}\label{conn}
A genuine  $S^1$-spectrum $X \in \bT\Sp^\gen_p$ is connective in the Mackey $t$-structure precisely if all geometric fixed points
$X^{\Phi C_{p^n}} \in \Sp$ are connective. 
\end{lemma}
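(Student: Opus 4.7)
The plan is to argue by induction on $n$, using a standard isotropy separation cofiber sequence. Since the family $\Pscr_n$ of proper subgroups of $C_{p^n}$ is generated by its unique maximal element $C_{p^{n-1}}$, one has $E\Pscr_n \simeq EC_p$ pulled back along the quotient $C_{p^n}\to C_{p^n}/C_{p^{n-1}}\iso C_p$. The first step will be to combine this observation with the standard cofiber sequence $(E\Pscr_n)_+\wedge X \to X \to \widetilde{E\Pscr_n}\wedge X$ of genuine $S^1$-spectra; taking $C_{p^n}$-fixed points then yields, for each $n\geq 1$, a cofiber sequence of spectra
\[
(X^{C_{p^{n-1}}})_{hC_p}\longrightarrow X^{C_{p^n}} \longrightarrow X^{\Phi C_{p^n}},
\]
where the $C_p$-action on $X^{C_{p^{n-1}}}$ comes from the residual $C_{p^n}/C_{p^{n-1}}$-action.

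With this cofiber sequence in place, both directions of the equivalence will follow by induction. The base case $n=0$ is trivial since $X^{C_{p^0}}$ and $X^{\Phi C_{p^0}}$ both agree with the underlying spectrum. For the induction step, the crucial input is that the homotopy orbit functor $(-)_{hC_p}\colon\Sp^{BC_p}\to\Sp$ is right $t$-exact, so connectivity of $X^{C_{p^{n-1}}}$ entails connectivity of $(X^{C_{p^{n-1}}})_{hC_p}$. For the forward direction, if each $X^{C_{p^k}}$ is connective, then $X^{\Phi C_{p^n}}$ is connective as the cofiber of a map between connective spectra. For the converse, assuming all geometric fixed points are connective and inductively that $X^{C_{p^k}}$ is connective for $k<n$, both outer terms of the cofiber sequence are connective, forcing the middle term $X^{C_{p^n}}$ to be connective as well.

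The main potential obstacle is simply justifying the isotropy separation cofiber sequence in the precise form above, which amounts to the identification $((E\Pscr_n)_+\wedge X)^{C_{p^n}}\simeq (X^{C_{p^{n-1}}})_{hC_p}$ using that $E\Pscr_n$ is pulled back from $EC_p$ and so has trivial $C_{p^{n-1}}$-action and free residual $C_p$-action. This is a standard computation in genuine equivariant stable homotopy theory and can be extracted from the background developed in~\cite{nikolaus-scholze}*{Chapter~II}; once it is in hand, the remainder of the argument is a formal application of the right $t$-exactness of homotopy orbits.
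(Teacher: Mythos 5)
Your proof is correct and follows essentially the same route as the paper: induction on $n$ combined with the isotropy separation sequence for the family of proper subgroups of $C_{p^n}$, with the right-hand term being $X^{\Phi C_{p^n}}$ by definition of geometric fixed points. The only difference is that you identify the left-hand term explicitly as $(X^{C_{p^{n-1}}})_{hC_p}$ via the projection formula and the Adams isomorphism (which is exactly the simplification the paper's footnote alludes to), whereas the paper only uses that it is a colimit built from fixed points of proper subgroups, so as to have an argument valid for more general families.
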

\begin{proof}    
 To see this we argue by induction over $n$. Assume that all spectra
   $ X^{C_k}$ and  $ X^{\Phi C_{p^k}}$  for $k = 0, \ldots, n-1$ are connective. Consider the isotropy separation sequence
  \[
  ((E_{\mathcal{P}rop} C_{p^n})_+ \otimes X)^{C_{p^n}} \to X^{C_{p^n}} \to X^{\Phi C_{p^n}}
  \]
  where the left hand side is a colimit of a diagram only involving fixed
  points of proper subgroups of $C_{p^n}$.\footnote{In fact this can even by
  simplified in this case since the subgroup lattice of $C_{p^n}$ is very
  simple but we prefer to write the proof in a form that also works for
  more complicated families.} Thus it is a connective spectrum. It follows that $
  X^{C_{p^n}} $ is connective if and only if $X^{\Phi C_{p^n}}$ is connective. 
\end{proof}

We recall the definition of genuine cyclotomic spectra. 

\begin{definition}\label{def:gencyc}
    The $\infty$-category $\CycSp^\gen_p$ of {\bf genuine
    $p$-typical cyclotomic spectra} is defined to be fixed points for the
    endofunctor $(-)^{\Phi C_p}$:
    \[
        \CycSp^\gen_p := \mathrm{Fix}_{(-)^{\Phi C_p}}(\bT\Sp^\gen_p).
    \]
    In other words, a genuine $p$-typical cyclotomic spectrum is a genuine
    $S^1$-spectrum $X$ equipped with an equivalence $X^{\Phi C_p}\we X$ of genuine
    $S^1$-spectra.
\end{definition}

By Lemma~\ref{conn}, the functor $(-)^{\Phi C_p}$ is right $t$-exact. Using that accessible
$t$-structures are closed under limits of right $t$-exact left adjoint functors,
we obtain the following corollary.

\begin{corollary}\label{cor:gencyct}
    There is an accessible $t$-structure on
    $\CycSp^\gen_p$ where $$(\CycSp_p^\gen)_{\geq 0}\we\mathrm{Fix}_{(-)^{\Phi
    C_p}}((\bT\Sp_p^\gen)_{\geq 0});$$ the forgetful functor to $\bT\Sp_p^\gen$ is right $t$-exact.
\end{corollary}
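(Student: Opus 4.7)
The plan is to realise $\CycSp^\gen_p$ as a limit in $\PrL$ and to transport the Mackey $t$-structure of Proposition~\ref{prop:mackeyt} along this limit using the general principle that accessible $t$-structures on presentable stable $\infty$-categories are inherited by limits along right $t$-exact colimit-preserving functors, with the connective part of the inherited $t$-structure given by the limit of the connective parts.

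First I would unwind the definition of the fixed-point $\infty$-category from Definition~\ref{def:gencyc}: $\CycSp^\gen_p = \mathrm{Fix}_{(-)^{\Phi C_p}}(\bT\Sp^\gen_p)$ is by construction the equalizer in $\PrL$ of the two endofunctors $\id,(-)^{\Phi C_p}\colon \bT\Sp^\gen_p \rightrightarrows \bT\Sp^\gen_p$. For this equalizer to live in $\PrL$ I need both functors to preserve colimits; this is trivial for $\id$ and is a standard fact about genuine equivariant spectra for $(-)^{\Phi C_p}$, which arises as a symmetric monoidal left adjoint in the models of~\cite{nikolaus-scholze} and~\cite{barwick-glasman-cyclonic}.

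Next I would verify right $t$-exactness of both arms. The identity functor is tautologically right $t$-exact. For $(-)^{\Phi C_p}$, Lemma~\ref{conn} characterises connectivity in the Mackey $t$-structure in terms of the connectivity of all geometric fixed points; since $(X^{\Phi C_p})^{\Phi C_{p^n}} \we X^{\Phi C_{p^{n+1}}}$, connectivity of $X$ in the Mackey sense immediately gives connectivity of $X^{\Phi C_p}$, so $(-)^{\Phi C_p}$ carries $(\bT\Sp^\gen_p)_{\geq 0}$ into itself. Having verified the hypotheses, I invoke the general limit principle (applied to the equalizer diagram) to produce the claimed accessible $t$-structure on $\CycSp^\gen_p$ with connective part $\mathrm{Fix}_{(-)^{\Phi C_p}}((\bT\Sp^\gen_p)_{\geq 0})$, which is exactly the equalizer of the restrictions of $\id$ and $(-)^{\Phi C_p}$ to the connective subcategory.

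The right $t$-exactness of the forgetful functor $\CycSp^\gen_p \to \bT\Sp^\gen_p$ is then automatic: by the description of the connective part, the forgetful functor carries $(\CycSp^\gen_p)_{\geq 0}$ into $(\bT\Sp^\gen_p)_{\geq 0}$ on the nose. The only point that requires genuine care rather than formalism is the invocation of the fact that $(-)^{\Phi C_p}$ is a left adjoint in the $\infty$-categorical setup; this is classical but does depend on the chosen foundational model of genuine equivariant spectra. Everything else is a direct application of the general machinery already used in the proof of Theorem~\ref{thm:cyclotomict}.
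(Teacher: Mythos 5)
Your argument is correct and is essentially the paper's: the paper likewise deduces the corollary by noting that Lemma~\ref{conn} makes $(-)^{\Phi C_p}$ right $t$-exact and then invoking the general fact that accessible $t$-structures pass to limits along right $t$-exact left adjoints, applied to the equalizer defining $\mathrm{Fix}_{(-)^{\Phi C_p}}(\bT\Sp^\gen_p)$. Your write-up merely spells out the colimit-preservation of geometric fixed points and the identification of the connective part, which the paper leaves implicit.
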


We call the induced $t$-structure on $p$-typical genuine cyclotomic spectra the
{\bf genuine cyclotomic $t$-structure}.

\begin{remark}
With the same arguments one also gets a Mackey $t$-structure on
$\bT\Sp^\gen_\Fscr$ where $\Fscr$ is the family of finite subgroups. This then
induces also a $t$-structure on the $\infty$-category of global genuine
cyclotomic spectra $\CycSp^\gen$. The latter is defined as the
$\infty$-category of homotopy fixed
points
\[
\CycSp^\gen := (\bT\Sp^\gen_\Fscr)^{h\mathbb{N}_{>0}},
\]
where the multiplicative monoid $\mathbb{N}_{>0}$ acts on $\bT\Sp^\gen_\Fscr$
via $n \mapsto (-)^{\Phi C_n}$. See~\cite{nikolaus-scholze}*{Section~2.3}.
\end{remark}

The next result says that the $t$-structure of Corollary~\ref{cor:gencyct} 
reduces to the cyclotomic $t$-structure of Section~\ref{sub:cyct} when restricted to
bounded below objects.

\begin{theorem}
    The natural functors $\CycSp^\gen\rightarrow\CycSp$ and
    $\CycSp^\gen_p\rightarrow\CycSp_p$ are right $t$-exact and restrict to equivalences
    $\CycSp^{\gen}_{\geq 0}\we\CycSp_{\geq 0}$ and $(\CycSp^{\gen}_p)_{\geq
        0}\we(\CycSp_{p})_{\geq 0}$.
\end{theorem}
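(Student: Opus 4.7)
The plan is to first verify right $t$-exactness directly, and then identify the connective parts using the Nikolaus--Scholze comparison between genuine and non-genuine cyclotomic spectra on bounded-below subcategories, combined with an inductive argument via isotropy separation.

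Right $t$-exactness is essentially immediate. If $X\in(\CycSp^\gen_p)_{\geq 0}$, then by definition all $X^{C_{p^n}}$ are connective, so in particular $X^{C_1}=X$ is connective as a spectrum with $S^1$-action; thus its image under the forgetful functor lies in $(\CycSp_p)_{\geq 0}$. The integral case is identical, using all finite subgroups of $\bT$.

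For the equivalence on connective parts, I would invoke the comparison of bounded-below genuine and non-genuine cyclotomic spectra from \cite{nikolaus-scholze}, which gives equivalences $\CycSp^{\gen,-}\we\CycSp^-$ and $(\CycSp^\gen_p)^-\we\CycSp_p^-$. Since $(\CycSp^\gen_p)_{\geq 0}\subseteq(\CycSp^\gen_p)^-$ and similarly on the non-genuine side, and since right $t$-exactness is already known, it suffices to prove: if $\tilde X$ is a bounded-below genuine $p$-typical cyclotomic spectrum whose underlying spectrum $X=\tilde X^{C_1}$ is connective, then $\tilde X^{C_{p^n}}$ is connective for every $n\geq 0$. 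I would argue by induction on $n$, with the base case being the hypothesis. For the inductive step, I apply the isotropy separation sequence
\[
((E_{\mathcal{P}rop}C_{p^n})_+\otimes\tilde X)^{C_{p^n}}\to\tilde X^{C_{p^n}}\to\tilde X^{\Phi C_{p^n}}.
\]
The right-hand term is equivalent to $\tilde X$ by iterating the cyclotomic structure equivalence $\tilde X^{\Phi C_p}\we\tilde X$, and is therefore connective. The left-hand term is assembled from a $C_{p^n}$-cellular decomposition of $E_{\mathcal{P}rop}C_{p^n}$ whose cells are of the form $C_{p^n}/C_{p^k}$ with $k<n$, and the standard identification $((C_{p^n}/C_{p^k})_+\otimes\tilde X)^{C_{p^n}}\we\tilde X^{C_{p^k}}$ exhibits it as a colimit of spectra that are connective by the inductive hypothesis. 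Hence $\tilde X^{C_{p^n}}$ sits in a fiber sequence between two connective spectra and is itself connective.

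The main obstacle is the inductive step. It requires both the identification $\tilde X^{\Phi C_{p^n}}\we\tilde X$ from iterating the cyclotomic structure and the control of the proper part of the isotropy separation sequence in terms of lower fixed points. The integral case follows the same pattern: every finite subgroup of $\bT$ is cyclic, and the isotropy separation sequence for $C_n$ can be analyzed inductively on the prime factorization of $n$, using that each prime $p$ dividing $n$ contributes its own cyclotomic structure equivalence $\tilde X^{\Phi C_p}\we\tilde X$.
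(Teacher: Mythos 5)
Your argument is correct and is essentially the paper's proof: we also deduce the statement from the Nikolaus--Scholze comparison on bounded-below objects together with the observation that a genuine cyclotomic spectrum with connective underlying spectrum has all its geometric fixed points $\tilde X^{\Phi C_{p^n}}\we\tilde X$ connective, hence all categorical fixed points connective. Your inductive isotropy-separation step is exactly the content of Lemma~\ref{conn}, which you have simply inlined rather than cited.
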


\begin{proof}
    This is simply a restatement of
    Theorem~\cite{nikolaus-scholze}*{Theorem~II.3.8}, once we note that an
    object $X$ in $\CycSp^\gen$ or $\CycSp^\gen_p$ is bounded below in the
    cyclotomic $t$-structure if and only if the underlying spectrum of $X$ is
    bounded below. This follows from Lemma \ref{conn} and the fact that
    $\Phi^{C_{p^n}}X\we X$ for all $n$.
\end{proof}

In particular, we see that $\CycSp_p^{\gen,\heart}\we\CycSp_p^\heart$ and similarly
$\CycSp^{\gen,\heart}\we\CycSp^\heart$.

\subsection{The cyclotomic $t$-structure and the generalized Segal conjecture}\label{sub:segal}

Recall that the Segal conjecture for $C_p$ (a theorem of Lin~\cite{lin} and
Gunawardena~\cite{gunawardena})  says that the Tate diagonal
$\SS\rightarrow(\SS^{\otimes p})^{tC_p}\we\SS^{tC_p}$ is $p$-completion. This map is also
equivalent to the trivial map that factors through homotopy fixed points $\SS^{hC_p}$.  We
prove that every bounded
above object in the cyclotomic $t$-structure satisfies an analogue of the Segal
conjecture.

\begin{proposition}\label{prop:segalcondition}
    Suppose that $M\in\CycSp_p$. If $M\in\left(\CycSp_p\right)_{\leq d}$, then the
    cyclotomic Frobenius map
    $M\xrightarrow{\varphi} M^{tC_p}$ is
    $d$-truncated.\footnote{Recall that a map of spectra $X\rightarrow Y$
    is $d$-truncated if it induces isomorphisms $\pi_nX\iso\pi_nY$ for
    $n\geq d+2$ and an injection $\pi_{d+1}X\rightarrow\pi_{d+1}Y$. In other
    words, the fiber of $X\rightarrow Y$ is $d$-truncated.}
\end{proposition}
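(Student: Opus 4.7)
Write $F:=\fib(\varphi_M\colon M\rightarrow M^{tC_p})$; the claim amounts to showing $F$ is $d$-truncated in $\Sp^{BS^1}$, i.e.\ $\pi_kF=0$ for all $k>d$. My plan is to realize each homotopy group $\pi_kF$ as a set of maps in $\CycSp_p$ from a suitable $k$-connective test object $A_k$ into $M$, so that the $t$-structure immediately produces the desired vanishing.

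For the test objects, I will take $A_k:=((S^1)_+\otimes\SS[k],\,0)$, where $(S^1)_+\otimes\SS[k]\in\Sp^{BS^1}$ denotes the induced (``free'') $S^1$-spectrum on $\SS[k]$ and the cyclotomic structure map is chosen to be the zero map. This is a legitimate object of the lax equalizer $\CycSp_p=\LEq(\id,(-)^{tC_p})$, since any $S^1$-equivariant map is allowed as the structure map. Its underlying spectrum is $\SS[k]\oplus\SS[k+1]$, which is $k$-connective, so by Theorem~\ref{thm:cyclotomict} we have $A_k\in(\CycSp_p)_{\geq k}$.

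Next, using the lax equalizer formula from~\cite{nikolaus-scholze}*{Section~II.1}, I would identify $\Map_{\CycSp_p}(A_k,M)$ with the fiber of
$$\Map_{\Sp^{BS^1}}((S^1)_+\otimes\SS[k],M)\xto{(\varphi_M)_*}\Map_{\Sp^{BS^1}}((S^1)_+\otimes\SS[k],M^{tC_p}),$$
the second component of the structure map vanishing because $\varphi_{A_k}=0$. The adjunction between $(S^1)_+\otimes(-)\colon\Sp\rightarrow\Sp^{BS^1}$ and the forgetful functor collapses both terms to the underlying spectrum, yielding $\Map_{\CycSp_p}(A_k,M)\we\Omega^{\infty-k}F$, so that $\pi_kF\iso\pi_0\Map_{\CycSp_p}(A_k,M)$.

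Finally, because $A_k\in(\CycSp_p)_{\geq k}$ and $M\in(\CycSp_p)_{\leq d}$, the defining property of the $t$-structure forces $\Map_{\CycSp_p}(A_k,M)\we 0$ whenever $k>d$, so $\pi_kF=0$ for all such $k$, proving that $\varphi_M$ is $d$-truncated. The main obstacle is the choice of $A_k$: it has to be free in the $S^1$-direction so that the mapping spectrum loses the $hS^1$, and it must carry the zero cyclotomic structure so that the lax equalizer formula isolates $\varphi_M$ cleanly. Once these choices are set, the rest follows formally from Theorem~\ref{thm:cyclotomict}.
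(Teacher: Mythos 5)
Your argument is correct and is essentially the paper's own proof: the paper also tests against the induced $S^1$-spectrum $x_!\SS[d+1]$ equipped with the zero cyclotomic structure map, uses the lax-equalizer description to identify the mapping spectrum with $\fib(\varphi)[-d-1]$, and invokes the $t$-structure orthogonality, your family $A_k$ being just the shifts of that single test object. (Only a cosmetic quibble: $\Omega^{\infty-k}F$ should be $\Omega^\infty(F[-k])$, but your stated identification $\pi_kF\iso\pi_0\Map_{\CycSp_p}(A_k,M)$ is the correct one.)
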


\begin{proof}
    Fix a point $x\in BS^1$. We endow $x_!\SS[d+1]$ with the structure of a
    $p$-typical cyclotomic spectrum by letting $x_!\SS[d+1]\rightarrow(x_!\SS[d+1])^{tC_p}$ be the
    zero map. The mapping spectrum $\MapSp_{\CycSp_p}(x_!\SS[d+1],M)$ is the
    equalizer of
    $$\MapSp_{S^1}(x_!\SS[d+1],M)\rightrightarrows\MapSp_{S^1}(x_!\SS[d+1],M^{tC_p}),$$ where the two maps are given by
    $$(x_!\SS[d+1]\xrightarrow{f}M)\mapsto
    (x_!\SS[d+1]\xrightarrow{f}M\xrightarrow{\varphi}M^{tC_p})$$ and $$f\mapsto
    x_!\SS[d+1]\xrightarrow{0}(x_!\SS[d+1])^{tC_p}\xrightarrow{f^{tC_p}}M^{tC_p}.$$
    Since
    $\MapSp_{S^1}(x_!\SS[d+1],M)\we M[-d-1]$ and
    $\MapSp_{S^1}(x_!\SS[d+1],M^{tC_p})\we M^{tC_p}[-d-1]$, we find that
    $\MapSp_{\CycSp_p}(x_!\SS[d+1],M)$ is the fiber of
    $M[-d-1]\xrightarrow{\varphi[-d-1]}M^{tC_p}[-d-1]$. 
    Since $M\in\left(\CycSp_p\right)_{\leq d}$, it follows that
    $$\pi_i\MapSp_{\CycSp_p}(x_!\SS[d+1],M)=0$$ for $i\geq 0$.
    Hence, $\pi_iM\xrightarrow{\pi_i(\varphi)}\pi_iM^{tC_p}$ is an injection
    for $i=d+1$ and an isomorphism for $i\geq d+2$. In other words, $\varphi$
    is $d$-truncated.
\end{proof}

\begin{remark}
    The proposition gives a necessary but certainly not sufficient condition for an
    object to be bounded above. For example, $\SS_p^\triv$ (meaning the $p$-complete sphere with the trivial cyclotomic structure) satisfies the conclusion
    of Proposition \ref{prop:segalcondition} by the Segal conjecture but not the hypothesis since $\TC(\SS_p^\triv)$ contains $\SS_p$ as a summand.
\end{remark}

We now give an example of a class of bounded above objects in the cyclotomic
$t$-structure. Many more will appear later, in Section~\ref{sec:schemes}, once we
have access to the $p$-typical topological Cartier module machinery developed in
Section~\ref{sec:tcm}.

\begin{example}
    Let $M\in\Sp_{\leq d}$ be a bounded above spectrum and let $x_!M$ denote the induced
    $S^1$-spectrum $M\otimes S^1_+$. Now, $(x_!M)^{tC_p}\we 0$. To see this, we
    can reduce to the case where $M$ is concentrated in a single degree
    using the easy generalization of~\cite{nikolaus-scholze}*{I.2.6(ii)}
    for weak Postnikov towers and the fact that $x_!$ preserves colimits.
    We make $x_!M$ into a $p$-typical cyclotomic
    spectrum in the only way we can: we let the cyclotomic Frobenius
    $x_!M\rightarrow (x_!M)^{tC_p}\we 0$ be the zero map.
    Now, we show that $x_!M\in(\CycSp_p)_{\leq d+1}$. To see this, fix another
    cyclotomic spectrum $X$. We see that the mapping spectrum
    $\MapSp_{\CycSp_p}(X,x_!M)\we\MapSp_{S^1}(X,x_!M)$ using the equalizer
    formula, since $(x_!M)^{tC_p}\we 0$. Of course, $x_!M\in(\Sp^{BS^1})_{\leq
    d+1}$. Hence, if $X\in(\CycSp_p)_{\geq d+2}$, the mapping space
    $\Map_{\CycSp_p}(X,x_!M)$ vanishes.
\end{example}

\section{Topological Cartier modules}\label{sec:tcm}

In this section we give a new description of $p$-typical cyclotomic spectra based on a
topological version of $p$-typical Cartier modules. Using this description, we find
that the cyclotomic homotopy groups $\pi_i^\cyc X$ of a $p$-typical cyclotomic
spectrum $X$ ``are'' the homotopy groups of
$\TR(X)=\lim_{n,R}X^{C_{p^n}}$ equipped with operations $V$ and $F$ such that
$FV=p$.

As for cyclotomic spectra, $p$-typical topological Cartier modules admit two flavors: a
genuine flavor and a simplistic flavor. Unlike cyclotomic
spectra, where one needs a boundedness assumption to show that the simple
version agrees with the genuine version, these flavors are unconditionally the
same for $p$-typical topological Cartier modules, as we prove in Section~\ref{sec:genuine}.

\subsection{The $\infty$-category of topological Cartier modules}\label{sub:tcm}

Classically, a {\bf $p$-typical Cartier module} is an abelian group $M$ equipped with endomorphisms $V$
and $F$ such that $FV=p$.\footnote{In contrast to the case of Dieudonn\'e modules we do not include the condition that
    $VF=p$. This is only appropriate  when working over Witt vectors of a
    perfect ring of characteristic $p$, see Example \ref{dieudonne} and
Section~\ref{sub:charp}.}

\begin{definition}
    A {\bf $p$-typical topological Cartier module}
    is a spectrum $M$ with an $S^1$-action together with an $S^1$-equivariant factorization of the $C_p$-norm
    \[
    M_{hC_p} \xto{V} M \xto{F} M^{hC_p},
    \]
    where $M_{hC_p}$ and $M^{hC_p}$ carry the residual $S^1 = S^1/C_p$ actions
    and the norm $M_{hC_p} \to M^{hC_p}$ is equivariant for this action. We
    will refer to $V$ as the {\bf Verschiebung} and $F$ as the {\bf Frobenius}.
\end{definition}

Here are some examples and constructions with $p$-typical topological Cartier modules.

\begin{example}\label{example_classical}
    Let $M$ be an abelian group considered as an Eilenberg--MacLane spectrum with (necessarily) trivial $S^1$-action.
    A $p$-typical topological Cartier module structure on $M$ is equivalent to
    a $p$-typical Cartier module structure on $M$ since the maps $V$ and $F$ necessarily
    have to factor through the truncations $M_{hC_p} \to \tau_{\leq
        0}M_{hC_p}\we M$ and $M\we\tau_{\geq 0}M^{hC_p} \to M^{hC_p}$ and the norm factors as $M_{hC_p} \to M \xto{\cdot p} M \to
    M^{hC_p}$.
    \end{example}

\begin{example}\label{exhomotopygroup}
    For every $p$-typical topological Cartier module $M$ the homotopy groups
    $\pi_nM$ are $p$-typical Cartier modules: the maps $V,F: \pi_nM \to \pi_n M$ are induced by $\pi_n$ of the compositions 
    \[
        M \to M_{hC_p} \xto{V} M \qquad \text{and} \qquad M \xto{F} M^{hC_p} \to M.
    \]
    To see that the composition $FV$ is $p$, note that the composition is
    equivalent to $M\to M_{hC_p}\xto{\Nm_{C_p}} M^{hC_p}\to M$ and we can
    reduce to the case where $M$ is discrete, where the claim follows from the 
    description of the norm from group homology to group cohomology.
    For more about the structure of $\pi_*M$ when $M$ is a $p$-typical
    topological Cartier module, see Section~\ref{sub:cc}.
\end{example}

\begin{example}\label{TrDieu}
    Let $X$ be a $p$-typical cyclotomic spectrum. The $S^1$-spectrum
    \begin{align*}
    \TR(X) & \we \left( \ldots \times_{(X^{tC_p})^{hC_{p^2}}}  X^{hC_{p^2}} \times_{(X^{tC_p})^{hC_p}} X^{hC_p} \times_{X^{tC_p}} X \right) \\
    & \we \underleftarrow{\lim}_{n,R}\left(  X^{hC_{p^n}} \times_{(X^{tC_p})^{hC_{p^{n-1}}}}  ...  \times_{(X^{tC_p})^{hC_p}} X^{hC_p} \times_{X^{tC_p}} X \right) \\ 
    & \we \underleftarrow{\lim}_{n,R} (\TR^{n+1}(X))
    \end{align*}
    canonically carries the structure of a topological Cartier module.
    The map $V\colon \TR(X)_{hC_p} \to \TR(X)$ is given by the canonical map 
    \begin{equation}\label{mapuniform}
      \TR(X)_{hC_p} \we \left(\underleftarrow{\lim}_{n,R} \TR^n(X)\right)_{hC_p} \to \underleftarrow{\lim}_{n,R}(\TR^n(X)_{hC_p})
    \end{equation}
    followed by the levelwise norm
    \begin{equation}\label{def_V}
    \xymatrix{
    \ldots (X^{hC_{p}})_{hC_p} \times_{(X^{tC_p})_{hC_p}} X_{hC_p} \times_{0\phantom{dd}} 0 \phantom{dd}\ar[d]_V \\
    \ldots \phantom{dd}X^{hC_{p^2}}\phantom{dd} \times_{(X^{tC_p})^{hC_p}} X^{hC_p} \times_{X^{tC_p}} X.
    }
    \end{equation}
    The map $F\colon \TR(X) \to \TR(X)^{hC_p}$ is given by the projection
    \begin{equation}\label{def_F}
    \xymatrix{
    \ldots \times_{(X^{tC_p})^{hC_{p^2}}} X^{hC_{p^2}} \times_{(X^{tC_p})^{hC_p}} X^{hC_p} \times_{X^{tC_p}} X \ar[d]_F \\
    \ldots \times_{(X^{tC_p})^{hC_{p^2}}} X^{hC_{p^2}} \times_{(X^{tC_p})^{hC_p}} X^{hC_p}. \phantom{\times_{X^{tC_p}} X}
    }
    \end{equation}
    The composition is evidently the norm; this will also follow from a more
    detailed analysis in Construction~\ref{const:tr}.
\end{example}

\begin{example}\label{ex:tcm2cyc}
    Let $M$ be a $p$-typical topological Cartier module. We consider the $S^1$-spectrum 
    \[X = M/V = \mathrm{cofib}\left(M_{hC_p} \xto{V} M\right)
    \]
    together with the `quotient' map $\rho:M\rightarrow X$, which is also
    $S^1$-equivariant.  The $S^1$-spectrum $X$ admits a canonical $S^1$-equivariant map $X \to M^{tC_p}$ induced from the commutative square
    \[
    \xymatrix{
    M_{hC_p} \ar[d]^V \ar[r]^\id & M_{hC_p} \ar[d]^{\Nm_{C_p}} \\
    M \ar[r]^F & M^{hC_p}  
    }
    \]
    by taking vertical cofibers. The composition $X \to M^{tC_p}
    \xto{\rho^{tC_p}} X^{tC_p}$ endows $X$ with the structure of a cyclotomic
    spectrum. Note that the map $M^{tC_p}\to X^{tC_p}$ is an equivalence if $M$ is
    bounded below, since the fiber is given by $(M_{hC_p})^{tC_p}$ which vanishes by the Tate orbit lemma~\cite[Lemma I.2.1]{nikolaus-scholze}. 
    When $M$ is a $p$-typical Cartier module, viewed as a $p$-typical
    topological Cartier module via Example~\ref{example_classical}, the homotopy groups of
    $M/V$ are given in Figure~\ref{fig:homotopygroups} on page~\pageref{fig:homotopygroups}.
\end{example}

Now, we give a rigorous construction of the $\infty$-category of $p$-typical
topological Cartier modules.

\begin{definition}\label{defTCart}
    The $\infty$-category $\TCart_p$ of $p$-typical topological Cartier modules is the pullback
    \begin{equation}\label{eq:tcart}
        \xymatrix{
        \TCart_p\ar[rr]\ar[d]  &&
        \left(\Sp^{BS^1}\right)^{\Delta^2}\ar[d]^{(\ev_1, \partial^1)}\\
        \Sp^{BS^1}\ar[rr]^-{(\id,\Nm_{C_p})}&&\Sp^{BS^1} \times \left(\Sp^{BS^1}\right)^{\Delta^1},
        }
    \end{equation}
    where the bottom arrow sends an object $M$ to the pair
    $(M,M_{hC_p}\xrightarrow{\Nm_{C_p}}M^{hC_p})$ and the right-hand vertical arrow sends a
    $2$-simplex
    $$\xymatrix{
        &M_1\ar[dr]^f&\\
        M_0\ar[rr]_n\ar[ur]^v&&M_2,}$$
    expressing that $n\we f\circ v$, to the pair $(M_1,M_0\xrightarrow{n}M_2)$.
    We will write $M=(M,V_M,F_M,\sigma_M)$ for an object of $\TCart_p$, where $M$ is a spectrum
    with $S^1$-action, $V_M:M_{hC_p}\rightarrow M$, $F_M:M\rightarrow
    M^{hC_p}$, and $\sigma_M$ is a $2$-simplex expressing an equivalence
    $\Nm_{C_p}\we F_M\circ V_M$. 
\end{definition}

For the rest of the section we will establish some facts about the $\infty$-category 
$\TCart_p$ including the fact that it is presentable and a formula for the mapping spaces.
Since this is a bit technical the reader might want to skip the rest of this section on a
first reading.

We recall the $\infty$-category $\CycSp^\Fr_p$ of cyclotomic spectra with Frobenius lifts. The objects are spectra $X$ with $S^1$-action equipped
with an $S^1$-equivariant map $\psi_p\colon X\rightarrow X^{hC_p}$. As an $\infty$-category
$\CycSp^\Fr_p$  is defined as the pullback
\[
\xymatrix{
\CycSp^\Fr_p \ar[rr]\ar[d] && \left(\Sp^{BS^1}\right)^{\Delta^1} \ar[d]^{(\ev_0, \ev_1)} \\
\Sp^{BS^1} \ar[rr]^-{(\id, (-)^{hC_p})} && \Sp^{BS^1} \times \Sp^{BS^1},
}
\]
or as $\LEq(\id, (-)^{hC_p})$ in the language of \cite{nikolaus-scholze}. 

\begin{lemma}\label{pullbackTCart}
    There is a pullback square of stable $\infty$-categories
    \[
    \xymatrix{
    \TCart_p \ar[r] \ar[d] & \Sp^{BS^1} \times \Sp^{BS^1}\ar[d] \\
    \CycSp^\Fr_p \ar[r] & \left(\Sp^{BS^1} \right)^{\Delta^1}
    }
    \]
    where
    \begin{enumerate}
    \item[{\rm (1)}]
    the left vertical functor sends $(M, V, F,\sigma)$ to $(M, F)$;
    \item[{\rm (2)}] the upper horizontal map sends $(M,V,F, \sigma)$ to the pair
    $(M_{hC_p},M^{hC_p}/F)$, where $M^{hC_p}/F$ denotes the cofiber of
    $M\xrightarrow{F}M^{hC_p}$;
    \item[{\rm (3)}] the lower horizontal functor sends $(M,F)$ to the composition $M_{hC_p}
    \xto{\Nm_{C_p}} M^{hC_p}\rightarrow M^{hC_p}/F$; 
     \item[{\rm (4)}]
     the right vertical map sends $(X,Y)$ to the zero map $X \xto{0} Y$. 
    \end{enumerate}
\end{lemma}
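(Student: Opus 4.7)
My plan is to unfold both sides of the proposed pullback and reduce the claim to the universal property of the cofiber in the stable $\infty$-category $\Sp^{BS^1}$. A point of the right-hand pullback consists of (i) an object $(M, F)$ of $\CycSp^\Fr_p$, (ii) a pair $(X, Y)$ of spectra with $S^1$-action, and (iii) an equivalence in $(\Sp^{BS^1})^{\Delta^1}$ between the zero map $X \xrightarrow{0} Y$ and the composite $M_{hC_p} \xrightarrow{\Nm_{C_p}} M^{hC_p} \to M^{hC_p}/F$. Such an equivalence supplies identifications $X \simeq M_{hC_p}$ and $Y \simeq M^{hC_p}/F$ together with a null-homotopy $\eta$ of the composite. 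After absorbing $X$ and $Y$ into their identifications, the pullback is equivalent to the $\infty$-category of triples $((M, F), \eta)$, where $\eta$ is a null-homotopy of $M_{hC_p} \xrightarrow{\Nm_{C_p}} M^{hC_p} \to M^{hC_p}/F$.

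Next I would invoke the universal property of the cofiber. Applying $\MapSp_{\Sp^{BS^1}}(M_{hC_p}, -)$ to the cofiber sequence $M \xrightarrow{F} M^{hC_p} \to M^{hC_p}/F$ produces a fiber sequence of mapping spectra. Comparing fibers over $\Nm_{C_p}$ in the middle term identifies the space of null-homotopies $\eta$ with the space of pairs $(V, \sigma)$, where $V\colon M_{hC_p} \to M$ is an $S^1$-equivariant map and $\sigma$ is an $S^1$-equivariant homotopy witnessing $F \circ V \simeq \Nm_{C_p}$. This is precisely the data required to refine $(M, F)$ to an object of $\TCart_p$ in the sense of Definition~\ref{defTCart}, so both pullbacks parametrize the same data on the level of objects.

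To upgrade this pointwise identification to an equivalence of $\infty$-categories, I would construct the comparison functor explicitly, sending $(M, V, F, \sigma) \in \TCart_p$ to the tuple $((M, F), (M_{hC_p}, M^{hC_p}/F), \eta(\sigma))$, where $\eta(\sigma)$ is the null-homotopy obtained by taking vertical cofibers in the defining commutative square of the topological Cartier module. This functor tautologically fits into the proposed pullback square. By the previous paragraph, it induces an equivalence on fibers over each fixed $(M, F) \in \CycSp^\Fr_p$, and hence is an equivalence of $\infty$-categories.

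The main obstacle will be the bookkeeping required to make the comparison rigorous at the $\infty$-categorical level, rather than merely on objects. A clean alternative is to rewrite the defining pullback of $\TCart_p$ (Definition~\ref{defTCart}) as an iterated limit, factor one of the inner pullbacks through the universal property of the cofiber sequence $M \xrightarrow{F} M^{hC_p} \to M^{hC_p}/F$, and recognize the resulting iterated limit as the proposed pullback. Either way, the essential input is the standard equivalence in a stable $\infty$-category between factoring a morphism through $F$ and providing a null-homotopy of its postcomposition with the projection to $\cofib(F)$.
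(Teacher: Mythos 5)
Your proposal is correct and is essentially the paper's argument: the key input in both is that, in a stable $\infty$-category, factoring a map through $F$ is equivalent to giving a null-homotopy of its composite with the projection to $\cofib(F)$, and the paper carries out precisely your ``clean alternative'' by rewriting the defining pullback of $\TCart_p$ from Definition~\ref{defTCart} as an iterated limit (using the decomposition $\Lambda^2_2\subseteq\Delta^2$ of simplicial sets), inserting $\CycSp^\Fr_p$ as an intermediate pullback, and pasting with the square expressing the null-homotopy/factorization equivalence. One caveat: your first route, deducing the equivalence from fiberwise equivalences over $\CycSp^\Fr_p$, is not justified as stated (fiberwise equivalences only suffice when the projections are (co)Cartesian fibrations and the comparison preserves the (co)Cartesian edges), so the iterated-limit route you sketch at the end is the one to follow.
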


\begin{proof}
    We consider the diagram of stable $\infty$-categories
    \begin{equation}\label{bigsquarew}
    \xymatrix{
    \TCart_p \ar[rr] \ar[dd] &&   \left(\Sp^{BS^1}\right)^{\Delta^2} \ar[d]^{i} \\
    &&  \left(\Sp^{BS^1}\right)^{\Lambda_2^2} \ar[d]^{(\ev_1,\partial_1)} \ar[rr]^{\partial_0} && \left(\Sp^{BS^1}\right)^{\Delta^1}\ar[d]^{(\ev_0, \ev_1)} \\
     \Sp^{BS^1} \ar[rr]^-{(\id, \Nm_{C_p})}  && \Sp^{BS^1}\times  \left(\Sp^{BS^1}\right)^{\Delta^1}  \ar[rr]^{\id \times \ev_1 } && \Sp^{BS^1} \times \Sp^{BS^1}
    }
    \end{equation}
    where the left hand square is the defining pullback for $\TCart_p$ as in
    Definition \ref{defTCart} and the right hand side is induced from the
    diagram of simplicial sets\footnote{Note that we use $\ev_i$ for evaluation
    at the vertex $i$ in $\Delta^n$. For example for $\Delta^1$ we have that
$\ev_0$ corresponds to $\partial^1$ and $\ev_1$ to $\partial^0$.}
    \[
    \xymatrix{
    \Delta^2 \\
    \Lambda_2^2 \ar[u]^{i} && \Delta^1 \ar[ll]_{\partial^0} \\
   \Delta^0  \sqcup \Delta^1 \ar[u]^{\{1\}\sqcup \partial^1} && \Delta^0 \sqcup \Delta^0
    \ar[u]_{\partial^1\sqcup \partial^0} \ar[ll]_{\id \sqcup \partial^0}
    }
    \]
    with $i\colon \Lambda_2^2 \to \Delta^2$ the `inclusion' and $\{1\}$ denotes the map that hits the
    object $\{1\} \in \Lambda_2^2$. The square in this diagram of simplicial sets is a pushout
    of simplicial sets and the lower horizontal map is a monomorphism. Therefore it is a pushout
    in $\Cat_\infty$ and thus the right hand square in~\eqref{bigsquarew}  is 
    a pullback of stable $\infty$-categories. We can insert a further pullback and obtain a diagram

    \begin{equation}\label{finalbig}
    \xymatrix{
    \TCart_p \ar[rr] \ar[d] &&   \left(\Sp^{BS^1}\right)^{\Delta^2} \ar[d]^{i} \\
    \CycSp^\Fr_p \ar[rr]\ar[d] &&  \left(\Sp^{BS^1}\right)^{\Lambda_2^2} \ar[d]^{(\ev_1,\partial_1)} \ar[rr]^{\partial_0} && \left(\Sp^{BS^1}\right)^{\Delta^1}\ar[d]^{(\ev_0, \ev_1)} \\
     \Sp^{BS^1} \ar[rr]^-{(\id , \Nm_{C_p})}  &&  \Sp^{BS^1}\times \left(\Sp^{BS^1}\right)^{\Delta^1}  \ar[rr]^{\id \times \ev_1 } && \Sp^{BS^1} \times \Sp^{BS^1}
    }
    \end{equation}
    where the new term is equivalent to $\CycSp^\Fr_p$ by pasting of pullback
    squares and the left upper square is a pullback for the same reason.
    Finally we use  that for every stable $\infty$-category $\Dscr$ there is a
    pullback square of the form
    \begin{equation}\label{stablepullback}
    \xymatrix{
    \Dscr^{\Delta^2} \ar[d]^i\ar[rr]^{(\ev_0, \cofib(\partial_1))} && \Dscr \times \Dscr \ar[d] \\
    \Dscr^{\Lambda_2^2} \ar[rr] && \Dscr^{\Delta^1},
    }
    \end{equation}
    where the lower horizontal map sends a diagram
    \begin{equation}\label{horn}
    \xymatrix{
    & B\ar[dr] & \\
    A\ar[rr] & & C   }
    \end{equation}
    to the composition $A \to C \to C/B$ and the right hand vertical map sends
    $(X,Y)$ to the zero morphism $X \xto{0} Y$. This pullback is just a
    manifestation of the fact that filling a diagram as \eqref{horn} is
    equivalent to choosing a nullhomotopy of the composition $A \to B \to
    C/B$.

    To finish the proof we paste together the upper square in diagram \eqref{finalbig} with the square \eqref{stablepullback} for $\Dscr = \Sp^{BS^1}$. 
\end{proof}

Now, we can give the desired formula for the mapping spectra in $\TCart_p$. Let
$M = (M, V_M, F_M, \sigma_M)$ and  $N = (N, V_N, F_N, \sigma_N)$ be topological
Cartier modules. There is a map
\[
\vartheta: \MapSp_{\CycSp^\Fr_p}(M, N) \to \MapSp_{\Sp^{BS^1}}\left(M_{hC_p}, \fib(F_N)\right)
\]
which sends a map $g: M \to N$ in $\CycSp^\Fr_p$ to the factorization of the map
\[
g V_M - V_N g_{hC_p} : M_{hC_p} \to N
\]
through the map $\fib(F_N) \to N$ induced from the canonical nullhomotopy 
\[
F_N(g V_M - V_n g_{hC_p}) \simeq g F_M V_M - F_N V_N g_{hC_p} \simeq 0,
\]
arising from the natural transformation $(-)_{hC_p}\xrightarrow{\Nm_{C_p}}(-)^{hC_p}$.

\begin{proposition}\label{mappingspectrum}
    For every pair of topological Cartier modules $M$ and $N$ there is a fiber  sequence 
    \[
    \MapSp_{\TCart_p}(M, N) \to \MapSp_{\CycSp^\Fr_p}(M, N) \xto{\vartheta} \MapSp_{\Sp^{BS^1}}\left(M_{hC_p}, \fib(F_N)\right)
    \]
    of spectra.
\end{proposition}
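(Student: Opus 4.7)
The strategy is to leverage the pullback square of Lemma \ref{pullbackTCart}. Since this is a pullback of stable $\infty$-categories, it induces a pullback of mapping spectra. Writing $\tilde F_M$ for the composite $M_{hC_p} \xrightarrow{\Nm_{C_p}} M^{hC_p} \to M^{hC_p}/F_M$ and analogously $\tilde F_N$, we obtain
\[
    \MapSp_{\TCart_p}(M,N) \simeq \MapSp_{\CycSp^\Fr_p}(M,N)\, \times_{\mathcal A}\, \mathcal B,
\]
where $\mathcal A = \MapSp_{(\Sp^{BS^1})^{\Delta^1}}(\tilde F_M, \tilde F_N)$ and $\mathcal B = \MapSp_{\Sp^{BS^1}}(M_{hC_p},N_{hC_p}) \times \MapSp_{\Sp^{BS^1}}(M^{hC_p}/F_M, N^{hC_p}/F_N)$.

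The topological Cartier data supplies canonical trivializations $\tilde F_M \simeq 0$ and $\tilde F_N \simeq 0$: the datum $\sigma_M$ identifies $\Nm_{C_p} \simeq F_M V_M$, and postcomposing with the projection to the cofiber of $F_M$ yields a canonical nullhomotopy of $\tilde F_M$. Using these trivializations, $\mathcal A$ is equivalent to the mapping spectrum between zero arrows, which fits into a split fiber sequence
\[
    \Omega\MapSp_{\Sp^{BS^1}}(M_{hC_p}, N^{hC_p}/F_N) \to \mathcal A \to \mathcal B.
\]
The map $\mathcal B \to \mathcal A$ appearing in the pullback is precisely the canonical section of this sequence (the zero nullhomotopy of a square of zero arrows), so the pullback simplifies to the fiber of the composite
\[
    \vartheta' \colon \MapSp_{\CycSp^\Fr_p}(M,N) \longrightarrow \mathcal A \longrightarrow \Omega\MapSp_{\Sp^{BS^1}}(M_{hC_p}, N^{hC_p}/F_N).
\]
Since the cofiber sequence $N \xrightarrow{F_N} N^{hC_p} \to N^{hC_p}/F_N$ gives $\Omega(N^{hC_p}/F_N) \simeq \fib(F_N)$, the target is canonically identified with $\MapSp_{\Sp^{BS^1}}(M_{hC_p}, \fib(F_N))$.

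It remains to identify $\vartheta'$ with the map $\vartheta$ in the statement. For a morphism $g$ in $\CycSp^\Fr_p$, its image in $\mathcal A$ is the commutative square with vertical arrows $g_{hC_p}$ and $\overline{g^{hC_p}}$, whose commutativity nullhomotopy is supplied by naturality of $\Nm_{C_p}$; after matching vertical maps, the image of $\mathcal B \to \mathcal A$ uses instead the zero nullhomotopy obtained from the Cartier trivializations of $\tilde F_M, \tilde F_N$. The loop class measuring the difference of these two routes is represented at the level of spectra by the map $gV_M - V_N g_{hC_p} \colon M_{hC_p} \to N$, and the canonical nullhomotopy of its postcomposition with $F_N$---arising as the chain $F_N g V_M \simeq g^{hC_p} F_M V_M \simeq g^{hC_p}\Nm_{C_p} \simeq \Nm_{C_p} g_{hC_p} \simeq F_N V_N g_{hC_p}$ using the $\CycSp^\Fr_p$-compatibility of $g$, $\sigma_M$, naturality of the norm, and $\sigma_N$---is exactly the lift to $\fib(F_N)$ defining $\vartheta$.

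The main obstacle is this last identification. The formal output of the pullback is a class in $\Omega\MapSp(M_{hC_p}, N^{hC_p}/F_N)$ encoded as a loop of nullhomotopies, and pinning down its image in $\MapSp(M_{hC_p}, \fib(F_N))$ as the explicit map $\vartheta$ of the statement requires a careful comparison of these two chains of nullhomotopies with the canonical equivalence $\Omega(N^{hC_p}/F_N) \simeq \fib(F_N)$. The rest of the argument is an essentially formal consequence of pullbacks of mapping spectra in stable $\infty$-categories.
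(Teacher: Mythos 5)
Your proposal is correct and takes essentially the same route as the paper: both pass from the pullback square of Lemma~\ref{pullbackTCart} to a pullback of mapping spectra, split the mapping spectrum in $\left(\Sp^{BS^1}\right)^{\Delta^1}$ off against $\MapSp_{\Sp^{BS^1}}(M_{hC_p},N_{hC_p})\times\MapSp_{\Sp^{BS^1}}(M^{hC_p}/F_M,N^{hC_p}/F_N)$ (the paper via the retraction induced by $(\ev_0,\ev_1)$, you equivalently via the canonical trivializations of the norm composites supplied by $\sigma_M,\sigma_N$), and identify the complementary summand with $\MapSp_{\Sp^{BS^1}}\left(M_{hC_p},\fib(F_N)\right)$ using $\Omega(N^{hC_p}/F_N)\simeq\fib(F_N)$. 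The final identification of the resulting map with $\vartheta$, which you rightly flag as the delicate step and sketch via the chain of nullhomotopies through $F_NV_N\simeq\Nm_{C_p}\simeq F_MV_M$, is precisely what the paper compresses into the sentence ``tracing through the identification lets us identify the maps as stated.''
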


\begin{proof}
From the pullback square of Lemma~\ref{pullbackTCart} we get an induced pullback square on mapping spectra of the form
\[
\xymatrix{
\MapSp_{\TCart_p}(M, N) \ar[rr]\ar[d] &&  \MapSp_{\Sp^{BS^1}}(M_{hC_p}, N_{hC_p}) \times
    \MapSp_{\Sp^{BS^1}} (M^{hC_p}/F_M, N^{hC_p}/F_N) \ar[d] \\
\MapSp_{\CycSp^\Fr_p}(M, N) \ar[rr] &&  \MapSp_{\left(\Sp^{BS^1}\right)^{\Delta^1}}(M_{hC_p}
        \to M^{hC_p}/F_M, N_{hC_p} \to N^{hC_p}/F_N).
}
\]
Thus, to establish the fiber sequence in question, we have to identify the
cofiber of the right hand map. The right hand map has an obvious section which
is induced from the functor $(\Sp^{BS^1})^{\Delta^1} \xto{(\ev_0, \ev_1)}
\Sp^{BS^1} \times \Sp^{BS^1}$. The fiber of this section is equivalent to
the mapping spectrum
\[
\MapSp_{\Sp^{BS^1}}\left(M_{hC_p}, \Omega (N^{hC_p}/F_N) \right).
\]
Since $ \Omega (N^{hC_p}/F_N) \simeq \fib(F_N)$ this shows that the cofiber of the map
$\MapSp_{\TCart_p}(M, N) \to \MapSp_{\CycSp^\Fr}(M, N)$ has the claimed homotopy type.
Tracing through the identification lets us identify the maps as stated. 
\end{proof}

\begin{remark}\label{remark_fiber}
For every $S^1$-spectrum $M$ we can consider $M_{hC_p}$ as a cyclotomic
spectrum with Frobenius lift, where we choose the Frobenius lift to be the zero
map $M_{hC_p} \to (M_{hC_p})^{hC_p}$. For every cyclotomic spectrum $N$
with Frobenius lift $F_N: N \to N^{hC_p}$ we get an equivalence
\[
\MapSp_{\Sp^{BS^1}}\left(M_{hC_p}, \fib(F_N)\right) \simeq 
\MapSp_{\CycSp^\Fr}(M_{hC_p}, N)
\]
as one can directly verify from the description of mapping spectra in
$\CycSp_p^\Fr$ using~\cite{nikolaus-scholze}*{Proposition~II.1.5}. Using this
equivalence we can rewrite the fiber sequence of Proposition
\ref{mappingspectrum} as
\[
\MapSp_{\TCart_p}(M, N) \to \MapSp_{\CycSp^\Fr_p}(M, N) \to \MapSp_{\CycSp^\Fr_p}(M_{hC_p}, N),
\]
in which the right hand map can be described as the map sending 
$g: M \to N$ to the map $g V_M - V_n f_{hC_p} : M_{hC_p} \to N$ in $\Sp^{BS^1}$ which
canonically refines to a map in $\CycSp^\Fr_p$. This fact will be useful in Section \ref{sub:adjunction}. 
\end{remark}
There is also a `dual' version of Proposition \ref{mappingspectrum} which we record to use later. We consider the $\infty$-category 
\[
\Alg_{(-)_{hC_p}}\left(\Sp^{BS^1}\right) := 
\LEq\left( \xymatrix{\Sp^{BS^1}\ar[rr]<2pt>^{(-)_{hC_p}}\ar[rr]<-2pt>_\id && \Sp^{BS^1}} \right)
\]
of $(-)_{hC_p}$-algebras. We will also abbreviate this $\infty$-category as $\Alg_{(-)_{hC_p}}$.
For every topological Cartier module $(M, F, V, \sigma)$ we get an object $( M, V) \in \Alg_{(-)_{hC_p}}$. 

\begin{proposition}\label{prop_algebra}
For every pair of topological Cartier modules $M, N$ there is a fiber sequence
\[
\MapSp_{\TCart_p}(M, N) \to \MapSp_{\Alg_{(-)_{hC_p}}}(M, N) \to \MapSp_{\Sp^{BS^1}}\left(\cofib{(V_M)}, N^{hC_p} \right) 
\]
where the right hand map admits a dual description to the of  Proposition \ref{mappingspectrum}.
\end{proposition}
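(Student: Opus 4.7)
The strategy is to mirror the proof of Proposition~\ref{mappingspectrum} in dual form, replacing $\CycSp^\Fr_p$ by $\Alg_{(-)_{hC_p}}$ and the outer horn $\Lambda_2^2$ by the other outer horn $\Lambda_0^2$. The first step is to establish a dual pullback square
\[
\xymatrix{
\TCart_p \ar[rr] \ar[d] & & \Sp^{BS^1} \times \Sp^{BS^1} \ar[d] \\
\Alg_{(-)_{hC_p}} \ar[rr] & & \bigl(\Sp^{BS^1}\bigr)^{\Delta^1}
}
\]
in which the left vertical sends $(M,V,F,\sigma)$ to the underlying pair $(M,V)$; the upper horizontal sends $(M,V,F,\sigma)$ to $(\fib(V_M), M^{hC_p})$; the lower horizontal sends $(M,V)$ to the composite $\alpha_M\colon\fib(V_M)\to M_{hC_p}\xrightarrow{\Nm_{C_p}}M^{hC_p}$; and the right vertical sends $(X,Y)$ to the zero map $X\xrightarrow{0}Y$. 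The square commutes because the 2-simplex $\sigma_M$ in a topological Cartier module provides a canonical nullhomotopy of $\alpha_M$: the composite $\fib(V_M)\to M_{hC_p}\xrightarrow{V_M}M$ is already null, and $\Nm_{C_p}\simeq F_M\circ V_M$ by $\sigma_M$.

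To prove this is a pullback, I would follow the pasting strategy of Lemma~\ref{pullbackTCart} with $\Lambda_2^2$ replaced by $\Lambda_0^2$. The key input is the formal dual of the stable pullback~\eqref{stablepullback}: in a stable $\infty$-category $\Dscr$, there is a pullback
\[
\xymatrix{
\Dscr^{\Delta^2} \ar[d] \ar[rr]^-{(\fib(\partial_2),\,\ev_2)} && \Dscr \times \Dscr \ar[d] \\
\Dscr^{\Lambda_0^2} \ar[rr] && \Dscr^{\Delta^1},
}
\]
where the lower horizontal sends a horn $(v\colon A\to B,\,n\colon A\to C)$ to the composite $\fib(v)\to A\xrightarrow{n}C$. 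This square encodes the fact that, using the cofiber sequence $\fib(v)\to A\xrightarrow{v}B$, extending the horn to a 2-simplex (i.e., providing $f\colon B\to C$ together with $fv\simeq n$) is equivalent to choosing a nullhomotopy of $\fib(v)\to A\xrightarrow{n} C$. Combining this with the tautological description of $\TCart_p$ as $\Lambda_0^2$-diagrams in $\Sp^{BS^1}$ with $A,B,C,n$ rigidified to $M_{hC_p},M,M^{hC_p},\Nm_{C_p}$ yields the desired pullback square as in diagram~\eqref{finalbig}.

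Applying $\MapSp(M,N)$ to the pullback square produces a pullback of spectra, and as in the proof of Proposition~\ref{mappingspectrum} the right vertical admits a retraction induced by $(\ev_0,\ev_1)\colon\bigl(\Sp^{BS^1}\bigr)^{\Delta^1}\to \Sp^{BS^1}\times\Sp^{BS^1}$. The cofiber of this retraction (equivalently, the fiber of the right vertical viewed as a map of spectra) is identified with
\[
\Omega\,\MapSp_{\Sp^{BS^1}}(\fib(V_M), N^{hC_p})\simeq \MapSp_{\Sp^{BS^1}}(\Sigma\fib(V_M), N^{hC_p})\simeq \MapSp_{\Sp^{BS^1}}(\cofib(V_M), N^{hC_p}),
\]
using the fiber--cofiber duality $\Sigma\fib(V_M)\simeq\cofib(V_M)$ available in a stable $\infty$-category. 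Since the cofiber of the left vertical agrees with the cofiber of the right vertical in a pullback of spectra, this is the third term of the desired fiber sequence. Tracing the identifications shows that the second map sends $g\colon M\to N$, a map commuting with $V$, to the induced factorization of $F_N g - g^{hC_p} F_M\colon M\to N^{hC_p}$ through $\cofib(V_M)$ (the factorization exists because this difference vanishes on $V_M(M_{hC_p})$ by the compatibility $gV_M\simeq V_N g_{hC_p}$ together with $F_N V_N\simeq\Nm\simeq F_M V_M$); this is the promised dual description. The only subtle point is verifying the dual stable pullback square for $\Lambda_0^2$; once that is in place, the rest of the argument is a direct formal dualization.
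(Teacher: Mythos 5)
Your proposal is correct and follows essentially the same route as the paper: the paper's proof is exactly the dualization you describe, via the pullback square of $\TCart_p$ over $\Alg_{(-)_{hC_p}}$ with upper horizontal $(M,V,F,\sigma)\mapsto(\fib(V_M),M^{hC_p})$, lower horizontal $(M,V)\mapsto(\fib(V_M)\to M_{hC_p}\xrightarrow{\Nm_{C_p}}M^{hC_p})$, and zero map on the right, followed by the same mapping-spectrum computation. Your extra details — the $\Lambda_0^2$ analogue of the stable pullback square, the identification $\Omega\MapSp(\fib(V_M),N^{hC_p})\simeq\MapSp(\cofib(V_M),N^{hC_p})$ via $\Sigma\fib\simeq\cofib$, and the explicit dual description of $\vartheta$ — correctly fill in what the paper leaves as "entirely dual."
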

\begin{proof}
The proof is entirely dual to the one of Proposition~\ref{mappingspectrum}:  we first get a pullback square of stable $\infty$-categories
\[
\xymatrix{
\TCart_p \ar[r] \ar[d] & \Sp^{BS^1} \times \Sp^{BS^1}\ar[d] \\
\Alg_{(-)_{hC_p}}\ar[r] & \left(\Sp^{BS^1} \right)^{\Delta^1}
}
\]
similar to the one of Lemma \ref{pullbackTCart}.
Here the upper horizontal map sends $(M, V, F, \sigma)$ to $(\fib(V), M^{hC_p})$,  the lower
horizontal map sends $(M,V)$ to the map $(\fib(V) \to M_{hC_p} \xto{\Nm} M^{hC_p})$  and the
right vertical map sends a pair $(X,Y)$ to the zero morphism from $X$ to $Y$. Then as in the
proof of Proposition~\ref{mappingspectrum} we compute the mapping space in this pullback to get the
result.
\end{proof}

\begin{proposition}\label{prop:tdforget}
    The $\infty$-category $\TCart_p$ is stable and presentable and the forgetful functor
    $\TCart_p\rightarrow\Sp^{BS^1}$ preserves limits and colimits.
\end{proposition}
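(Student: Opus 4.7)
The plan is to derive all three assertions from the pullback description~(\ref{eq:tcart}), with the only real work arising in the colimit case. For stability, every $\infty$-category in the square is stable and both structure maps are exact, since they are built from restriction/evaluation on functor categories and from the norm natural transformation $\Nm_{C_p}\colon(-)_{hC_p}\rightarrow(-)^{hC_p}$, which is a natural transformation of exact functors $\Sp^{BS^1}\rightarrow\Sp^{BS^1}$. For presentability and limit preservation, one notes additionally that every corner is presentable and that both maps are accessible (since $(-)_{hC_p}$ and $(-)^{hC_p}$ are both accessible) and preserve limits (since $(-)^{hC_p}$ is a right adjoint and $(\ev_1,\partial^1)$ is a restriction/evaluation on functor categories). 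Hence~(\ref{eq:tcart}) is a pullback in $\PrR$ along limit-preserving accessible functors, which yields presentability of $\TCart_p$ together with the fact that the forgetful functor $\TCart_p\rightarrow\Sp^{BS^1}$ preserves limits.

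The delicate point is colimit preservation by the forgetful functor, where the pullback formalism breaks down because $(-)^{hC_p}$ does not commute with colimits in general. My plan is to construct colimits by hand. Given a diagram $\{M_i\}$ in $\TCart_p$, set $M:=\colim_i M_i$ in $\Sp^{BS^1}$. Using $M_{hC_p}\simeq\colim_i (M_i)_{hC_p}$, the maps $V_i$ assemble into a Verschiebung $V\colon M_{hC_p}\rightarrow M$; composing $\colim_i F_i$ with the canonical assembly map $\colim_i M_i^{hC_p}\rightarrow M^{hC_p}$ yields a Frobenius $F\colon M\rightarrow M^{hC_p}$; and the required coherence $FV\simeq\Nm_{C_p}$ assembles from the $\sigma_{M_i}$ using naturality of the norm transformation in the source variable.

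To verify the universal property, I would appeal to the mapping-spectrum formula of Proposition~\ref{mappingspectrum}, which presents $\MapSp_{\TCart_p}(M,N)$ as the fiber of a map between mapping spectra in $\CycSp^\Fr_p$ and $\Sp^{BS^1}$, both of which convert the constructed $M$ into the appropriate limit of fiber sequences because the analogous forgetful functors in those settings preserve colimits (for $\CycSp^\Fr_p$ via the lax equalizer description and accessibility of $(-)^{hC_p}$). This identifies the construction above as the colimit in $\TCart_p$ and completes the argument. The main obstacle is precisely the failure of $(-)^{hC_p}$ to preserve colimits, which prevents a purely formal proof and forces the explicit construction together with an appeal to the mapping-spectrum formula.
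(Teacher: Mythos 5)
Your colimit argument is sound and is essentially the paper's own: stability from the pullback of stable $\infty$-categories along exact functors, colimits built by hand on underlying objects using that $(-)_{hC_p}$ preserves colimits, the norm-compatibility assembled by naturality, and the universal property checked against the fiber sequence of Proposition~\ref{mappingspectrum} together with the fact that $\CycSp_p^\Fr\rightarrow\Sp^{BS^1}$ preserves colimits.

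The gap is in the part you tried to make formal. The defining square~\eqref{eq:tcart} is \emph{not} a pullback along morphisms of $\PrR$: the bottom functor $(\id,\Nm_{C_p})$ sends $M$ to the pair $\bigl(M,\,M_{hC_p}\rightarrow M^{hC_p}\bigr)$, and since limits in $\Sp^{BS^1}\times(\Sp^{BS^1})^{\Delta^1}$ are computed pointwise, limit preservation would force $M\mapsto M_{hC_p}$ to commute with limits. Homotopy orbits do not commute with limits in general (Lemma~\ref{lem:littlelimits} gives this only under boundedness hypotheses, and the failure is exactly what is exploited in Proposition~\ref{prop_not}), so $(\id,\Nm_{C_p})$ is not limit-preserving; it is not colimit-preserving either, since $(-)^{hC_p}$ sits in the target, which is why no purely formal $\PrL$/$\PrR$ pullback argument is available. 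Note also that in a pullback $\Ascr\times_{\Escr}\Bscr$ one cannot get by with only one of the two functors to $\Escr$ preserving limits: the candidate limit only receives a canonical comparison map, not an equivalence, so the conclusion does not follow from the weaker hypothesis. The repair is to argue as you did for colimits, but dually: accessibility of $\TCart_p$ follows from the pullback of accessible $\infty$-categories along accessible functors, presentability then follows since you have already constructed all colimits, and for limits you build the structure on $\lim_i M_i$ by hand --- the Verschiebung via the canonical map $(\lim_i M_i)_{hC_p}\rightarrow\lim_i\bigl((M_i)_{hC_p}\bigr)$ followed by $\lim_i V_i$, the Frobenius via $\lim_i F_i$ and the equivalence $\lim_i\bigl(M_i^{hC_p}\bigr)\simeq(\lim_i M_i)^{hC_p}$ --- and then verify the universal property with Proposition~\ref{mappingspectrum}, using that $(-)^{hC_p}$ preserves limits and that the forgetful functor $\CycSp_p^\Fr\rightarrow\Sp^{BS^1}$ preserves limits. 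This is exactly how the paper treats the limit case.
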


\begin{proof} 
    We follow the proof of~\cite{nikolaus-scholze}*{II.1.5}. The
    $\infty$-category $\TCart_p$ is
    stable because it is the pullback of stable $\infty$-categories along exact functors
    (see~\cite{ha}*{1.1.4.2}).
    Since $\TCart_p$ is the pullback of accessible $\infty$-categories along accessible functors,
    it is accessible itself by~\cite{htt}*{5.4.6.6}. To see that $\TCart_p$ admits colimits and
    that $\TCart_p\rightarrow\Sp^{BS^1}$ preserves them, first suppose that
    $K\rightarrow\TCart_p$ is a
    diagram which admits an extension to $K^\triangleright\rightarrow\TCart_p$ such that the
    induced map $K^\triangleright\rightarrow\Sp^{BS^1}$ is a colimit diagram. Since
    $(-)_{hC_p}$ preserves colimits and since the forgetful functor
    $\CycSp_p^\Fr\rightarrow\Sp^{BS^1}$ preserves colimits
    (see~\cite{nikolaus-scholze}*{Proposition~II.1.5}), it follows from Proposition~\ref{mappingspectrum} 
    that $K^\triangleright\rightarrow\TCart_p$ is a
    colimit too. Now, let $K\rightarrow\TCart_p$ be an arbitrary diagram where $K$ is a small
    simplicial set. The composition $K\rightarrow\TCart_p\rightarrow\Sp^{BS^1}$ admits a colimit
    because $\Sp^{BS^1}$ is presentable. Let $$(\colim_{k\in
    K}M(k))_{hC_p}\we\colim_{k\in K}(M(k)_{hC_p})\xrightarrow{\colim
    V_{M(k)}}\colim_{k\in K}M(k)$$ be the Verschiebung. Similarly, let $$\colim_{k\in
    K}M(k)\xrightarrow{\colim_{k\in K}F_{M(k)}}\colim_{k\in
    K}M(k)^{hC_p}\rightarrow(\colim_{k\in K}M(k))^{hC_p}$$ define the Frobenius
    map. It is enough to show that the composition is equivalent to $\Nm_{C_p}$.
    But, the composition is equivalent to
    \begin{equation}\label{eq:nmcomposition}\colim_{k\in
    K}(M(k)_{hC_p})\xrightarrow{\colim_{k\in K}\Nm_{C_p}|_{M(k)}}\colim_{k\in
    K}(M(k)^{hC^p})\rightarrow(\colim_{k\in K}M(k))^{hC_p}.
    \end{equation}
    Since $\Nm_{\C_p}$ is a natural transformation, there is a canonical
    commutative diagram
    $$\xymatrix{
    \colim_{k\in K}M(k)_{hC_p}\ar[r]\ar[d]^{\colim{\Nm_{C_p}|_{M(k)}}}  &
    \left(\colim_{k\in K}M(k)\right)_{hC_p}\ar[d]^{\Nm_{C_p}|_{\colim M(k)}}\\
        \colim_{k\in K}M(k)^{hC_p}\ar[r]&\left(\colim_{k\in
    K}M(k)\right)^{hC_p}.
    }$$
    The top arrow is an equivalence, and we see that~\eqref{eq:nmcomposition}
    is the desired norm.
    The proof that $\TCart_p\rightarrow\Sp^{BS^1}$ preserves limits is the same,
    using Proposition~\ref{mappingspectrum}, the fact that the forgetful functor
    $\CycSp_p^\Fr\rightarrow\Sp^{BS^1}$ preserves limits
    by~\cite{nikolaus-scholze}*{Proposition~II.1.5}, and the fact
    that $(-)^{hC_p}$ preserves limits.
\end{proof}

\begin{remark}\label{rempullback}
    The $\infty$-category $\TCart_p$ is equivalent to the $\infty$-category of pullback squares in $\Sp^{BS^1}$ of the form
    \[
    \xymatrix{
    M \ar[r]\ar[d] & X\ar[d] \\
    M^{hC_p} \ar[r] & M^{tC_p}
    }
    \]
    (where $X = M/V$). This description is akin to Tate squares and shows that a topological
    Cartier module $M$ gives rise to a genuine $C_p$-spectrum\footnote{The
    $C_p$-spectrum $M$ admits an $S^1$-action which is compatible with the
    $C_p$-equivariant structure, so it is really an $S^1$-spectrum that is
    genuine for the family consisting only of $C_p$ and the trivial group.} whose
    categorical fixed points are equivalent to the underlying spectrum. We will make this analogy rigorous in Section \ref{genuineTCart}.
    This category of pullback squares has an obvious symmetric monoidal
    structure (algebras are such that all objects are algebras and all
    maps are algebra maps), which will be discussed in more detail in future
    work (see also Section~\ref{sec:monoidal}).
\end{remark}

\begin{remark}\label{rem:raynaudcartier}
    Proposition \ref{prop:tdforget} formally implies that $\TCart_p$ has a single
    compact generator $K \in \TCart_p$: we denote the left adjoint to the forgetful
    functor $\TCart_p \to \Sp$ by $L$ and set $K = L(\SS)$. Then $K$ is compact
    since if $M\we\colim_iM_i$ is a filtered colimit in $\TCart_p$, then
    \[
    \MapSp_{\TCart_p}(K, \colim_i M_i) \simeq \MapSp_\Sp(\SS, \colim_i M_i) \simeq
    \colim_i \MapSp_\Sp(\SS, M_i) \simeq \colim_i\MapSp_{\TCart_p}(K,M_i).
    \]
    It is a generator because $\MapSp_{\TCart_p}(K, M) \we 0$ implies by adjunction
    that $M \we 0$. It thus follows that $\TCart_p$ is equivalent to
    the $\infty$-category of module spectra over 
    \[
    \MapSp_{\TCart_p}(K, K) = \MapSp_\Sp(\SS, K) \simeq K.
    \]
    This endomorphism ring spectrum is a spectral version of the Raynaud--Cartier
    ring which controls the operations on the de Rham--Witt complex. We will analyze
    this ring spectrum in future work.
\end{remark}

\subsection{The $t$-structure on $\TCart_p$}\label{sub:ttcm}

We introduce a $t$-structure on $\TCart_p$ which will turn out to be compatible
via the constructions in Examples~\ref{TrDieu} and~\ref{ex:tcm2cyc} with the
$t$-structure introduced in Section~\ref{sec:cyct} on cyclotomic spectra.

\begin{definition}
    Let $(\TCart_p)_{\geq 0}$
    denote the full subcategory of topological Cartier modules whose underlying
    spectrum is connective and let $(\TCart_p)_{\leq 0}$ denote the full subcategory whose
    underlying spectrum is coconnective. We will call the objects of these
    $\infty$-categories the connective and coconnective $p$-typical topological Cartier
    modules, respectively.
\end{definition}

\begin{proposition}\label{prop:tdt}
    The pair $((\TCart_p)_{\geq 0},(\TCart_p)_{\leq 0})$ defines an accessible
    $t$-structure on $\TCart_p$
    with the following properties:
    \begin{enumerate}
        \item[{\rm (i)}] the forgetful functor $\TCart_p\rightarrow\Sp^{BS^1}$ is $t$-exact;
        \item[{\rm (ii)}] the functor $(-)/V\colon\TCart_p\rightarrow\CycSp_p$ constructed
            in Example~\ref{ex:tcm2cyc} is right $t$-exact;
        \item[{\rm (iii)}] the $t$-structure is compatible with filtered colimits;
        \item[{\rm (iv)}] the $t$-structure is left and right complete;
        \item[{\rm (v)}] the heart $\TCart^\heart_p$ is equivalent to the abelian
            category of abelian groups equipped with endomorphisms $V$ and $F$
            such that $FV=p$.
    \end{enumerate}
\end{proposition}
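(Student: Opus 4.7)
The plan is to invoke~\cite{ha}*{1.4.4.11} to produce the $t$-structure and then identify its coconnective part. The subcategory $(\TCart_p)_{\geq 0}$ is the preimage $U^{-1}((\Sp^{BS^1})_{\geq 0})$ under the forgetful functor $U\colon\TCart_p\to\Sp^{BS^1}$. Since $U$ preserves colimits (Proposition~\ref{prop:tdforget}) and $(\Sp^{BS^1})_{\geq 0}$ is presentable, closed under colimits, and closed under extensions, so is $(\TCart_p)_{\geq 0}$. Thus~\cite{ha}*{1.4.4.11} endows $\TCart_p$ with an accessible $t$-structure whose connective part is $(\TCart_p)_{\geq 0}$; let $(\TCart_p)^{\leq 0}_\flat$ denote its coconnective part pending identification with $(\TCart_p)_{\leq 0}$.

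For the inclusion $(\TCart_p)_{\leq 0}\subseteq(\TCart_p)^{\leq 0}_\flat$, let $M\in(\TCart_p)_{\geq 1}$ and $N$ with $UN\in(\Sp^{BS^1})_{\leq 0}$. Proposition~\ref{mappingspectrum} provides a fiber sequence
$$\MapSp_{\TCart_p}(M,N)\to\MapSp_{\CycSp^\Fr_p}(M,N)\to\MapSp_{\Sp^{BS^1}}(M_{hC_p},\fib(F_N)),$$
and the lax equalizer description of $\CycSp^\Fr_p$ realizes the middle term as the fiber of $\MapSp_{\Sp^{BS^1}}(M,N)\to\MapSp_{\Sp^{BS^1}}(M,N^{hC_p})$. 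Since $(-)_{hC_p}$ is right $t$-exact and $(-)^{hC_p}$ is left $t$-exact, the objects $M_{hC_p}\in(\Sp^{BS^1})_{\geq 1}$ and $N^{hC_p},\fib(F_N)\in(\Sp^{BS^1})_{\leq 0}$ force every mapping spectrum above into $\Sp_{\leq -1}$, whence $\Map_{\TCart_p}(M,N)\simeq 0$.

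The main obstacle is the reverse inclusion, which I plan to establish by lifting the Postnikov truncation from $\Sp^{BS^1}$ to $\TCart_p$. Given $M\in\TCart_p$, the underlying spectrum of the candidate $\tau_{\leq 0}M$ is $\tau_{\leq 0}UM$. The Frobenius is forced by universality of the truncation: $(\tau_{\leq 0}UM)^{hC_p}\in(\Sp^{BS^1})_{\leq 0}$ by left $t$-exactness of $(-)^{hC_p}$, and the composite $M\xrightarrow{F_M}M^{hC_p}\to(\tau_{\leq 0}UM)^{hC_p}$ factors uniquely through $\tau_{\leq 0}UM$. The Verschiebung is subtler since $(\tau_{\leq 0}UM)_{hC_p}$ need not be coconnective; instead, I observe that the map $UM_{hC_p}\to(\tau_{\leq 0}UM)_{hC_p}$ has fiber $(\tau_{\geq 1}UM)_{hC_p}\in(\Sp^{BS^1})_{\geq 1}$, so the obstruction to lifting $UM_{hC_p}\xrightarrow{V_M}UM\to\tau_{\leq 0}UM$ across this map lies in $\MapSp_{\Sp^{BS^1}}((\tau_{\geq 1}UM)_{hC_p},\tau_{\leq 0}UM)\in\Sp_{\leq -1}$ and hence vanishes; the same estimate makes the space of lifts contractible, so $V_{\tau_{\leq 0}M}$ exists uniquely and the norm identity $\sigma$ extends coherently by a parallel higher-degree argument. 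The resulting fiber sequence $\tau_{\geq 1}M\to M\to\tau_{\leq 0}M$ exhibits the required truncation in $(\TCart_p)^{\leq 0}_\flat$, so $U\tau_{\leq 0}^\flat M\simeq\tau_{\leq 0}UM$ and $(\TCart_p)^{\leq 0}_\flat=(\TCart_p)_{\leq 0}$.

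Properties (ii)--(v) then follow quickly: (ii) holds because $M_{hC_p}$ and $\cofib(V_M)=M/V$ are connective whenever $M$ is, and the cyclotomic structure from Example~\ref{ex:tcm2cyc} is inherited naturally; (iii) holds because $U$ preserves filtered colimits and the Postnikov $t$-structure on $\Sp^{BS^1}$ is compatible with them; (iv) holds because $U$ is $t$-exact and conservative, transferring left and right completeness from the left and right complete $t$-structure on $\Sp^{BS^1}$; and (v) is a direct consequence of Example~\ref{example_classical}, which identifies $p$-typical topological Cartier module structures on Eilenberg--MacLane objects with abelian groups equipped with endomorphisms $V,F$ satisfying $FV=p$.
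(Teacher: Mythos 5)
Your proposal is correct, and it has the same overall skeleton as the paper's proof -- invoke \cite{ha}*{1.4.4.11}, use Proposition~\ref{mappingspectrum} to see that objects with coconnective underlying spectrum are right orthogonal to $(\TCart_p)_{\geq 1}$, and then pin down the abstract coconnective part by constructing a truncation by hand -- but the by-hand construction is done in the dual direction. The paper extends the topological Cartier module structure to the \emph{connective cover} $\tau_{\geq 0}M$: there the Verschiebung lifts for free (homotopy orbits preserve connectivity), the Frobenius lifts because $\tau_{\geq 0}$ commutes with limits, and the $2$-simplex $\sigma$ together with the map $\tau_{\geq 0}M\to M$ is produced by decomposing the prism $\Delta^2\times\Delta^1$ and filling inner horns; the coconnective truncation is then the cofiber. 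You instead build $\tau_{\leq 0}M$ directly: the Frobenius lifts for free (the target is coconnective), while the Verschiebung needs your obstruction argument precisely because $(-)_{hC_p}$ does not preserve coconnectivity; the connective cover is then the fiber. Your vanishing estimates are right, and the same estimates show that restriction along $UM_{hC_p}\to(\tau_{\leq 0}UM)_{hC_p}$ induces an equivalence on the relevant mapping spaces into $(\tau_{\leq 0}UM)^{hC_p}$, so the homotopy $F'\circ V'\simeq\Nm_{C_p}$ and the compatibility making $M\to\tau_{\leq 0}M$ a map of topological Cartier modules transport uniquely from $\sigma_M$; your ``parallel higher-degree argument'' is therefore legitimate, and it is exactly the coherence work the paper does via the prism-filling. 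In short, the paper's direction buys a trivial lift of $V$ at the price of explicit simplicial filling, while yours buys a trivial lift of $F$ at the price of obstruction theory for $V$ and its coherences; both then conclude by uniqueness of truncation triangles. One compression worth fixing: in (iv), left and right completeness do not transfer along a conservative $t$-exact functor alone; you also need that the forgetful functor preserves products and coproducts (Proposition~\ref{prop:tdforget}), after which one argues as the paper does via \cite{ha}*{1.2.1.19}, getting separatedness from conservativity and closure of $(\TCart_p)_{\geq 0}$ under countable products (resp.\ of $(\TCart_p)_{\leq 0}$ under countable coproducts) from the corresponding closure in $\Sp^{BS^1}$.
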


\begin{proof}
    Because $(\TCart_p)_{\geq 0}$ is presentable and closed under colimits and
    extensions in
    $\TCart_p$, by~\cite{ha}*{1.4.4.11},
    there is some $t$-structure $((\TCart_p)_{\geq 0},\Cscr)$ on $\TCart_p$ with
    connective part $(\TCart_p)_{\geq 0}$ and coconnective part some full
    subcategory $\Cscr\subseteq\TCart_p$. Note that by
    Proposition~\ref{mappingspectrum}, for every $M\in(\TCart_p)_{\geq 0}$ and
    every $N\in(\TCart_p)_{\leq
    -1}$, the mapping space $\Map_{\TCart_p}(M,N)$ is contractible. It follows that
    $(\TCart_p)_{\leq 0}\subseteq\Cscr$. To prove that the inclusion is an
    equivalence, we will prove that every topological Cartier module
    structure on a spectrum $M$ with $S^1$-action extends to a structure on
    $\tau_{\geq 0}M$ and $\tau_{\leq -1}M$. In fact, it will be enough to do
    this for $\tau_{\geq 0}M$ since it follows then for $\tau_{\leq -1}M$ by
    taking cofibers.

    Consider the composition $(\tau_{\geq 0}M)_{hC_p}\rightarrow
    M_{hC_p}\xrightarrow{V}M$. Since $(\tau_{\geq 0}M)_{hC_p}$ is connective,
    this factors through the connective cover $\tau_{\geq 0}M\rightarrow M$,
    giving a commutative diagram
    $$\xymatrix{
    (\tau_{\geq 0}M)_{hC_p}\ar[r]\ar[d]&\tau_{\geq 0}M\ar[d]\\
    M_{hC_p}\ar[r]_V&M.
    }$$ Similarly, the composition $\tau_{\geq 0}M\rightarrow
    M\xrightarrow{F}M^{hC_p}$ factors through $\tau_{\geq
    0}(M^{hC_p})\we\tau_{\geq 0}((\tau_{\geq 0}M)^{hC_p})$, where the
    equivalence follows because the right adjoint functor $\tau_{\geq 0}$ commutes with limits.
    Hence, we get a commutative diagram
    $$\xymatrix{
        \tau_{\geq 0}M\ar[r]\ar[d]&(\tau_{\geq 0}M)^{hC_p}\ar[d]\\
        M\ar[r]_F&M^{hC_p}.}$$
    Consider the prism $\Delta^2\times\Delta^1$. We would like to construct a
    new object of $(\Sp^{BS^1})^{\Delta^2}$ defining a topological Cartier
    module structure on $\tau_{\geq 0}M$ where the $F$ and $V$ maps are the
    top horizontal arrows in the two commutative diagrams above. Moreover, we
    need to have a map of topological Cartier modules $\tau_{\geq
    0}M\rightarrow M$. This will be provided by a map
    $\Delta^1\rightarrow(\Sp^{BS^1})^{\Delta^2}$ or, by adjunction, by a map
    $\Delta^2\times\Delta^1\rightarrow\Sp^{BS^1}$.

    The commutative diagrams constructed above as well as the natural
    transformation $\Nm_{C_p}$ from $(-)_{hC_p}$ to $(-)^{hC_p}$ provide a map
    from $(\partial\Delta^2)\times\Delta^1$ to $\Sp^{BS^1}$. Moreover, the
    `bottom' of this triangular cylinder can be filled in with the $2$-simplex
    $\sigma$ associated to the topological Cartier module structure on $M$.
    Decomposing $\Delta^2\times\Delta^1$ into three $3$-simplices (tetrahedra),
    we can use the fact that we can fill inner horns (since $\Sp^{BS^1}$ is an
    $\infty$-category), to inductively fill in $\Delta^2\times\Delta^1$, thus
    obtaining the desired map.

    Now, the cofiber of $\tau_{\geq 0}M\rightarrow M$ is a $\TCart_p$-structure on
    $\tau_{\leq -1}M$. Since $\tau_{\geq 0}M\in(\TCart_p)_{\geq 0}$ and $\tau_{\leq
    -1}M\in(\TCart_p)_{\leq -1}\subseteq\Cscr[-1]$, it follows that in fact this is
    the truncation sequence associated to $M$ in the $t$-structure $((\TCart_p)_{\geq
    0},\Cscr)$. In particular, we see that $(\TCart_p)_{\leq 0}\we\Cscr$, as desired.

    Accessibility of the $t$-structure follows because $(\TCart_p)_{\geq 0}$ is
    presentable. The $t$-exactness of $\TCart_p\rightarrow\Sp^{BS^1}$ follows by
    construction. The right $t$-exactness of $\TCart_p\rightarrow\CycSp_p$ follows
    because if $M\in(\TCart_p)_{\geq 0}$, then $M_{hC_p}$ is also connective and hence
    so is the cofiber $M/V$. Compatibility with filtered colimits follows
    because filtered colimits are preserved by the forgetful functor
    $\TCart_p\rightarrow\Sp^{BS^1}$ by Proposition~\ref{prop:tdforget} and
    because the $t$-structure on $\Sp^{BS^1}$ is
    compatible with filtered colimits by Proposition~\ref{prop:xsp}.
    Right completeness follows from the right complete version
    of~\cite{ha}*{Proposition~1.2.1.19} since $\TCart_p$ is right separated (by the
    conservativity of the forgetful functor $\TCart_p\rightarrow\Sp^{BS^1}$) and
    because $(\TCart_p)_{\leq 0}$ is closed under filtered
    colimits and hence in particular countable coproducts inside of $\TCart_p$.
    For left completeness, use that $\TCart_p$ is left separated and recall
    that $\TCart_p\rightarrow\Sp^{BS^1}$ preserves limits by
    Proposition~\ref{prop:tdforget}. It follows, as usual by
    conservativity, that $(\TCart_p)_{\geq 0}$ is closed under countable products. We
    conclude by~\cite{ha}*{Proposition~1.2.1.19}.
    Finally, we see that the heart consists precisely of topological
    Cartier modules whose underlying spectrum is discrete. Since the
    $S^1$-action is automatically trivial, we are reduced to the objects of
    Example~\ref{example_classical}.
\end{proof}

\begin{remark}
    Alternatively, to prove the existence, accessibility, and completeness of
    the $t$-structure on $\TCart_p$, one can show that the ring spectrum $K$ of
    Remark~\ref{rem:raynaudcartier} is connective and identify the
    $t$-structure above on $\TCart_p$ with the Postnikov $t$-structure on
    $K$-module spectra, which has the desired properties
    by~\cite{ha}*{7.1.1.13}.
\end{remark}

\subsection{Topological Cartier modules and cyclotomic spectra}\label{sub:adjunction}

The functor
\[
(-) /V: \TCart_p \to \CycSp_p
\]
of Example~\ref{ex:tcm2cyc}
preserves all colimits and thus is a left adjoint by the
adjoint functor theorem. The first result of this section identifies the right
adjoint.

\begin{proposition}\label{thm:tr}
    The functor $\TR\colon\CycSp_p\rightarrow\TCart_p$ is right adjoint to
    $(-)/V$.
\end{proposition}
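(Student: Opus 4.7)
The plan is to establish the adjunction in two steps: first produce a right adjoint by the adjoint functor theorem, and then identify it with $\TR$ by constructing an explicit counit.

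To show a right adjoint exists, I would verify that $(-)/V$ preserves small colimits. By Proposition~\ref{prop:tdforget} colimits in $\TCart_p$ are created by the forgetful functor to $\Sp^{BS^1}$; by Lemma~\ref{lem:t}, together with the lax equalizer description of $\CycSp_p$ and~\cite{nikolaus-scholze}*{Proposition~II.1.5}, the same holds for $\CycSp_p$. Since the underlying $S^1$-spectrum of $M/V$ is $\cofib(V_M)$ and the cyclotomic structure on $M/V$ is constructed functorially in $M$ (Example~\ref{ex:tcm2cyc}), the functor $(-)/V$ preserves colimits. Both categories being presentable, the adjoint functor theorem supplies a right adjoint $R \colon \CycSp_p \to \TCart_p$.

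To construct a candidate counit $\epsilon \colon \TR(X)/V \to X$, I would use the canonical projection $\pi \colon \TR(X) \to X$ onto the rightmost factor in the iterated pullback description of Example~\ref{TrDieu}. The composite of $\pi$ with the Verschiebung $V_{\TR(X)}$ of diagram~\eqref{def_V} is canonically nullhomotopic: the relevant component of $V_{\TR(X)}$ targets the zero factor $X_{hC_p} \to 0$ appearing in the rightmost slot, so its postcomposition with $\pi$ is zero by construction. This nullhomotopy exhibits $\pi$ as factoring through $\TR(X)/V$, producing $\epsilon$ in $\Sp^{BS^1}$, and a short verification using the description of the cyclotomic structure on $\TR(X)/V$ from Example~\ref{ex:tcm2cyc} shows that $\epsilon$ refines to a morphism in $\CycSp_p$.

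The main obstacle is to show that precomposition with $\epsilon$ yields an equivalence $\MapSp_{\TCart_p}(M, \TR(X)) \simeq \MapSp_{\CycSp_p}(M/V, X)$ for every $M$ and $X$. My plan is to apply Proposition~\ref{mappingspectrum} on the left to obtain a fiber sequence
\[
\MapSp_{\TCart_p}(M, \TR(X)) \to \MapSp_{\CycSp^\Fr_p}(M, \TR(X)) \to \MapSp_{\Sp^{BS^1}}(M_{hC_p}, \fib(F_{\TR(X)})).
\]
Unwinding the pullback identification $\TR(X) \simeq X \times_{X^{tC_p}} \TR(X)^{hC_p}$, one identifies $\fib(F_{\TR(X)})$ with $\fib(\varphi \colon X \to X^{tC_p})$, and the tower $\TR(X) = \lim_n \TR^{n+1}(X)$ together with the natural adjunction between $\CycSp^\Fr_p$ and $\Sp^{BS^1}$ reduces $\MapSp_{\CycSp^\Fr_p}(M, \TR(X))$ essentially to $\MapSp_{\Sp^{BS^1}}(M, X)$. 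On the right, the cofiber sequence $M_{hC_p} \xrightarrow{V_M} M \to M/V$ combined with the lax equalizer description of $\CycSp_p$ yields a parallel fiber sequence, and comparing term-by-term via $\epsilon$ shows that the two are naturally equivalent.
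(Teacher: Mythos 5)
Your setup (colimit preservation plus the adjoint functor theorem, the counit $\TR(X)/V\to X$ from the nullhomotopy of $\pi\circ V$, and the use of the fiber sequence of Proposition~\ref{mappingspectrum}) follows the paper's route, and your identification of $\fib(F_{\TR(X)})$ with $\fib(\varphi\colon X\to X^{tC_p})$ via $\TR(X)\simeq X\times_{X^{tC_p}}\TR(X)^{hC_p}$ is correct; together with the observation that $M_{hC_p}$ with the zero Frobenius (lift) satisfies $\MapSp_{\Sp^{BS^1}}(M_{hC_p},\fib(\varphi_X))\simeq\MapSp_{\CycSp_p}(M_{hC_p},X)$ (cf.\ Remark~\ref{remark_fiber}), the third terms of the two fiber sequences can indeed be matched.

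The genuine gap is your treatment of the middle term. You claim that the tower $\TR(X)=\lim_n\TR^{n+1}(X)$ plus ``the natural adjunction between $\CycSp^\Fr_p$ and $\Sp^{BS^1}$'' reduces $\MapSp_{\CycSp^\Fr_p}(M,\TR(X))$ to $\MapSp_{\Sp^{BS^1}}(M,X)$. That statement is false as written: it would say that $\TR(X)$ is the cofree object on $X$ for the forgetful functor $\CycSp^\Fr_p\to\Sp^{BS^1}$, which is $\prod_{n\geq 0}X^{hC_{p^n}}$ with the shift, not the iterated pullback along the Frobenius that defines $\TR(X)$. The correct identification --- and the actual crux of the proof --- is that $\TR$ is right adjoint to the colimit-preserving functor $\CycSp^\Fr_p\to\CycSp_p$ (Krause--Nikolaus, Proposition~10.3, cited in the paper), so that $\MapSp_{\CycSp^\Fr_p}(M,\TR(X))\simeq\MapSp_{\CycSp_p}(M,X)$, mapping spectra in \emph{cyclotomic spectra}, where $M$ carries the Frobenius $M\to M^{hC_p}\to M^{tC_p}$. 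The difference between $\MapSp_{\CycSp_p}(M,X)$ and $\MapSp_{\Sp^{BS^1}}(M,X)$ is precisely the Frobenius-compatibility condition on maps, and with your version the final term-by-term comparison produces the wrong fiber: one gets $\fib(\MapSp_{\Sp^{BS^1}}(M,X)\to\MapSp_{\Sp^{BS^1}}(M_{hC_p},\fib\varphi))$ rather than $\MapSp_{\CycSp_p}(M/V,X)=\fib(\MapSp_{\CycSp_p}(M,X)\to\MapSp_{\CycSp_p}(M_{hC_p},X))$. To close the gap you must either invoke the Krause--Nikolaus adjunction or prove directly (by unwinding the limit defining $\TR(X)$ and the equalizer formula for mapping spectra in $\CycSp^\Fr_p$) that $\TR(X)$ has this universal property among cyclotomic spectra with Frobenius lifts; this is not a formal consequence of the tower together with an adjunction over $\Sp^{BS^1}$.
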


\begin{construction}\label{const:tr}
Let us review our candidate functor $\TR$ first, which was sketched in
Example~\ref{TrDieu}. By construction, 
for every cyclotomic spectrum $X$ (no boundedness assumptions) the spectrum 
\[
\TR(X) = \left( \ldots \times_{(X^{tC_p})^{hC_{p^2}}}  X^{hC_{p^2}} \times_{(X^{tC_p})^{hC_p}} X^{hC_p} \times_{X^{tC_p}} X \right)
\] 
comes equipped with a canonical $S^1$-action and an $S^1$-equivariant map $\pi: \TR(X) \to X$. 
Moreover there is an evident $S^1$-equivariant equivalence
\begin{equation}\label{eq:trpullback}
    \Phi\colon \TR(X) \xto{\simeq} X \times_{X^{tC_p}}
    \TR(X)^{hC_p},
\end{equation}
where the pullback involves the  cyclotomic Frobenius $X\rightarrow X^{tC_p}$ and the map
$\TR(X)^{hC_p}\rightarrow X^{tC_p}$ is given by one of the two equivalent compositions in the square 
\[
\xymatrix{
\TR(X)^{hC_p}  \ar[d]^-{\can} \ar[r]^-{\pi^{hC_p} } &  X^{hC_p} \ar[d]^{\can} \\
\TR(X)^{tC_p} \ar[r]^-{\pi^{tC_p}} &  X^{tC_p}.
}
\]
Under $\Phi$, the map $\pi$ corresponds to projection onto the first factor. 
Now let us construct the structure of a topological Cartier module  $\TR(X)$
from Example \ref{TrDieu} a little bit more carefully. The map $F: \TR(X) \to
\TR(X)^{hC_p}$ is obtained as the composite
\[
F \colon \quad\TR(X) \xto{\Phi} X \times_{X^{tC_p}} \TR(X)^{hC_p} \xto{\mathrm{pr}_2} \TR(X)^{hC_p}
\]
and the map $V$ as the composite
\[
V\colon \TR(X)_{hC_p} \xto{(0,\mathrm{Nm})}  X \times_{X^{tC_p}} \TR(X)^{hC_p} \xto{\Phi^{-1}} \TR(X)
\]
where the first map is zero onto the first
factor and the norm into the second factor, using the canonical nullhomotopy
of $\can \circ \mathrm{Nm}$. Now the composition $F \circ V$ comes by
definition with a homotopy to the norm.

Moreover by construction of $V$ the composite $\TR(X)_{hC_p} \xto{V} \TR(X) \xto{\pi} X$ is
canonically nullhomotopic and thus gives rise to a map $\TR(X)/V \to X$. This map admits a refinement to a map of cyclotomic spectra induced by the compatible nullhomotopies of the horizontal maps in the commutative diagram
\[
\xymatrix{
\TR(X)_{hC_p} \ar[r]^-V \ar[d]& \TR(X) \ar[d]^F\ar[r]^-\pi & X \ar[d]\\
\TR(X)_{hC_p} \ar[r]^-{\mathrm{Nm}} & \TR(X)^{hC_p} \ar[r]  & X^{tC_p} \ .
}
\]
Unwinding the definitions, we see that the map $\TR(X)/V \to X$ is an equivalence precisely if the map
\begin{equation}\label{eq:equivalenttate}
    \pi^{tC_p}\colon \TR(X)^{tC_p} \to X^{tC_p}
\end{equation}
is an equivalence. Indeed, the fiber $F$ of $\TR(X)\rightarrow X$ is naturally
equivalent to the fiber of $\TR(X)^{hC_p}\rightarrow X^{tC_p}$ and there is a
natural map from $\TR(X)_{hC_p}$ to this fiber. We have $\TR(X)_{hC_p}\we F$
(equivalently, $\TR(X)/V\we X$) if and only if $X^{tC_p}$ is the cofiber of the norm
map $\TR(X)_{hC_p}\rightarrow\TR(X)^{hC_p}$.
\end{construction}

In general, the map $\TR(X)/V\rightarrow X$ induces a natural transformation of
functors $\TR\rightarrow R$, where $R$ is the right adjoint to $(-)/V$.

\begin{proof}[Proof of Proposition~\ref{thm:tr}]
  We have to verify that $\TR$ satisfies the universal property of the right
  adjoint, i.e., that for every pair of a topological Cartier module $M$ and a
  cyclotomic spectrum $X$ the induced map
  \[
  \MapSp_{\TCart_p}(M, \TR(X)) \to   \MapSp_{\CycSp_p}(M/V, X) 
  \]
is an equivalence of spectra. We use the fiber sequence for the mapping space in $\TCart_p$ in the form established in Remark \ref{remark_fiber}:
\[
\MapSp_{\TCart_p}(M, \TR(X)) \to \MapSp_{\CycSp^\Fr_p}(M, \TR(X)) \to \MapSp_{\CycSp^\Fr_p}(M_{hC_p}, \TR(X)) \ .
\]
Recall that the objects of $\CycSp^\Fr_p$ are spectra $M$ with $S^1$-action equipped
    with an $S^1$-equivariant map $\psi_p\colon M\rightarrow M^{hC_p}$.
    Composing with the canonical map $M^{hC_p}\rightarrow M^{tC_p}$ defines a
    $p$-typical cyclotomic spectrum, giving a colimit preserving functor
    $$\CycSp_p^\Fr\rightarrow\CycSp_p.$$ A result of
    A. Krause and the second author~\cite{krause-nikolaus}*{Proposition~10.3} says
    that the right adjoint is given by $\TR$. Using this we can retranslate the fiber sequence above into 
    \[
\MapSp_{\TCart_p}(M, \TR(X)) \to \MapSp_{\CycSp_p}(M, X) \to \MapSp_{\CycSp_p}(M_{hC_p}, X) \ .
 \]
 where the cyclotomic spectrum $M_{hC_p}$ has the trivial Frobenius $M_{hC_p} \xto{0}
 (M_{hC_p})^{tC_p}$. Using the description of the maps we see that the right hand map in
 this fiber sequence is the map induced from the map $V: M_{hC_p} \to M$ of cyclotomic
 spectra.\footnote{Note that this is not a map of cyclotomic spectra with
     Frobenius lifts,
even though source and target have Frobenius lifts.} Now the claim follows from the
fact that the cyclotomic spectrum $M/V$ is by construction the cofiber of the map
$M_{hC_p} \to M$.
\end{proof}

\begin{lemma}\label{lem:trbounded}
    The natural map $\TR(X)/V\rightarrow X$ is an equivalence if $X$ is bounded
    below.
\end{lemma}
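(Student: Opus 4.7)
The plan is to invoke Construction~\ref{const:tr}, which shows that the counit map $\TR(X)/V \to X$ is an equivalence if and only if the induced map $\pi^{tC_p}\colon \TR(X)^{tC_p} \to X^{tC_p}$ is an equivalence. So the task reduces to establishing this latter Tate-level equivalence when $X$ is bounded below.

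To attack it, I would exploit the iterated pullback description
\[
\TR(X) \simeq \lim_n \TR^{n+1}(X), \qquad \TR^{n+1}(X) \simeq X^{hC_{p^n}} \times_{(X^{tC_p})^{hC_{p^{n-1}}}} \TR^n(X)
\]
for $n \geq 1$, and prove by induction on $n$ that the projection $\TR^{n+1}(X) \to X$ becomes an equivalence after applying $(-)^{tC_p}$. The base case $n=0$ is immediate since $\TR^1(X) = X$. For the inductive step from $n$ to $n+1$, the fiber of the restriction $R\colon \TR^{n+2}(X) \to \TR^{n+1}(X)$ coincides with the fiber of $X^{hC_{p^{n+1}}} \to (X^{tC_p})^{hC_{p^n}}$, which, by applying $(-)^{hC_{p^n}}$ to the fiber sequence $X_{hC_p} \to X^{hC_p} \to X^{tC_p}$, is identified with $(X_{hC_p})^{hC_{p^n}}$. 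Since $(-)^{tC_p}$ is exact, the inductive step reduces to the vanishing statement
\[
\bigl((X_{hC_p})^{hC_{p^n}}\bigr)^{tC_p} \simeq 0
\]
for $X$ bounded below and every $n \geq 0$.

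To pass from the finite-stage equivalences to the limit, I would check that for $X$ bounded below the spectra $\TR^{n+1}(X)$ are uniformly bounded below, which follows by induction on $n$ from the pullback description together with the fact that homotopy fixed points for finite subgroups of $S^1$ preserve lower bounds on connectivity of connective spectra. Consequently Lemma~\ref{lem:littlelimits}(a) applies to commute $(-)^{tC_p}$ with the tower $\TR(X) = \lim_n \TR^{n+1}(X)$, yielding $\TR(X)^{tC_p} \simeq \lim_n \TR^{n+1}(X)^{tC_p} \simeq X^{tC_p}$ as required.

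The main obstacle is the vanishing $\bigl((X_{hC_p})^{hC_{p^n}}\bigr)^{tC_p} \simeq 0$. The case $n = 0$ is precisely the Tate orbit lemma~\cite{nikolaus-scholze}*{Lemma I.2.1}, applied to the restricted $C_{p^2}$-action on $X$. For $n \geq 1$ one needs an iterated form; my plan is to identify the expression with $(X_{hC_p})^{tC_{p^{n+1}}}$ via the Nikolaus--Scholze decomposition of Tate constructions for cyclic $p$-groups into iterated homotopy fixed points of Tate constructions for $C_p$ (valid for bounded-below spectra), and then to deduce the vanishing from an iterated application of the Tate orbit lemma to the $C_{p^{n+2}}$-action on $X$ coming from $C_{p^{n+2}} \subset S^1$.
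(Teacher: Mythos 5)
Your overall strategy matches the paper's: reduce via Construction~\ref{const:tr} to showing that $\pi^{tC_p}\colon\TR(X)^{tC_p}\to X^{tC_p}$ is an equivalence, handle the finite stages with Tate orbit lemma input, and commute $(-)^{tC_p}$ with the limit via Lemma~\ref{lem:littlelimits}. The difference is that you induct along the restriction maps $R$, whose fibers are $(X_{hC_p})^{hC_{p^n}}$, and therefore need the iterated Tate orbit/fixed-point lemmas of Nikolaus--Scholze, whereas the paper uses the cofiber sequences $\TR^n(X)_{hC_p}\to\TR^{n+1}(X)\to X$ and only the basic Tate orbit lemma applied to the bounded below spectra $\TR^n(X)$. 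That part of your plan does go through.

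There is, however, a genuine gap in your passage to the limit. You justify the uniform bounded-belowness of the tower $\{\TR^{n+1}(X)\}$ by asserting that homotopy fixed points for finite subgroups of $S^1$ preserve lower connectivity bounds; this is false. For the trivial action, $(\H\FF_p)^{hC_p}$ has $\pi_{-n}\iso\H^n(BC_p;\FF_p)\neq 0$ for all $n\geq 0$, so homotopy fixed points of a connective spectrum need not even be bounded below, and in particular the factors $X^{hC_{p^k}}$ in the pullback description are typically unbounded below. The conclusion you want is still true, but it must be proved by the same circle of ideas you are already using rather than quoted as a connectivity fact: for example, your identification $\fib\bigl(R\colon\TR^{n+2}(X)\to\TR^{n+1}(X)\bigr)\simeq(X_{hC_p})^{hC_{p^n}}$ together with the (iterated) Tate orbit lemma gives $(X_{hC_p})^{hC_{p^n}}\simeq X_{hC_{p^{n+1}}}$, which is bounded below with the same bound as $X$, and then induction on $n$ yields the uniform bound; alternatively the paper's cofiber sequences $\TR^n(X)_{hC_p}\to\TR^{n+1}(X)\to X$ have visibly bounded below outer terms. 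So the bounded-belowness and the Tate-level equivalences should be established in one simultaneous induction. A smaller slip: Lemma~\ref{lem:littlelimits}(a) requires the fibers $\fib(\TR(X)\to\TR^{n+1}(X))$ to be increasingly connective, which fails here (they are only uniformly bounded below, being built from the $X_{hC_{p^m}}$); the correct reference is part (b), which applies to sequential limits of uniformly bounded below objects with $d=1$.
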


\begin{proof}
    We first recall that $(-)_{hC_p}$ and $(-)^{tC_p}$ preserve sequential
    limits of uniformly bounded below spectra with $S^1$-action by
    Lemma~\ref{lem:littlelimits}.
    Now, as noted above, $\TR(X)/V\rightarrow X$ is an equivalence if and only if
    the arrow of~\eqref{eq:equivalenttate} is an equivalence. There are
    cofiber sequences $\TR^n(X)_{hC_p}\rightarrow\TR^{n+1}(X)\rightarrow X$ for
    each $n\geq 0$. Applying $(-)^{tC_p}$ and using the Tate orbit lemma, which
    holds as $X$ is bounded below, we
    see that $\TR^{n+1}(X)^{tC_p}\we X^{tC_p}$ for all $n$. Since
    $(-)^{tC_p}$ commutes with limits of towers of uniformly bounded below
    spectra by the discussion at the beginning of the proof, $\TR(X)^{tC_p}\we X^{tC_p}$.
\end{proof}

Let $\TCart_p^-$ and $\CycSp_p^-$ denote the full subcategories of $\TCart_p$
and $\CycSp_p$ on the bounded below objects.

\newcommand{\CartV}{\widehat{\Cart}_p}
\newcommand{\TCartV}{\widehat{\TCart}_p}
\newcommand{\TCartVm}{\widehat{\TCart}{}_p^-}

\begin{definition}\label{def_complete}
    Say that a bounded below $M\in\TCart_p^-$ is $V$-complete if 
    the limit of the tower $$\cdots\rightarrow
    M_{hC_{p^2}}\xrightarrow{V_{hC_p}}M_{hC_p}\xrightarrow{V}M$$ vanishes.
    Write $V^n$ for the composition
    $M_{hC_{p^n}}\xrightarrow{V_{hC_{p^{n-1}}}}M_{hC_{p^{n-1}}}\rightarrow\cdots\rightarrow
    M_{hC_p}\xrightarrow{V}M$. Let $\TCartVm\subseteq\TCart_p^-$ be the full subcategory of
    $V$-complete bounded below $p$-typical topological Cartier modules.
    We will say that a map $M\rightarrow \hat{M}$ in $\TCart_p^-$ is a
    $V$-completion if $\hat{M}$ is $V$-complete and if the natural forgetful map
    $\MapSp_{\TCart_p}(\hat{M},N)\rightarrow\MapSp_{\TCart_p}(M,N)$ is an
    equivalence for every $N$ in $\TCartVm$. 
\end{definition}

\begin{theorem}\label{thm:boundedbelow}
    The functor $\TR\colon\CycSp_p^-\rightarrow\TCart_p^-$ is fully faithful
    and $t$-exact
    with left adjoint $M\mapsto M/V$. The essential image is the full
    subcategory of $V$-complete bounded below $p$-typical topological Cartier
    modules.
\end{theorem}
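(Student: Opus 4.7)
The theorem has three ingredients: $t$-exactness of $\TR$ on $\CycSp_p^-$, fully faithfulness there, and identification of the essential image with $\TCartVm$.

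First ($t$-exactness and fully faithfulness): Left $t$-exactness of $\TR$ is automatic since $\TR$ is right adjoint to the right $t$-exact functor $(-)/V$ by Propositions~\ref{prop:tdt}(ii) and~\ref{thm:tr}. For right $t$-exactness on $\CycSp_p^-$, I would pass through the equivalence $(\CycSp_p)_{\geq 0} \simeq (\CycSp_p^{\gen})_{\geq 0}$: for $X \in (\CycSp_p)_{\geq 0}$, all geometric fixed points of the corresponding genuine cyclotomic spectrum are equivalent to $X$ and hence connective, so Lemma~\ref{conn} places it in the connective part of the Mackey $t$-structure. By Proposition~\ref{prop:mackeyt}, each categorical fixed point $\TR^{n+1}(X) \simeq (X^{\gen})^{C_{p^n}}$ is then connective, and the Milnor exact sequence shows $\TR(X) = \lim_n \TR^{n+1}(X)$ is connective. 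Once $\TR$ is known to preserve bounded below objects, Lemma~\ref{lem:trbounded} gives the counit equivalence $\TR(X)/V \simeq X$ on $\CycSp_p^-$, which is precisely fully faithfulness of $\TR|_{\CycSp_p^-}$.

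Second (essential image via the unit): The essential image of $\TR|_{\CycSp_p^-}$ consists of those $M \in \TCart_p^-$ for which the unit $\eta_M \colon M \to \TR(M/V)$ is an equivalence, and I would prove this is equivalent to $V$-completeness of $M$. Assume $M$ is $V$-complete, and set $F := \fib(\eta_M)$. The triangle identity forces $\eta_M/V = \id_{M/V}$, so $F/V \simeq 0$; equivalently, $V \colon F_{hC_p} \to F$ is an equivalence in $\Sp^{BS^1}$, and applying $(-)_{hC_{p^{k-1}}}$ makes every transition map $V_{hC_{p^{k-1}}} \colon F_{hC_{p^k}} \to F_{hC_{p^{k-1}}}$ an equivalence, whence $\lim_k F_{hC_{p^k}} \simeq F$. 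Since limits and $(-)_{hC_{p^k}}$ both preserve fiber sequences, the fiber sequence $F \to M \to \TR(M/V)$ yields a fiber sequence $\lim_k F_{hC_{p^k}} \to \lim_k M_{hC_{p^k}} \to \lim_k \TR(M/V)_{hC_{p^k}}$; the last two terms vanish by $V$-completeness of $M$ and, granting the step below, of $\TR(M/V)$, so $F \simeq \lim_k F_{hC_{p^k}} \simeq 0$.

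Third ($V$-completeness of $\TR(X)$; the main obstacle): It remains to show $\lim_k \TR(X)_{hC_{p^k}} \simeq 0$ for $X \in \CycSp_p^-$. Iterating the cofiber sequence $\TR(X)_{hC_p} \xto{V} \TR(X) \to X$ from Lemma~\ref{lem:trbounded} via $(-)_{hC_{p^{k-1}}}$ produces cofiber sequences $\TR(X)_{hC_{p^k}} \to \TR(X)_{hC_{p^{k-1}}} \to X_{hC_{p^{k-1}}}$ identifying the cofibers of the $V$-tower transition maps. I would then combine this with the presentation $\TR(X) = \lim_n \TR^{n+1}(X)$ via genuine fixed points and the norm relation $FV = p$: at each finite level $\TR^{n+1}(X) \simeq (X^{\gen})^{C_{p^n}}$, the action of $V^k$ for large $k$ factors through transfers that become trivial after projection along the $R$-maps, forcing vanishing in the iterated limit. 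The main obstacle lies precisely here: the tower $\TR(X)_{hC_{p^k}}$ does not vanish term-by-term nor do its cofibers become more connective, so no purely formal argument suffices, and the vanishing of the limit must be extracted from the genuine equivariant structure of $\TR$ together with careful analysis of the pro-system $\{\TR^{n+1}(X)\}$.
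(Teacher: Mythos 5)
Your first two steps are essentially sound (and your unit/fiber argument for the essential image is a reasonable variant of the paper's, which instead shows directly that $M\rightarrow\TR(M/V)\we\lim_n M/V^n$ is a $V$-completion), but everything hinges on your third step, and there you stop short of a proof: the assertion that ``the action of $V^k$ for large $k$ factors through transfers that become trivial after projection along the $R$-maps, forcing vanishing in the iterated limit'' is not an argument, and you concede as much. So the central claim --- that $\TR(X)$ is $V$-complete for bounded below $X$, i.e.\ $\lim_n\TR(X)_{hC_{p^n}}\we 0$ --- is left unproved, and with it both directions of your essential-image argument. Moreover, your diagnosis that ``no purely formal argument suffices'' and that the vanishing must be extracted from genuine equivariant structure is off the mark: the paper closes exactly this gap using only the ingredients you already have on the table.

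Namely, from the counit equivalence $\TR(X)/V\we X$ (Lemma~\ref{lem:trbounded}) and the cofiber sequences $\TR(X)_{hC_{p^{k}}}\rightarrow\TR(X)_{hC_{p^{k-1}}}\rightarrow X_{hC_{p^{k-1}}}$ you wrote down, one identifies, inductively in $n$ and compatibly with the transition maps, the cofiber sequence
$$\TR(X)_{hC_{p^{n}}}/\TR(X)_{hC_{p^{n+1}}}\rightarrow\TR(X)/\TR(X)_{hC_{p^{n+1}}}\rightarrow\TR(X)/\TR(X)_{hC_{p^{n}}}$$
with the sequence $X_{hC_{p^n}}\rightarrow\TR^{n+1}(X)\xrightarrow{R}\TR^{n}(X)$. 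In other words, the $V$-adic quotient tower $\{\TR(X)/V^{n+1}\}_n$ \emph{is} the $R$-tower $\{\TR^{n+1}(X)\}_n$, whose limit is $\TR(X)$ by definition; hence $\TR(X)\we\lim_n\TR(X)/V^{n+1}$, and since limits commute with fiber sequences this is precisely $\lim_n\TR(X)_{hC_{p^{n+1}}}\we 0$. The point is not that the terms or cofibers of the $V$-tower become highly connected (you are right that they do not), but that the quotient tower is re-identified with a tower whose limit you already control; no genuine machinery is needed beyond what went into Lemma~\ref{lem:trbounded}. The same inductive identification applied to $X=M/V$ gives $\TR(M/V)\we\lim_n M/V^n$, which is how the paper pins down the essential image. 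One further small point in your first step: the Milnor sequence alone does not give connectivity of $\lim_n\TR^{n+1}(X)$; you also need $\lim^1\pi_0=0$, which holds because the fibers $X_{hC_{p^n}}$ of the $R$-maps are connective, so the transition maps are surjective on $\pi_0$.
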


\begin{proof}
    The first claim follows immediately from the fact that $M\mapsto M/V$ is right
    $t$-exact, that $\TR$ is $t$-exact, and that the counit map
    $\TR(X)/V\rightarrow X$ is an equivalence for $X\in\CycSp_p$ bounded below,
    which was proved in Lemma~\ref{lem:trbounded}. The second claim follows
    from the next proposition.
\end{proof}

\begin{proposition}
    If $M\in\TCart_p^-$ is bounded below, then $\lim_nM/V^n$ admits the structure
    of a $p$-typical Cartier module and there is a natural map
    $M\rightarrow\lim_nM/V^n$ exhibiting $\lim_nM/V^n$ as the $V$-completion of $M$.
\end{proposition}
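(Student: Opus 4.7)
The plan is to construct each finite quotient $M/V^n$ with enough Cartier-like structure to assemble, in the bounded-below limit, into a topological Cartier module, and then verify the universal property of the $V$-completion.

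For the first step, set $M/V^n := \cofib\bigl(V^n : M_{hC_{p^n}} \to M\bigr)$ in $\Sp^{BS^1}$ and form the tower $\cdots \to M/V^{n+1} \to M/V^n$ via the factorization $V^{n+1} = V \circ (V^n)_{hC_p}$. Each $M/V^n$ carries a partial Verschiebung $\bar V_n : (M/V^n)_{hC_p} \to M/V^n$ coming from the commuting square
$$\xymatrix{M_{hC_{p^{n+1}}} \ar[r]^-{(V^n)_{hC_p}} \ar[d]_-{V_{hC_{p^n}}} & M_{hC_p} \ar[d]^-{V} \\ M_{hC_{p^n}} \ar[r]^-{V^n} & M,}$$
whose two composites both equal $V^{n+1}$; taking horizontal cofibers yields $\bar V_n$, using that $(-)_{hC_p}$ is exact. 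It also carries a shifted Frobenius $\bar F_n : M/V^n \to (M/V^{n-1})^{hC_p}$: using that $(-)^{hC_p}$ is exact and that $F \circ V^n \simeq (V^{n-1})^{hC_p} \circ \Nm_{C_p}$ (by the relation $FV \simeq \Nm_{C_p}$ and naturality of the norm), the composite $M_{hC_{p^n}} \xrightarrow{V^n} M \xrightarrow{F} M^{hC_p} \to (M/V^{n-1})^{hC_p}$ is canonically null, and induces $\bar F_n$. Standard diagram chases verify these operations are compatible with the tower.

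In the second step, set $\hat M := \lim_n M/V^n$. Since $M$ is bounded below, the tower is uniformly bounded below, so Lemma~\ref{lem:littlelimits} gives $\hat M_{hC_p} \simeq \lim_n (M/V^n)_{hC_p}$ and $\hat M^{hC_p} \simeq \lim_n (M/V^n)^{hC_p}$. Assembling the $\bar V_n$ yields $\hat V : \hat M_{hC_p} \to \hat M$, and assembling the $\bar F_n$ (using that a one-step shift of towers is cofinal) yields $\hat F : \hat M \to \hat M^{hC_p}$; the relation $\hat F \circ \hat V \simeq \Nm_{C_p}$ is inherited from $M$, giving $\hat M$ a topological Cartier module structure. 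The projections $M \to M/V^n$ assemble to a natural map $M \to \hat M$ in $\TCart_p$. Taking the limit of the defining cofiber sequences yields the cofiber sequence $\lim_n M_{hC_{p^n}} \to M \to \hat M$ in $\Sp^{BS^1}$; applying $(-)_{hC_{p^k}}$, using Lemma~\ref{lem:littlelimits}, and passing to the limit over $k$ produces the identity map on $\lim_n M_{hC_{p^n}}$ (by cofinality of the diagonal), whose cofiber $\lim_k \hat M_{hC_{p^k}}$ is therefore zero, so $\hat M \in \TCartVm$.

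The main obstacle will be the universal property: showing that for $N \in \TCartVm$, every $f : M \to N$ in $\TCart_p$ extends uniquely to a map $\hat M \to N$ in $\TCart_p$. The Cartier-map condition $f \circ V^n \simeq V^n_N \circ f_{hC_{p^n}}$ and the vanishing $\lim_n N_{hC_{p^n}} \simeq 0$ give a canonical nullhomotopy of $\lim_n M_{hC_{p^n}} \to M \xrightarrow{f} N$, producing an extension $\hat M \to N$ in $\Sp^{BS^1}$ along the cofiber sequence above. Promoting this to a $\TCart_p$-map and checking uniqueness is the delicate part; here one unpacks the mapping spectrum fiber sequence of Proposition~\ref{mappingspectrum} for both $M$ and $\hat M$, uses naturality of the construction with respect to $N$'s topological Cartier module structure, and exploits the $V$-completeness of $N$ to force the appropriate correction terms (involving $\MapSp_{\Sp^{BS^1}}(\hat M_{hC_p}, \fib(F_N))$ and $\MapSp_{\CycSp^\Fr_p}$ pieces) to vanish.
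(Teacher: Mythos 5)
Your construction-by-hand route is genuinely different from the paper's, which gets everything from the adjunction $(-)/V\dashv\TR$ of Proposition~\ref{thm:tr}: there the topological Cartier module structure on $\lim_n M/V^n$ is not built from partial operators $\bar V_n,\bar F_n$ but is simply the canonical structure on $\TR(M/V)$ from Construction~\ref{const:tr}, after identifying $(M/V)^{C_{p^n}}\simeq M/V^{n+1}$ inductively via the cofiber sequences $(M/V)_{hC_{p^n}}\to(M/V)^{C_{p^n}}\xrightarrow{R}(M/V)^{C_{p^{n-1}}}$; the natural map is the unit $M\to\TR(M/V)$, and the universal property is the chain $\MapSp_{\TCart_p}(\TR(M/V),N)\simeq\MapSp_{\CycSp_p}(M/V,N/V)\simeq\MapSp_{\TCart_p}(M,N)$ for $V$-complete $N$, using Lemma~\ref{lem:trbounded} and the $V$-completeness of $\TR(X)$. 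Your approach could in principle avoid these inputs, but as written it has two problems. First, assembling an actual object of $\TCart_p$ from the tower of partial Verschiebungs and shifted Frobenii is not a matter of ``standard diagram chases'': $\TCart_p$ is a pullback of $\infty$-categories, so you must produce coherent data (the $2$-simplex $\sigma$ witnessing $\Nm_{C_p}\simeq F\circ V$, compatibly over the tower and in the limit), not merely commutativity up to unspecified homotopy in each square; compare the explicit horn-filling argument the paper is forced to run in Proposition~\ref{prop:tdt} for a much simpler such construction.

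The more serious gap is the universal property, which is exactly the content of the proposition and which you explicitly defer (``promoting this to a $\TCart_p$-map and checking uniqueness is the delicate part''). What is needed is that the restriction map $\MapSp_{\TCart_p}(\hat M,N)\to\MapSp_{\TCart_p}(M,N)$ is an equivalence of spectra for every $V$-complete bounded below $N$; constructing one extension of $f$ on underlying $S^1$-spectra from the nullhomotopy of $\lim_n M_{hC_{p^n}}\to M\to N$ is far from this. Your proposed remedy --- that $V$-completeness of $N$ ``forces the correction terms involving $\MapSp_{\Sp^{BS^1}}(\hat M_{hC_p},\fib(F_N))$ to vanish'' --- is not correct as stated: that mapping spectrum has no reason to vanish when $N$ is $V$-complete, and the actual argument must instead compare the fiber sequences of Proposition~\ref{mappingspectrum} (or Remark~\ref{remark_fiber}) for $M$ and $\hat M$ and show the comparison map is an equivalence on all three terms, which is essentially where the paper's use of the adjunction with $\TR$ (resting on \cite{krause-nikolaus}*{Proposition~10.3}) does real work. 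Until that step is carried out, the proposal does not prove that $M\to\lim_n M/V^n$ is a $V$-completion.
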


\begin{proof}
    By the first part of Theorem~\ref{thm:boundedbelow}, there are two claims
    to check: first that $\TR(X)$ is $V$-complete for any bounded below
    $p$-typical cyclotomic spectrum $X$ (apply this to $X=M/V$); second that $\TR(M/V)\we\lim_n M/V^n$
    for a bounded below $p$-typical topological Cartier module. Indeed, the
    $p$-typical Cartier module structure on $\lim_n M/V^n$ comes from that on
    $\TR(M/V)$ via Construction~\ref{const:tr}. The natural map is then the unit map
    $M\rightarrow\TR(M/V)$ of the adjunction. We have that $\TR(N/V)$ is
    $V$-complete and hence $N\we\TR(N/V)$ for any $V$-complete $M$. In
    particular,
    $\MapSp_{\TCart_p}(\TR(M/V),N)\we\MapSp_{\TCart_p}(\TR(M/V),\TR(N/V))\we\MapSp_{\CycSp_p}(M/V,N/V)\we\MapSp_{\TCart_p}(M,N)$
    for $N\in\TCartVm$ by Proposition~\ref{thm:tr},
    which shows that $\TR(M/V)$ is a $V$-completion of $M$.

    For the first claim, note that for each $n$ the cofiber sequence
    $$\TR(X)_{hC_{p^{n}}}/\TR(X)_{hC_{p^{n+1}}}\rightarrow\TR(X)/\TR(X)_{hC_{p^{n+1}}}\rightarrow\TR(X)/\TR(X)_{hC_{p^{n}}}$$
    is equivalent to the cofiber sequence $$X_{hC_{p^{n}}}\rightarrow
    X^{C_{p^{n}}}\xrightarrow{R} X^{C_{p^{n-1}}},$$
    from which it follows immediately that
    $\TR(X)\we\lim_n\TR(X)/\TR(X)_{hC_{p^{n+1}}}$, or equivalently that
    $\lim_n\TR(X)_{hC_{p^{n+1}}}\we 0$.
    Similarly, the cofiber sequence
    $$(M/V)_{hC_{p^n}}\rightarrow(M/V)^{C_{p^n}}\xrightarrow{R}(M/V)^{C_{p^{n-1}}}$$
    is inductively equivalent to $$M_{hC_{p^n}}/M_{hC_{p^{n+1}}}\rightarrow
    M/M_{hC_{p^{n+1}}}\rightarrow M/M_{hC_{p^n}},$$
    which is what we wanted to show.
\end{proof}

\begin{remark}\label{modV}
We have defined what it means to be $V$-complete for a bounded below
topological Cartier module. In general, one can consider the Bousfield
localization of $\TCart_p$ at the mod $V$-equivalences, i.e. all the maps in
$\TCart_p$ that become equivalences after applying the functor $\TCart_p \to
\CycSp_p$. This defines a Bousfield
localization of the $\infty$-category $\TCart_p$ by the results of
\cite[Section 5.5.4]{htt}.  The local objects are those  topological Cartier
modules $N$ for which the induced map
\[
\MapSp_{\TCart_p}(M, N) = 0
\]
whenever $M/V = 0$. It follows from the adjunction of Proposition \ref{thm:tr}
that $\TR(X)$ is mod $V$-local for every cyclotomic spectrum $X$. In general,
the map $M \to \TR(M/V)$ is not a mod $V$-equivalence, but it is if $M$ is
bounded below. Therefore, a bounded below topological Cartier module is mod
$V$-local precisely if it is $V$-complete in the sense of Definition
\ref{def_complete}. For a bounded below topological Cartier module the map $M
\to \TR(M/V) \simeq \lim M/V^n$ is the mod $V$-localization. When $M$ is not
bounded below, this
localization is mysterious and we do not know how to describe it or how to
understand the mod $V$-local objects. See also
Proposition~\ref{prop:rightcompletion} below.
\end{remark}

\subsection{The heart}\label{sub:tcmheart}

From Theorem~\ref{thm:boundedbelow}, we see that $\CycSp_p^\heart$ is the full subcategory
of $\TCart_p^\heart$ consisting of $V$-complete objects.
This notion of $V$-complete is in the ambient stable $\infty$-category
$\TCart_p^-$. The goal of this section is to describe a more intrinsic notion, making
reference only to the abelian category of $p$-typical Cartier modules, that
agrees with $V$-completeness for objects of $\TCart_p^\heart$.

\begin{definition}\label{Cartier_module_complete}
    If $M$ is a $p$-typical Cartier module, we say that $M$ is {\bf derived
    $V$-complete} if the limit $\lim(\cdots\rightarrow
    M\xrightarrow{V}M\xrightarrow{V}M)$ vanishes (in the
    derived category $\Dscr(\ZZ)$), or 
    equivalently, if the canonical map $M \to \lim_n
    \cofib(M\xrightarrow{V^n}M)$ is an equivalence.
    We denote the category of $p$-typical Cartier modules by $\Cart_p$ and the full subcategory
    of derived $V$-complete Cartier modules by $\CartV$. 
\end{definition}

\begin{lemma}\label{lem:complete}
    Let $M$ be a $p$-typical topological Cartier module concentrated in a single degree as
    in Example~\ref{example_classical}. Then $M$ is $V$-complete as a
    $p$-typical topological Cartier module if and only if it is derived $V$-complete as a
    $p$-typical Cartier module.
\end{lemma}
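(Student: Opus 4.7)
The plan is to compare the two $V$-completeness conditions via the Postnikov cofiber sequence applied termwise to the tower $\{M_{hC_{p^n}}\}_n$ of Definition~\ref{def_complete}. Since $M$ is discrete, the Verschiebung $V\colon M_{hC_p}\to M$ factors as the Postnikov truncation $M_{hC_p}\to\tau_{\leq 0}M_{hC_p}\simeq M$ followed by the classical Verschiebung $V_{\mathrm{cl}}\colon M\to M$ (cf.\ Example~\ref{example_classical}); by functoriality of $(-)_{hC_{p^{n-1}}}$ the same factorization holds for each tower transition $V_{hC_{p^{n-1}}}$. Consequently the Postnikov cofiber sequences $\tau_{\geq 1}M_{hC_{p^n}}\to M_{hC_{p^n}}\to M$ assemble into a cofiber sequence of towers whose rightmost term is the constant tower $\{M\}_n$ with transition maps $V_{\mathrm{cl}}$. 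Taking $\lim_n$ in $\Sp$ yields a cofiber sequence
\[
\lim_n \tau_{\geq 1}M_{hC_{p^n}}\to \lim_n M_{hC_{p^n}}\to \lim_n M
\]
whose rightmost term lies in $\Sp^{[-1,0]}$, with $\pi_0$ and $\pi_{-1}$ given by the classical $\lim_n$ and $\lim_n^1$ of the Verschiebung tower; this rightmost term vanishes exactly when $M$ is derived $V$-complete in the sense of Definition~\ref{Cartier_module_complete}.

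For the direction ``topological $V$-complete implies derived $V$-complete'', assume $\lim_n M_{hC_{p^n}}\simeq 0$. The cofiber sequence then gives $\lim_n M\simeq \Sigma\lim_n\tau_{\geq 1}M_{hC_{p^n}}$. Since each $\tau_{\geq 1}M_{hC_{p^n}}$ lies in $\Sp_{\geq 1}$, the Milnor sequence shows $\lim_n\tau_{\geq 1}M_{hC_{p^n}}\in\Sp_{\geq 0}$ and hence $\Sigma\lim_n\tau_{\geq 1}M_{hC_{p^n}}\in\Sp_{\geq 1}$. Combined with $\lim_n M\in\Sp^{[-1,0]}$, the equivalence forces $\lim_n M\simeq 0$, so $M$ is derived $V$-complete.

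For the converse direction, assume $M$ is derived $V$-complete, so $\lim_n M\simeq 0$; the cofiber sequence reduces the claim to showing $\lim_n\tau_{\geq 1}M_{hC_{p^n}}\simeq 0$. Using $\pi_{2k-1}M_{hC_{p^n}}\cong M/p^n$ and $\pi_{2k}M_{hC_{p^n}}\cong M[p^n]$ for $k\geq 1$, and computing the tower transitions from the factorization of $V$ through truncation together with the homotopy orbit spectral sequence, these transitions take the form ``natural quotient followed by the classical $V$'' on $M/p^n\to M/p^{n-1}$, and analogously on $M[p^n]\to M[p^{n-1}]$. By the Milnor exact sequence it suffices to verify that the classical $\lim_n$ and $\lim_n^1$ of each of the towers $\{M/p^n\}_n$ and $\{M[p^n]\}_n$ vanish. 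This is the main obstacle: a compatible sequence $(a_n)\in\prod_n M/p^n$ with $Va_{n+1}\equiv a_n\pmod{p^n}$ can be lifted iteratively, using $FV=p$ to commute $V$ past powers of $p$, to elements of $\bigcap_k V^kM$, which vanishes by derived $V$-completeness; the Mittag--Leffler property required for $\lim_n^1=0$ follows from the successive shrinking of images under $V$, and the treatment of $\{M[p^n]\}_n$ is parallel.
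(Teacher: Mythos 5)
Your reduction to the cofiber sequence of towers is sound: since $M$ is discrete, the truncation functors applied to the tower $\{M_{hC_{p^n}}\}_n$ do produce the constant tower $(M,V_{\mathrm{cl}})$ as the $\tau_{\leq 0}$-part, and your connectivity argument for the direction ``topologically $V$-complete $\Rightarrow$ derived $V$-complete'' is correct (the paper gets this direction instead by noting that a tower of spectra with vanishing limit induces a $\pi_0$-tower with vanishing derived limit, via the fiber of $\prod X_i\to\prod X_i$; either way works).

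The converse direction, however, has a genuine gap. First, the transition maps are misidentified: the map $M_{hC_{p^n}}\to M_{hC_{p^{n-1}}}$ is $(V_{\mathrm{cl}})_{hC_{p^{n-1}}}$ composed with the map induced by $B(C_{p^n}\twoheadrightarrow C_{p^{n-1}})$, and the latter acts on group homology by multiplication by $p^{k-1}$ on $H_{2k-1}\cong M/p^n$ and by $p^k$ on $H_{2k}\cong M[p^n]$ (a standard resolution computation), so for $k\geq 2$ the towers are not ``natural quotient followed by $V$''. Second, and more seriously, the key algebraic step is not established: the vanishing of $\lim_n$ and $\lim^1_n$ of these towers is exactly what needs proof, and your justification fails at both points. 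Derived $V$-completeness does \emph{not} imply $\bigcap_k V^kM=0$ (derived complete modules need not be $V$-adically separated; compatible sequences under $V$ are what vanish, and a compatible sequence of residues mod $p^n$ need not lift to one, since corrections at stage $n$ destroy earlier stages), and ``successively shrinking images'' is the opposite of the Mittag--Leffler condition, so $\lim^1=0$ is unsubstantiated. Note also that ``using $FV=p$ to commute $V$ past powers of $p$'' is vacuous, as $V$ is additive and already commutes with multiplication by integers. The paper avoids all of this by never computing homotopy groups: it writes the $V$-tower of homotopy orbits as the diagonal of the double tower with columns $M\otimes BC_{p^k}$ and vertical maps $V\otimes\id$, interchanges the two limits, and invokes Lemma~\ref{lem:littlelimits}(b) to commute $-\otimes BC_{p^k}$ with the limit of the uniformly bounded below tower $\cdots\xto{V}M\xto{V}M$, whose vanishing is precisely the hypothesis. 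If you want to salvage your Postnikov approach, you would need an honest argument that derived $V$-completeness forces $\lim$ and $\lim^1$ of the towers $\{M/p^n\}$ and $\{M[p^n]\}$ (with the corrected transition maps) to vanish, which is essentially as hard as the lemma itself.
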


\begin{proof}
    First, assume that $M$ is derived $V$-complete. We have to show that the limit of the diagram
    \begin{equation}\label{diagram_vanish}
    \cdots \xto{V_{hC_{p^2}}} M_{hC_{p^2}} \xto{V_{hC_p}} M_{hC_p} \xto{V} M
    \end{equation}
    of spectra
    vanishes as well. This diagram can be written as the diagonal of the diagram
    \[
    \xymatrix{
    & \vdots \ar[d]^{V \otimes BC_{p^2}} & \vdots \ar[d]^{V \otimes BC_p} & \vdots \ar[d]^{V} \\
    \cdots \ar[r]^-{\id \otimes t_2} & M \otimes BC_{p^2} \ar[r]^-{\id \otimes t_1}\ar[d]^{V \otimes BC_{p^2}} & M \otimes BC_p \ar[r]^-{\id \otimes t_0}\ar[d]^{V \otimes BC_p} & M \ar[d]^{V} \\ 
    \cdots \ar[r]^-{\id \otimes t_2} & M \otimes BC_{p^2} \ar[r]^-{\id \otimes t_1} & M \otimes BC_p \ar[r]^-{\id \otimes t_0} & M
    }
    \] 
    where $t_n\colon BC_{p^{n+1}} \to BC_{p^n}$ is the canonical projection. Then
    the diagonal limit can also be computed by first computing the vertical
    limits followed by the horizontal one. We claim that all the vertical limits are already trivial so that  also the limit of \eqref{diagram_vanish} vanishes. For the rightmost column this is by assumption true. For the other columns we invoke part (b) of Lemma \ref{lem:littlelimits} to see that they also vanish.

    The converse statement follows since for any diagram of spectra 
    $\cdots \to X_2 \to X_1 \to X_0$ with vanishing limit the induced diagram
    $\cdots\to
    \pi_0(X_2) \to \pi_0(X_1) \to \pi_0(X_0)$ as a diagram of abelian groups has
    vanishing derived limit. This can be seen by writing the  inverse limit as the
    fiber of a map
    \[
    \prod X_i \to \prod X_i
    \]
    which is then an equivalence since the limit vanishes. Thus it induces a
    bijection on $\pi_0$ which implies the claim since the homotopy groups of
    the derived limit of the $\pi_0$-diagram are the kernel
    ($\lim$) and cokernel ($\lim^1$) of this
    map.
\end{proof}

Theorem~\ref{thm:boundedbelow} and Lemma~\ref{lem:complete} imply the following
theorem, which completes the proof of Theorem~\ref{mt:t}.

\begin{theorem}\label{thm:heartid}
    The heart $\CycSp^\heart_p\subseteq\TCart_p^\heart$ is the full subcategory of
    derived $V$-complete $p$-typical Cartier modules.
\end{theorem}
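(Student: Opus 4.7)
The plan is to combine Theorem~\ref{thm:boundedbelow} with Lemma~\ref{lem:complete} essentially directly. By Theorem~\ref{thm:boundedbelow}, the functor $\TR\colon\CycSp_p^-\to\TCart_p^-$ is fully faithful and $t$-exact, so it restricts to a fully faithful functor $\TR^\heart\colon\CycSp_p^\heart\to\TCart_p^\heart$. Moreover, the essential image of $\TR$ on bounded-below objects is precisely the full subcategory $\TCartVm$ of $V$-complete bounded-below topological Cartier modules. Since $t$-exactness gives $\TR(X)\in\TCart_p^\heart$ whenever $X\in\CycSp_p^\heart$, the essential image of $\TR^\heart$ is exactly $\TCart_p^\heart\cap\TCartVm$, i.e.\ the heart objects that are $V$-complete in the topological sense of Definition~\ref{def_complete}.

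Next, I would invoke Lemma~\ref{lem:complete}, which shows that for an object of $\TCart_p^\heart$, understood as a $p$-typical Cartier module concentrated in degree $0$ via Example~\ref{example_classical}, being $V$-complete as a topological Cartier module is equivalent to being derived $V$-complete as a $p$-typical Cartier module in the sense of Definition~\ref{Cartier_module_complete}. Chaining the two identifications, the essential image of $\TR^\heart$ is precisely $\CartV$, the full subcategory of derived $V$-complete $p$-typical Cartier modules inside $\TCart_p^\heart\we\Cart_p$.

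Finally, since $\TR^\heart$ is fully faithful with this essential image and its left adjoint $(-)/V$ witnesses that the inclusion $\CycSp_p^\heart\hookrightarrow\TCart_p^\heart$ factors through $\CartV$, we conclude $\CycSp_p^\heart\we\CartV$. There is no real additional work at this stage: all the substantive content sits in Theorem~\ref{thm:boundedbelow} (adjunction plus essential image on bounded-below objects) and Lemma~\ref{lem:complete} (the translation between topological and derived $V$-completeness); the only thing to check is that the identifications of hearts are compatible, which is formal from $t$-exactness of $\TR$ and right $t$-exactness of $(-)/V$ established in Proposition~\ref{prop:tdt}. The one place requiring a moment of care is making sure that the implicit identification $\TCart_p^\heart\we\Cart_p$ of Proposition~\ref{prop:tdt}(v) matches up with the classical Cartier module structure in Lemma~\ref{lem:complete}, but this is already built into Example~\ref{example_classical} and Example~\ref{exhomotopygroup}.
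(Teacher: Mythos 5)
Your overall strategy is the same as the paper's: combine Theorem~\ref{thm:boundedbelow} with Lemma~\ref{lem:complete}. But there is a gap at the central step, namely the assertion that the essential image of $\TR^\heart$ is \emph{exactly} $\TCart_p^\heart\cap\TCartVm$. The justification you offer (``$t$-exactness gives $\TR(X)\in\TCart_p^\heart$ whenever $X\in\CycSp_p^\heart$'') only proves the easy inclusion: image of $\TR^\heart$ $\subseteq$ $\TCart_p^\heart\cap\TCartVm$. The direction the theorem actually needs is the converse: if $M\in\TCart_p^\heart$ is $V$-complete, then Theorem~\ref{thm:boundedbelow} gives $M\simeq\TR(X)$ for some \emph{bounded below} $X$, and you must still show that this $X$ (equivalently $M/V$) lies in $\CycSp_p^\heart$ rather than merely in $\CycSp_p^-$. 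Declaring ``there is no real additional work at this stage'' skips precisely the point where the paper's proof does its work: there one checks that $\pi_i^\cyc(M/V)=0$ for $i>0$ (using full faithfulness of $\TR$ on bounded below objects, since $\TR(M/V)\simeq M$ is discrete) and then invokes left separatedness of the cyclotomic $t$-structure from Theorem~\ref{thm:cyclotomict} to conclude $M/V\in\CycSp_p^\heart$; a priori $M/V$ is only connective, so some such input is required, and ``right $t$-exactness of $(-)/V$'', which you cite, does not supply it.

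Your claim is nevertheless true and can be made formal, but by an argument you did not state: a $t$-exact functor commutes with truncation triangles (by uniqueness of the fiber sequence $\tau_{\geq 1}\to\id\to\tau_{\leq 0}$), and a fully faithful exact functor reflects zero objects. Hence if $\TR(X)$ lies in the heart, then $\TR(\tau_{\geq 1}^\cyc X)\simeq\tau_{\geq 1}\TR(X)\simeq 0$ and $\TR(\tau_{\leq -1}^\cyc X)\simeq\tau_{\leq -1}\TR(X)\simeq 0$, so $\tau_{\geq 1}^\cyc X\simeq 0\simeq\tau_{\leq -1}^\cyc X$ and $X\in\CycSp_p^\heart$; all objects involved stay bounded below, so this takes place within the domain where Theorem~\ref{thm:boundedbelow} applies (and it incidentally avoids the paper's appeal to left separatedness). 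With either this argument or the paper's inserted at that point, the remainder of your proof --- the translation of topological $V$-completeness into derived $V$-completeness via Lemma~\ref{lem:complete} and the identification $\TCart_p^\heart\simeq\Cart_p$ from Proposition~\ref{prop:tdt} --- goes through as you describe.
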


\begin{proof}
    Since $\TR\colon\CycSp_p^-\rightarrow\TCart_p^-$ is fully faithful and
    $t$-exact, it embeds the heart $\CycSp_p^\heart$ fully faithfully into
    $\TCart_p^\heart$. An object $M$ of $\TCart_p^\heart$ is moreover in
    $\CycSp_p^\heart$ if and only if $M\rightarrow\TR(M/V)$ is an equivalence.
    Indeed, if $M\in\TCart_p^\heart$ is equivalent to $\TR(N)$ for some
    $N\in\CycSp_p^\heart$, then $M$ is $V$-complete by
    Theorem~\ref{thm:boundedbelow} and $M\we\TR(M/V)$. On the other hand, if
    $M\we\TR(M/V)$, then $\pi_i^\cyc M/V=0$ for
    $i>0$ since $\TR$ is fully faithful on bounded below objects. Since the cyclotomic $t$-structure is left separated, this implies
    $M/V\in\CycSp^\heart_p$. Now, $M\we\TR(M/V)$ if and only if $M$ is
    $V$-complete as a $p$-typical
    topological Cartier module by Theorem~\ref{thm:boundedbelow}. This happens if and only if $M$ is derived
    $V$-complete by Lemma~\ref{lem:complete}.
\end{proof}

Using the theorem, we can prove that the $t$-structure on $\CycSp_p$ is not
compatible with filtered colimits.

\begin{example}\label{ex:filteredcolimits}
    For the purposes of this example, we consider
    every abelian group $M$ as a $p$-typical Cartier module with $V = p$ and
    $F= \id$. Then, $M$
    is derived $V$-complete if and only if $M$ is derived $p$-complete. Now we have the
    filtered colimit $$\colim_n \ZZ/p^n = \QQ_p / \ZZ_p,$$ which we consider as a
    filtered colimit of $p$-typical Cartier modules. The $p$-typical Cartier modules $\ZZ/p^n$ are derived
    $V$-complete. The module $\QQ_p / \ZZ_p$ is not derived $V$-complete and the derived $V$-completion
    is given as the cofiber of the derived $V$-completions of $\ZZ_p$ and $\QQ_p$. The first
    is already derived $V$-complete and the derived $V$-completion of $\QQ_p$
    is trivial. Hence, the derived $V$-completion of $\QQ_p/\ZZ_p$ is
    $\ZZ_p[1]$. Thus the heart $\CycSp_p^\heart$ is not closed under filtered colimits in
    $(\CycSp_p)_{\geq 0}$.
\end{example}

Theorem~\ref{thm:heartid} generalizes by induction to give a description of all $V$-complete
bounded below $p$-typical topological Cartier modules. The reader should
compare this to the fact that a complex $M\in\Dscr(\ZZ)$ is $p$-complete if and
only each homology group $\H_n(M)$ is derived $p$-complete.

\begin{proposition}\label{prop:vcompletemodule}
    A bounded below $p$-typical topological Cartier module $M$ is $V$-complete
    precisely if all its homotopy groups are derived $V$-compete when
    considered as $p$-typical Cartier modules. 
\end{proposition}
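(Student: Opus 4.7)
The plan is to establish the two implications separately, each relying on Theorem~\ref{thm:boundedbelow} together with the $t$-structure formalism of Proposition~\ref{prop:tdt}. The backward direction $(\Rightarrow)$ is nearly immediate: if $M\in\TCartVm$, then $M\simeq\TR(M/V)$ with $M/V\in\CycSp_p^-$ by Theorem~\ref{thm:boundedbelow}. Since $\TR$ is $t$-exact on bounded below objects, the $p$-typical Cartier module $\pi_n M$ is identified with $\pi_n^\cyc(M/V)\in\CycSp_p^\heart$, which by Theorem~\ref{thm:heartid} is derived $V$-complete.

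For the forward direction $(\Leftarrow)$, the first task is to show that $\TCartVm\subseteq\TCart_p^-$ is closed under both limits and extensions. Closure under limits is automatic from reflectivity, witnessed by the $V$-completion functor $M\mapsto\TR(M/V)$ of Theorem~\ref{thm:boundedbelow}. For closure under cofibers, given a cofiber sequence $M\to N\to P$ in $\TCart_p^-$ with $M,P\in\TCartVm$, apply the exact functor $(-)/V$ (a left adjoint) and then $\TR$, which is exact on the bounded below part, being $t$-exact between stable $\infty$-categories. Since $\TR(M/V)\simeq M$ and $\TR(P/V)\simeq P$, comparing the induced map of cofiber sequences forces $N\to\TR(N/V)$ to be an equivalence, so $N\in\TCartVm$.

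With these closure properties in hand, I proceed by Postnikov tower induction. Assuming each $\pi_n M$ is derived $V$-complete, Lemma~\ref{lem:complete} together with the shift-invariance of $V$-completeness (which follows because homotopy orbits commute with shifts, or because $\TR$ commutes with shifts) shows that each Eilenberg--MacLane object $(\pi_n M)[n]$ lies in $\TCartVm$. Since $M$ is bounded below, $\tau_{\leq n}M=0$ for $n$ sufficiently small, so induction on $n$ via the cofiber sequences
\[
(\pi_n M)[n]\to\tau_{\leq n}M\to\tau_{\leq n-1}M
\]
together with closure under extensions yields $\tau_{\leq n}M\in\TCartVm$ for every $n$. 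Left completeness of the $t$-structure on $\TCart_p$ (Proposition~\ref{prop:tdt}(iv)) gives $M\simeq\lim_n\tau_{\leq n}M$, and closure of $\TCartVm$ under limits then concludes that $M\in\TCartVm$.

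The delicate point is verifying closure of $\TCartVm$ under cofibers, which hinges on the exactness of $\TR$ on the bounded below part and on the counit identifications coming from Theorem~\ref{thm:boundedbelow}; everything else is a routine Postnikov assembly argument built on the left completeness of the $t$-structure and the heart identification of Theorem~\ref{thm:heartid}.
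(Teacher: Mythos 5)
Your proof is correct, and for the main implication it follows the same skeleton as the paper's: Postnikov induction, the single-homotopy-group case via Lemma~\ref{lem:complete}, and assembly by extensions and a limit. The packaging differs in two places. For the direction ``$V$-complete implies all $\pi_n$ derived $V$-complete'' the paper argues by hand (the vanishing limit of the $V$-tower forces the derived limit of the $\pi_0$-tower to vanish, then induction over connective covers), whereas you deduce it formally from the full faithfulness and $t$-exactness of $\TR$ (Theorem~\ref{thm:boundedbelow}) together with the identification of the heart (Theorem~\ref{thm:heartid}); this is legitimate and slightly slicker, and there is no circularity since Theorem~\ref{thm:heartid} is proved earlier without using this proposition. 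For the converse, the paper commutes the $V$-completion construction with the Postnikov limit directly, via Lemma~\ref{lem:littlelimits} (homotopy orbits, cofibers and sequential limits preserve limits of uniformly bounded below objects), while you instead use abstract closure properties of the essential image of $\TR$: closure under limits from reflectivity, and closure under extensions from a five-lemma comparison of unit maps. Both work; your route is more formal and leans harder on Theorem~\ref{thm:boundedbelow}, while the paper's is more hands-on and makes visible where uniform bounded-belowness enters. Two small points you should state explicitly: the reflectivity is only available inside the bounded below subcategory, so before invoking closure under limits you need to observe that $\lim_n\tau_{\leq n}M$, computed in $\TCart_p$, is again bounded below (clear, since the forgetful functor to $\Sp^{BS^1}$ preserves limits and the tower is uniformly bounded below); and the exactness of $\TR$ used in the five-lemma step follows from its being a right adjoint between stable $\infty$-categories, not from $t$-exactness as such.
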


\begin{proof}
    Assume that $M$ is connective and $V$-complete. Then, as in the second part
    of the proof of Lemma~\ref{lem:complete}, we see that $\pi_0M$ is derived $V$-complete as a
    $p$-typical Cartier module. We consider the fiber sequence $\tau_{\geq 1} M
    \to M \to \pi_0M$. By Lemma~\ref{lem:complete}, $\pi_0M$ is $V$-complete and by
    assumption $M$ is. It follows that $\tau_{\geq 1}M$ is as well. By
    induction we get that all homotopy groups are derived $V$-complete when considered
    as classical Cartier modules.

    For the converse assume that all homotopy groups are derived $V$-complete as
    $p$-typical Cartier modules. We first write $M$ as the limit of its Postnikov
    tower. Since completion commutes with this limit (the construction uses
    homotopy orbits, cofibers, and a sequential limit, all of which
    preserve limits of uniformly bounded below objects by
    Lemma~\ref{lem:littlelimits}), we can assume that $M$ is bounded above. Then, we can
    reduce to a single homotopy group by using iterated extensions as in the
    first part of the proof. Finally, we can apply Lemma~\ref{lem:complete}.
\end{proof}

We now  describe the algebraic `completion' functor for algebraic Cartier
modules to get a better understanding of the heart. This will be relevant in
the next section.

\begin{lemma}\label{alg_completion}
    The inclusion of derived $V$-complete $p$-typical Cartier modules into $p$-typical Cartier modules is reflective.
    The left adjoint $LM$ can be described as $\pi_0$ of the $V$-completion of $M$ as a
    topological Cartier module or as $\pi_0\TR(M/V)$. It is also equivalent to
    $\H_0$ of the derived inverse limit of the tower
    \[
    \cdots \to \cofib(M \xto{V^2} M) \to \cofib(M \xto{V} M)
    \]
    in the derived category $\Dscr(\ZZ)$.
\end{lemma}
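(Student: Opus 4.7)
The plan is to define $LM := \pi_0 \TR(M/V)$ and verify all three descriptions of $L$ in the statement.

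First I would check that $LM$ is a derived $V$-complete Cartier module. Viewing $M \in \Cart_p \simeq \TCart_p^\heart$ as a connective $p$-typical topological Cartier module (Example~\ref{example_classical}), the cyclotomic spectrum $M/V$ is connective, so the $t$-exactness of $\TR$ on bounded below objects from Theorem~\ref{thm:boundedbelow} gives that $\TR(M/V)$ is a connective topological Cartier module. It lies in the essential image of $\TR$, so it is $V$-complete, i.e., an object of $\TCartVm$. By Proposition~\ref{prop:vcompletemodule} each homotopy group of $\TR(M/V)$ is a derived $V$-complete Cartier module; in particular so is $LM$.

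Next I would establish the universal property of $LM$. The unit of the adjunction with $(-)/V$ left adjoint to $\TR$ from Proposition~\ref{thm:tr} gives a natural map $M \to \TR(M/V)$ in $\TCart_p$; applying $\pi_0$ produces the candidate unit $M \to LM$ in $\Cart_p$. For any $N \in \CartV$, Lemma~\ref{lem:complete} places $N$ in $\TCartVm$, so the defining universal property of the $V$-completion $M \to \TR(M/V)$ from Definition~\ref{def_complete} yields
\[
    \Map_{\TCart_p}(\TR(M/V), N) \simeq \Map_{\TCart_p}(M, N).
\]
As $\TR(M/V)$ is connective and $N$ is $0$-truncated, taking $\pi_0$ on both sides and using the standard identification of $\Hom$ in the heart with $\pi_0$ of mapping spectra gives the desired bijection $\Hom_{\Cart_p}(LM, N) \cong \Hom_{\Cart_p}(M, N)$.

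Finally I would identify $LM$ with $\H_0$ of the derived inverse limit in $\Dscr(\ZZ)$. The proposition immediately preceding the lemma gives $\TR(M/V) \simeq \lim_n M/V^n$ in $\TCart_p$, where $M/V^n := \cofib(V^n \colon M_{hC_{p^n}} \to M)$. Since the forgetful functor preserves limits by Proposition~\ref{prop:tdforget}, the underlying spectrum of $\TR(M/V)$ is $\lim_n \cofib(V^n \colon M_{hC_{p^n}} \to M)$. Because $M$ is discrete, $V^n \colon M_{hC_{p^n}} \to M$ factors through the truncation $M_{hC_{p^n}} \to \tau_{\leq 0} M_{hC_{p^n}} = M$, and a short induction starting from $n=1$ identifies the resulting self-map of $M$ with multiplication by the classical Cartier Verschiebung $V^n$. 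The octahedral axiom applied to this factorization gives a fiber sequence of spectra
\[
    \tau_{\geq 1}(M_{hC_{p^n}})[1] \to \cofib(V^n \colon M_{hC_{p^n}} \to M) \to \cofib(M \xto{V^n} M).
\]
Taking $\lim_n$, a Milnor-sequence computation using that $\pi_0$ and $\pi_1$ of each left-hand term vanish shows that $\pi_0$ and $\pi_{-1}$ of the limit of the left-hand terms both vanish, so the induced map on $\pi_0$ of the limits of the right two terms is an isomorphism. This gives $\pi_0 \TR(M/V) \cong \pi_0 \lim_n \cofib(M \xto{V^n} M) = \H_0 \lim_n \cofib(M \xto{V^n} M)$ in $\Dscr(\ZZ)$.

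The main obstacle is the last step: one must both unwind the recursive definition of $V^n$ on $M_{hC_{p^n}}$ to see that its truncation is multiplication by the classical Verschiebung and then control the limit of the ``truncation error'' term $\tau_{\geq 1}(M_{hC_{p^n}})[1]$ via the Milnor short exact sequences. The rest follows essentially formally from the adjunction between $(-)/V$ and $\TR$ together with the $V$-completion results already established.
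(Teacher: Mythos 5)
Your proof is correct and uses essentially the same ingredients as the paper's: the identification of the $V$-completion of a bounded below topological Cartier module with $\lim_n M/V^n$ via Theorem~\ref{thm:boundedbelow}, Lemma~\ref{lem:complete} to place derived $V$-complete Cartier modules in $\TCartVm$, and a comparison of the tower $M/V^n$ with the algebraic tower $\cofib(M\xrightarrow{V^n}M)$. The only differences are in the bookkeeping: the paper obtains reflectivity formally by passing to left adjoints in a commutative square of inclusions and identifies $\tau_{\leq 1}(M/V^n)\simeq\cofib(M\xrightarrow{V^n}M)$ by the long exact sequence before taking the limit (noting that $\pi_0$ of the limit only depends on the $1$-truncated tower), whereas you verify the universal property of $\pi_0\TR(M/V)$ directly from the $(-)/V\dashv\TR$ adjunction and discard the connective error term $\tau_{\geq 1}(M_{hC_{p^n}})[1]$ with an octahedral fiber sequence and a Milnor-sequence argument after taking limits---both routes address the same two points.
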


\begin{proof}
By the previous results we have a commutative diagram
\[
\xymatrix{
\CartV  \ar[r]\ar[d] & \Cart_p\ar[d] \\
(\TCartV)_{\geq 0} \ar[r] & (\TCart_p)_{\geq 0},
}
\]
where $(\TCartV)_{\geq 0}=(\TCart_p)_{\geq 0}\cap\TCartVm$.
Passing to left adjoints and applying to discrete topological Cartier modules we get the first claim. Now observe that $\pi_0$ of the inverse limit
\[
M^\wedge_V = \lim( \cdots\to M/V^3 \to M/V^2 \to M/V)
\]
(recall that $M/V^n$ means the cofiber of the map $V^n: M_{hC_{p^n}} \to M$) is equivalent to $\pi_0$ of the inverse limit
\[
\lim( \cdots\to \tau_{\leq 1} (M/V^3) \to \tau_{\leq 1} (M/V^2) \to \tau_{\leq 1} (M/V))
\]
and that the canonical map $\tau_{\leq 1} (M/V^n)$ to $\cofib(M \xto{V^n} M)$ is an equivalence which follows immediately from the long exact sequences.
\end{proof}

We warn the reader that $L$ is a slightly non-standard operation and we will refer to it as the algebraic derived $V$-completion.
The crucial property is that $L: \Cart_p \to \CartV$ is a left adjoint (and hence
is right exact) and that for $V$-torsion free modules it is just given by the
usual inverse limit $\lim_nM/V^n$.

\begin{lemma}\label{torsionfree}
    If $M$ is $V$-torsion free, then so is $LM$.
\end{lemma}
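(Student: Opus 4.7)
The plan is to unwind the concrete description of $L$ provided by Lemma \ref{alg_completion} and then carry out an elementary diagram chase. Since $M$ is $V$-torsion free, each map $V^n\colon M \to M$ is injective, so the derived cofiber $M/V^n$ (computed in $\Dscr(\ZZ)$) is concentrated in degree zero and agrees with the honest quotient $M/V^nM$. The transition maps $M/V^{n+1}M \twoheadrightarrow M/V^nM$ in the tower are surjective, so the tower is Mittag--Leffler and $\lim^1$ vanishes. Hence by Lemma~\ref{alg_completion} the algebraic derived $V$-completion $LM$ is just the classical inverse limit
$$ LM \;\cong\; \lim_n M/V^nM , $$
whose elements are compatible sequences $(x_n)_{n\geq 1}$ with $x_n \in M/V^nM$ and image of $x_{n+1}$ in $M/V^nM$ equal to $x_n$.

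Next, I would observe that the Verschiebung on $LM$ is induced in each component by the Verschiebung on $M$. This follows from functoriality of $L$ applied to the endomorphism $V\colon M \to M$ in $\Cart_p$: the derived $V$-completion sends this endomorphism to the limit of the endomorphisms $M/V^nM \to M/V^nM$ induced by $V$.

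Now I would take $x = (x_n) \in LM$ with $Vx = 0$ and show $x = 0$. Choose lifts $\tilde x_n \in M$ of the classes $x_n$. The hypothesis $Vx=0$ means $V\tilde x_n \in V^nM$ for every $n$, so there exist $y_n \in M$ with $V\tilde x_n = V^n y_n$. Because $M$ is $V$-torsion free, cancelling one $V$ gives $\tilde x_n = V^{n-1}y_n$, so $\tilde x_n$ lies in $V^{n-1}M$ and therefore has zero image in $M/V^{n-1}M$. By compatibility of the sequence, $x_{n-1}$ equals this image and thus $x_{n-1}=0$ for all $n\geq 1$; hence $x=0$, proving that $V$ acts injectively on $LM$.

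The argument is essentially routine once the identification $LM \cong \lim_n M/V^nM$ is in place; the only nontrivial point is that identification itself, which relies on Mittag--Leffler (to kill $\lim^1$) together with $V$-torsion freeness (to make each $M/V^n$ discrete). No obstacle should arise beyond these bookkeeping steps.
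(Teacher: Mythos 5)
Your argument is correct and is essentially the paper's own proof: after identifying $LM$ with the classical limit $\lim_n M/V^nM$ (the $\lim^1$ remark is harmless but not needed, since $\lim^1$ only contributes to $\H_{-1}$ of the derived limit), your element chase showing $x_{n-1}=0$ for every $n$ is exactly the paper's observation that $\ker(V)$ on the limit is the limit of the kernels $V^{n-1}M/V^nM\cong\coker(M\xrightarrow{V}M)$, a tower whose transition maps are zero. One caveat: $V\colon M\to M$ is generally \emph{not} an endomorphism in $\Cart_p$ (it commutes with $V$ but need not commute with $F$, as only $FV=p$ is imposed), so the appeal to ``functoriality of $L$'' does not literally justify the levelwise description of the Verschiebung on $LM$; that description should instead be taken from the construction of the completion itself, namely the evident map of towers $\{M/V^nM\}\to\{M/V^nM\}$, which is also how the paper treats it, so the body of your argument is unaffected.
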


\begin{proof}
    Since $M$ is $V$-torsion free,
    the kernel of $V\colon\lim_n M/V^n \to \lim_n M/V^n$ is given by the inverse
    limit of the kernels of the maps
    \[
        V\colon \coker(M\xrightarrow{V^n}M)\rightarrow\coker(M\xrightarrow{V^n}M) .
    \]
    But these kernels are isomorphic to $\coker(M\xrightarrow{V}M)$ through the
    map $\coker(M\xrightarrow{V}M) \to \ker(\coker(M\xrightarrow{V^n}) \to
        \coker(M\xrightarrow{V^n}M)$ which sends $m$ in $M$ to $V^{n-1}(m)$. Under this equivalence the diagram over which we have to take the limit is the diagram
    \[
        \cdots \xto{0} \coker(M\xrightarrow{V}M) \xto{0}
        \coker(M\xrightarrow{V}M) \xto{0} \coker(M\xrightarrow{V}M)
    \]
    whose limit is $0$. This shows that $LM$ is $V$-torsion free. 
\end{proof}

\begin{lemma}\label{lemmodVequivalence}
    For a map $M \to N$ between $V$-torsion free Cartier modules, the induced map
    $LM \to LN$ on algebraic $V$-completions is an isomorphism precisely if the
    induced map $\coker(M\xrightarrow{V}M)\rightarrow\coker(N\xrightarrow{V}N)$
    is an isomorphism.
\end{lemma}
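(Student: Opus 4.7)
The lemma hinges on the identification $LM/VLM \cong M/VM$ for $V$-torsion free Cartier modules $M$ (recall from Lemma~\ref{torsionfree} that $LM$ is itself $V$-torsion free, so $LM/VLM$ is a classical quotient). Granting this, direction $(\Leftarrow)$ is immediate: the isomorphism $LM \xrightarrow{\sim} LN$ induces $LM/VLM \xrightarrow{\sim} LN/VLN$, which by the identification is $M/VM \xrightarrow{\sim} N/VN$. For direction $(\Rightarrow)$, I would induct on $n$ to show that $M/V^n M \to N/V^n N$ is an isomorphism, applying the five lemma to the short exact sequences
\begin{equation*}
    0 \to M/VM \xrightarrow{V^n} M/V^{n+1} M \to M/V^n M \to 0
\end{equation*}
(which are short exact by $V$-torsion freeness, using that $V^n$ is injective on $M$ and hence induces an isomorphism $M/VM \xrightarrow{\sim} V^n M/V^{n+1} M$). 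The surjectivity of the transitions then makes the tower $(M/V^n M)$ Mittag--Leffler, so passage to the limit gives $LM = \lim_n M/V^n M \xrightarrow{\sim} \lim_n N/V^n N = LN$.

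To establish $LM/VLM \cong M/VM$, I would apply the derived $V$-completion functor $L^{\mathrm{der}}\colon \Dscr(\ZZ[V]) \to \Dscr(\ZZ[V])$ defined by $A \mapsto \lim_n \cofib(V^n\colon A \to A)$ to the short exact sequence
\begin{equation*}
    0 \to M \xrightarrow{V} M \to M/VM \to 0
\end{equation*}
of $\ZZ[V]$-modules, where $V$ acts as zero on $M/VM$. Since $M$ is $V$-torsion free and the tower $(M/V^n M)$ has surjective transitions, $L^{\mathrm{der}} M \simeq LM$ concentrated in degree zero. Since $V$ acts as zero on $M/VM$, this module is $V$-nilpotent and hence derived $V$-complete, so $L^{\mathrm{der}}(M/VM) \simeq M/VM$. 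As $L^{\mathrm{der}}$ is exact, we obtain a cofiber triangle $LM \xrightarrow{V} LM \to M/VM$ in $\Dscr(\ZZ)$, and the injectivity of $V$ on $LM$ from Lemma~\ref{torsionfree} upgrades this to a short exact sequence $0 \to LM \xrightarrow{V} LM \to M/VM \to 0$; hence $LM/VLM \cong M/VM$.

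The main obstacle is the identification $LM/VLM \cong M/VM$. A direct Mittag--Leffler argument on the torsors $A_n = V^{-1}(x_n) \subseteq M/V^n M$ for $(x_n) \in \ker(LM \to M/VM)$ does not obviously furnish surjective transition maps (one cannot in general correct a lift of $y \in A_{n-1}$ to land in $A_n$), so the cleanest route is the derived-categorical argument above. The remaining bookkeeping (induction via the five lemma and passage to inverse limits) is then routine.
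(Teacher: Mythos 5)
Your proof is correct and follows essentially the same route as the paper: one direction rests on identifying the mod-$V$ reduction of the completion with that of the original module, and the other proceeds by induction on $n$ using the sequence relating $M/V$, $M/V^{n+1}$ and $M/V^{n}$ before passing to the limit. The only real difference is presentational: the paper argues at the level of derived $V$-completions for arbitrary Cartier modules and simply asserts that $\cofib(V)$ of the completion agrees with $\cofib(M\xrightarrow{V}M)$, whereas you specialize to the $V$-torsion free case and supply a justification of $LM/VLM\iso \coker(M\xrightarrow{V}M)$ via exactness of derived completion and derived completeness of modules on which $V$ acts by zero.
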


\begin{proof}
By the derived $V$-completion we shall mean the object of the derived category given on underlying abelian groups by the (derived) inverse limit
\[
    \lim_n \cofib(M\xrightarrow{V^n}M).
\]
The $V$-operator on this limit can be described in the obvious way coming from
a map of diagrams. We claim that a map $M \to N$ between not necessarily
$V$-torsion free $p$-typical Cartier modules induces an equivalence on derived
$V$-completions precisely if it induces an equivalence
$\cofib(M\xrightarrow{V}M)\rightarrow\cofib(N\xrightarrow{V}N)$. 
To see this note that $\cofib(\lim_n
\cofib(M\xrightarrow{V^n}M)\xrightarrow{V}\lim_n\cofib(M\xrightarrow{V^n}M))
\simeq \cofib(M\xrightarrow{V}M)$ which implies that a map
which is an equivalence after derived $V$-completion is also an equivalence
after mod $V$ reduction. Conversely, if $\cofib(M\xrightarrow{V}M) \to
\cofib(N\xrightarrow{V}N)$ is an equivalence, then one shows inductively using the fiber sequence
\[
    \cofib(M\xrightarrow{V}M) \to \cofib(M\xrightarrow{V^n}M) \to
    \cofib(M\xrightarrow{V^{n-1}}M)
\]
and the analogous one for $N$ to show that
that $\cofib(M\xrightarrow{V^n}M) \to\cofib(N\xrightarrow{V^n}N)$ is an equivalence for every $n$ which implies that the map on the limit is an equivalence.
\end{proof}

To conclude, we note that the right $t$-completion of the $\infty$-category of cyclotomic spectra is a
localization of $\TCart_p$. This will not be needed in the rest of the paper.

\begin{proposition}\label{prop:rightcompletion}
    The functor $(-)/V\colon\TCart_p\rightarrow\CycSp_p$ factors through the
    right completion 
    \[
    \Sp((\CycSp_p)_{\geq 0})\rightarrow\CycSp_p
    \]
     of
    $p$-typical cyclotomic spectra with respect to the cyclotomic
    $t$-structure. The induced map $\TCart_p\rightarrow\Sp((\CycSp_p)_{\geq 0})$
    exhibits the right completion as a localization of $\TCart_p$. The local objects are those topological Cartier modules all of whose
homotopy groups are derived $V$-complete.\footnote{We would like to thank the anonymous referee for suggesting this explicit characterization of the local objects.}
\end{proposition}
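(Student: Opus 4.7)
The proof splits into three parts matching the proposition's three assertions. First, since $(-)/V$ is right $t$-exact by Proposition \ref{prop:tdt}(ii) and $\TCart_p$ is right $t$-complete by Proposition \ref{prop:tdt}(iv) (so $\TCart_p\simeq\Sp((\TCart_p)_{\geq 0})$), applying the stabilization functor $\Sp$ to the restriction $(-)/V\colon(\TCart_p)_{\geq 0}\to(\CycSp_p)_{\geq 0}$ produces the desired factorization $\TCart_p\xrightarrow{L}\Sp((\CycSp_p)_{\geq 0})\to\CycSp_p$, where the second arrow is the canonical functor $\alpha$ out of the right completion.

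Second, the functor $L$ preserves colimits as a stabilization of a colimit-preserving functor between presentable $\infty$-categories, so the adjoint functor theorem provides a right adjoint $R$. On bounded below objects, where the natural map $\alpha\colon\Sp((\CycSp_p)_{\geq 0})\to\CycSp_p$ restricts to an equivalence $\Sp((\CycSp_p)_{\geq 0})^-\simeq\CycSp_p^-$, the restriction of $R$ is the functor $\TR$ from Theorem \ref{thm:boundedbelow}, and the counit $LR(X)\to X$ is an equivalence for bounded below $X$ by Lemma \ref{lem:trbounded}. To extend to general $X$, I would use the exactness of $L$ and $R$ combined with left separatedness of the $t$-structure on $\Sp((\CycSp_p)_{\geq 0})$: for each $n\in\ZZ$, $\pi_n$ of the counit reduces via the long exact sequences associated to the fiber sequence $\tau_{\geq -N}X\to X\to\tau_{\leq -N-1}X$ (for $N$ large relative to $n$) to the bounded below case, provided one knows that $\pi_n LR(\tau_{\leq -N-1}X)$ vanishes for $N$ sufficiently large.

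Third, by standard Bousfield theory, the essential image of the fully faithful $R$ is the $L$-local subcategory. For the ``only if'' direction, a local $M$ has local bounded truncations $\tau_{[m,n]}M$ with derived $V$-complete homotopy groups by Theorem \ref{thm:boundedbelow} together with Proposition \ref{prop:vcompletemodule}; since $\pi_kM\cong\pi_k\tau_{[m,n]}M$ for $m\leq k\leq n$, all homotopy groups of $M$ are derived $V$-complete. For the converse, given $M$ with all $\pi_nM$ derived $V$-complete, left completeness of $\TCart_p$ gives $M\simeq\lim_n\tau_{\leq n}M$, and since the essential image of $R$ is closed under limits, it suffices to show each $\tau_{\leq n}M$ is local. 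The essential image is moreover closed under extensions in $\TCart_p$ (since $R$ is fully faithful and exact, so the gluing lemma applied to the units $A\to RL(A)$, $B\to RL(B)$, $C\to RL(C)$ of a fiber sequence $A\to B\to C$ gives closure under cofibers when two of three are local), and one inducts on the Postnikov tower via the fiber sequences $\pi_kM[k]\to\tau_{\leq k}M\to\tau_{\leq k-1}M$, with each $\pi_kM[k]$ local as a shift of a single-degree derived $V$-complete Cartier module.

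The main obstacle is twofold. First, in the extension beyond the bounded below case in Step 2, showing the vanishing of $\pi_n LR(\tau_{\leq -N-1}X)$ for large $N$ is non-trivial because $L$ is \emph{not} left $t$-exact; it likely requires a spectral sequence argument identifying $\pi_*L$ with the left derived functors of the algebraic derived $V$-completion from Lemma \ref{alg_completion}. Second, in Step 3, the induction on the Postnikov tower does not terminate when $M$ has non-trivial homotopy in arbitrarily negative degrees, so the argument must be extended — either by a transfinite induction, or equivalently by directly identifying $\pi_k(RL(M))$ with the algebraic derived $V$-completion of $\pi_kM$, which combined with left separatedness of $\TCart_p$ reduces the ``if'' direction to a check at each homotopy group.
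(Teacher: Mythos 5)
Your bounded-below ingredients are fine, but the two places where unboundedness enters are precisely the content of the proposition, and you leave both open. In Step 2, full faithfulness of the abstractly-produced right adjoint $R$ hinges on the vanishing of $\pi_n LR(\tau_{\leq -N-1}X)$ for large $N$; but this vanishing is essentially equivalent to what you are trying to prove (it would follow from $t$-exactness and full faithfulness of $R$, which is the conclusion), and your proposed repair --- a spectral sequence identifying $\pi_*L$ with derived functors of the algebraic $V$-completion --- runs into exactly the convergence problems that make $(-)/V$-localization on non-bounded-below objects intractable (compare Remark~\ref{modV}, where the closely related mod~$V$ localization is explicitly declared mysterious outside the bounded-below case). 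Step 3 has the same defect in the ``if'' direction: the local objects are closed under \emph{limits}, but $\tau_{\leq n}M$ is recovered from its bounded truncations $\tau_{[m,n]}M$ only as a filtered \emph{colimit} (right completeness), and locality is not preserved by filtered colimits (Example~\ref{ex:filteredcolimits}); so the non-terminating Postnikov induction cannot be closed by the closure properties you list, and the substitute you suggest --- identifying $\pi_k RL(M)$ with the algebraic derived $V$-completion of $\pi_kM$ for unbounded $M$ --- is again the hard statement rather than a quotable lemma.

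The paper closes exactly these gaps by building the right adjoint concretely instead of abstractly. Since $\TR$ is $t$-exact and, for every $n$, fully faithful on $(\CycSp_p)_{\geq n}$ with essential image the $n$-connective $V$-complete topological Cartier modules --- equivalently, by Proposition~\ref{prop:vcompletemodule}, those whose homotopy groups are derived $V$-complete --- and since $\TCart_p$ is right complete (Proposition~\ref{prop:tdt}), so $\TCart_p\simeq\lim_n(\TCart_p)_{\geq n}$, one takes the inverse limit of the levelwise embeddings $\TR\colon(\CycSp_p)_{\geq n}\to(\TCart_p)_{\geq n}$ along the truncation towers. Full faithfulness and the description of the essential image then hold automatically because they hold levelwise, the left adjoint (the localization) exists by the adjoint functor theorem since the embedding is accessible and limit-preserving, and no statement about $L$, $LR$, or $RL$ on non-bounded-below objects is ever required. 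If you want to repair your argument, reverse the order of construction in this way. A minor further point: in your Step 1, the restriction of $(-)/V$ to connective objects is not left exact (it does not commute with $\tau_{\geq0}\Omega$), so you cannot apply $\Sp(-)$ to it as a left-exact functor; you need the universal property of stabilization with respect to colimit-preserving functors out of $(\TCart_p)_{\geq 0}$, together with a uniqueness argument to identify the composite with $(-)/V$.
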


\begin{proof}
    Because $\TR$ is $t$-exact, we have a commutative diagram
    $$\xymatrix{
    \cdots\ar[r]&(\CycSp_p)_{\geq n}\ar[r]^{\tau_{\geq
    n+1}}\ar[d]^{\TR}&(\CycSp_p)_{{\geq n+1}}\ar[r]\ar[d]^{\TR}&\cdots\\
    \cdots\ar[r]&(\TCart_p)_{\geq n}\ar[r]^{\tau_{\geq n+1}}&(\TCart_p)_{\geq n+1}\ar[r]&\cdots
    }$$ of $\infty$-categories, where the vertical maps are fully faithul with essential image consisting of $n$-connective topological Cartier modules that are $V$-complete. By Propositon \ref{prop:vcompletemodule} these are equivalently those $n$-connective topological Cartier modules whose homotopy groups are derived $V$-complete. 
    Taking the limit, as
    $n\rightarrow-\infty$, we obtain a fully faithful embedding
    $$\Sp((\CycSp_p)_{\geq 0})\rightarrow\TCart_p,$$
    where $\Sp((\CycSp_p)_{\geq 0})$ is the right completion of $\CycSp_p$ with
    respect to the cyclotomic $t$-structure. (Recall from
    Proposition~\ref{prop:tdt} that $\TCart_p$ is already right complete.)
    This inclusion functor preserves limits and is accessible, and thus admits
    a left adjoint. Moreover the essential image consists of those topological Cartier modules all of whose homotopy groups are derived $V$-complete.
\end{proof}

\subsection{$p$-typical Cartier complexes}\label{sub:cc}

Let $M$ be a $p$-typical topological Cartier module.
We have already seen in Section \ref{exhomotopygroup} that the homotopy groups
$\pi_*M$ admit the structure of $p$-typical Cartier modules, i.e. for every
$k \in \ZZ$ the induced maps 
\[
V: \pi_kM \to \pi_k M \qquad \text{and} \qquad F: \pi_k M \to \pi_k M
\]
satisfy $FV = p$. Moreover from the $S^1$-action on $M$ we get a `Connes operator'
\[
d: \pi_k M \to \pi_{k+1} M
\]
which satisfies $d^2= \eta d = d \eta$ where $\eta$ here denotes the map 
\[
\eta: \pi_kM \to \pi_{k+1}M
\]
given by acting with the Hopf element $\eta \in \pi_1(\SS)$. In particular the map $\eta$ is $2$-torsion and $\eta^4 = 0$.  
To see this we note that the map $d$ is defined by acting with an element
$d$ in $\pi_1$ of the spherical group ring $\SS[S^1]$ which is given by the
fundamental class in $\pi_1(S^1)$ shifted to the basepoint $0$ in $\SS[S^1]$.
Then the claim follows from the fact that
\[
\pi_*(\SS[S^1]) = (\pi_*\SS) [d] /(d^2 = \eta d)
\]
which is implied by the determination of stable homotopy class of the
multiplication map $S^1 \times S^1 \to S^1$. This map is after a single
suspension given by the map
\[
S^2 \vee S^2 \vee S^3 \to S^2
\]
which is the inclusions on the first two summands and the Hopf map on the last.
See also \cite{hesselholt-madsen-drw} for a discussion.

\begin{lemma}\label{lem:cartiercomplexes}
For a topological Cartier module $M$ we have on $\pi_*M$ the relations
\begin{align*}
 Vd = p dV, \qquad dF = p Fd, \qquad FdV = \begin{cases}
  d  & \text{for } p >2, \\
  d + \eta & \text{for } p =2
  \end{cases}
\end{align*}
and the maps $F,V$ commute with $\eta$. 
\end{lemma}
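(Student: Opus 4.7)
The plan is to exploit the two natural flavors of $S^1$-equivariance carried by the data. The structural maps $V_0\colon M_{hC_p}\to M$ and $F_0\colon M\to M^{hC_p}$, together with the norm $\mathrm{Nm}\colon M_{hC_p}\to M^{hC_p}$ (to which $F_0V_0$ is homotopic by the topological Cartier module structure), are $S^1$-equivariant for the \emph{residual} $S^1/C_p$-actions on $M_{hC_p}$ and $M^{hC_p}$, while the canonical maps $\iota\colon M\to M_{hC_p}$ and $q\colon M^{hC_p}\to M$ are equivariant for the $S^1$-actions \emph{pulled back} along the projection $\rho\colon S^1\to S^1/C_p$. Under the identification $S^1/C_p\cong S^1$, $[t]\mapsto t^p$, the map $\rho$ becomes the $p$-th power map, so multiplies $\pi_1 S^1$ by $p$. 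Consequently the Connes operator $d_{\mathrm{pb}}$ for the pulled-back action equals $p\cdot d_{\mathrm{res}}$ on $\pi_*$.

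The first two relations follow formally. Writing $V=V_0\iota$ and $F=qF_0$, I would compute on $\pi_*M$:
\[ Vd = V_0(\iota d) = V_0(d_{\mathrm{pb}}\iota) = V_0(p\,d_{\mathrm{res}}\iota) = p\,d\,V_0\iota = p\,dV, \]
and symmetrically
\[ dF = (dq)F_0 = p\,q\,d_{\mathrm{res}}F_0 = p\,qF_0\,d = p\,Fd. \]

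For the $FdV$ relation, I would first combine these with the identity $FV=p$ on $\pi_*M$ (which holds because $F_0V_0\simeq\mathrm{Nm}$ and $q\,\mathrm{Nm}\,\iota=p\cdot\mathrm{id}_M$) to obtain
\[ p(FdV) = (pFd)V = (dF)V = d(FV) = pd. \]
Therefore $FdV - d$ is $p$-torsion in $\pi_1$ of the endomorphism spectrum of $M$. By naturality in $M$, this element is induced from a single universal class in the degree-one part of the endomorphism ring of the compact generator $K\in\TCart_p$ of Remark~\ref{rem:raynaudcartier}, and the $p$-torsion bound forces it into the image of $\pi_1\SS=\ZZ/2\{\eta\}$. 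For $p>2$ such a class vanishes, giving $FdV=d$. For $p=2$ I would identify the class as $\eta$ by a direct computation of the residual Connes operator applied to the $C_2$-norm $\mathrm{Nm}\colon\SS_{hC_2}\to\SS^{hC_2}$, which is a Hopf-invariant style computation on $BC_2$; alternatively one can check this against the classical de Rham--Witt formula via a concrete model such as $\THH(\FF_2)$. Finally, $\eta\in\pi_1\SS$ acts naturally on every spectrum and commutes with every map of spectra, so it commutes with $V_0,\iota,F_0,q$ and hence with their composites $V$ and $F$.

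The main obstacle is the $p=2$ identification: the formal manipulations show only that $FdV-d$ is $2$-torsion, and ruling out the possibility $FdV=d$ requires an honest sphere-level computation of the interaction between the $C_2$-norm and the residual $S^1/C_2$-action (or a verification against a known example), rather than a purely formal argument.
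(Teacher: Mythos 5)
Your treatment of $Vd=pdV$ and $dF=pFd$ (via the two flavours of equivariance and the fact that the Connes operator for the action restricted along the $p$-fold cover is $p\,d$), and of the commutation of $F,V$ with $\eta$, is exactly the paper's argument. The gap is in the relation for $FdV$. From $p(FdV-d)=0$ you conclude that the natural operation $FdV-d$ is multiplication by a $p$-torsion class in $\pi_1$ of the endomorphism ring of the compact generator $K\in\TCart_p$, and then assert that the torsion bound ``forces it into the image of $\pi_1\SS$''. Nothing forces that: $\pi_1\EndSp_{\TCart_p}(K)\simeq\pi_1 K$ is not computed anywhere (the paper explicitly defers the analysis of this ring spectrum to future work in Remark~\ref{rem:raynaudcartier}), and there is no reason its $p$-torsion should be exhausted by the image of $\eta$. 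Both your vanishing claim for $p>2$ and your reduction to $\ZZ/2\{\eta\}$ for $p=2$ rest on this unsupported step; and even granting it, the decisive $p=2$ computation distinguishing $FdV=d$ from $FdV=d+\eta$ is only gestured at, although it is precisely the content of the lemma at $p=2$.

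The paper closes exactly this gap by a different reduction: writing the operator as $\kappa\circ d'\circ F_*\circ V_*\circ\iota$ and using the $S^1$-equivariance of $V$, it replaces $F_*\circ V_*$ by the norm $\Nm_{C_p}$, so that the composite $\kappa\circ d'\circ\Nm_{C_p}\circ\iota$ depends only on the underlying spectrum with $S^1$-action and no Cartier structure at all. Naturality in $\Sp^{BS^1}$ then reduces the identification to the free $S^1$-spectrum $\Sigma^\infty_+S^1$ on a degree-zero class, where $M\simeq M_{hC_p}\simeq M^{hC_p}\simeq\SS\oplus\Sigma\SS$ and $\iota$, $\Nm_{C_p}$, $\kappa$, $d'$ are all explicit; the $\eta$ at $p=2$ appears in the identification of $\kappa$ with the matrix $\begin{pmatrix} p & \eta\\ 0 & \id\end{pmatrix}$ (after Hesselholt). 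To salvage your approach you would need either to compute enough of $\pi_1 K$, or, better, to make the same move of trading $F_*V_*$ for the norm so that the universal object is $\Sigma^\infty_+S^1$ in $\Sp^{BS^1}$ rather than $K$ in $\TCart_p$ --- at which point the remaining computation is the one the paper carries out.
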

\begin{proof}
The $S^1$-equivariant map $V: M_{hC_p} \to M$ can equivalently be considered as
an $S^1$-equivariant map $M \to \mathrm{res}_p M$ where $\mathrm{res}_p M$ has
the $S^1$-action given by restricting the $S^1$-action on $M$ along the
$p$-fold cover map $S^1 \to S^1$ given by $z \mapsto z^p$. Thus we get that on
homotopy groups $V(d(x)) = d' V(x)$ where $d': \pi_k M \to \pi_{k+1} M$ is the
Connes operator associated with the $S^1$-action on $\mathrm{res}_p M$. But
$d' = pd$ since the $p$-fold cover map is of degree $p$. Hence, we see
that \[
Vd = d'V = p d V .
\]
The Frobenius can dually be considered as a map $\mathrm{res}_p M \to M$ and we therefore find
\[
d F  = F d'  =  F pd  = p F d .
\]
To identify  $FdV: \pi_*M \to \pi_{*+1}M$ we note that we can write this as a composite
\[
    \pi_*M\xto{\iota} \pi_*(M_{hC_p}) \xto{V_*} \pi_*M \xto{d} \pi_{*+1}M
    \xto{F_*} \pi_{*+1}(M^{hC_p}) \xto{\kappa} \pi_{*+1}M,
\]
where $\iota$ and $\kappa$ are the structure maps and the maps $V_*$ and $F_*$
are the maps induced by the spectral maps $V$ and $F$ on homotopy groups. The
operator called abusively $V$ above was defined as the composite $V_* \circ
\iota$ and similar for $F$. Since $V$ is $S^1$-equivariant we find that the
composite of the maps is equal to
\[
 \kappa \circ d' \circ F_* \circ V_* \circ \iota = \kappa \circ d' \circ \Nm_{C_p} \circ \iota \ ,
\]
where $d'$ is the Connes operator for the residual $S^1$-action on
$M_{hC_p}$.\footnote{This map is also equal to $\kappa \circ \Nm_{C_p} \circ d'
\circ \iota$ for $d'$ corresponding to the residual $S^1$-action on $M^{hC_p}$
but we shall not need this fact. } Now we claim that for every $S^1$-spectrum
$M$ (not necessarily a $p$-typical topological Cartier module) the composite
$\kappa \circ d' \circ \Nm_{C_p} \circ \iota $ is given by $d: \pi_*M \to
\pi_{*+1}M$ for $p$ odd and by $d + \eta: \pi_*M \to \pi_{*+1}M$ for $p =2$. It
is enough to show this claim for $M$ the free $S^1$-spectrum on a generator in
degree $0$, which is to say for $M = \Sigma^\infty_+ S^1$ and the class $x \in
\pi_0(M)$ induced by the basepoint $1 \in S^1$. We have identifications
\[
M \we M_{hC_p} \we M^{hC_p} \we \SS \oplus \Sigma \SS
\]
such that under these identifications the map $\iota: M \to M_{hC_p}$ is given
by the map $\id \oplus p : \SS \oplus \Sigma \SS \to \SS  \oplus \Sigma \SS$,
the norm is given by the identity and the map
$\kappa: M^{hC_p} \to M$ is given by the map $p \oplus \id: \SS \to \SS$ for $p$ odd and by the map 
\[
\begin{pmatrix}
p & \eta \\
0 & \id 
\end{pmatrix}  : \SS \oplus \Sigma \SS \to \SS \oplus \Sigma \SS
\]
for $p =2$ (see the proof of Lemma \cite[Lemma 1.5.1]{hesselholt-ptypical} for.an argument). Under these identifications the operator $d'$ takes $1 \in \SS
\oplus \Sigma \SS = M_{hC_p}$ to the unit element in $\pi_1$ of $\Sigma \SS$.
Then the claim follows by a straightforward computation.
The commutativity of the maps $F$ and $V$ with $\eta$ is clear since these are stable maps. 
\end{proof}

Now, we make this structure into a definition which is inspired by the
definition of a Dieudonn\'e complex in \cite{blm1}.

\begin{definition}\label{def_Cartier}
A $p$-typical Cartier complex is a $\ZZ$-graded abelian group $C^*$ together with operators
\[
    V, F: C^* \to C^* \qquad \text{and}\quad \eta, d: C^* \to C^{* +1} 
\] 
satisfying
\begin{align*}
&FV = p, \qquad d^2= \eta d = d \eta, \qquad 2 \eta = \eta^4 = 0,  \\
& Vd = p dV, \qquad dF = p Fd, \qquad  FdV = \begin{cases}
  d  & \text{for } p >2, \\
  d + \eta & \text{for } p =2.
  \end{cases}
\end{align*}
\end{definition}

\begin{remark}
    Hesselholt--Madsen~\cite{hesselholt-madsen-drw} and 
    Hesselholt~\cite{hesselholt-big}*{Definition~4.1} introduced the notion of a Witt
    complex. The universal example is the absolute de Rham--Witt complex. These
    Witt complexes give examples of $p$-typical Cartier complexes in the sense
    of Definition~\ref{def_Cartier} with $\eta$ given by multiplication with
    $d\log[-1] = [-1] \cdot d[-1]$. In fact Witt complexes should be considered
    as a multiplicative version of Cartier complexes (with some additional
    structure like a map from the Witt vectors). We will study the precise
    relation in future work.
\end{remark}

Lemma~\ref{lem:cartiercomplexes} shows that the homotopy groups $C^* := \pi_*M$ of a
$p$-typical topological Cartier module $M$ naturally form a $p$-typical Cartier
complex. The statement of Proposition \ref{prop:vcompletemodule} is that for
bounded below $M$ the $V$-completeness of $M$ is equivalent to the derived
$V$-completeness of the terms $\pi_*M$. In particular, the completeness of $M$ can be
entirely expressed in terms of the associated $p$-typical Cartier complex. More generally
one can ask that all the terms $C^i$ be
derived $V$-complete for a general Cartier complex $C^*$. It turns out in
practice that this condition is somewhat hard to verify since the $V$-adic
filtration does not take the differential $d$ into account. For
example, in the case of the de Rham--Witt complex which will be treated later, the
quotients by iterations of $V$ will not be the truncated de Rham--Witt complexes.
We will now give a completeness condition for a $p$-typical Cartier complex that is
equivalent to degreewise derived $V$-completeness but closer to the notions of
completeness that arise for the de Rham--Witt complex.

\begin{construction}\label{constr_completion}
Let $C^*$ be a $p$-typical Cartier complex. For every $i \in \ZZ$ and $r \geq 0$ we define
a derived quotient $C^{i} / (V^r + d V^r)$ in $\Dscr(\ZZ)$ as the total cofiber
of the commutative square
\[
\xymatrix{
C^{i-1} \ar[r]^{p^r} \ar[d]^d & C^{i-1} \ar[d]^{dV^r}\\
C^i \ar[r]^{V^r} & C^i.
}
\]
This total cofiber is by definition the cofiber of the map $V^r + dV^r :
C^{i}\oplus_{C^{i-1}} C^{i-1} \to C^i$ (where the
source is the derived pushout) and this justifies  the notation $C^{i} / (V^r
+ d V^r)$.

There is a natural diagram
\[
 \qquad \ldots \to C^{i} / (V^3 + d V^3) \to C^{i} / (V^2 + d V^2) \to C^{i} / (V + d V)
\]
where the map $C^{i} / (V^{r+1} + d V^{r+1}) \to C^{i} / (V^r + d V^r)$ is 
induced from the map of squares given by the identity  on the lower right term, by $pV$ on the upper left term  and by $V$ on the other two terms. 
We will denote the (derived) limit of this diagram as $(C^i)^{\wedge}_{V + dV}
\in \Dscr(\ZZ)$. There is a canonical map
\[
C^i \to (C^i)^{\wedge}_{V + dV}
\]
induced from the structure map out of lower right corner of the defining square
for $C^{i} / (V^r + d V^r)$.
\end{construction}

\begin{definition}
    A Cartier complex $C^*$ is called derived {\bf $(V+dV)$-complete} if for every $i$ the map
\[
C^i \to (C^i)^{\wedge}_{V + dV}
\]
from Construction \ref{constr_completion} is an equivalence in $\Dscr(\ZZ)$. 
\end{definition}

Now we show that for a bounded below Cartier complex this notion of
$(V+dV)$-completeness is equivalent to the naive notion of degreewise derived
$V$-completeness. The latter means that for every $i$ the Cartier module $C^i$ is derived complete in the sense of Definition \ref{Cartier_module_complete}.
Bounded below means that there exists $i_0$ such that $C^i = 0$ for $i  < i_0$ (we hope that the cohomological notation does not lead to confusion).

\begin{proposition}\label{prop:derivedcanonical}
    A bounded below Cartier module $C^*$ is derived $(V+dV)$-complete precisely
    if it is degreewise derived $V$-complete. 
\end{proposition}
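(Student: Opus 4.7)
The plan is to analyze the relationship between the two completeness notions via a cofiber sequence coming from Construction~\ref{constr_completion}, and to proceed by induction on the cohomological degree using the bounded-below hypothesis.

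The first step is to observe that since the total cofiber of a commutative square in $\Dscr(\ZZ)$ equals the cofiber between its horizontal cofibers, the defining square for $C^i/(V^r+dV^r)$ yields, for each $r \geq 1$, a cofiber sequence
\[
C^{i-1}/p^r \xrightarrow{d} C^i/V^r \to C^i/(V^r + dV^r)
\]
in $\Dscr(\ZZ)$, where the first map is induced by the left vertical map $d$ of the defining square (using that the square commutes because $V^r d = p^r dV^r$). These sequences assemble compatibly across $r$ using the transition maps of Construction~\ref{constr_completion}, so passing to the derived inverse limit produces a fiber sequence
\[
\lim_r C^{i-1}/p^r \to \lim_r C^i/V^r \to (C^i)^{\wedge}_{V+dV}
\]
in $\Dscr(\ZZ)$. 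The transition maps on the leftmost tower are induced by $(pV, V)$ on the chain representation of $C^{i-1}/p^r$, so they correspond to the action of $V$ on the cofiber.

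Granting this fiber sequence, both directions of the proposition reduce to the same key algebraic claim, which is proven by induction on $i \geq i_0$, where $i_0$ is the lower bound of $C^*$. The base case $i = i_0$ is immediate, since $C^{i_0-1} = 0$ collapses the defining square and forces $C^{i_0}/(V^r+dV^r) \simeq C^{i_0}/V^r$, so the two completeness notions coincide at $i_0$. For the inductive step, assume both conditions have been shown equivalent at level $i-1$; then knowing derived $V$-completeness (respectively $(V+dV)$-completeness, which by the inductive hypothesis implies derived $V$-completeness) of $C^{i-1}$, the fiber sequence shows that the natural map $C^i \to (C^i)^\wedge_{V+dV}$ differs from $C^i \to \lim_r C^i/V^r$ exactly by $\lim_r C^{i-1}/p^r$, so the equivalence at level $i$ follows once this limit is shown to vanish.

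The main obstacle is thus the following key lemma: if $M$ is a derived $V$-complete $p$-typical Cartier module, then the derived inverse limit of the tower $\{M/p^r\}_r$ with transition maps induced by $V \colon M \to M$ vanishes in $\Dscr(\ZZ)$. My approach is to exploit the factorization $p^r = F^r V^r$ coming from the Cartier relation $FV = p$. Via the octahedral axiom this fits $M/p^r$ into a cofiber sequence $M/V^r \xrightarrow{F^r} M/p^r \to M/F^r$, reducing the study of the $p^r$-tower to the $V^r$-tower (where derived $V$-completeness of $M$ directly provides $M \simeq \lim_r M/V^r$) and the $F^r$-tower. In parallel, the iterated transition $M/p^{r+k} \to M/p^r$ factors as $V^k$ followed by the natural reduction, so its image on $\pi_0$ lies in $(V^k M + p^r M)/p^r M$; derived $V$-completeness of $M$ then forces this image to stabilize in a Mittag-Leffler sense, making both $\lim$ and $\lim^1$ of the $\pi_0$ and $\pi_1$ terms vanish, which by the Milnor sequence gives the desired vanishing of the derived limit.
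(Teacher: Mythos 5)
Your overall skeleton is correct and is essentially the paper's argument in different packaging: the cofiber sequence $C^{i-1}/p^r \xto{d} C^i/V^r \to C^i/(V^r+dV^r)$, passage to the limit over $r$, and induction on the degree with the trivial base case in the lowest degree all work (the paper instead commutes the limit with the total cofiber corner-wise in the defining square, which amounts to the same thing). In both versions the entire content is the vanishing of $\lim_r C^{i-1}/p^r$ for the twisted transition maps induced by $(pV,V)$, given that $C^{i-1}$ is derived $V$-complete; that statement is true.

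The gap is in your proof of this key lemma. The assertion that derived $V$-completeness forces the images $(V^kM+p^rM)/p^rM$ to stabilize in a Mittag--Leffler sense is false: take $M=\FF_p\llbracket x\rrbracket$ with $F=0$ and $V$ the shift (exactly the Cartier module appearing in the paper's supersingular K3 example). It is derived $V$-complete, $p=FV=0$ on $M$, and the images $V^kM$ strictly decrease without ever stabilizing; the lemma's conclusion still holds there, but not for the reason you give. Moreover, even if Mittag--Leffler held it would only kill $\lim^1$, not $\lim$, unless the stable images were zero; and the octahedral reduction to the $V^r$- and $F^r$-towers is not obviously natural in $r$ for the transition maps $(pV,V)$ — checking compatibility requires identities involving $VF$, which a Cartier module does not control — nor do you say why the $F^r$-tower contribution should vanish. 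The repair is much simpler and is what the paper does: since sequential limits commute with cofibers in $\Dscr(\ZZ)$, one has $\lim_r C^{i-1}/p^r \simeq \cofib\bigl(\lim(\cdots\xto{pV}C^{i-1}\xto{pV}C^{i-1})\to\lim(\cdots\xto{V}C^{i-1}\xto{V}C^{i-1})\bigr)$; the second limit vanishes by derived $V$-completeness of $C^{i-1}$, and the first vanishes because it can be rewritten as a limit of multiplication-by-$p$ maps on the already-vanishing $V$-limit. Substituting this for your Mittag--Leffler step makes the induction go through.
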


\begin{proof}
We can assume by shifting that $C^i = 0$ for $i <0$. For $C^0$ we then obtain that
$C^0 / (V^r + dV^r) = C^0 / V^r$ and that $ (C^0)^{\wedge}_{V + dV} =
(C^0)^\wedge_V$ where the latter is the derived $V$-completion. It follows that
$C^0$ is derived $(V+dV)$-complete precisely if it is derived $V$-complete.

Now we proceed by induction over $i$. Assume that $C^{i-1}$ is derived $V$ and
$(V + dV)$-complete. We will show that $C^i$ is derived $(V+dV)$-complete
precisely if it is derived $V$-complete. By definition $(C^i)^\wedge_{V + dV}$
is the limit of total cofibers of squares
\[
\xymatrix{
C^{i-1} \ar[r]^{p^r} \ar[d]^d & C^{i-1} \ar[d]^{dV^r}\\
C^i \ar[r]^{V^r} & C^i.
}
\]
The limit is taken over maps of total cofibers obtained from maps of squares.
Thus we can equivalently described $(C^i)^\wedge_{V + dV}$  as the total
cofiber of the square obtained as the limit of these maps of squares. This
limit square is
$$\xymatrix{
\lim(\cdots \xto{pV} C^{i-1}  \xto{pV} C^{i-1} \xto{pV} C^{i-1})\ar[r]\ar[d]&
\lim(\cdots \xto{V} C^{i-1}  \xto{V} C^{i-1} \xto{V} C^{i-1})\ar[d]\\
\lim(\cdots \xto{V} C^{i}  \xto{V} C^{i} \xto{V} C^{i})\ar[r]&
C^i.}$$
The upper right term vanishes by the assumption that $C^{i-1}$ is derived $V$-complete;
the upper left term vanishes since the inverse limit can be written
equivalently as the limit of 
\[
    \ldots \xto{p} \lim_{i,V} C^{i-1} \xto{p} \lim_{i,V} C^{i-1} \xto{p}
    \lim_{i,V} C^{i-1}
\] 
where $\lim_{i,V} C^{i-1}$ is the term in the upper right corner, which we have
already noted is zero. In other words the limit square takes the form
\[
\xymatrix{
0 \ar[r]\ar[d] & 0 \ar[d] \\
\lim_{i,V} C^i \ar[r] & C^i.
}
\]
From this description it follows that the map from $C^i$ to the total cofiber
of this square is an equivalence precisely if the lower left corner vanishes,
i.e. if $C^i$ is derived $V$-complete. This finishes the proof.
\end{proof}

\section{The symmetric monoidal structure}\label{sec:monoidal}

According to Corollary \ref{cor_symmheart} we get an induced symmetric monoidal structure on
$\CycSp_p^\heart$ from the tensor product of cyclotomic spectra. In this section we shall
give an explicit formula for this symmetric monoidal structure and explore some
consequences. We will come back to this in later work.

\subsection{The tensor product of topological Cartier modules}

In this section we shall describe the symmetric monoidal structure on $\TCartVm$ induced
through the equivalence $\TCartVm\simeq \CycSp_p^-$ from the one on cyclotomic spectra. 
We denote the tensor product corresponding to the symmetric monoidal structure on $\TCartVm$ by
$\widehat \boxtimes$. More precisely we have that
\[
M \widehat{\boxtimes} N := \TR( M/V \otimes N/V),
\]
where $\otimes$ is the tensor product of cyclotomic spectra whose underlying spectrum is just
the tensor product of spectra. In this section we will follow the convention that $\otimes$
shall always refer to symmetric monoidal structures that are taken `underlying' in this
sense.
Our first task is to give a more explicit formula for $M \widehat \boxtimes N$ and then we
will identify the induced symmetric monoidal structure on the heart explicitly. In future
work we will show that this symmetric monoidal structure on $(\TCart^-_p)^\wedge_V$ is
induced by a natural  symmetric monoidal structure on $\TCart_p$ which will be denote by
$\boxtimes$ and use this to understand the relation to de Rham--Witt complexes that will be
explained in Section \ref{sec:schemes}  over general bases.

Recall the $\infty$-category $\CycSp_p^\Fr$ of cyclotomic spectra with Frobenius lift. There are natural forgetful functors
\[
\TCart_p \to \CycSp^\Fr_p \to \CycSp_p
\]
where the first functor just forgets the $V$ operator and the second sends $(X,\psi_p)$ to
the cyclotomic spectrum $X$ equipped with the Frobenius $X \to X^{hC_p} \xto{\can}
X^{tC_p}$.
As proven in~\cite{krause-nikolaus}*{Proposition~10.3}, the second functor
admits a right adjoint given by $\TR$, which, as we have proved above, factors canonically
through the first category. The first
functor $\TCart_p \to \CycSp^\Fr_p$ admits both adjoints by the adjoint functor theorem, but
we will only be concerned with the left adjoint here.

\begin{lemma}\label{lem_leftadj}
    The forgetful functor 
    \[
    \TCart_p \to \CycSp^\Fr_p
    \]
    is monadic. The underlying $S^1$-spectrum of the left adjoint applied to $M$ in
    $\CycSp^\Fr_p$ is given by 
    $M[V] := \bigoplus_{n \geq 0} M_{hC_{p^n}}$ with the structure of a topological Cartier module that will be described in the proof. 
\end{lemma}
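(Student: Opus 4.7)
The plan is to apply the Barr--Beck--Lurie monadicity theorem \cite{ha}*{Theorem~4.7.3.5} and separately construct the left adjoint by hand. Let $U\colon \TCart_p \to \CycSp^\Fr_p$ denote the forgetful functor, sending $(M, V_M, F_M, \sigma_M)$ to $(M, F_M)$. Observe that the forgetful functors $\TCart_p \to \Sp^{BS^1}$ (Proposition~\ref{prop:tdforget}) and $\CycSp^\Fr_p \to \Sp^{BS^1}$ (\cite{nikolaus-scholze}*{Proposition~II.1.5}) are conservative and preserve all small limits and colimits; it follows formally that $U$ shares these properties. Since $\TCart_p$ and $\CycSp^\Fr_p$ are presentable, the adjoint functor theorem supplies a left adjoint $(-)[V]\colon \CycSp^\Fr_p \to \TCart_p$, and preservation of all colimits implies in particular preservation of $U$-split geometric realizations. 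Barr--Beck--Lurie then delivers monadicity.

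To identify the underlying $S^1$-spectrum of $M[V]$, I will construct a candidate left adjoint explicitly. Given $(M, \psi) \in \CycSp^\Fr_p$, set
\[
M[V] := \bigoplus_{n \geq 0} M_{hC_{p^n}},
\]
with canonical inclusions $\iota_n\colon M_{hC_{p^n}} \hookrightarrow M[V]$. Since $(-)_{hC_p}$ commutes with direct sums, $M[V]_{hC_p} \simeq \bigoplus_{n \geq 0} M_{hC_{p^{n+1}}}$. Define the Verschiebung $V_{M[V]}$ to send the $n$-th summand via $\iota_{n+1}$, so that $V_{M[V]}$ is a split inclusion with cofiber the $n=0$ summand $M$. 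Define the Frobenius $F_{M[V]}\colon M[V] \to M[V]^{hC_p}$ summandwise: on $M = M_{hC_{p^0}}$, it is $\psi$ followed by $(\iota_0)^{hC_p}$; on $M_{hC_{p^{n+1}}} = (M_{hC_{p^n}})_{hC_p}$ for $n \geq 0$, it is $\Nm_{C_p}$ followed by $(\iota_n)^{hC_p}$. The composition $F_{M[V]} V_{M[V]}$ agrees summandwise with $\Nm_{C_p}|_{M[V]}$ via the naturality of the norm transformation, yielding the required topological Cartier module structure.

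For the universal property I apply Proposition~\ref{prop_algebra}. Forgetting the Frobenius, $M[V]$ is the free $(-)_{hC_p}$-algebra on $M$, a standard construction for the colimit-preserving endofunctor $(-)_{hC_p}$; consequently $\MapSp_{\Alg_{(-)_{hC_p}}}(M[V], N) \simeq \MapSp_{\Sp^{BS^1}}(M, N)$. Combined with $\cofib(V_{M[V]}) \simeq M$, the fiber sequence of Proposition~\ref{prop_algebra} reduces to
\[
\MapSp_{\TCart_p}(M[V], N) \to \MapSp_{\Sp^{BS^1}}(M, N) \to \MapSp_{\Sp^{BS^1}}(M, N^{hC_p}),
\]
in which the right-hand map, upon unwinding definitions, sends $g$ to the difference $F_N \circ g - g^{hC_p} \circ \psi$. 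The fiber of this map is exactly $\MapSp_{\CycSp^\Fr_p}(M, N)$ by the lax equalizer description of $\CycSp^\Fr_p$ together with \cite{nikolaus-scholze}*{Proposition~II.1.5}, completing the verification of the adjunction.

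The main obstacle is unpacking the right-hand arrow in the fiber sequence above from the abstract formulation of Proposition~\ref{prop_algebra}: one must simultaneously carry along the summandwise description of $F_{M[V]}$, the identification of $\cofib(V_{M[V]})$ with the $n=0$ summand, and the universal property of the free $(-)_{hC_p}$-algebra, so that the recovered map agrees with the Frobenius comparison defining $\CycSp^\Fr_p$.
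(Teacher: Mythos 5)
Your proposal is correct and follows essentially the same route as the paper: monadicity via Lurie's theorem (conservativity plus preservation of limits and colimits, checked through the forgetful functors to $\Sp^{BS^1}$), the same explicit construction of $M[V]=\bigoplus_{n\geq 0}M_{hC_{p^n}}$ with $V$ the summand inclusion and $F$ given summandwise by $\psi$ and the $C_p$-norms, and the same verification of the universal property via Proposition~\ref{prop_algebra}, using that $M[V]$ is free as a $(-)_{hC_p}$-algebra and that $\cofib(V_{M[V]})\simeq M$, so the fiber sequence recovers $\MapSp_{\CycSp^\Fr_p}(M,N)$.
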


\begin{proof}
    The forgetful functor reflects equivalences and preserves all colimits and limits. Thus, it
    follows from Lurie's version of the monadicity theorem that it is monadic
    (see~\cite{ha}*{4.7.0.3}).
    To understand the left adjoint functor we give a construction of an object $M[V] \in
    \TCart_p$. As a spectrum with $S^1$-action we set
    \[
    M[V] := \bigoplus_{n \geq 0} M_{hC_{p^n}}
    \]
    where $M_{hC_{p^n}}$ carries the residual $S^1/C_{p^n} \cong S^1$-action. We now want to
    equip $M[V]$ with the structure of a $p$-typical topological Cartier module. To this end, we define the $V$-operator as the inclusion
    \[
    M[V]_{hC_p} \we \bigoplus_{n \geq 1} M_{hC_{p^n}} \to \bigoplus_{n \geq 0} M_{hC_{p^n}}  = M[V] 
    \]
    and the $F$-operator as the composition
    \[
    \bigoplus_{n \geq 0} M_{hC_{p^n}}  \to  \bigoplus_{n \geq 0} \left(M_{hC_{p^n}}\right) ^{hC_p} \to  \left(\bigoplus_{n \geq 0} M_{hC_{p^n}}\right) ^{hC_p}  = M[V]^{hC_p}
    \]
    where the second map is the canonical interchange map and the first is given on
    the $(n=0)$-summand by $F$ and on the $n$th summand by the $C_p$-norm map
    \[
    M_{hC_{p^n}} \we (M_{hC_{p^{n-1}}})_{hC_p} \to (M_{hC_{p^{n-1}}})^{hC_p} 
    \]
    followed by the inclusion into the direct sum. There is a canonical equivalence
    between the composition $F \circ V$ and the norm map, as the latter also
    factors as the composition
    \[
    M[V]_{hC_p} \we  \bigoplus_{n \geq 1} M_{hC_{p^n}} \xto{\bigoplus \Nm_{C_p}} \bigoplus_{n \geq 1} (M_{hC_{p^{n-1}}})^{hC_p} \to \left(\bigoplus_{n \geq 1} M_{hC_{p^{n-1}}} \right)^{hC_p}.
    \]
    Now we compute the mapping spectrum $\MapSp_{\TCart_p}(M[V], N)$ and show
    that the canonical map to $\MapSp_{\CycSp^\Fr_p}(M, N)$, induced by the forgetful functor
    $\TCart_p\rightarrow\CycSp^\Fr_p$ and the map $M\rightarrow M[V]$ of
    cyclotomic spectra with Frobenius lifts, is an equivalence. Here we abusively denote the `underlying' cyclotomic spectrum with Frobenius lift of $N$ also by $N$. To this end we use the formula for the mapping space in $\TCart_p$ given in Proposition \ref{prop_algebra} and get a fibre sequence
    \begin{equation}\label{fibrecoalgebra}
    \MapSp_{\TCart_p}(M[V], N) \to \MapSp_{\Alg_{(-)_{hC_p}}}(M[V], N) \xto{\vartheta} \MapSp_{\Sp^{BS^1}}\left(\cofib{(V_M)}, N^{hC_p} \right) \ .
    \end{equation}
    Note that $\cofib{(V_M)} = M$. Moreover, it is straightforward to check
    that  $\MapSp_{\Alg_{(-)_{hC_p}}}(M[V], N) \simeq \MapSp_{\Sp^{BS^1}}(M, N)$. In fact $M[V]$ is by construction free as a $(-)_{hC_p}$-algebra if we neglect the $F$-operators. Under these identifications the fibre sequence \eqref{fibrecoalgebra} takes the form
    \[
    \MapSp_{\TCart_p}(M[V], N) \to \MapSp_{\Sp^{BS^1}}(M, N) \xto{\vartheta} \MapSp_{\Sp^{BS^1}}\left(M, N^{hC_p} \right) 
    \]
    with the right hand map $\vartheta$ is given by sending $g$ to $F_N \circ g - g^{hC_p} \circ F_M$. But this is also the mapping spectrum in $\CycSp^\Fr$.
\end{proof}

\begin{remark}
For any spectrum with $S^1$-action $M$ the spectrum $M[V] = \bigoplus_{n \geq
0} M_{hC_{p^n}}$ of the last lemma is free as a spectrum with $V$ operator.
The lemma shows that if $M$ admits an $F$-operator then $M[V]$ is a topological
Cartier module and is also free as a topological Cartier module. One can also
express this by saying that the commutative square of forgetful functors
\[
\xymatrix{
\TCart_p \ar[r] \ar[d] & \Alg_{(-)^{hC_p}} \ar[d] \\
\Alg_{(-)_{hC_p}}\ar[r] & \Sp^{BS^1}
}
\]
remains commutative after passing to left adjoints of the horizontal maps, i.e., the square is left adjointable.
\end{remark}

As usual we denote by $(\CycSp^\Fr_p)^- \subseteq \CycSp_p^\Fr $ the full subcategory of bounded below objects. 
The adjunction of Lemma \ref{lem_leftadj} induces an adjunction
\begin{equation}\label{adjucompl}
\xymatrix{
(\CycSp^\Fr_p)^-\ar[r]<2pt> &  \ar[l]<2pt> \TCartVm
}
\end{equation}
whose left adjoint sends $M$ to the $V$-completion of $\bigoplus_{n \geq 0} M_{hC_{p^n}}$
which is  $\prod_{n \geq 0} M_{hC_{p^n}}$ and will be denoted by $M[[V]]$. We note that this
is also equivalent to $\TR(M)$ where $M$ is considered as a cyclotomic spectrum with the
Frobenius $M \xto{F} M^{hC_p} \xto{\can} M^{tC_p}$ (since this is the mod $V$ reduction).

We now use that the $\infty$-category $\CycSp^\Fr_p$ has a symmetric monoidal structure
given by the `underlying' tensor product. Formally this symmetric monoidal structure is
constructed exactly as the one on $\CycSp_p$ in \cite[Construction IV.2.1]{nikolaus-scholze}
using that the functor $(-)^{hC_p}: \Sp^{BS^1} \to \Sp^{BS^1}$ admits a canonical lax
symmetric monoidal structure. The symmetric monoidal structure on $\CycSp^\Fr_p$ restricts
to one on the full subcategory $(\CycSp^\Fr_p)^- \subseteq \CycSp^\Fr_p$ since $X \otimes Y$
is bounded below for $X$ and $Y$ bounded below.

\begin{proposition}\label{symmon}
    The left adjoint $-[[V]]: (\CycSp^\Fr_p)^- \to \TCartVm$
    admits a canonical refinement to a symmetric monoidal functor.
\end{proposition}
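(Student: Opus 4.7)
The plan is to factor $-[[V]]$ as a composition of two symmetric monoidal functors, using the identification $M[[V]] \simeq \TR(G(M))$ already noted before the proposition statement. Here $G\colon (\CycSp^\Fr_p)^- \to \CycSp_p^-$ is the forgetful functor sending a cyclotomic spectrum with Frobenius lift $(M, \psi_p\colon M \to M^{hC_p})$ to $M$ equipped with the cyclotomic Frobenius $\varphi = \can \circ \psi_p\colon M \to M^{tC_p}$.

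First, $\TR\colon \CycSp_p^- \to \TCartVm$ is an equivalence, and the monoidal structure $\widehat{\boxtimes}$ on $\TCartVm$ is defined as the transport of $\otimes$ on $\CycSp_p^-$ along this equivalence (this is how $\widehat{\boxtimes}$ was introduced at the start of the section). Hence $\TR$ is tautologically symmetric monoidal.

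Second, $G$ admits a canonical symmetric monoidal refinement. Indeed, both source and target symmetric monoidal structures are built via \cite[Construction~IV.2.1]{nikolaus-scholze}, which produces a symmetric monoidal structure on a lax equalizer $\LEq(\id, T)$ for any lax symmetric monoidal endofunctor $T$ of $\Sp^{BS^1}$; for $\CycSp^\Fr_p$ one takes $T = (-)^{hC_p}$, and for $\CycSp_p$ one takes $T = (-)^{tC_p}$. The canonical map $\can\colon (-)^{hC_p} \to (-)^{tC_p}$ is a lax symmetric monoidal natural transformation, and $G$ is the functor induced on lax equalizers by post-composing the Frobenius lift with $\can$. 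Since the lax equalizer construction is functorial in lax symmetric monoidal natural transformations of the endofunctor pair $(\id, T)$, this endows $G$ with a canonical symmetric monoidal structure; concretely, on a tensor product the Frobenius of $G(M) \otimes G(N)$ is the composite $\can \circ (M \otimes N \xrightarrow{\psi_p \otimes \psi_p} M^{hC_p} \otimes N^{hC_p} \to (M \otimes N)^{hC_p})$, which agrees with $\varphi_{M \otimes N}$. The composition $\TR \circ G$ is then symmetric monoidal, and transporting along the natural equivalence $-[[V]] \simeq \TR \circ G$ endows $-[[V]]$ with the desired canonical symmetric monoidal structure.

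The main technical obstacle is verifying this functoriality of \cite[Construction~IV.2.1]{nikolaus-scholze} in lax symmetric monoidal natural transformations between pairs of endofunctors, i.e., checking that lax equalizers behave coherently under such transformations at the $\infty$-categorical level. Once that is in hand, the identification of $-[[V]]$ with $\TR \circ G$ as functors (which is just the content of the preceding discussion: $M[[V]] / V \simeq G(M)$, together with $\TR((-)/V) \simeq \id$ on $V$-complete objects) suffices to promote the natural equivalence to one of symmetric monoidal functors, finishing the proof.
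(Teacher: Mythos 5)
Your proposal is correct and is essentially the paper's own argument viewed from the other side of the equivalence $\TCartVm\simeq\CycSp_p^-$: the paper composes $-[[V]]$ with the symmetric monoidal equivalence $(-)/V$ and identifies the composite with the forgetful functor $(\CycSp^\Fr_p)^-\to\CycSp_p^-$ (whose symmetric monoidal refinement comes, exactly as you say, from the symmetric monoidal transformation $(-)^{hC_p}\to(-)^{tC_p}$ and the lax-equalizer construction of the monoidal structures), while you write the same data as $-[[V]]\simeq\TR\circ G$. The only point to keep in mind is that the identification $M[[V]]/V\simeq G(M)$ is where the paper does its one computation (commuting the uniformly bounded below product with homotopy orbits), so it is an input you are using rather than a tautology, but it is established in the surrounding text just as you indicate.
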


\begin{proof}
    By definition, the functor $(-)/V:  \TCartVm \to \CycSp_p^-$ is a symmetric
    monoidal equivalence. Thus, it suffices to equip the composition
    \[
        (\CycSp^\Fr_p)^- \xto{-[[V]]} \TCartVm \xto{(-)/V} \CycSp_p^-
    \]
    with a symmetric monoidal structure. We claim that this composite is equivalent to the functor which takes 
    $X \in (\CycSp^\Fr_p)^-$ to the `underlying' cyclotomic spectrum of $X$, i.e. $X$ equipped with the 
    composition 
    $X \to X^{hC_p} \xto{\can} X^{tC_p}$ as Frobenius. To see this we have to compute the cofiber of the map
    \[
    V: \left(\prod_{n \geq 0} M_{hC_{p^n}}\right)_{hC_p} \to \prod_{n \geq 0} M_{hC_{p^n}} \ .
    \]
    Since the product is uniformly bounded below it commutes with the orbits and the claim
    follows from the description of the $V$ and $F$ operators given in the proof of Lemma
    \ref{lem_leftadj} above.
    Finally the functor $\CycSp^\Fr_p \to \CycSp_p$ admits by construction of the symmetric
    monoidal structures a symmetric  monoidal refinement since the transformation
    $(-)^{hC_p} \to (-)^{tC_p}$ is a symmetric monoidal transformation.
\end{proof}

We now observe that for every $p$-typical topological Cartier module there is a
natural cofiber sequence
\begin{equation}\label{coffree}
(M_{hC_p})[V] \to M[V] \to M
\end{equation}
of $p$-typical topological Cartier modules. Here the $S^1$-spectrum $M_{hC_p}$ is considered
as a cyclotomic spectrum with Frobenius lift, where the Frobenius lift is given by the zero
map. Then the map $(M_{hC_p})[V] \to M[V]$ is induced from the map $M_{hC_p} \to M[V]$ that
is given as the composition
\[
M_{hC_p} \xto{(\id, -V)} M_{hC_p} \oplus M \xto{i}  \oplus_{n \geq 0} M_{hC_{p^n}} = M[V]
\]
where $i$ is the summand inclusion. This is a map in $\CycSp^\Fr$ and therefore gives rise
to a map $(M_{hC_p})[V] \to M[V]$. Concretely this map is given by $i - V$ where $i$ is the
inclusion $(M_{hC_p})[V] \to M[V]$ and $V = \oplus_{n \geq 0} V_{hC_{p^n}}$ is just applied
levelwise.
It is easy to see that the composite to $M$ comes with a preferred nullhomotopy
(as we also only have to check that on $M_{hC_p}$) and that on underlying
spectra this is a cofiber sequence. This implies that it is a cofiber sequence
of $p$-typical topological Cartier modules.\footnote{Note that this gives rise to our
    standard cofiber sequence $M_{hC_p} \to M \to M/V$ of cyclotomic spectra upon taking the
        mod $V$ reductions (again $M_{hC_p}$ has the trivial Frobenius). But while the first
        map is a map of cyclotomic spectra between cyclotomic spectra that admit Frobenius
        lifts, it is not a map of cyclotomic spectra with Frobenius lifts. That is important
        to keep in mind in identifying some of the maps later.}

If $M$ is bounded below and $V$-complete, we also get a cofiber sequence
\begin{equation}\label{coff2}
(M_{hC_p})[[V]] \to M[[V]] \to M
\end{equation}
in $\TCartVm$ by completion of~\eqref{coffree}.

\begin{corollary}\label{corollary_tensor}
    For every pair of $V$-complete bounded below $p$-typical topological
    Cartier modules $M$ and $N$  the tensor product $M \widehat\boxtimes N \in
    \TCartVm$ is equivalent to the total cofiber of a square
    \[
    \xymatrix{
    (M_{hC_p} \otimes N_{hC_p}) [[V]] \ar[d] \ar[r] & (M \otimes N_{hC_p}) [[V]] \ar[d]  \\
    (M_{hC_p} \otimes N) [[V]] \ar[r] & (M \otimes N) [[V]].
    }
    \]
\end{corollary}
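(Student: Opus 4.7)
The plan is to resolve both $M$ and $N$ by $[[V]]$-free objects using the cofiber sequence \eqref{coff2} and then reduce the computation of $M \widehat{\boxtimes} N$ to the symmetric monoidality of $-[[V]]$ established in Proposition \ref{symmon}. Explicitly, \eqref{coff2} presents any $V$-complete bounded below topological Cartier module $L$ as
$$L \simeq \mathrm{cofib}\bigl((L_{hC_p})[[V]] \to L[[V]]\bigr)$$
in $\TCartVm$, and the idea is to apply this resolution to both $M$ and $N$ simultaneously.

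The first key point is that $\widehat{\boxtimes}$ preserves cofibers (and in fact all colimits) separately in each variable. This follows because $\widehat{\boxtimes}$ arises from a symmetric monoidal structure on the stable presentable $\infty$-category $\TCartVm \simeq \CycSp_p^-$, inherited from the symmetric monoidal structure on $\CycSp_p$, which is bilinear by construction. Combining this bilinearity with the resolutions of $M$ and $N$ provided by \eqref{coff2}, I would conclude that $M \widehat{\boxtimes} N$ is the total cofiber of the square
$$\xymatrix{
(M_{hC_p})[[V]] \widehat{\boxtimes} (N_{hC_p})[[V]] \ar[d] \ar[r] & M[[V]] \widehat{\boxtimes} (N_{hC_p})[[V]] \ar[d] \\
(M_{hC_p})[[V]] \widehat{\boxtimes} N[[V]] \ar[r] & M[[V]] \widehat{\boxtimes} N[[V]]
}$$
with maps induced from the natural ones in \eqref{coff2}.

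The second step is to apply Proposition \ref{symmon}, which says that $-[[V]]\colon (\CycSp^\Fr_p)^- \to \TCartVm$ is symmetric monoidal with respect to the underlying tensor products. Each corner of the square is then canonically equivalent to $(L_1 \otimes L_2)[[V]]$, where $M, N$ are viewed as cyclotomic spectra with Frobenius lift via the forgetful functor $\TCart_p \to \CycSp^\Fr_p$ and $M_{hC_p}, N_{hC_p}$ carry the zero Frobenius lift (as in the construction of \eqref{coffree}). This produces exactly the square in the statement. The main subtlety I would need to verify is that the symmetric monoidal equivalences $L_1[[V]] \widehat{\boxtimes} L_2[[V]] \simeq (L_1 \otimes L_2)[[V]]$ are sufficiently natural that the resulting square really carries the maps induced by $M_{hC_p} \to M$ and $N_{hC_p} \to N$; this follows from the naturality built into the symmetric monoidal refinement of Proposition \ref{symmon} together with the functoriality of \eqref{coff2} in $M$ and $N$, so the main obstacle is essentially bookkeeping rather than anything substantive.
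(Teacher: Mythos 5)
Your proposal is correct and follows essentially the same route as the paper's proof: resolve $M$ and $N$ via the cofiber sequence \eqref{coff2}, use that $\widehat\boxtimes$ commutes with (finite) colimits in each variable to express $M\widehat\boxtimes N$ as the total cofiber of the square of $\widehat\boxtimes$-products, and then apply the symmetric monoidality of $-[[V]]$ from Proposition~\ref{symmon} to identify the corners. The only caveat, recorded in the paper's Remark~\ref{rem_maps}, is that identifying the maps of the resulting square explicitly under the $[[V]]$-identifications is more delicate than bookkeeping---extra terms involving the Frobenius operators appear---but since the statement does not name the maps, this does not affect the argument.
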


\begin{proof}
We use that the symmetric monoidal structure $\widehat{\boxtimes}$ on
$\TCartVm$ commutes with colimits and the cofiber sequence~\eqref{coff2} for
$M$ and $N$ to deduce that the tensor product is the total cofiber of a square
\[
\xymatrix{
M_{hC_p}[[V]] \widehat\boxtimes N_{hC_p} [[V]] \ar[d] \ar[r] & M[[V]] \widehat\boxtimes N_{hC_p} [[V]] \ar[d]  \\
M_{hC_p}[[V]] \widehat\boxtimes N [[V]] \ar[r] & M[[V]] \widehat\boxtimes N [[V]].
}
\]
Now we use that $-[[V]]$ is symmetric monoidal as shown in Proposition \ref{symmon} to get the result.
(Note that we do not need to complete the total cofiber as it is already
$V$-complete since this is a finite colimit.)
\end{proof}

\begin{remark}\label{rem_maps}
One can also work out the maps in the diagram of Corollary
\eqref{corollary_tensor} by using the fact that this is a diagram in $\TR$ for the corresponding diagram 
\[
\xymatrix{
(M_{hC_p} \otimes N_{hC_p}) \ar[d]^{\id \otimes V} \ar[r]^{V \otimes \id} & (M \otimes N_{hC_p})\ar[d]^{\id \otimes V}  \\
(M_{hC_p} \otimes N) \ar[r]^{V \otimes \id} & (M \otimes N)
}
\]
of cyclotomic spectra. We again issue the warning that this is not a diagram
with Frobenius lifts. Therefore one has to use the identification of $\TR$ with
$-[[V]]$ in each term but the maps are not compatible with that identification and we get
additional terms coming from the Frobenius operators.
 \end{remark}
Now assume that $M$ and $N$ are connective. We want to determine $\pi_0( M \widehat \boxtimes N)$ in terms of $\pi_0(M)$ and $\pi_0(N)$. 
For $M$ connective we have that $\pi_0(M[V]) = \oplus_{n \geq 0} \pi_0(M)$.
Thus, on $\pi_0$, the effect of $-[V]$ is given by adjoining $V$ freely as a module.
We will write this formula as $\pi_0(M[V]) = (\pi_0M)[V]$, thus use the
notation $A[V]$ for an abelian group with a map $F: A \to A$ for the
$p$-typical Cartier module $\oplus_{n \geq 0} A$ formed analogously to the
topological case. We shall write a typical element in $A[V]$ as a polynomial in
$V$, i.e. as $\sum_{i=1}^n a_i V^i$. Similarly, we have that $\pi_0(M[[V]]) =
\pi_0(M)[[V]]$.

Now, observe that in the formula of Corollary \ref{corollary_tensor} all terms  are
connective if $M$ and $N$ are.  As a result we find that $\pi_0( M \widehat \boxtimes N) $
is the total cokernel of a square
\begin{equation}\label{coequalizer}
\xymatrix{
(\pi_0M \otimes \pi_0N)  [[V]] \ar[d] \ar[r] & (\pi_0M \otimes \pi_0N) [[V]] \ar[d]  \\
(\pi_0M \otimes \pi_0N ) [[V]] \ar[r] & (\pi_0M \otimes \pi_0N) [[V]].
}
\end{equation}
Of course the total cokernel does not depend on the upper left term (this is
why this is not a concept one ever hears of), so it is just the quotient of $(\pi_0M \otimes \pi_0N) [[V]]$ by the image of the two maps into it.

\begin{corollary}\label{tensor_heart}
For any pair of $V$-complete connective $p$-typical topological Cartier modules $M$ and $N$ we can describe $\pi_0(M \widehat\boxtimes N)$ as $\pi_0$ of the 
algebraic derived $V$-completion (see Lemma \ref{alg_completion} and the
following discussion for this notion) of a $p$-typical Cartier module 
\[
(\pi_0 M \otimes \pi_0 N)[V] / \sim
\]
where the equivalence relation is generated additively by
\[
(m \otimes Vn) V^k \sim (Fm \otimes n) V^{k+1} 
\qquad (Vm \otimes n) V^k \sim (m \otimes Fn) V^{k+1}
\]
for all $m \in \pi_0M$, $n \in \pi_0N$ and $k \in \mathbb{N}$. 
\end{corollary}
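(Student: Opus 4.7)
The plan is to apply Corollary \ref{corollary_tensor} and compute $\pi_0$ of the resulting total cofiber of a square of spectra of the form $(-)[[V]]$. Since $M$ and $N$ are connective, each of the four corners of the square is connective, and so $\pi_0$ of the total cofiber is the total cokernel of the induced square on $\pi_0$. Combining (i) the fact that the $C_{p^n}$-action on $\pi_0$ of any $S^1$-spectrum is trivial (because $S^1$ is connected), (ii) the K\"unneth isomorphism $\pi_0(X \otimes Y) \cong \pi_0 X \otimes \pi_0 Y$ for connective $X$ and $Y$, and (iii) the identification $\pi_0(X[[V]]) \cong \pi_0(X)[[V]]$ noted just before the statement, each of the four corners of the $\pi_0$-square is identified with $(\pi_0 M \otimes \pi_0 N)[[V]]$.

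The key step is to identify the two nontrivial maps in the $\pi_0$-square. Following Remark \ref{rem_maps}, the square is obtained by applying $\TR$ to a square of maps of cyclotomic spectra that do not respect Frobenius lifts, so the identification $\TR(X) \simeq X[[V]]$ does not transport the maps directly: one picks up Frobenius correction terms. Unwinding this using the iterated pullback description of $\TR$ and the explicit formulas for $V$ and $F$ in Construction \ref{const:tr}, one finds that the bottom horizontal map identifies $(Vm \otimes n)V^k$ with $(m \otimes Fn)V^{k+1}$ in the target, and the right vertical map identifies $(m \otimes Vn)V^k$ with $(Fm \otimes n)V^{k+1}$. Modding $(\pi_0 M \otimes \pi_0 N)[[V]]$ by the images of these two maps therefore yields precisely the relations claimed in the statement.

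Finally, one checks that the total cokernel formed in the $V$-adically completed setting $(\pi_0 M \otimes \pi_0 N)[[V]]$ coincides with the algebraic derived $V$-completion (in the sense of Lemma \ref{alg_completion}) of the analogous quotient of the uncompleted $p$-typical Cartier module $(\pi_0 M \otimes \pi_0 N)[V]/\sim$. This follows because the algebraic derived $V$-completion is a left adjoint, so it commutes with cokernels, and $(\pi_0 M \otimes \pi_0 N)[[V]]$ is by construction the completion of $(\pi_0 M \otimes \pi_0 N)[V]$. The main obstacle is the middle step: correctly extracting the Frobenius correction terms from the equivalence $\TR(X) \simeq X[[V]]$ when the maps in question are maps of cyclotomic spectra rather than of cyclotomic spectra with Frobenius lifts. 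This is where the norm appearing in the $F$-operator of Lemma \ref{lem_leftadj} forces an $F$ on one tensor factor whenever a $V$ is applied on the other, and careful bookkeeping of the pullback levels is needed to extract the stated relations rather than merely relations modulo higher $V$-terms.
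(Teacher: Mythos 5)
Your proposal is correct and takes essentially the same route as the paper: reduce via Corollary~\ref{corollary_tensor} to the total cokernel of a square of copies of $(\pi_0 M\otimes \pi_0 N)[[V]]$, identify the two relevant maps with their Frobenius correction terms (yielding exactly the stated relations, which the paper obtains via Remark~\ref{rem_maps} or by compatibility with $F$), and recognize the result as the algebraic derived $V$-completion of the uncompleted quotient. The paper's own proof is equally brief at the two points you leave as sketches (the map identification, and the passage from the cokernel of the completed map to $L$ applied to the uncompleted quotient, the latter using that $L$ is a colimit-preserving left adjoint and that $L(A[V])\cong A[[V]]$ for the $V$-torsion-free module $A[V]$), so nothing essential is missing.
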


\begin{proof}
It is clear that the square  \eqref{coequalizer} is the $V$-completion of a square with
terms $(\pi_0M\otimes \pi_0N)[V]$ instead of  $(\pi_0M\otimes \pi_0N)[V]$. The rest follows
from the description of the maps in the square \eqref{coequalizer} which can be done as
explained in Corollary \ref{rem_maps} or just using that they have to be compatible with $F$
(where $F$ is zero on the terms with orbits). Concretely one gets the following description
of the maps:
\begin{enumerate}
\item
 the upper horizontal map sends $(m \otimes n) V^k$ to $(Vm \otimes n)V^k$;
 \item
 the left vertical map sends $(m \otimes n) V^k$ to $(m \otimes Vn)V^k$;
\item
the lower horizontal map sends $(m \otimes n) V^k$ to $(Vm \otimes n) V^k - (m \otimes Fn) V^{k+1}$;
\item
the right vertical map sends $(m \otimes n) V^k$ to $(m \otimes Vn) V^k - (Fm \otimes n) V^{k+1}$.
\end{enumerate}
This implies the claim.
\end{proof}

Corollary~\ref{tensor_heart} identifies the tensor product on the category of derived
$V$-complete $p$-typical Cartier modules induced from the symmetric monoidal
structure on $\CycSp_p$. We
shall describe this tensor product in the next section purely algebraically.

\subsection{The tensor product of classical Cartier modules}\label{sec:symmonDieu}

\newcommand{\DFp}{D_{\FF_p}}

Inspired by the results of the last section, we will construct
in this section a symmetric monoidal structure 
$\boxtimes$ on the abelian category $\Cart_p$ of $p$-typical Cartier modules. Our guiding
principle is that if $R$ is a commutative ring, then the ring of $p$-typical Witt vectors $W(R)$, with its usual
Frobenius and Verschiebung operations $F$ and $V$ defines  a commutative algebra object with respect to $\boxtimes$.
In particular we will get induced symmetric monoidal structures on modules over $W(R)$ (internal to the category $\Cart_p$). We will explicitly identify this category and in the case $R=\FF_p$ we will see that the category is equivalent to classical Dieudonn\'e modules. 

The induced symmetric monoidal structure $\boxtimes_{W(\FF_p)}$ on Dieudonn\'e modules was first defined by
Goerss in~\cite{goerss-hopf} and later studied by~\cite{buchstaber-lazarev}. Under the relation to genuine spectra and Mackey functors that will be explained in Section \ref{sec:genuine} our tensor product can be understood using the tensor product of Mackey functors. 

We thank Achim Krause for helpful discussion around this section and for the crucial idea in the proof of Proposition \ref{complete}.

\begin{definition}\label{def}
    Let $M,N,Q$ be $p$-typical Cartier modules. 
    A bilinear map of abelian groups $(-,-):M\times N\rightarrow Q$ will be
    called $(V,F)$-bilinear if it satisfies the relations
    \begin{gather*}
        F(x,y)=(F(x),F(y))\\
        V(x,F(y))=(V(x),y)\\
        V(F(x),y)=(x,V(y))
    \end{gather*}
    for any $x\in M$ and $y\in N$.
    We let $\Hom_{(V,F)}(M\times N,Q)$ denote the group of
    $(V,F)$-bilinear maps $M\times N\rightarrow Q$.
\end{definition}

\begin{example}
For a commutative ring $R$ the multiplication $W(R) \times W(R) \to W(R)$ is $(V,F)$-bilinear. 
\end{example}

For an abelian group  $M$ with a map $F: M \to M$ we will denote by $M[V]$ the
$p$-typical Cartier module $\oplus_{n \in \mathbb{N}} M$ with the $V$ and $F$
operators as in the last section. We will again adopt the notation that we
write an element as a polynomial in $V$, i.e. in the form $\sum a_i V^i$.

\begin{lemma}\label{lemexp}
    Given $p$-typical Cartier modules $M$ and $N$, the functor
    $\Hom_{(V,F)}(M\times N,-)$ from $p$-typical Cartier modules to abelian
    groups is corepresentable by a $p$-typical Cartier module $M\boxtimes
    N$. Explicitly we have that 
    \[
M \boxtimes N = (M \otimes N)[V] / \sim
\]
where the equivalence relation is generated additively by
\[
(m \otimes Vn) V^k \sim (Fm \otimes n) V^{k+1} 
\qquad (Vm \otimes n) V^k \sim (m \otimes Fn) V^{k+1}
\]
for all $m \in M$, $n \in N$ and $k \in \mathbb{N}$ with the $V$ and $F$ operators induced from the ones on $(M \otimes N)[V]$.
\end{lemma}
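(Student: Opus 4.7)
The plan is to produce $M\boxtimes N$ by the explicit description in the statement and then verify the universal property directly. First, I would make precise the Cartier module structure on $A[V]$ for $A$ any abelian group equipped with an endomorphism $F_A$: the underlying group is $\bigoplus_{n\geq 0}A$ with elements written as $\sum a_iV^i$, the Verschiebung is the shift $V(aV^i)=aV^{i+1}$, and the Frobenius is $F(a)=F_A(a)$ on the bottom summand and $F(aV^i)=p\,aV^{i-1}$ for $i\geq 1$ on the higher summands. A direct check using $FV(a)=F_A(a)\cdot$nothing $\to p\cdot a$ confirms $FV=p$. Applied to $A=M\otimes N$ with $F_A=F_M\otimes F_N$, this gives the ambient Cartier module $(M\otimes N)[V]$.

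Next, I would check that the subgroup generated by the two families of relations is closed under $V$ and $F$, so that the quotient $M\boxtimes N$ inherits a Cartier module structure. Closure under $V$ is immediate since each relation shifts to a relation. Closure under $F$ breaks into two cases: for $k\geq 1$, $F$ multiplies both sides of a relation by $p$ and decreases $k$ by one, so one obtains $p$ times a relation; for $k=0$, one computes $F((m\otimes Vn)V^0)=F_M(m)\otimes F_N(V n)=F_M(m)\otimes p\,n$ and $F((F_M m\otimes n)V)=p(F_M m\otimes n)$, which agree on the nose in $(M\otimes N)[V]$, so no new identification is needed.

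Then I define the canonical map $\iota\colon M\times N\to M\boxtimes N$ by $\iota(m,n)=(m\otimes n)V^0$, and check $(V,F)$-bilinearity: additivity is immediate; $F\iota(m,n)=F((m\otimes n)V^0)=(F_M m\otimes F_N n)V^0=\iota(Fm,Fn)$; and $V\iota(Fm,n)=(Fm\otimes n)V\sim (m\otimes Vn)V^0=\iota(m,Vn)$ by the first relation (with $k=0$), and analogously for the second relation.

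Finally, I verify the universal property. Given a $(V,F)$-bilinear $\varphi\colon M\times N\to Q$, bilinearity extends $\varphi$ uniquely to a homomorphism $M\otimes N\to Q$, and $V$-equivariance forces the extension on $(M\otimes N)[V]$ to be $\widetilde\varphi((m\otimes n)V^k)=V^k\varphi(m,n)$. To see this passes to $M\boxtimes N$: the identity $\varphi(m,Vn)=V\varphi(Fm,n)$ (from $V(x,Fy)=(Vx,y)$) gives $\widetilde\varphi((m\otimes Vn)V^k)=V^k\varphi(m,Vn)=V^{k+1}\varphi(Fm,n)=\widetilde\varphi((Fm\otimes n)V^{k+1})$, and dually for the other relation. $F$-equivariance of $\widetilde\varphi$ is clear on the bottom summand from $\varphi(Fm,Fn)=F\varphi(m,n)$, and on higher summands from $F\widetilde\varphi((m\otimes n)V^k)=FV\cdot V^{k-1}\varphi(m,n)=pV^{k-1}\varphi(m,n)=\widetilde\varphi(p(m\otimes n)V^{k-1})$. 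Uniqueness and $(V,F)$-linearity together show that $\varphi\mapsto\widetilde\varphi$ is inverse to $\psi\mapsto\psi\circ\iota$, giving the required bijection. The only mildly delicate point is the compatibility of the relations with $F$ in the $k=0$ case, but as noted this holds on the nose rather than merely up to the relations.
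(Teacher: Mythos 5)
Your proof is correct and follows essentially the same route as the paper: define the quotient of $(M\otimes N)[V]$ explicitly, check that $V$ and $F$ descend (with the $k=0$ case of $F$ holding on the nose), and verify the universal property by noting that a $V$-linear map out of $(M\otimes N)[V]$ is determined by its restriction to the degree-zero summand, which must satisfy exactly the $(V,F)$-bilinearity relations. The only blemish is a mislabeling: the identity $\varphi(m,Vn)=V\varphi(Fm,n)$ comes from the relation $V(F(x),y)=(x,V(y))$, not from $V(x,F(y))=(V(x),y)$, but this does not affect the argument.
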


\begin{proof}
We want to define $M \boxtimes N $ by the above formula. First we have to show
that this is even a $p$-typical Cartier module, i.e. that the $V$ and $F$ operators descend to the quotient. For $V$ this is obvious, since 
\[
    V((m \otimes Vn) V^k ) = (m \otimes Vn) V^{k+1}.
\]
The operator $F$ is given on additive generators by
\[
F((m \otimes n) V^k ) = \begin{cases}
(Fm \otimes Fn) V^0 & \text{for } k = 0 \\
p (m \otimes n) V^{k-1} & \text{for } k > 0 
\end{cases}
\]
Thus it follows that 
\[
F((m \otimes Vn) V^k ) \sim F( (Fm \otimes n) V^{k+1})
\]
and similarly for the other relation.  Finally we have to understand maps $M
\boxtimes N \to Q$ in the category of $p$-typical Cartier modules. By
construction these are just maps $\beta: (M \otimes N) [V] \to Q$ of
$p$-typical Cartier modules which satisfy the relations
\[
\beta((m \otimes Vn) V^k) = \beta((Fm \otimes n) V^{k+1} ) \qquad \beta((Vm \otimes n) V^k ) = \beta((m \otimes Fn) V^{k+1}) \ .
\]
Being $V$-linear, every such map $\beta$ is determined by its restriction $M \otimes N \to M\otimes N[V] \to  Q$. This restriction precisely satisfies the relation of Definition \ref{def}. Vice verse every map satisfying the relations of Definition \ref{def} can be extended to such a map $\beta$ in a unique way. 
\end{proof}
\begin{remark}\label{rem_cokernel}
We can also write this tensor product also the cokernel  of the map of
$p$-typical Cartier modules
\[
(M \otimes N)[V] \oplus (M \otimes N) [V] \to (M\otimes N)[V]
\]
given by  assembling together the maps $(M \otimes N)[V] \to (M \otimes N)[V]$ given by
\begin{align*}
(m \otimes n) V^k  &\mapsto (Vm \otimes n) V^k - (m \otimes Fn) V^{k+1},\\
(m \otimes n) V^k &\mapsto (m \otimes Vn) V^k - (Fm \otimes n) V^{k+1}.
\end{align*}
Here, the summands $(M \otimes N)[V]$ in the source of the map are $p$-typical Cartier modules with Frobenius that is zero in $V$-degree $0$. 
Note that this cokernel description is basically reversing the line of thought in the proof of Corollary \ref{tensor_heart}.
\end{remark}
\newcommand{\rat}{\mathrm{rat}}

\begin{proposition}
    The assignment $(M,N)\mapsto M\boxtimes N$ defines a symmetric monoidal
    structure on $p$-typical Cartier modules which is compatible with small colimits in
    each variable and with unit the $p$-typical Cartier module $\ZZ[V]
    \subseteq W(\ZZ)$ which is given by
    \[
    \ZZ[V] = \bigoplus_{n \geq 0} \ZZ \cdot V^n(1) \subseteq \prod_{n \geq 0}
    \ZZ \cdot V^n(1) = \ZZ[[V]] \iso W(\ZZ),
    \]
    a subring of $W(\ZZ)$.\footnote{There is another similar subring of
    $W(\ZZ)$, namely the rational Witt vectors $W_\rat(\ZZ)$. There are
    canonical inclusions $\ZZ[V] \subseteq W_\rat(\ZZ) \subseteq W(\ZZ)$ both of which are proper. }
\end{proposition}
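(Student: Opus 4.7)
The plan is to mimic the classical construction of the tensor product of abelian groups (or modules over a ring): extend the universal property of Lemma~\ref{lemexp} from bilinear to multilinear maps, and then read off the symmetric monoidal structure from the corepresentability of the corresponding functors. Concretely, define a $(V,F)$-multilinear map $M_1 \times \cdots \times M_n \to Q$ of $p$-typical Cartier modules to be a $\ZZ$-multilinear map satisfying $F(x_1,\dots,x_n) = (Fx_1,\dots,Fx_n)$ together with the $V$-$F$ exchange relations
\[
V(x_1,\dots,x_{i-1},Fx_i,x_{i+1},\dots,x_n) \;=\; (x_1,\dots,x_{i-1},Vx_i,x_{i+1},\dots,x_n)
\]
for every $i$. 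By exactly the argument of Lemma~\ref{lemexp}, such maps are corepresented by a $p$-typical Cartier module $M_1 \boxtimes \cdots \boxtimes M_n$, which can be written explicitly as a quotient of $(M_1 \otimes \cdots \otimes M_n)[V]$ by the evident $V$-$F$ relations. For any bracketing, the iterated binary tensor product has the same universal property, so the canonical comparison map is an isomorphism.

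Having this, the associator, unitor, and symmetry isomorphisms arise formally from the universal property, and the coherence axioms (pentagon, triangle, hexagon) are verified by observing that both composites in each diagram represent the same multilinear map and hence must coincide. This reduces the whole monoidal coherence to the standard argument for tensor products in a concrete monoidal category, so there is nothing genuinely new to check once the multilinear formalism is in place.

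For the unit, I would directly check that $\ZZ[V] \boxtimes M \iso M$ by producing an inverse bijection at the level of $\Hom$. A $(V,F)$-bilinear map $\phi\colon \ZZ[V] \times M \to Q$ is determined by $\psi := \phi(1,-)\colon M \to Q$, since the relation $V(F(V^{k-1}),y) = (V^k,y)$ and an induction give $\phi(V^k,y) = V^k\psi(F^ky)$; conversely the relations $FV=p$ in $\ZZ[V]$ and in $M$ force $\psi$ to be a morphism of $p$-typical Cartier modules, and any such $\psi$ extends uniquely to a $(V,F)$-bilinear map. Thus $\Hom_{\Cart_p}(\ZZ[V] \boxtimes M,Q) \iso \Hom_{\Cart_p}(M,Q)$, which is the desired unit property by Yoneda. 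The symmetry isomorphism is then compatible with the unit by a direct check.

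Finally, compatibility with colimits in each variable is immediate from the explicit cokernel presentation in Remark~\ref{rem_cokernel}: $(-) \otimes_{\ZZ} N$ preserves colimits of abelian groups, the construction $A \mapsto A[V]$ (free $p$-typical Cartier module with $F$ given) preserves colimits since it is a left adjoint to the forgetful functor to the category of abelian groups equipped with an endomorphism, and cokernels commute with colimits. The main obstacle I anticipate is purely bookkeeping: organizing the $(V,F)$-multilinear formalism cleanly enough that the coherence diagrams can be dispatched uniformly by the universal property, rather than constructing each associator and unitor by hand and then verifying pentagon and hexagon at the level of explicit generators $(x \otimes y \otimes z)V^k$ of the iterated quotient.
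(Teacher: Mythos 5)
Your overall strategy coincides with the paper's: corepresentability of $(V,F)$-multilinear maps for associativity, an explicit identification $\Hom_{(V,F)}(\ZZ[V]\times M,-)\iso\Hom_{\Cart_p}(M,-)$ for the unit, and corepresentability/the explicit presentation for compatibility with colimits. However, there is a genuine error in the central definition on which your associativity argument rests: your $(V,F)$-multilinear exchange relation $V(x_1,\dots,Fx_i,\dots,x_n)=(x_1,\dots,Vx_i,\dots,x_n)$ is wrong. Already for $n=2$ it contradicts Definition~\ref{def}, which demands $V(x,F(y))=(V(x),y)$ and $V(F(x),y)=(x,V(y))$; the correct $n$-ary relation applies $F$ to \emph{every} slot except the one receiving $V$, namely $V(Fx_1,\dots,Fx_{i-1},x_i,Fx_{i+1},\dots,Fx_n)=(x_1,\dots,Vx_i,\dots,x_n)$, which is exactly the trilinear list the paper writes out when identifying what $(M\boxtimes N)\boxtimes P$ corepresents.

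This is not a harmless typo, because your key step ``for any bracketing, the iterated binary tensor product has the same universal property'' fails for the relations as you stated them. For instance, the universal trilinear map into $(M\boxtimes N)\boxtimes P$ satisfies $V(x,Fy,Fz)=(Vx,y,z)$ (use the inner relation $x\otimes Vy\sim V(Fx\otimes y)$ and the outer relations), but it does \emph{not} satisfy your relation $V(x,Fy,z)=(x,Vy,z)$: the latter would force $((x\otimes Fy)\otimes z)V^1=((Fx\otimes y)\otimes Fz)V^1$, which is not a consequence of the defining relations. So with your relations the comparison maps between bracketings are not isomorphisms of corepresented functors, and the associator construction collapses. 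The same slot-confusion appears in your unit computation, where you cite ``$V(F(V^{k-1}),y)=(V^k,y)$''; the relation you actually need is $V(\phi(V^{k-1},Fy))=\phi(V^k,y)$, i.e.\ $V(x,Fy)=(Vx,y)$ with $x=V^{k-1}(1)$, after which your formula $\phi(V^k,y)=V^k\psi(F^ky)$ and the rest of the unit argument are fine and agree with the paper's. Once the multilinear relations are corrected, your proof becomes essentially the paper's; the colimit argument via the cokernel presentation (or, as the paper does it, directly from corepresentability) is unproblematic.
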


\begin{proof}
    The proof is the same as the proof that the tensor product on abelian
    groups defines a symmetric monoidal structure. Since the bilinear
    conditions are symmetric, the result is symmetric. The claim about colimits
    is clear because $\Hom_{(V,F)}(M\times N,-)$ takes colimits in $N$ to
    limits in the functor category, and hence colimits of corepresentables.
    Now, we claim that $\ZZ[V]\boxtimes M\iso M\iso M\boxtimes \ZZ[V]$.
    For this, by symmetry,
    it is enough to construct a natural isomorphism $\Hom_{(V,F)}(M\times
    \ZZ[V],-)\iso\Hom_{\Cart_p}(M,-)$. Given a $(V,F)$-bilinear pairing $(-,-):M\times
    \ZZ[V]\rightarrow Q$, we let $f:M\rightarrow N$ be given by
    $f(x)=(x,1)$. Then, $$f(F(x))=(F(x),1)=(F(x),F_W(1))=F(x,1)=F(f(x))$$ and
    $$f(V(x))=(V(x),1)=V(x,F_W(1))=V(x,1)=V(f(x)),$$ so that $f$ is a map of
    $p$-typical Cartier modules. Given a map $f:M\rightarrow N$ of $p$-typical Cartier modules we let
    $(x,y)_f:M\times \ZZ[V] \rightarrow N$ be defined by $$(x,y)_f=yf(x).$$
    This is clearly bilinear. We must check that it is $(V,F)$-bilinear. To
    that end, we have
    \begin{gather*}
        F(x,y)_f=F(y(f(x)))=F_W(y)F(f(x))=F_W(y)f(F(x))=(F(x),F_W(y))_f,\\
        V(x,F_W(y))_f=V(F_W(y)f(x))=yV(f(x))=yf(V(x))=(V(x),y)_f,\\
        V(F(x),y)_f=V(yf(F(x)))=V(yF(f(x)))=V_W(y)f(x)=(x,V_W(y))_f,
    \end{gather*}
    which is what we needed to show.
    These operations are mutually inverse. Associativity isomorphisms are
    constructed by observing that $(M\boxtimes N)\boxtimes P$ corepresents
    $(V,F)$-bilinear morphisms $(M\boxtimes N)\times P\rightarrow Q$ or
    $(V,F)$-multilinear morphisms $(M\times N)\times P\rightarrow Q$. These are
    multilinear maps $(-,-,-)$ satisfying
    \begin{gather*}
        F(x,y,z)=(F(x),F(y),F(z)),\\
        V(x,F(y),F(z))=(V(x),y,z),\\
        V(F(x),y,F(z))=(x,V(y),z),\\
        V(F(x),F(y),z)=(x,y,V(z)).
    \end{gather*}
    But, it is easy to check that these are precisely the same relations
    satisfied by the bilinear maps classified by maps $M\boxtimes
    (N\boxtimes P)\rightarrow Q$. The unit, pentagon, and
    hexagon axioms will be left to the reader.
\end{proof}

\begin{lemma}\label{lemmodVsymm}
    The functor $\coker(V): \Cart_p \to \Ab$ admits a symmetric monoidal structure.
\end{lemma}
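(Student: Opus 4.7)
The plan is to compute $\coker(V)(M \boxtimes N)$ directly from the presentation of Lemma~\ref{lemexp} and exhibit a natural isomorphism with $\coker(V)(M) \otimes_{\ZZ} \coker(V)(N)$; the coherence axioms should then follow essentially formally.

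Concretely, recall $M \boxtimes N = (M \otimes N)[V]/\sim$, where $V$ acts by shifting the $V$-degree up by one. Since every element of positive $V$-degree lies in the image of $V$, I would observe that
\[
(M \boxtimes N)/V \;=\; (M \otimes N)/R,
\]
where $R \subseteq M \otimes N$ is the subgroup generated by the $V$-degree-zero projections of the defining relations. The relation $(m \otimes V n) V^0 \sim (F m \otimes n) V^1$ contributes $m \otimes V n$ to $R$ (the right-hand side lies in $V$-degree $1$ and is killed), and symmetrically $V m \otimes n \in R$; the relations at higher $V$-degree become trivial. Hence $R = V(M) \otimes N + M \otimes V(N)$, and there is a canonical identification
\[
\coker(V)(M \boxtimes N) \;\xrightarrow{\ \cong\ }\; \coker(V)(M) \otimes_{\ZZ} \coker(V)(N),
\]
natural in both variables. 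The same calculation gives $\coker(V)(\ZZ[V]) \cong \ZZ$ for the unit.

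To upgrade this data to a symmetric monoidal structure on $\coker(V)$, I would verify the hexagon, pentagon and unit axioms. The symmetry of $\boxtimes$ is induced by $(m \otimes n) V^k \mapsto (n \otimes m) V^k$, which restricts in $V$-degree zero to the standard braiding of $\otimes_\ZZ$, and the unitors are immediate. For associativity, both $(M \boxtimes N) \boxtimes P$ and $M \boxtimes (N \boxtimes P)$ map under the isomorphisms above onto $(M/V) \otimes (N/V) \otimes (P/V)$, and I would check that the associator of $\boxtimes$ reduces to the usual associator of $\otimes_\ZZ$ by unwinding how an iterated tensor is presented via the pushout-cokernel description from Remark~\ref{rem_cokernel}.

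The only real obstacle is the associativity bookkeeping: one must verify that the action of $F$ and $V$ on the intermediate tensor products $M \boxtimes N$ and $N \boxtimes P$ (which enters into the relations defining the triple tensor) contributes nothing in $V$-degree zero beyond what the iterated tensor of abelian groups already sees. This is straightforward from the explicit formula for $F$ given in the proof of Lemma~\ref{lemexp}, where $F$ acts as $p$ on elements of positive $V$-degree and as $F \otimes F$ in degree zero, so all new terms created by the associator live in positive $V$-degree and vanish modulo $V$.
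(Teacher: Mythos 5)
Your proof is correct and follows essentially the same route as the paper: you reduce $\coker(V)(M\boxtimes N)$ via the presentation of Lemma~\ref{lemexp}, note that everything in positive $V$-degree dies modulo $V$ so the relations collapse to $Vm\otimes n\sim 0$ and $m\otimes Vn\sim 0$, and identify the result with $\coker(V)(M)\otimes_\ZZ\coker(V)(N)$ together with $\coker(V)(\ZZ[V])\iso\ZZ$ for the unit. Your additional remarks on the coherence axioms go slightly beyond what the paper spells out but are consistent with its argument.
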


\begin{proof}
    The abelian group $\coker(M \boxtimes N\xrightarrow{V}M\boxtimes N)$ is obtained from $M \boxtimes N$ by an equivalence relation. But 
    according to Lemma \ref{lemexp} the group $M \boxtimes N$ is itself
    obtained by quotienting $(M \otimes N)[V]$ by an equivalence relation. Thus
    we obtain that  $\coker(M \boxtimes N\xrightarrow{V}M\boxtimes N)$ is the quotient of $(M \otimes N)[V]$
    obtained by the combined equivalence relation which is generated additively
    by
    \[
    (m \otimes Vn) V^k \sim (Fm \otimes n) V^{k+1} 
    \qquad (Vm \otimes n) V^k \sim (m \otimes Fn) V^{k+1}
    \qquad (m \otimes n) V^n \sim 0  \text{ for } n \geq 1
    \]
    In view of the third relation the first two relations are equivalent to 
    \[
    m \otimes Vn \simeq 0, \qquad Vm \otimes n \simeq 0.
    \]
    Therefore we just obtain the quotient of $M \otimes N$ by these relations.
    But this is $\coker(M\xrightarrow{V}M) \otimes \coker(N\xrightarrow{V}N)$.
    Together with the isomorphism
    \[
        \coker(\ZZ[V]\xrightarrow{V}\ZZ[V]) \iso \ZZ,
    \]
    this gives the functor $\coker(V)\colon\Cart_p \to \Ab$ the desired symmetric monoidal structure.
\end{proof}

We shall also need a derived version of Lemma \ref{lemmodVsymm}. To this end we
introduce a `derived' version of $\boxtimes$ which we denote $\boxtimes^\L$. This is just defined as the total cofiber of the diagram
\begin{align}\label{square_derived}
\xymatrix{
(M \otimes^\L N)[V] \ar[r]\ar[d] & (M \otimes^\L N)[V] \ar[d] \\
(M \otimes^\L N)[V] \ar[r] & (M \otimes^\L N)[V]
}
\end{align} 
where $(-)[V]$ is as before just the infinite product of $M \otimes^\L N$ (not
homotopy orbits) and the maps are as in the proof of Corollary
\ref{tensor_heart}. This is an object in the derived category. The ordinary
tensor product $\boxtimes$ is $\H_0$ of $\boxtimes^\L$. \footnote{We note that the notation $\boxtimes^\L$ is meant to indicate that $\boxtimes^\L$ is really a tensor product on the derived category. We do not claim that it is actually the derived functor of $\boxtimes$ in any sense and hope that this notation does not lead to confusion.}

\begin{lemma}\label{reduction}
For every pair of $p$-typical Cartier modules $M$ and $N$ we have an equivalence
\[
    \cofib(M \boxtimes^\L N\xrightarrow{V}M\boxtimes^\L N) \simeq
    \cofib(M\xrightarrow{V}M) \otimes^\L \cofib(N\xrightarrow{V}N)
\]
in $\Dscr(\ZZ)$.
\end{lemma}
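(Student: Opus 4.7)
The plan is to adapt the proof of Lemma~\ref{lemmodVsymm} to the derived setting. By definition, $M \boxtimes^\L N$ is the total cofiber of the square~\eqref{square_derived}, each of whose four corners equals $(M \otimes^\L N)[V]$, on which $V$ acts by the shift $V^k \mapsto V^{k+1}$. The explicit formulas for the four edges recorded in the proof of Corollary~\ref{tensor_heart} are visibly $V$-equivariant, so $V$ descends to an endomorphism of the total cofiber. Since $\cofib(V)$ is a finite colimit and therefore commutes with total cofibers in the stable $\infty$-category $\Dscr(\ZZ)$, the cofiber $\cofib(M \boxtimes^\L N \xrightarrow{V} M \boxtimes^\L N)$ is itself the total cofiber of the square obtained by applying $\cofib(V)$ vertex- and edge-wise.

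Next I would compute the corners. The shift $V$ on $(M \otimes^\L N)[V]$ factors as the evident equivalence $(M \otimes^\L N)[V] \xrightarrow{\simeq} V\cdot(M \otimes^\L N)[V]$ followed by the inclusion of the latter into $(M \otimes^\L N)[V]$, so its cofiber is canonically the $V^0$-summand $M \otimes^\L N$. I would then read off the descended edge maps from the formulas in the proof of Corollary~\ref{tensor_heart}: the upper horizontal $(m \otimes n)V^k \mapsto (V_Mm \otimes n)V^k$ descends on $V^0$ to $V_M \otimes 1$, and likewise the left vertical descends to $1 \otimes V_N$. The lower horizontal and right vertical edges are given by the same formulas plus correction terms $(m \otimes F_N n)V^{k+1}$ and $(F_M m \otimes n)V^{k+1}$; these correction terms land in $V^{\geq 1}$, i.e.\ in the image of $V$, and therefore vanish after passing to $\cofib(V)$. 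Hence both horizontal edges descend to $V_M \otimes 1$ and both vertical edges to $1 \otimes V_N$.

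Thus $\cofib(V)(M \boxtimes^\L N)$ is the total cofiber of the commuting square
\[
\xymatrix{
M \otimes^\L N \ar[r]^-{V_M \otimes 1} \ar[d]_-{1 \otimes V_N} & M \otimes^\L N \ar[d]^-{1 \otimes V_N} \\
M \otimes^\L N \ar[r]^-{V_M \otimes 1} & M \otimes^\L N.
}
\]
I would finish by computing this as an iterated cofiber: taking horizontal cofibers first gives $\cofib(V_M) \otimes^\L N$ in each row (using that $-\otimes^\L N$ is exact), with induced vertical map $1 \otimes V_N$, and the remaining vertical cofiber is $\cofib(V_M) \otimes^\L \cofib(V_N)$, which is the desired identification.

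The main obstacle I anticipate is justifying rigorously at the $\infty$-categorical level that the correction terms really do vanish on $\cofib(V)$, rather than merely at the symbolic level suggested by the formulas in the proof of Corollary~\ref{tensor_heart}. Concretely, this requires producing the null-homotopies in $\Dscr(\ZZ)$ witnessing the factorization of the offending edges through the sub-object $V\cdot (M \otimes^\L N)[V] \subseteq (M \otimes^\L N)[V]$; once one unpacks the construction of the square~\eqref{square_derived} carefully enough to exhibit these factorizations coherently, the remainder of the argument is a formal consequence of the stability of $\Dscr(\ZZ)$ and the commutation of tensor products with finite colimits.
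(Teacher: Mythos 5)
Your argument is correct and is essentially the paper's proof: the paper likewise interchanges the mod $V$ reduction with the total cofiber of the defining square, observes that the reduced square has edges $V\otimes\id$ and $\id\otimes V$ (the $F$-correction terms dying since they factor through the image of the shift), and reads off the total cofiber as $\cofib(V_M)\otimes^\L\cofib(V_N)$. Your version just spells out the corner identification and the $V$-equivariance/null-homotopy bookkeeping that the paper leaves implicit.
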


\begin{proof}
We can interchange the mod $V$ reduction with taking the total cofiber in the square \eqref{square_derived}. Then the square is just of the form
\[
\xymatrix{
(M \otimes^\L N) \ar[r]^{V \otimes \id}\ar[d]^{\id \otimes V} & (M \otimes^\L N) \ar[d]^{\id \otimes V} \\
(M \otimes^\L N) \ar[r]^{V \otimes \id} & (M \otimes^\L N)
}
\]
whose total cofiber evidently has the claimed form.
\end{proof}

We now show that the tensor product of $p$-typical Cartier modules also induces
a tensor product of derived $V$-complete $p$-typical Cartier modules. We remind
the reader of the algebraic derived $V$-completion  $L: \Cart_p \to
\CartV$ discussed in Lemma~\ref{alg_completion} and afterwards.

\begin{proposition}\label{complete}
    The localization $L: \Cart_p \to  \CartV$  is compatible with the
    symmetric monoidal structure, that is we have an induced symmetric monoidal
    structure on $\CartV$, which we denote by $\widehat \boxtimes$ and
    which is given by
    \[
    M \widehat\boxtimes N := L (M \boxtimes N)
    \]
    and has unit $W(\ZZ)$. We can also describe $M \widehat\boxtimes N$ as the cokernel of the map
    \[
    (M \otimes N)[[V]] \oplus (M \otimes N) [[V]] \to (M\otimes N)[[V]],
    \]
    the completion of the map in Remark \ref{rem_cokernel}.
\end{proposition}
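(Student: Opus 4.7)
The plan is to show that the symmetric monoidal structure $\boxtimes$ on $\Cart_p$ descends through the reflective localization $L\colon \Cart_p \to \CartV$ to a symmetric monoidal structure $\widehat\boxtimes$ on $\CartV$ with $M \widehat\boxtimes N = L(M \boxtimes N)$, and then to identify the unit and the explicit cokernel presentation by direct computation. The crucial technical step is to verify that the class of $L$-equivalences is closed under $\boxtimes$ with an arbitrary object, after which standard symmetric monoidal localization theory produces the symmetric monoidal refinement of $L$ and the formula for $\widehat\boxtimes$ follows from the universal property.

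For this closure, I would use the criterion established in the proof of Lemma~\ref{lemmodVequivalence}, namely that a map of Cartier modules is a derived $V$-completion equivalence precisely when it induces an equivalence on $\cofib(V)$. Combined with Lemma~\ref{reduction}, which identifies $\cofib(V)(M \boxtimes^\L N) \simeq \cofib(V)(M) \otimes^\L \cofib(V)(N)$ for the derived tensor product $\boxtimes^\L$, this shows that $\boxtimes^\L$ preserves derived $V$-completion equivalences in each variable, and passing to $\pi_0$ yields the corresponding stability for the underived $\boxtimes$. A cleaner alternative bypassing this derived-to-underived passage is to invoke Corollary~\ref{tensor_heart} directly: the symmetric monoidal structure on $\CycSp_p^\heart$ from Corollary~\ref{cor_symmheart}, transported through the equivalence $\CycSp_p^\heart \simeq \CartV$ of Theorem~\ref{thm:heartid}, is already explicitly given by the formula of Lemma~\ref{lemexp} after derived $V$-completion, which is exactly $L(M \boxtimes N)$; this immediately endows $\CartV$ with the required monoidal structure.

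Once $\widehat\boxtimes$ is established, the unit is identified as $L(\ZZ[V])$, which equals $\lim_n \ZZ[V]/V^n = \ZZ[[V]] \cong W(\ZZ)$ because $\ZZ[V]$ is $V$-torsion-free and its algebraic derived $V$-completion then coincides with the ordinary inverse limit by Lemma~\ref{torsionfree} and the surrounding discussion. For the explicit cokernel description, I apply $L$ to the presentation of $M \boxtimes N$ from Remark~\ref{rem_cokernel}: as a left adjoint, $L$ preserves cokernels, and since $(M \otimes N)[V] = \bigoplus_{n \geq 0}(M \otimes N)$ is $V$-torsion-free one has $L((M \otimes N)[V]) = (M \otimes N)[[V]]$, giving the stated presentation of $M \widehat\boxtimes N$.

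The main obstacle is the derived-to-underived passage in the first argument, since $\boxtimes^\L$ has higher Tor contributions and one must check that $L$ interacts correctly with them in order to transfer closure under $\boxtimes^\L$ to closure under $\boxtimes$. The alternative route through the cyclotomic spectra machinery sidesteps this entirely by reading the formula for $\widehat\boxtimes$ off of Corollary~\ref{tensor_heart}, at the cost of deferring to Theorem~\ref{thm:heartid} and the construction of the monoidal structure on $\CycSp_p^\heart$.
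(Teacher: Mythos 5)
The cokernel presentation and the unit identification in your proposal are fine and essentially coincide with the paper's own argument (completion of a cokernel is the cokernel of the completions, and $L(\ZZ[V])=\ZZ[[V]]\iso W(\ZZ)$ by $V$-torsion-freeness). The gap is exactly where you flag it, and it is the heart of the proposition: showing that $L(M\boxtimes N)\to L(LM\boxtimes N)$ is an isomorphism. Your route conflates two different classes of maps. The monoidal-localization criterion requires stability of $L$-equivalences under $-\boxtimes N$, while the criterion from the proof of Lemma~\ref{lemmodVequivalence} together with Lemma~\ref{reduction} controls \emph{derived $V$-completion} equivalences via $\cofib(V)$. An $L$-equivalence is only an isomorphism on $\H_0$ of the derived completion, so it need not induce an equivalence on $\cofib(V)$; and in the other direction, knowing that $M\boxtimes^\L N\to M'\boxtimes^\L N$ becomes an equivalence after derived completion does not by itself identify $L(M\boxtimes N)=L(\H_0(M\boxtimes^\L N))$ with $L(M'\boxtimes N)$, because of possible higher Tor/$\lim^1$ contributions. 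Your alternative route does not repair this: transporting the heart structure through $\CycSp_p^\heart\we\CartV$ and reading off Corollary~\ref{tensor_heart} produces \emph{a} symmetric monoidal structure on $\CartV$ whose binary operation on complete objects is $L(M\boxtimes N)$, but it does not prove that the localization $L\colon(\Cart_p,\boxtimes)\to\CartV$ is compatible (i.e.\ that $L$-equivalences are stable under $\boxtimes$), which is what the proposition asserts and what is used later; it would also make Theorem~\ref{mainsymm} circular, since Proposition~\ref{complete} is meant to supply the purely algebraic side of that comparison.

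The missing idea in the paper is a reduction to the $V$-torsion-free case, which bridges the derived/underived passage. One resolves $M$ by $V$-torsion-free Cartier modules via $\ker(q)\to M[V]\xrightarrow{q} M$ and uses that both $\boxtimes$ and $L$ are right exact (left adjoints), so it suffices to treat $V$-torsion-free $M$ and the specific unit map $M\to LM$. Then $LM$ is again $V$-torsion-free (Lemma~\ref{torsionfree}), $M\to LM$ is a mod $V$ equivalence (Lemma~\ref{lemmodVequivalence}), and since for torsion-free modules $\coker(V)$ computes the derived $\cofib(V)$, Lemma~\ref{reduction} yields that $M\boxtimes^\L N\to LM\boxtimes^\L N$ is a derived mod $V$ equivalence, hence an equivalence after derived $V$-completion, hence an isomorphism after applying $L=\H_0$ of the derived completion (Lemma~\ref{alg_completion}). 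Without this torsion-free reduction (or some substitute handling the higher Tor terms and the $L$-versus-derived-completion discrepancy), the key compatibility statement remains unproved.
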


\begin{proof}
First it is clear that the cokernel of the map $(M \otimes N)[[V]] \oplus (M \otimes N) [[V]] \to (M\otimes N)[[V]]$ agrees with the completion of the tensor product by the fact that the corresponding statement is clear before completion (Remark \ref{rem_cokernel}) and the completion of a cokernel is the cokernel of the completions. 

By the usual criterion for symmetric monoidal localizations we have to verify
that for any pair of $p$-typical Cartier modules $M$ and $N$ the map
\[
L(M \boxtimes N) \to L(LM \boxtimes N) 
\]
is an isomorphism. 
We have a resolution $\ker(q) \to M[V] \xto{q} M$ of $M$ by $V$-torsion free $p$-typical Cartier modules. 
Since $\boxtimes$ and $L$ both commute with colimits (i.e. are right exact) we
can thereby reduce the question to the $V$-torsion free case. Thus we assume
that $M$ is $V$-torsion free. Then also $LM$ is $V$-torsion free by Lemma
\ref{torsionfree} and the map $M \to LM$ is a mod $V$ equivalence by Lemma
\ref{lemmodVequivalence}. Since the mod $V$ reduction is also the derived mod
$V$ reduction we get using Lemma~\ref{reduction} that
\begin{align*}
    \cofib(M \boxtimes^\L N\xrightarrow{V}M\boxtimes^\L N)
    &\simeq\cofib(M\xrightarrow{V}M) \otimes^\L \cofib(N\xrightarrow{V}N)\\
    &\simeq\cofib(LM\xrightarrow{V}LM)\otimes^\L\cofib(N\xrightarrow{V}N)\\
    &\simeq\cofib(LM\boxtimes N\xrightarrow{V}LM\boxtimes N).
\end{align*}
Thus the induced map is a derived mod $V$-equivalence, thus an equivalence
after derived mod $V$ reduction (see the proof of Lemma
\ref{lemmodVequivalence}). But then $L$ is $\H_0$ of the derived completion
(Lemma \ref{alg_completion}), thus it is also an equivalence on $L$.
\end{proof}

We recall from Theorem \ref{thm:cyclotomict} that the symmetric monoidal structure on $\CycSp_p$ is compatible with the $t$-structure. In particular it induces a symmetric monoidal structure on the heart as shown in Corollary \ref{cor_symmheart} (also see Appendix \ref{sub:compatible}). 

\begin{theorem}\label{mainsymm}
    The symmetric monoidal structure on the heart $\CycSp_p^\heart \simeq \CartV$
    induced from the one of cyclotomic spectra is equivalent to the symmetric
    monoidal structure  $\widehat \boxtimes$ constructed above.
\end{theorem}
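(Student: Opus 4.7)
The plan is to reduce the comparison to the explicit formulas already obtained for both symmetric monoidal structures and then upgrade the resulting isomorphism of bifunctors to a symmetric monoidal equivalence. Let $M, N \in \CartV \simeq \CycSp_p^\heart$. On the cyclotomic side, Corollary \ref{cor_symmheart} says that the induced tensor product on the heart is $M \otimes^\heart N = \pi_0^\cyc(M \otimes N)$, where $\otimes$ denotes the symmetric monoidal structure on $(\CycSp_p)_{\geq 0}$. Through the symmetric monoidal equivalence $(-)/V : \TCartVm \xrightarrow{\simeq} \CycSp_p^-$ of Theorem \ref{thm:boundedbelow}, this corresponds to $\pi_0$ of the tensor product in $\TCartVm$ applied to $M$ and $N$ viewed as discrete, derived $V$-complete $p$-typical topological Cartier modules.

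First, I would invoke Corollary \ref{tensor_heart} to compute this $\pi_0$ explicitly: it is the algebraic derived $V$-completion $L$ of the $p$-typical Cartier module $(M \otimes N)[V]/\!\sim$, where the equivalence relation is generated by $(m \otimes Vn)V^k \sim (Fm \otimes n)V^{k+1}$ and $(Vm \otimes n)V^k \sim (m \otimes Fn)V^{k+1}$. By Lemma \ref{lemexp}, this quotient is precisely $M \boxtimes N$, and by Proposition \ref{complete} the algebraic $M \widehat\boxtimes N$ equals $L(M \boxtimes N)$. Thus on underlying bifunctors, there is a natural isomorphism $M \otimes^\heart N \cong M \widehat\boxtimes N$, and this isomorphism is clearly natural in $M$ and $N$.

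To upgrade to a symmetric monoidal equivalence, I would use that both symmetric monoidal structures are determined (up to canonical equivalence) by the universal property of corepresenting $(V,F)$-bilinear maps out of $M \times N$ into a derived $V$-complete $p$-typical Cartier module, together with compatibility with colimits in each variable. On the algebraic side this is Lemma \ref{lemexp} combined with Proposition \ref{complete}. On the cyclotomic side, the same universal property can be extracted from Corollary \ref{tensor_heart}: the explicit formula shows that maps $M \otimes^\heart N \to Q$ with $Q \in \CartV$ are in natural bijection with $(V,F)$-bilinear pairings out of $M \times N$, since the relations defining the quotient are exactly the bilinearity relations of Definition \ref{def}. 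The associativity and symmetry constraints on both sides arise from the symmetry of these relations in the iterated triple-tensor description (the three-variable $(V,F)$-multilinearity pattern established during the proof of associativity of $\boxtimes$), so the natural isomorphism of bifunctors is compatible with the structure maps of the symmetric monoidal structures. The unit also matches: the cyclotomic unit is $\pi_0^\cyc\SS^{\triv}$, which equals $\pi_0^\cyc\ZZ^{\triv} \cong W(\ZZ)$ by Corollary \ref{mt:pi0}, matching the algebraic unit of Proposition \ref{complete}.

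The main obstacle is the symmetric monoidal coherence, as opposed to the isomorphism of underlying bifunctors, which is nearly immediate. One option is to circumvent checking coherences pointwise by using that both symmetric monoidal structures are obtained by symmetric monoidal Bousfield localization from presentably symmetric monoidal $\infty$-categories (respectively $(\CycSp_p)_{\geq 0}$ and $\Cart_p$), and that a symmetric monoidal equivalence between localizations of presentably symmetric monoidal categories is detected by an equivalence of the underlying functors provided both are obtained by the same universal property; the universal property in question being corepresentation of derived $V$-complete $(V,F)$-bilinear maps. Thus the identification promotes uniquely to a symmetric monoidal equivalence, completing the proof.
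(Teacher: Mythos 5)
Your proposal follows essentially the same route as the paper: identify the underlying bifunctors via Corollary \ref{tensor_heart} together with Lemma \ref{lemexp} and Proposition \ref{complete}, handle associativity and symmetry by appealing to the symmetric/iterable nature of both constructions, and match the units via $\pi_0^\cyc(\SS)\cong W(\ZZ)$ from Hesselholt--Madsen (Theorem \ref{thm:pi0}), exactly as the paper does. Your coherence step is phrased in terms of a corepresentability universal property rather than the paper's ``iterate the Bar construction and the algebraic construction,'' but this is the same argument at the same level of detail, so the proposal is correct.
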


\begin{proof}
    From Corollary \ref{tensor_heart} and the description above we get that the
    two tensor products are naturally isomorphic. By construction it is clear
    that these isomorphisms are also compatible with the associators and the
    symmetry (since the Bar construction can be iterated as well as the
    construction of the algebraic tensor product). The fact that we get a
    compatibility of units follows since $\pi_0^\cyc(\SS) \iso \TR_0(\SS) \simeq
    W(\ZZ)$ which is a result of Hesselholt and Madsen that holds for any
    commutative ring; see the proof of Theorem~\ref{thm:pi0} for precise references. But in this case it is
    also easy to see directly since the unit is also given by $\SS[[V]]$ as a
    consequence of Proposition \ref{symmon}.
\end{proof}

\subsection{Tensor product of Witt vectors and modules}\label{sub:wittmodules}

We note that an equivalent statement to Theorem \ref{mainsymm} is to say that 
\[
\pi_0^\cyc = \TR_0: (\CycSp_p)_{\geq 0} \to \CartV
\] 
is a symmetric monoidal functor, where $\CartV$ is equipped with the
$\widehat\boxtimes$ symmetric monoidal functor. This functor also preserves colimits and therefore also preserves relative tensor products over an algebra object in connective $p$-cyclotomic spectra. 
For rings $R$ and $S$ we have that 
$
\THH(R \otimes_\ZZ S) = \THH(R) \otimes_{\THH(\ZZ)} \THH(S).
$
It follows from the fact that $\pi_0^\cyc(\THH(R))=W(R)$ (see Theorem \ref{thm:pi0}) that we have 
\begin{align*}
W(R \otimes S) &= \pi^\cyc_0(R\otimes_\ZZ S) \\
&= \pi_0^\cyc\THH(R) \widehat\boxtimes_{\pi_0^\cyc\THH(\ZZ)} \pi_0^\cyc\THH(S) \\
& = W(R) \widehat\boxtimes W(S),
\end{align*}
where the last step uses that $W(\ZZ) = \pi_0^\cyc\THH(\ZZ)$ is the unit for
$\widehat\boxtimes$. We have not found a statement of this sort in the
literature and we think that this is an important property of Witt vectors, as
it for example explains how to understand Witt vectors of polynomial rings in
several variables. Therefore we want to give a purely algebraic proof of this
fact.

\begin{theorem}\label{thm:witttensor}
For every pair of rings $R$ and $S$ there is a natural isomorphism
\[
W(R) \widehat\boxtimes W(S) \cong W(R \otimes S) \ .
\]
\end{theorem}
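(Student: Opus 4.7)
The plan is to construct a canonical comparison map
\[
\widehat\Phi\colon W(R) \widehat\boxtimes W(S) \longrightarrow W(R \otimes S)
\]
out of the ring structure of $W(R \otimes S)$ and to show it is an isomorphism via a mod-$V$ reduction.

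First I would produce the map. Functoriality of $W$ applied to the inclusions $R, S \hookrightarrow R \otimes S$ gives ring maps $\phi_R\colon W(R) \to W(R \otimes S)$ and $\phi_S\colon W(S) \to W(R \otimes S)$ that commute with $V$ and $F$. The pairing $\mu(x, y) := \phi_R(x)\cdot \phi_S(y)$ is $(V, F)$-bilinear in the sense of Definition~\ref{def}: the identity $F\mu(x,y) = \mu(Fx, Fy)$ is immediate, while the two mixed relations $V\mu(x, Fy) = \mu(Vx, y)$ and $V\mu(Fx, y) = \mu(x, Vy)$ follow from the Frobenius reciprocity formula $V(a)\cdot b = V(a \cdot Fb)$, valid in any Witt ring, combined with $\phi_R F = F\phi_R$ and $\phi_S F = F\phi_S$. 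By the universal property of $\boxtimes$ from Lemma~\ref{lemexp} together with the derived $V$-completeness of $W(R \otimes S) = \lim_n W_n(R \otimes S)$, $\mu$ descends to the desired map $\widehat\Phi$.

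Next I would compute $\widehat\Phi$ on $\coker V$. Combining the symmetric monoidality of $\coker V$ from Lemma~\ref{lemmodVsymm} with the observation in the proof of Proposition~\ref{complete} that the completion $L$ preserves $\coker V$ on $V$-torsion-free inputs, the source reduces to
\[
\coker V\bigl(W(R) \widehat\boxtimes W(S)\bigr) \;\cong\; W(R)/V \otimes W(S)/V \;\cong\; R \otimes S,
\]
while the target satisfies $\coker V(W(R \otimes S)) \cong R \otimes S$ via the zeroth ghost. A direct Teichm\"uller chase, sending $[a]\otimes [b]$ to $[a]\cdot [b] = [a \otimes b]$, identifies the induced map with the identity of $R \otimes S$.

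To promote this $\coker V$-isomorphism to an honest isomorphism, I would invoke Lemma~\ref{lemmodVequivalence}, which upgrades a mod-$V$ equivalence to an equivalence on derived $V$-completions provided both sides are $V$-torsion free. The target $W(R \otimes S)$ is $V$-torsion free since $V$ is the shift on ghost coordinates. The main obstacle will be showing that $W(R) \widehat\boxtimes W(S)$ is also $V$-torsion free, since in general $L$ does not kill $V$-torsion. My plan is to choose $V$-free resolutions $A[V] \twoheadrightarrow W(R)$ and $B[V] \twoheadrightarrow W(S)$ in the style of the proof of Proposition~\ref{complete}, and to use the right-exactness of $\boxtimes$ together with Lemma~\ref{torsionfree} to reduce to checking $V$-injectivity of $A[V] \widehat\boxtimes W(S)$. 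In that case the explicit presentation from Lemma~\ref{lemexp} simplifies considerably, since $A[V]$ is freely generated in each $V$-degree, and $V$-injectivity becomes a direct consequence of the $V$-torsion freeness of $W(S)$.
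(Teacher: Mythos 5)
Your construction of the comparison map is fine for commutative rings, but note the theorem is stated for arbitrary associative rings, where $W(R\otimes S)$ carries no multiplication; in that generality one has to invoke the $(V,F)$-bilinear pairing $W(R)\times W(S)\to W(R\otimes S)$ constructed in \cite{krause-nikolaus} rather than Frobenius reciprocity in a Witt ring. The serious gap, however, is in your final step. You correctly isolate the need for $V$-torsion-freeness of $W(R)\widehat\boxtimes W(S)$ before Lemma~\ref{lemmodVequivalence} can be applied, but the proposed reduction does not deliver it: a surjection $A[V]\twoheadrightarrow W(R)$ together with right-exactness of $\boxtimes$ only exhibits $W(R)\widehat\boxtimes W(S)$ as a \emph{quotient} of $A[V]\widehat\boxtimes W(S)$, and $V$-torsion-freeness does not pass to quotients (e.g.\ $\ZZ$ with $V=p$, $F=\id$ is $V$-torsion free, while its quotient $\ZZ/p$ is entirely $V$-torsion). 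Lemma~\ref{torsionfree} concerns the completion functor $L$, not cokernels, so it cannot close this gap. The same issue already infects your mod-$V$ computation of the source: the identification of $\coker V$ of $L(W(R)\boxtimes W(S))$ with $\coker V$ of $W(R)\boxtimes W(S)$ is justified only for $V$-torsion-free inputs, and $W(R)\boxtimes W(S)$ is not known to be $V$-torsion free a priori --- it is so only a posteriori, via the theorem itself. As written, the argument is circular at its crucial point.

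The paper avoids this difficulty by a different reduction: since $W(-)$ commutes with split coequalizers, one resolves $R$ and $S$ by monoid algebras $\ZZ[M]$ and reduces to that case. There one proves $W(\ZZ[M])\cong \ZZ[M][[V]]$, constructing the map from the Teichm\"uller representatives and applying Lemma~\ref{lemmodVequivalence} only in this situation, where both sides are visibly $V$-torsion free; the comparison then becomes the assertion that the canonical map from $R[V]\boxtimes S[V]$ to $(R\otimes S)[V]$ is an isomorphism, which is immediate from the explicit presentation in Lemma~\ref{lemexp} and passes to $V$-completions. To salvage your plan you would have to prove $V$-torsion-freeness of $W(R)\widehat\boxtimes W(S)$ directly for arbitrary $R$ and $S$, which is essentially as hard as the theorem; the reduction to free objects (monoid algebras) is the missing idea.
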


Note that we do not require that $R$ and $S$ are commutative here and use non-commutative Witt vectors, but of course the statement is of most interest in the commutative case. 

\begin{proof}

Our first task is to produce a natural map
\[
W(R) \widehat\boxtimes W(S) \to W(R \otimes S) \ .
\]
In the commutative case this is easy: $W(R)$ and $W(S)$ are commutative algebra
objectives in $\CartV$ and therefore the tensor product is the coproduct. As a result to
construct such a map we can just combine the maps $W(R) \to W(R \otimes S)$ and $W(S) \to
W(R \otimes S)$ that come from the maps $R \to R\otimes S$ and $S \to R \otimes S$.

In the associative case such map is by construction of $\widehat \boxtimes$ the same as a
bilinear map $W(R) \times W(S) \to W(R \otimes S)$ of abelian groups with a certain
compatibility with Verschiebung and Frobenius. But such a map has exactly been constructed in
\cite{krause-nikolaus}.
By construction of the ring structure on $W(R)$ and $W(S)$ in the commutative case through
this lax symmetric monoidal map the two maps agree in the commutative case.

Now that we have the map we are left to show that the is an isomorphism. We can
resolve all our rings by rings of the form $\ZZ[M]$ for a monoid $M$ and since
$W(-)$ commutes with split coequalizers reduce to this case. But for $R =
\ZZ[M]$ we have a natural isomorphism $W(R) \iso R[[V]]$ with the terminology from Section
\ref{sec:symmonDieu}. We will verify this at the end of the proof, but let us assume it for
the moment.
Thus the statement reduces to show that the canonical map
\[
R[[V]] \widehat \boxtimes S[[V]] \to (R\otimes S)[[V]]
\]
is an isomorphism. This in turn follows from the statement that the canonical map 
\[
R[V] \otimes S[V] \to (R \otimes S)[V]
\]
is an isomorphism in $\Cart_p$. The latter is easy to verify using the explicit
formula for the tensor product in $\Cart_p$ given in Lemma \ref{lemexp}.

Now we want to verify that for a ring of the form $R= \ZZ[M]$ with a monoid $M$ we have a
canonical isomorphism of Cartier modules $R[[V]] \iso W(R)$.\footnote{This easily follows
    using Theorem \ref{thm:pi0} from the topological statement that $\TR(\SS[M]) \simeq
        \THH(\SS[M])[[V]]$ which is a formal consequence of the fact that $\THH(\SS[M])$ has a
        Frobenius lift. But of course the purpose of this proof is to avoid using
        topological arguments.}
For simplicity we assume that $M$ is a commutative monoid. The statement in fact holds with
the same proof also of non-commutative monoids but we do not want to get involved into
non-commutative Witt vectors here (see \cite{krause-nikolaus} for a discussion which shows
        that the arguments given here carry over directly except one has to verify that $V$
        is injective). First, we know that $W(R)$ is derived $V$-complete. Therefore  to
construct a map of Cartier modules $R[[V]] \to W(R)$ we have to construct a map $R \to W(R)$
compatible with $F$-operators where the $F$-operator on $R= \ZZ[M]$ is given by the $p$-th
power on $M$. Such a map in turn is the same as a map of multiplicative monoids $M \to W(R)$
and is canonically provided by the Teichm\"uller. Now both sides are $V$-torsion free, since
the Witt vectors of every commutative ring are $V$-torsion free and $R[[V]]$ evidently is.
Thus by Lemma \ref{lemmodVequivalence} is suffices to show that the map  $R[[V]] \to W(R)$
is an equivalence on the cokernel of $V$ which is obvious.
\end{proof}

Finally we want to study categories of modules over ring objects $A$ in
$\Cart_p$. The main example we have in mind is $W(R)$ for a commutative ring
$R$.  Thus let $A$ be a ring object in $\Cart_p$, that is $A$ is a ring, equipped with $V$
and $F$ operators such that
\[
FV = p, \qquad F(ab) = F(a)F(b), \qquad V(a F(b)) = V(a) b, \qquad V(F(a) b) =
a V(b).
\]
By an $A$-module, we mean an $A$-module object in $\Cart_p$ with respect to the symmetric monoidal structure $\boxtimes$. 

\begin{lemma}\label{lemma_dieudonne}
A left $A$-module structure on an object $M \in \Cart_p$ is the same as 
a left module structure for the underlying ring $A$ on the underlying abelian
group $M$ such that the following conditions hold for any $x\in A$ and $y\in
M$:
    \begin{enumerate}
        \item[(i)] $V(F_A(x)y)=xV(y)$;
        \item[(ii)] $V(xF(y))=V_A(x)y$;
        \item[(iii)] $F(xy)=F_A(x)F(y)$.
    \end{enumerate}\end{lemma}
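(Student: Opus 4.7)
The plan is to reduce the statement to the universal property of $\boxtimes$ recorded in Lemma~\ref{lemexp}. An $A$-module structure on $M \in \Cart_p$ means a morphism $\mu \colon A \boxtimes M \to M$ in $\Cart_p$ satisfying the usual associativity and unit axioms; by Lemma~\ref{lemexp}, giving such a $\mu$ is the same as giving a $(V,F)$-bilinear pairing $A \times M \to M$ in the sense of Definition~\ref{def}. Writing the pairing as $(x,y) \mapsto xy$, the three $(V,F)$-bilinearity clauses translate verbatim into conditions (iii), (ii), and (i) of the lemma, so this first step is essentially unpacking definitions.

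Next, I would identify the associativity and unitality axioms on $\mu$ with ordinary associativity and unitality of the action of $A$ on the underlying abelian group of $M$. For associativity, I would use an iterated form of Lemma~\ref{lemexp}: by the same presentation-based argument used to prove associativity of $\boxtimes$ in the proposition immediately following that lemma, $A \boxtimes A \boxtimes M$ corepresents $(V,F)$-trilinear pairings (the natural generalization of Definition~\ref{def}). Under this identification, the two composites $A \boxtimes A \boxtimes M \rightrightarrows M$ arising from a module structure correspond to the trilinear maps $(x,y,z) \mapsto (xy)z$ and $(x,y,z) \mapsto x(yz)$, whose equality is ordinary associativity. The ring unit $\eta \colon \ZZ[V] \to A$ in $\Cart_p$ is determined by $\eta(1)=1_A$, and hence $\eta(V^a) = V_A^a(1)$.

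The main obstacle, and the step requiring the most care, is the unitality reduction. Tracing through the explicit presentation $(\ZZ[V] \otimes M)[V]/{\sim}$ from Lemma~\ref{lemexp}, the defining relation $(Vy \otimes m)V^k \sim (y \otimes Fm)V^{k+1}$ forces the unit isomorphism $\ZZ[V] \boxtimes M \simeq M$ to correspond to the pairing $(V^a, y) \mapsto V^a F^a y$ rather than the naive pairing $(V^a, y) \mapsto V^a y$; one verifies directly using $FV = p$ that this pairing really is $(V,F)$-bilinear. Unitality of $\mu$ then reduces to the family of identities $V_A^a(1) \cdot y = V^a F^a y$. The case $a=0$ is ordinary unitality $1_A \cdot y = y$, and the case $a=1$ is exactly condition (ii) with $x=1$; the cases $a \geq 2$ follow by iterating condition (ii) with $x = V_A^{a-1}(1)$ and using ordinary associativity. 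The converse implication runs the same chain of identifications backwards and requires no new input.
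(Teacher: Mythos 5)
Your proposal is correct and takes essentially the same route as the paper, whose proof simply observes that the statement is immediate from the definition of $(V,F)$-bilinear maps (Definition~\ref{def}) via the corepresentability in Lemma~\ref{lemexp}: with the pairing $(x,y)\mapsto xy$, the three bilinearity clauses are literally conditions (iii), (ii), (i). Your additional verifications of associativity via trilinear corepresentability and of unitality --- in particular the correct identification of the unit pairing $\ZZ[V]\times M\to M$ as $(V^a(1),y)\mapsto V^aF^a(y)$, forced by iterating condition (ii) and compatible with $FV=p$ --- are accurate and merely make explicit the bookkeeping the paper leaves implicit.
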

\begin{proof}
This immediately follows from the definition of bilinear maps as in Definition~\ref{def}.
\end{proof}

\begin{example}\label{dieudonne}
    For $A = W(k)$ where $k$ is a perfect ring of characteristic $p$ we get
    that the category  $\Mod_{W(k)}(\Cart_p)$ is the category of
    $W(k)$-modules $M$ (in the classical sense of modules) together with a
    Frobenius semilinear map $F: M \to M$ and a map $V: M \to M$ that is
    semilinear for the inverse of Frobenius on $W(k)$, such that $VF = FV = p$. 
    This is
    the category of Dieudonn\'e modules.     
\end{example}

\section{The relationship to genuine cyclotomic spectra}\label{sec:genuine}

In this section we shall compare the $\infty$-category of $p$-typical topological Cartier
modules with the $\infty$-category of genuine $p$-typical cyclotomic spectra.
This generality will be
relevant in order to understand some cases of $V$-completion. It will also
provide a more conceptual proof of the main result of the Section~\ref{sec:tcm} on the
adjointness of $(-)/V$ and $\TR$. Moreover, although we do not explore it here,
we find an equivalent formulation of the theory of $p$-typical topological
Cartier modules in Proposition~\ref{prop_forget}, which immediately generalizes to the
integral (or big) situation. The
main results can be found in Section~\ref{consequences}. 


\subsection{Genuine $p$-typical topological Cartier modules}\label{genuineTCart}

The next definition parallels that of~\ref{def:gencyc}.

\begin{definition}
    The $\infty$-category $\TCart^\gen_p$ of {\bf genuine
    $p$-typical topological Cartier modules} is defined to be fixed points for the
    endofunctor $(-)^{C_p}$:
    \[
        \TCart_p^\gen := \mathrm{Fix}_{(-)^{C_p}}(\bT\Sp^\gen_p).
    \]
    In other words, a genuine $p$-typical topological Cartier module is a genuine
    $S^1$-spectrum $M$ equipped with an equivalence $M^{C_p}\we X$ of genuine
    $S^1$-spectra.
\end{definition}

Our primary aim in this section is to prove that $\TCart_p^\gen\we\TCart_p$, up
to a theorem about genuine $S^1$-spectra which we will prove in the next
section. In the next proposition, we will use the notion of an $F$-algebra for an
endofunctor $F\colon\Cscr\rightarrow\Cscr$. By definition, these are organized
into an $\infty$-category
$\Alg_F(\Cscr)\we\LEq(F,\id\colon\Cscr\rightarrow\Cscr)$. In other words, an
$F$-algebra is an object $X$ of $\Cscr$ equipped with a map $FX\rightarrow X$.
Similarly, if $G$ is an endofunctor of $\Dscr$, we have the opposite notion of
a $G$-coalgebra, namely an object $Y\in\Dscr$ equipped with a map $Y\rightarrow
GY$. These are objects of the $\infty$-category $\CoAlg_G(\Dscr)$. For details, see~\cite{nikolaus-scholze}*{Section~II.5}.

\begin{proposition}\label{gen_adjunction}
    There is an adjunction
    \[
    L: 
    \xymatrix{
    \TCart^\gen_p \ar[r]<2pt> & \CycSp^\gen_p \ar[l]<2pt> 
    }: \overline{\TR}
    \]
    with the functors
    \[
    L(M) = \colim(M \to M^{\Phi C_p} \to M^{\Phi C_{p^2}} \to \cdots )
    \]
    where the map $M \to M^{\Phi C_p}$ is given by the composite $M \simeq M^{C_p} \to M^{\Phi C_p}$. The right adjoint is given by
    \[
    \overline{\TR}(X) = \lim\left( \cdots \to X^{C_{p^2}} \to X^{C_p} \to X \right)
    \]
    where the maps are induced by $X^{C_p} \to X^{\Phi C_p} \simeq X$.
\end{proposition}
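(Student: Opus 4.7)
The plan is to construct $L$ and $\overline{\TR}$ as explicit functors, verify that each lands in the correct fixed-point subcategory, and then establish the adjunction by comparing mapping spaces.

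First I would verify the target conditions. For $M\in\TCart_p^\gen$ with structure equivalence $\sigma\colon M^{C_p}\xrightarrow{\simeq}M$, the first map $M\to M^{\Phi C_p}$ in the defining sequence for $L(M)$ is obtained from the canonical $M^{C_p}\to M^{\Phi C_p}$ precomposed with $\sigma^{-1}$. The subsequent maps $M^{\Phi C_{p^n}}\to M^{\Phi C_{p^{n+1}}}$ arise similarly, by identifying $M^{\Phi C_{p^n}}\we(M^{C_p})^{\Phi C_{p^n}}$ via $\sigma$ and then applying the canonical transformation $(-)^{C_p}\to(-)^{\Phi C_p}$. To see that $L(M)\in\CycSp_p^\gen$, I would invoke the fact that $(-)^{\Phi C_p}$ on $\bT\Sp_p^\gen$ preserves sequential colimits, so that applying it to the defining colimit of $L(M)$ produces the same tower shifted by one, and hence the same colimit. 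Dually, $\overline{\TR}(X)\in\TCart_p^\gen$ because $(-)^{C_p}$ is a right adjoint and so commutes with the limit defining $\overline{\TR}(X)$, shifting the tower by one.

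Next I would construct the unit and counit. The counit $\varepsilon_X\colon L(\overline{\TR}(X))\to X$ is built from the projection $\overline{\TR}(X)\to X$ to the $n=0$ term of the limit, together with the iterated cyclotomic identifications $X^{\Phi C_{p^n}}\we X$; these combine into compatible maps $\overline{\TR}(X)^{\Phi C_{p^n}}\to X$ that assemble out of the colimit. The unit $\eta_M\colon M\to\overline{\TR}(L(M))$ arises from the tautological inclusion $M\to L(M)$ of the initial term, which after applying $(-)^{C_{p^n}}$ and using $M\we M^{C_{p^n}}$ produces a compatible family of maps $M\to L(M)^{C_{p^n}}$ to the terms of the defining tower of $\overline{\TR}(L(M))$. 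To verify the adjunction itself, I would then directly compare mapping spaces: using the pullback description of $\mathrm{Fix}_F$ along the lines of \cite{nikolaus-scholze}*{Proposition~II.1.5}, a map $M\to\overline{\TR}(X)$ in $\TCart_p^\gen$ unpacks to a coherent tower of maps $M\to X^{C_{p^n}}$ compatible with $(-)^{C_p}$ on both sides; conversely, the equivalences $M\we M^{C_p}$ and $X\we X^{\Phi C_p}$ let us transpose such data into a compatible sequence of maps $M^{\Phi C_{p^n}}\to X$, i.e., a map $L(M)\to X$ in $\CycSp_p^\gen$. Naturality of this identification, together with the explicit unit and counit above, yields the adjunction.

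The main obstacle is verifying that $(-)^{\Phi C_p}$ commutes with the sequential colimit defining $L(M)$. This rests on the fact that geometric fixed points form a symmetric monoidal left adjoint on $\bT\Sp_p^\gen$, which can be extracted either from the model-categorical construction in \cite{schwede-lectures} or from the Mackey functor model of \cite{barwick-spectral}. Once this is in hand, the remaining work is largely the bookkeeping of iterated fixed-point identifications in $\bT\Sp_p^\gen$, which must be handled carefully to ensure all coherences hold in the $\infty$-categorical setting but poses no conceptual difficulty.
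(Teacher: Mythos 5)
Your explicit formulas for $L$ and $\overline{\TR}$, and the verification that they carry fixed-point structures by commuting $(-)^{\Phi C_p}$ with the sequential colimit and $(-)^{C_p}$ with the sequential limit, agree with the paper. But you have misplaced the difficulty: since $(-)^{\Phi C_p}$ is a colimit-preserving functor on $\bT\Sp^\gen_p$, the step you single out as the main obstacle is routine. The genuine gap sits exactly where you dismiss things as bookkeeping: the claim that a map $M\to\overline{\TR}(X)$ in $\TCart_p^\gen$ "unpacks to a coherent tower of maps $M\to X^{C_{p^n}}$" which one can then "transpose" into a map $L(M)\to X$. Both the unpacking and the transposition involve an unbounded amount of higher coherence data (and if you instead run the unit/counit route you owe the triangle identities, which raise the same issue); in the $\infty$-categorical setting this coherence is the actual content of the proposition, not an afterthought, so as written the adjunction is asserted rather than proved.

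The paper's proof supplies the device that eliminates this coherence. It proves the statement for a general natural transformation $F\to G$ of endofunctors of $\Cscr$ (with $F$ preserving sequential limits and $G$ sequential colimits) and uses that the full inclusion $\mathrm{Fix}_F(\Cscr)\subseteq\Alg_F(\Cscr)$ has right adjoint $\lim_n F^n$, and dually $\mathrm{Fix}_G(\Cscr)\subseteq\CoAlg_G(\Cscr)$ has left adjoint $\colim_n G^n$ (the input from \cite{nikolaus-scholze}*{Section~II.5}). This identifies $\Map_{\mathrm{Fix}_F}(M,\overline{\TR}(X))$ with $\Map_{\Alg_F}(M,X)$ and $\Map_{\mathrm{Fix}_G}(L(M),X)$ with $\Map_{\CoAlg_G}(M,X)$, and by the lax-equalizer mapping-space formula you allude to, each of the latter is an equalizer of just two maps out of $\Map_{\Cscr}(M,X)$ --- a single map plus a single homotopy, no infinite tower. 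The two equalizers are then matched using $M\simeq M^{C_p}$, $X^{\Phi C_p}\simeq X$, and naturality of $(-)^{C_p}\to(-)^{\Phi C_p}$, and one checks the resulting equivalence is induced by the unit $\id\to\overline{\TR}\circ L$ you construct. With this reduction your outline becomes essentially the paper's argument; without it (or an equivalent device), the key step is missing.
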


\begin{proof}
This is more generally true: if we have an $\infty$-category $\Cscr$ and a
natural transformation $F \to G$ of functors $F, G:  \Cscr \to \Cscr$ such that
$F$ preserves sequential limits (and these exist in $\Cscr$) and $G$ preserves
sequential colimits (also these are assumed the exist in $\Cscr$), then we get
an adjunction
\[
L: \xymatrix{
\mathrm{Fix}_F(\Cscr)  \ar[r]<2pt> & \mathrm{Fix}_G(\Cscr) \ar[l]<2pt> 
}: R
\]
where the underlying object of $L(X)$ is given by the colimit
\[
\colim\left(X \to GX \to G^2X \to \cdots\right)
\]
and the fixed point structure for $G$ comes from commuting this colimit with $G$. 
The underlying object of $R(Y)$ is given by 
\[
\lim\left(\cdots \to F^2Y \to FY \to Y\right)
\] 
and the fixed point structure comes from commuting the limit with $F$. 
To see that these functors are adjoint to one another, we observe that there is natural transformation
\[
\id \to RL
\]
given on underlying objects by the natural map
\[
X = \lim_k F^k X \to \lim_k F^k (\colim_n ( G^n X)).
\]
It now suffices to verify the mapping space property. To this end, we claim that for every $X \simeq FX \in \mathrm{Fix}_F(\Cscr) $ and every 
$Y \simeq GY \in \mathrm{Fix}_G(\Cscr)$ one has
\[
\Map_{\mathrm{Fix}_F(\Cscr) }(X, RY) \simeq \Map_{\mathrm{Alg}_F(\Cscr)}(X, Y),
\]
where $Y$ is considered as an $F$-algebra through the map $FY \to GY
\xto{\simeq} Y$. To see this equivalence we have used that taking the limit over iterated applications of $F$ is the right adjoint to the inclusion $\mathrm{Fix}_F(\Cscr) \subseteq \mathrm{Alg}_F(\Cscr)$ see 
\cite{nikolaus-scholze}*{Section~II.5}. The space $\Map_{\mathrm{Alg}_F(\Cscr)}(X, Y)$ is equivalent to the equalizer
\[
\xymatrix{
\Map_{\Cscr}(X,Y) \ar[r]<2pt> \ar[r]<-2pt> & \Map_{\Cscr}(FX, Y)
}
\]
where the first map is given by precomposition with $FX \to X$ and the second by applying $F$ and then postcomposition with $FY \to  Y$. 
We can similarly describe the mapping space $\Map_{\mathrm{Fix}_G(\Cscr) }(LX,
Y)$ via $\CoAlg_G(\Cscr)$ as the equalizer 
\[
\xymatrix{
\Map_{\Cscr}(X,Y) \ar[r]<2pt> \ar[r]<-2pt> & \Map_{\Cscr}(X, GY)
}
\]
along the analogous maps. Since $FX \simeq X$ and $GY \simeq Y$ is clear that
$\Map_{\Cscr}(FX, Y) \simeq \Map_{\Cscr}(X, Y) \simeq \Map_{\Cscr}(X,GY)$.
Unfolding the definitions we see that under this equivalence the maps in the
equalizers correspond to each other and such that they are equivalent. Moreover
the equivalence we get this way agrees (at least up to homotopy) with the map
induced from the transformation $\id \to RL$.
\end{proof}

In order to understand the functors $L$ and $\overline \TR$ we need the following result, which is in some sense a genuine version of the Tate orbit Lemma, but much easier.

\begin{lemma}\label{gen_tateorbit}
For every object  $M \in \bT\Sp^\gen_p$, the canonical map
\[
\left(M^{C_{p}} \right)^{\Phi C_p} \to M^{\Phi C_{p^{2}}}
\]
is an equivalence in $\bT\Sp^\gen$.
\end{lemma}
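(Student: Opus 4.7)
The plan is to realize the map $(M^{C_p})^{\Phi C_p} \to M^{\Phi C_{p^2}}$ as the geometric $C_p$-fixed points (for the quotient $C_{p^2}/C_p \cong C_p$) of the canonical comparison map $M^{C_p} \to M^{\Phi C_p}$ of $C_{p^2}/C_p$-spectra. The identification of the target uses the composition law $\Phi^{C_p}\circ \Phi^{C_p} \simeq \Phi^{C_{p^2}}$ for iterated geometric fixed points of a normal tower of subgroups. Since $\Phi^{C_p}$ is exact, it then suffices to show that $\Phi^{C_p}$ kills the fiber of $M^{C_p} \to M^{\Phi C_p}$ as a $C_{p^2}/C_p$-spectrum.

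I would next compute this fiber using the isotropy-separation sequence. The family of subgroups of $C_{p^2}$ not containing $C_p$ is just $\{e\}$, so the universal space is $EC_{p^2}$ (a contractible free $C_{p^2}$-space), and the sequence $EC_{p^2,+}\wedge M \to M \to \widetilde EC_{p^2}\wedge M$ gives, after applying $(-)^{C_p}$, the fiber sequence
\[
(EC_{p^2,+}\wedge M)^{C_p} \longrightarrow M^{C_p} \longrightarrow M^{\Phi C_p}
\]
of $C_{p^2}/C_p$-spectra. Thus the proof reduces to checking that $\Phi^{C_p}\bigl((EC_{p^2,+}\wedge M)^{C_p}\bigr)\simeq 0$.

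The key step — and what I expect to be the main obstacle — is this vanishing claim. My plan is to argue it by a direct space-level computation via the projection formula for the normal subgroup $C_p \triangleleft C_{p^2}$. Writing $p\colon C_{p^2}\to C_{p^2}/C_p$, one has
\[
\Phi^{C_p}\bigl((EC_{p^2,+}\wedge M)^{C_p}\bigr)
 \;\simeq\; \bigl(EC_{p^2,+}\wedge M \wedge p^{*}\widetilde E C_p\bigr)^{C_{p^2}},
\]
using that iterated fixed points for the normal tower $e\triangleleft C_p\triangleleft C_{p^2}$ assemble into $C_{p^2}$-fixed points. It then suffices to show that the $C_{p^2}$-space $EC_{p^2,+}\wedge p^{*}\widetilde E C_p$ is $C_{p^2}$-equivariantly contractible. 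This follows by inspecting $H$-fixed points for each $H\subseteq C_{p^2}$: if $H\ne e$ then $(EC_{p^2,+})^H$ collapses to the basepoint, so the smash is a point; if $H=e$ then $EC_{p^2}$ is (non-equivariantly) contractible, so $(EC_{p^2,+})^e$ is contractible, and again the smash with $(\widetilde EC_p)^e$ is contractible.

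Combining these steps yields the desired equivalence $(M^{C_p})^{\Phi C_p}\simeq M^{\Phi C_{p^2}}$. The only book-keeping to be done is to make sure that the various natural transformations — the comparison map $(-)^{C_p}\to(-)^{\Phi C_p}$, the composition law $\Phi^{C_p}\circ\Phi^{C_p}\simeq \Phi^{C_{p^2}}$, and the projection formula for the normal subgroup — are the canonical ones, so that the resulting chain of equivalences recovers the canonical map appearing in the statement of the lemma.
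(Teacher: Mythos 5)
Your proposal is correct in substance, but it packages the argument differently from the paper. The paper argues in one step: by definition $(M^{C_p})^{\Phi C_{p^2}/C_p} \simeq \bigl(M^{C_p}\otimes \widetilde{E(S^1/C_p)}\bigr)^{C_{p^2}/C_p}$, and a single application of the projection formula for inflation along $S^1\to S^1/C_p$, together with the identification of the inflated space $f^*\widetilde{E(S^1/C_p)}$ with the space $\widetilde{E_{p^2}S^1}$ defining $\Phi C_{p^2}$, rewrites this directly as $(M\otimes \widetilde{E_{p^2}S^1})^{C_{p^2}} = M^{\Phi C_{p^2}}$; no fiber sequence and no composition law $\Phi\circ\Phi\simeq\Phi$ are needed. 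You instead factor the lemma's map as $\Phi^{C_{p^2}/C_p}$ applied to the comparison $M^{C_p}\to M^{\Phi C_p}$, and then kill the isotropy-separation fiber after geometric fixed points, using the same two ingredients (projection formula for the inflated $\widetilde{E}$-space, plus a fixed-point computation of a smash of universal spaces). What your route buys is a cleaner conceptual statement -- ``the induced/free part of $M^{C_p}$ dies under $\Phi$'' -- which makes the analogy with the Tate orbit lemma transparent; what the paper's route buys is brevity and the fact that one never has to identify the canonical map beyond its definition.

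Two points need repair, though neither is fatal. First, at $H=e$ you assert that $(EC_{p^2,+})^e$ is contractible; it is not -- it is equivalent to $S^0$. The smash is nevertheless pointed-contractible there because $(\widetilde{E}C_p)^e\simeq \ast$ (the family defining $\widetilde{E}C_p$ contains the trivial subgroup, so its underlying pointed space is contractible), so the conclusion stands but for a different reason than the one you give. Second, the lemma asserts an equivalence of genuine $S^1$-spectra (with the residual $S^1/C_{p^2}\cong S^1$-action), whereas your bookkeeping restricts everything to $C_{p^2}$ and its quotient, which directly yields only the equivalence of underlying spectra. The fix is immediate: run the same fixed-point check over the whole $p$-family of $S^1$ (the fixed points of $E\Fscr_+$ at every nontrivial $C_{p^k}$ are just the basepoint, and at $e$ one uses $(\widetilde{E(S^1/C_p)})^e\simeq\ast$ as above), so that the vanishing of $\Phi^{C_{p^2}/C_p}$ of the fiber holds as genuine $S^1/C_{p^2}$-spectra; this is exactly why the paper carries the residual $S^1/C_p$-action through the whole computation.
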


\begin{proof}
For this proof we will write the left hand side as $\left(M^{C_{p}} \right)^{\Phi C_{p^2}/C_p}$ and note that is carries a residual $S^1/ C_{p^2}$-action. 
By definition (see for example~\cite{nikolaus-scholze}*{Definition~II.2.3}),
the $C_{p^2}/C_p$-geometric fixed points are given by
\[
\left(M^{C_{p}} \right)^{\Phi C_{p^2}/C_p} \simeq \left(\left(M^{C_{p}} \right) \otimes \widetilde{E(S^1/C_p)}\right)^{C_p^2/C_p},
\]
where the pointed genuine $S^1/C_p$-space $\widetilde{E(S^1/C_p)}$ is characterized by having fixed points
\[
\widetilde{E(S^1/C_p)}^{C_{p^k}/C_p} = \begin{cases}
\ast & k = 1, \\
S^0 & k > 1 
\end{cases}
\]
and the obvious maps (so that it receives a map from the space $S^0$). 
By the projection formula we get an equivalence  of $S^1/C_p$-spectra
\[
\left(M^{C_{p}} \right) \otimes \widetilde{E(S^1/C_p)} \simeq  M \otimes f^*\left(\widetilde{E(S^1/C_p)} \right)
\]
where $f: S^1 \to S^1/C_p$ is the canonical projection and $f^*$ is inflation
along this projection (i.e. restriction of the action which is left adjoint to
taking fixed points). The pullback $f^*\left(\widetilde{E(S^1/C_p)} \right)$ is
equivalent to the  genuine $S^1$-space $\widetilde{E_{p^2}S^1}$ which has fixed
points
\[
\widetilde{E_{p^2}S^1}^{C_{p^k}} = \begin{cases}
\ast & k = 0,1, \\
S^0 & k > 1.
\end{cases}
\]
Putting everything together we conclude that 
\[
\left(M^{C_{p}} \right)^{\Phi C_{p^2}/C_p} \simeq  \left(M \otimes \widetilde{E_{p^2}S^1}\right)^{C_{p^2}} 
\]
which is by definition equivalent to $M^{\Phi C_{p^{2}}}$.
\end{proof}

Let $M$ be a genuine $p$-typical topological Cartier module. In the next
corollary, we identify the $C_{p^n}$ fixed points of the genuine $p$-typical
cyclotomic spectrum $LM$ from Proposition \ref{gen_adjunction}.

\begin{corollary}\label{corfixed}
    For every genuine $p$-typical topological Cartier module $M$ we have a
    natural equivalence
    \[
    (LM)^{C_{p^n}} \simeq M/ V^{n+1},
    \]
    where $V^{n+1}$ denotes the map $M_{hC_{p^{n+1}}} \to M^{C_{p^{n+1}}} \xto{\simeq} M$.
\end{corollary}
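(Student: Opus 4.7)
I would compute $(LM)^{C_{p^n}}$ directly from the colimit description $LM\simeq\colim_k M^{\Phi C_{p^k}}$ of Proposition~\ref{gen_adjunction}. Since $\Sigma^\infty_+\bT/C_{p^n}$ is a compact object of $\bT\Sp_p^\gen$, the functor $(-)^{C_{p^n}}$ preserves filtered colimits, so
\[
    (LM)^{C_{p^n}}\simeq \colim_k\bigl(M^{\Phi C_{p^k}}\bigr)^{C_{p^n}}.
\]
For each $k$ I would apply the isotropy separation cofiber sequence of genuine $S^1/C_{p^k}$-spectra
\[
    M_{hC_{p^k}}\to M^{C_{p^k}}\to M^{\Phi C_{p^k}},
\]
using that $M^{C_{p^k}}\simeq M$ by iterating the defining equivalence $M^{C_p}\simeq M$, to identify $M^{\Phi C_{p^k}}\simeq\cofib\bigl(V^k\colon M_{hC_{p^k}}\to M\bigr)$ and check that the composite $M_{hC_{p^k}}\to M^{C_{p^k}}\simeq M$ is the iterated Verschiebung of the statement.

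\textbf{Next step.} Taking $(-)^{C_{p^n}}$ of this sequence and using iterated categorical fixed points together with the Cartier equivalence $M^{C_{p^{n+k}}}\simeq M$, each term of the colimit becomes
\[
    \bigl(M^{\Phi C_{p^k}}\bigr)^{C_{p^n}}\simeq\cofib\bigl((M_{hC_{p^k}})^{C_{p^n}}\to M\bigr),
\]
where $(M_{hC_{p^k}})^{C_{p^n}}$ denotes the $C_{p^n}$-categorical fixed points for the residual $S^1/C_{p^k}$-action (so the $C_{p^n}$ here corresponds to $C_{p^{n+k}}/C_{p^k}$ in the original $S^1$). Since cofibers commute with colimits, the remaining task is to evaluate $\colim_k(M_{hC_{p^k}})^{C_{p^n}}$ and to match the resulting cofiber with $M/V^{n+1}=\cofib\bigl(V^{n+1}\colon M_{hC_{p^{n+1}}}\to M\bigr)$.

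\textbf{Main obstacle.} The hardest step will be showing that the tower $\{(M_{hC_{p^k}})^{C_{p^n}}\}_k$, with transition maps coming from the isotropy separation tower, has colimit naturally equivalent to $M_{hC_{p^{n+1}}}$, with the induced composite to $M$ being $V^{n+1}$. The key mechanism is that for $k\geq 1$ the difference between successive $M^{\Phi C_{p^k}}$ and $M^{\Phi C_{p^{k+1}}}$ is controlled by $(M_{hC_p})^{\Phi C_{p^k}}$, which is trivialized by applying Lemma~\ref{gen_tateorbit} to the isotropy cofiber sequence $M_{hC_p}\to M^{C_p}\to M^{\Phi C_p}$: the two outer terms identify with $M^{\Phi C_{p^{k+1}}}$ under Lemma~\ref{gen_tateorbit} and the iterated geometric-fixed-point formula, and one must check the identification is compatible with the map induced by $M^{C_p}\to M^{\Phi C_p}$. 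Carefully tracking these subgroup identifications through the iterated residual $S^1$-actions, and showing that after taking $C_{p^n}$-fixed points the colimit truncates at level $k=n+1$ with value $M^{\Phi C_{p^{n+1}}}\simeq M/V^{n+1}$, is the core of the argument.
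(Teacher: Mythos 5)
Your opening move --- using compactness of $\Sigma^\infty_+\bT/C_{p^n}$ to commute $(-)^{C_{p^n}}$ past the colimit defining $LM$ --- is exactly how the paper's proof begins, but the next step contains a genuine error. The sequence $M_{hC_{p^k}}\to M^{C_{p^k}}\to M^{\Phi C_{p^k}}$ is an isotropy separation sequence only for $k=1$: for $k\geq 2$ the fiber of $M^{C_{p^k}}\to M^{\Phi C_{p^k}}$ is $\bigl((E\mathcal{P})_+\otimes M\bigr)^{C_{p^k}}$, where $\mathcal{P}$ is the family of \emph{all} proper subgroups of $C_{p^k}$ (as in the proof of Lemma~\ref{conn}), and this is not the homotopy orbits $M_{hC_{p^k}}$. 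Consequently the identification $M^{\Phi C_{p^k}}\simeq\cofib\bigl(V^k\colon M_{hC_{p^k}}\to M\bigr)$ is false; in fact, for a genuine topological Cartier module it is provably wrong, since Lemma~\ref{gen_tateorbit} together with the structural equivalence $M^{C_p}\simeq M$ gives $M^{\Phi C_{p^{k+1}}}\simeq (M^{C_p})^{\Phi C_{p^k}}\simeq M^{\Phi C_{p^k}}$ for all $k\geq 1$, hence $M^{\Phi C_{p^k}}\simeq M^{\Phi C_p}\simeq M/V$ for every $k\geq 1$. Already for a discrete Cartier module such as $W(k)$ this differs from $M/V^k$.

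For the same reason your picture of the colimit is off: the transition maps $(M^{\Phi C_{p^k}})^{C_{p^n}}\to(M^{\Phi C_{p^{k+1}}})^{C_{p^n}}$ are equivalences for \emph{all} $k\geq 1$, so the colimit does not truncate at level $k=n+1$ with value $M^{\Phi C_{p^{n+1}}}$ (which is $M/V$, not $M/V^{n+1}$, and in any case $(M^{\Phi C_{p^{n+1}}})^{C_{p^n}}\not\simeq M^{\Phi C_{p^{n+1}}}$); it is already the second term $(M^{\Phi C_p})^{C_{p^n}}$. The missing ingredient, which is how the paper concludes, is a single isotropy separation sequence for the family consisting only of the trivial subgroup of $C_{p^{n+1}}$,
\[
M_{hC_{p^{n+1}}}\to M^{C_{p^{n+1}}}\to (M^{\Phi C_p})^{C_{p^n}},
\]
whose middle term is identified with $M$ via the Cartier structure, so that the first map becomes $V^{n+1}$ and the cofiber is $M/V^{n+1}$. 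So the Tate-orbit-type input (Lemma~\ref{gen_tateorbit}) serves only to show that the colimit stabilizes after one step; the computation of its value is done by this one sequence at the level of $C_{p^{n+1}}$, not by an identification of the individual geometric fixed points $M^{\Phi C_{p^k}}$ with $M/V^k$.
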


\begin{proof}
    By definition we have that $L(M) = \colim(M \to M^{\Phi C_p} \to M^{\Phi C_{p^2}} \to \cdots )$ and since (categorical) fixed points commute with colimits we have that 
    \[
    (LM)^{C_{p^n}} = \colim(M^{C_{p^n}} \to (M^{\Phi C_p})^{C_{p^n}} \to (M^{\Phi C_{p^2}})^{C_{p^n}} \to \cdots).
    \]
    The transition maps in this colimit are given by the maps
    \[
    (M^{\Phi C_{p^k}})^{C_{p^n}} \xto{\simeq} ((M^{C_p})^{\Phi C_{p^k}})^{C_{p^n}} \to (M^{\Phi C_{p^{k+1}}})^{C_{p^n}} 
    \]
    which are equivalences by Lemma \ref{gen_tateorbit} for $k \geq 1$. As a result, the colimit is equivalent to the second term 
    $(M^{\Phi C_p})^{C_{p^n}}$. This sits in an isotropy separation cofiber sequence
    \[
    M_{hC_{p^{n+1}}} \to M^{C_{p^{n+1}}} \to (M^{\Phi C_p})^{C_{p^n}}
    \]
    where the middle term is identified with $M$ through the general
    $p$-typical Cartier module structure and the left map with $V^{n+1}$. This shows the claim. 
\end{proof}

Now we come to the key assertion of the section.

\begin{proposition}\label{prop_forget}
    The canonical forgetful functor $\TCart^\gen_p \to \TCart_p$ (which will be made explicit in the proof) is an equivalence of $\infty$-categories. 
\end{proposition}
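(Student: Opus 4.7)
The plan is to construct the forgetful functor explicitly and then use the isotropy separation square for $C_p$ together with the equivalence $M^{C_p}\we M$ to recognize the data of a genuine $p$-typical topological Cartier module as exactly the pullback-square data of Remark~\ref{rempullback}.

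First I would define the functor $\TCart_p^\gen \to \TCart_p$. Given $(M,\alpha)$ with $\alpha\colon M^{C_p}\we M$ in $\bT\Sp_p^\gen$, the underlying object is the Borel $S^1$-spectrum $M$; define
\[
V\colon M_{hC_p}\xrightarrow{\mathrm{tr}}M^{C_p}\xrightarrow{\alpha}M, \qquad F\colon M\xrightarrow{\alpha^{-1}}M^{C_p}\xrightarrow{\mathrm{can}}M^{hC_p},
\]
both $S^1$-equivariant using the residual $S^1/C_p\we S^1$-action on $M^{C_p}$ and $\alpha$'s $S^1$-equivariance. The homotopy $FV\we \Nm_{C_p}$ is induced from the standard factorization of the norm through the genuine fixed points $M^{C_p}$, i.e., from the canonical nullhomotopy of $M_{hC_p}\to M^{C_p}\to M^{\Phi C_p}$ composed with $M^{\Phi C_p}\to M^{tC_p}$.

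Second, I would invoke the isotropy separation recollement for genuine $C_p$-spectra (applied with residual $S^1$-action): a genuine $C_p$-spectrum equipped with compatible $S^1$-action is equivalent to the data of a pullback square in $\Sp^{BS^1}$
\[
\xymatrix{
N \ar[r]\ar[d] & X\ar[d]\\
M^{hC_p}\ar[r] & M^{tC_p}
}
\]
where $M$ is the underlying spectrum, $X$ is the geometric fixed points $M^{\Phi C_p}$ (with residual $S^1$-action), and $N\we M^{C_p}$ is recovered as the pullback. Under the equivalence $\alpha\colon M^{C_p}\we M$, this is precisely the pullback description of $\TCart_p$ from Remark~\ref{rempullback} (with $X=M/V$), so the forgetful functor is fully faithful onto the first level of the genuine structure.

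Third, for essential surjectivity, I would use the description of $\bT\Sp_p^\gen$ as an iterated limit along the tower of genuine-at-$C_{p^n}$ subcategories (equivalently, via Barwick's spectral Mackey functor model on the orbit category $\{S^1/C_{p^n}\}_{n\geq 0}$). The key observation is that at each stage the recollement for the next finite $p$-subgroup $C_{p^{n+1}}$ inside $S^1/C_{p^n}$ takes the same form as the $C_p$-recollement above, applied to the $C_{p^n}$-fixed points. Under the recursive equivalence $M^{C_p}\we M$ (which forces $M^{C_{p^n}}\we M$ for all $n$ with compatible residual actions), the gluing datum at the $(n+1)$st stage is canonically identified with the gluing datum at the first stage, so the entire tower is determined by the single pullback square. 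Thus the given square (a topological Cartier module) lifts uniquely to a genuine $S^1$-spectrum with $M^{C_p}\we M$, providing the inverse. Finally, one checks that this construction is inverse to the forgetful functor and compatible with mapping spectra via Proposition~\ref{mappingspectrum}.

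The hard part will be making the recursion in the third step precise: one must produce a coherent system of equivalences at all finite $p$-subgroups from the single equivalence $\alpha$, which amounts to showing that the endofunctor $(-)^{C_p}$ of $\bT\Sp_p^\gen$ whose fixed-point $\infty$-category is $\TCart_p^\gen$ already encodes, by iteration, the entire tower of genuine fixed points. This is where one would appeal cleanly to the Mackey functor model or, alternatively, to Lemma~\ref{gen_tateorbit}–style calculations to show the iteration collapses.
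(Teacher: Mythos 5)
Your first two steps are fine: the explicit description of the forgetful functor (transfer plus $\alpha$ for $V$, $\alpha^{-1}$ plus the canonical map for $F$, with the norm factorization through $M^{C_p}$), and the identification of the bottom, $C_p$-genuine level with the pullback-square description of $\TCart_p$ from Remark~\ref{rempullback}, are exactly how the paper sets things up (its square~\eqref{pulli} is the Tate-square recollement you invoke). The problem is the third step, and it infects the second as well: the claim that ``the gluing datum at the $(n+1)$st stage is canonically identified with the gluing datum at the first stage, so the entire tower is determined by the single pullback square'' is precisely the nontrivial content of the proposition, and you assert it rather than prove it. Note also that even your fully-faithfulness claim in step 2 is not yet justified: mapping spectra in $\TCart_p^\gen$ involve the whole genuine tower together with the coherence of $\alpha$, so fully faithfulness onto the first level already requires the same collapsing statement, not just Proposition~\ref{mappingspectrum}.

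What is actually needed, and what the paper supplies, is a categorical statement: for every $n\geq 1$ the square of stable $\infty$-categories formed by $(-)^{C_p}$ and the forgetful functors $U_{n+1}, U_n$ between $\bT\Sp^\gen_{C_{p^{n+1}}}$, $\bT\Sp^\gen_{C_{p^{n}}}$ and $\bT\Sp^\gen_{C_{p^{n-1}}}$ is a pullback (Theorem~\ref{pullback}). This is proved via a recollement criterion (Proposition~\ref{criterion}): one must check not only that ``the recollement takes the same form at each stage'' but that the gluing (Tate) functors are intertwined by $(-)^{C_p}$, i.e.\ that the Tate object $T(X)\we (X^{C_{p^n}})^{tC_p}$ is carried to the corresponding Tate object one level down — this compatibility is where the real work lies, and it does not follow formally from the shape of the recollement. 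Once the pullback is known, the tower collapses by the purely formal identity $\Eq\bigl(A\times_B A\rightrightarrows A\bigr)\we\Eq\bigl(A\rightrightarrows B\bigr)$, which is the mechanism replacing your ``recursion.'' Your suggested substitutes do not fill the gap: Lemma~\ref{gen_tateorbit} is an object-level statement about geometric fixed points of categorical fixed points (used to compute $L$ and $\TR$), not the categorical gluing statement, and appealing to the Mackey-functor model would amount to re-proving the same pullback/coherence assertion in a different language rather than avoiding it.
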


\begin{proof}
    Fix $n\geq 0$.
We  consider the $\infty$-category
\[
\bT\Sp_{C_{p^n}}^\gen
\]
of genuine $S^1$-spectra for the family of subgroups of $C_{p^n}\subseteq S^1$. In particular $\bT\Sp_{C_{p^0}}^\gen = \Sp^{BS^1}$. 
Taking $C_p$-fixed points gives a functor 
\[
(-)^{C_p} \colon \quad \bT\Sp_{C_{p^{n+1}}}^\gen \to \bT\Sp_{C_{p^n}}^\gen
\]
and there is another functor $U_{n+1}\colon \bT\Sp_{C_{p^{n+1}}}^\gen \to
\bT\Sp_{C_{p^n}}^\gen$ which forgets the $C_{p^{n+1}}$-fixed points. We define a stable $\infty$-category
$(\TCart^\gen_p)_n$ as the equalizer
\[
    (\TCart^\gen_p)_n := \Eq\left((-)^{C_p},U_{n+1}\colon  \bT\Sp_{C_{p^{n+1}}}^\gen \xymatrix{\ar[r]<2pt>\ar[r]<-2pt> &} \bT\Sp_{C_{p^{n}}}^\gen \right)
\]
of $(-)^{C_p}$ and $U_{n+1}$. There is a canonical map
$(\TCart^\gen_p)_n \to (\TCart^\gen_p)_{n-1}$ induced from the pair of
commutative diagrams\footnote{Here, we mean that the diagram with the
$(-)^{C_p}$ horizontal maps is commutative as is the diagram with the $U_{n+1}$
and $U_n$ horizontal maps.}
\[
\xymatrix{
\bT\Sp_{C_{p^{n+1}}}^\gen
\ar[d]_{U_{n+1}}\ar[r]<2pt>^{(-)^{C_p}}\ar[r]<-2pt>_{U_{n+1}} &
\bT\Sp_{C_{p^{n}}}^\gen \ar[d]^{U_{n}}  \\
\bT\Sp_{C_{p^{n}}}^\gen \ar[r]<2pt>^{(-)^{C_p}}\ar[r]<-2pt>_{U_n} &
\bT\Sp_{C_{p^{n-1}}}^\gen.
}
\]
The main point of the proof is to show that  $(\TCart_p^\gen)_n \to (\TCart_p^\gen)_{n-1}$ is an equivalence for every $n \geq 1$. Let us first assume that and finish the proof.
Since $\bT\Sp_p^\gen \simeq \underleftarrow{\lim} \bT\Sp_{C_{p^{n}}}^\gen$ it
follows that $\TCart^\gen_p \simeq  \lim_n
(\TCart_p^\gen)_{n}$, and therefore, under our assumption,
\[
\TCart^\gen_p \simeq (\TCart^\gen_p)_0 \ .
\]
To identify the  $n = 0$ case, we claim that there is a pullback square
 \begin{equation}\label{pulli}
 \xymatrix{
 \bT\Sp_{C_{p}}^\gen\ar[rr]\ar[d]_{U_1}  &&
    \left(\Sp^{BS^1}\right)^{\Delta^2}\ar[d]^{\partial^1}\\
    \Sp^{B\S^1}\ar[rr]^-{\Nm_{C_p}}&&\left(\Sp^{B\S^1}\right)^{\Delta^1}
    }
 \end{equation}
 of $\infty$-categories. This fact is equivalent to the fact that the `Tate square'
 determines $ \bT\Sp_{C_{p}}^\gen$ which is well-known, see e.g. \cite{mnn-descent}*{Theorem~6.24}. The translation between these two facts is given by the observation that a pullback square 
 \[
 \xymatrix{
 A \ar[r]\ar[d] & B \ar[d] \\
 C \ar[r] & D
 }
 \]
 in any stable $\infty$-category (here $\Sp^{BS^1}$) is equivalently determined by the triangle 
 \[
  \xymatrix{
 F \ar[r]\ar[rd] & A \ar[d]  \\
  & D
 }
 \]
 where $F$ is the horizontal fiber. 
 
 The two maps $\bT\Sp_{C_{p}}^\gen \to\bT\Sp_{C_{p^{0}}}^\gen = \Sp^{BS^1}$ that feature in the definition of $(\TCart^\gen_p)_0$ are in the  pullback description \eqref{pulli} given by the two maps 
that are given by the composites
\[
    (-)^{C_p}\colon\bT\Sp_{C_{p}}^\gen\to \left(\Sp^{BS^1}\right)^{\Delta^2}
\xto{\mathrm{ev_1}=\partial^0\partial^2} \Sp^{BS^1}
\]
and
\[
U_1\colon \bT\Sp_{C_{p}}^\gen \to  \Sp^{B\S^1} \ .
\]
If thus follows that the equalizer defining $(\TCart^\gen_p)_0$ is equivalent to the pullback defining $\TCart_p$. 

Now in order to show that the functor $(\TCart_p^\gen)_n \to (\TCart_p^\gen)_{n-1}$ is an equivalence we will use that for $n \geq 1$ the square
\begin{align}\label{square}
\xymatrix{
\bT\Sp_{C_{p^{n+1}}}^\gen\ar[r]^{(-)^{C_p}}\ar[d]_{U_{n+1}} &
\bT\Sp_{C_{p^{n}}}^\gen \ar[d]^{U_n}\\
\bT\Sp_{C_{p^{n}}}^\gen \ar[r]^{(-)^{C_p}}  & \bT\Sp_{C_{p^{n-1}}}^\gen
 }
\end{align}
is a pullback square of stable $\infty$-categories which we will prove in
Theorem \ref{pullback} below. Given this, the claim follows from the fact that
for a general pair of maps $f,g: A \to B$ in an $\infty$-category with finite
limits (here the $\infty$-category of stable $\infty$-categories), we have a
canonical equivalence
\[
    \Eq\left(A~_f\!\times_g A
    \xymatrix{\ar[r]<2pt>^{\mathrm{pr}_1}\ar[r]<-2pt>_{\mathrm{pr}_2} &} A \right) \simeq \Eq\left(A  \xymatrix{\ar[r]<2pt>^f\ar[r]<-2pt>_g &} B\right).
\]
This follows by taking the `total equalizer' in the commutative diagram
\[
\xymatrix{
A\times A \ar[r]<2pt>^-{\mathrm{pr}_1}\ar[r]<-2pt>_-{\mathrm{pr}_2}\ar[d]<2pt>^{g \mathrm{pr}_2}\ar[d]<-2pt> _{f \mathrm{pr}_1}& A\ar[d]<2pt>\ar[d]<-2pt> \\
B  \ar[r]<2pt>\ar[r]<-2pt> & \ast
}
\]
in two different ways (first horizontally and then vertically or vice versa). Then we get the diagram
\[
\xymatrix{
E \ar[d]\ar[r] & A \times_B A \ar[r]<2pt>\ar[r]<-2pt> \ar[d]& A \ar[d] \\
A \ar[r] \ar[d]<2pt>\ar[d]<-2pt>& A\times A \ar[r]<2pt>\ar[r]<-2pt>\ar[d]<2pt>\ar[d]<-2pt> & A\ar[d]<2pt>\ar[d]<-2pt> \\
B \ar[r] & B  \ar[r]<2pt>\ar[r]<-2pt> & \ast
}
\]
in which the two `outer forks' are equalizers.  
\end{proof}

\subsection{A pullback square}

Now, to complete the proof of Proposition~\ref{prop_forget}, we have to establish that for $n\geq 1$, the square~\eqref{square} is a
pullback. To this end we shall introduce some terminology and establish an abstract criterion for pullbacks of stable $\infty$-categories.

\begin{definition}
Let $U: \mathcal{C} \to \mathcal{D}$ be a functor of stable $\infty$-categories. We say that
$U$ exhibits a recollement if it admits a fully faithful left adjoint and a fully faithful
right adjoint, i.e. $U$ is a localization and a colocalization.
\end{definition}

For the next example we follow the notation of Proposition~\ref{prop_forget}.

\begin{example}\label{ex_tate}
    The functor $U_{n+1}\colon \bT\Sp_{C_{p^{n+1}}}^\gen \to \bT\Sp_{C_{p^{n}}}^\gen $
    exhibits a recollement. The left adjoint is given by forming the `free'
    $C_{p^{n+1}}$-spectrum $\mathrm{Fr}(X)$ on an  $C_{p^{n}}$-spectrum $X$. For the
    spectrum $\mathrm{Fr}(X)$ we have that 
\[
\mathrm{Fr}(X)^{C_{p^k}} = \begin{cases}
 X^{C_{p^k}} & \text{ for } k=0,..., n, \\
 \left(X^{C_{p^n}}\right)_{hC_p} & \text{ for } k= n+1.
\end{cases}
\]
The right adjoint is given by the `Borel complete' spectrum $BX$ with 
\[
BX^{C_{p^k}} = \begin{cases}  
X^{C_{p^k}} & \text{ for } k=0,..., n,\\
\left(X^{C_{p^n}}\right)^{hC_p} & \text{ for } k= n+1.
\end{cases}
\]
The canonical maps $U B \to \id \to U \mathrm{Fr}$ are equivalences. This shows that $B$ and $\mathrm{Fr}$ are fully faithful. 
\end{example}

\begin{lemma}\label{cartcoCart}
    If $U: \mathcal{C} \to \mathcal{D}$ exhibits a recollement, then $\ker(U)
    \to \mathcal{C} \to \mathcal{D}$ is a Verdier sequence. Moreover the
    functor $U$ is a Cartesian and coCartesian fibration.\footnote{Here we mean the
    invariant concept, i.e. that every replacement by a categorical fibration is a Cartesian
    and coCartesian fibration.}
\end{lemma}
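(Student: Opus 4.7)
Write the recollement as $L \dashv U \dashv R$ with $L, R\colon \mathcal{D} \hookrightarrow \mathcal{C}$ fully faithful. The nontrivial unit/counit are $\eta\colon \id \to RU$ and $\varepsilon\colon LU \to \id$, while full faithfulness of $L$ and $R$ gives equivalences $UL \xrightarrow{\sim} \id$ and $UR \xrightarrow{\sim} \id$. In particular $U$ is simultaneously a left and a right adjoint, so it preserves all small limits and colimits.

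For the Verdier sequence claim, the existence of a fully faithful right adjoint $R$ exhibits $U$ as a Bousfield localization with local objects the essential image of $R$, and $\ker(U)$ is the full subcategory of $U$-acyclic objects. In the stable setting a Bousfield localization is automatically a Verdier quotient by its kernel, so $\ker(U) \to \mathcal{C} \to \mathcal{D}$ is a fiber--cofiber sequence. I would obtain this by citing the standard fact (as in Blumberg--Gepner--Tabuada, or Theorem~I.3.3 of \cite{nikolaus-scholze}), rather than reprove it directly.

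For the Cartesian fibration claim, given a morphism $f\colon d \to U(c)$ in $\mathcal{D}$, I construct the Cartesian lift by forming the pullback
\[ c' := c \times_{RU(c)} R(d) \quad \text{in } \mathcal{C}, \]
where $c \to RU(c)$ is the unit $\eta_c$ and $R(d) \to RU(c)$ is $R(f)$. Since $U$ preserves this pullback and $UR \simeq \id$, one computes $U(c') \simeq U(c) \times_{U(c)} d \simeq d$, and the composite $c' \to c$ lifts $f$ by construction. The Cartesian universal property then follows directly from the mapping-space formula for a pullback combined with the adjunction $U \dashv R$:
\[ \Map_\mathcal{C}(c'', c') \simeq \Map_\mathcal{C}(c'', c) \times_{\Map_\mathcal{D}(Uc'', Uc)} \Map_\mathcal{D}(Uc'', d), \]
which is exactly the defining condition of a Cartesian arrow.

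The coCartesian case is entirely dual, using $L$ instead of $R$: given $f\colon U(c) \to d$, form the pushout $c'' := c \sqcup_{LU(c)} L(d)$ with structure maps $\varepsilon_c\colon LU(c) \to c$ and $L(f)\colon LU(c) \to L(d)$. Since $U$ preserves colimits and $UL \simeq \id$, we get $U(c'') \simeq d$, and the coCartesian universal property follows from the analogous mapping-space identity using $L \dashv U$. I do not anticipate a substantive obstacle here: the Verdier sequence statement reduces to a citation, and the fibration statements are instances of the standard construction of (co)Cartesian lifts from (co)units of adjunctions, with the only minor care being to note that having functorial lifts for every target morphism suffices to produce a (co)Cartesian fibration in the invariant sense.
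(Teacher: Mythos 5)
Your proposal is correct and follows essentially the same route as the paper: the Verdier-sequence claim is dispatched by citing that a (co)localization is a Verdier quotient by its kernel, and the Cartesian lift of $f\colon d\to U(c)$ is constructed as the pullback $c\times_{RU(c)}R(d)$ along the unit, exactly as in the paper (with the coCartesian case dual/by opposite categories). The only cosmetic difference is that you verify Cartesian-ness of the constructed lift directly via the mapping-space criterion and the adjunction $U\dashv R$, whereas the paper first proves the general characterization that a morphism is $U$-Cartesian precisely when its naturality square against the unit $\id\to RU$ is a pullback, using a slice-category argument.
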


\begin{proof}
The first statement is clear since $U$ is a localization (or colocalization)
and thus $\mathcal{D}$ is the Verdier quotient by the acyclics. Now we want to
show that $U$ is a Cartesian fibration in the sense
of~\cite{htt}*{Section~2.4}; the coCartesian case follows by passing
to opposite categories. We claim that a morphism $f: c \to c'$ in $\mathcal{C}$
is $U$-Cartesian precisely if the square
\begin{equation}\label{square1}
\xymatrix{
c \ar[r]^f\ar[d] & c' \ar[d] \\
RUc \ar[r]^{RUf} & RUc'
}
\end{equation}
is a pullback in $\Cscr$ where $R: \Dscr \to \Cscr$ is the right adjoint to $U$ and the vertical maps in the diagram are the unit maps. 
By definition, the morphism $f: c \to c'$ is Cartesian precisely if the square
\[
\xymatrix{
\mathcal{C}_{/c} \ar[r]^{f_*}\ar[d]^U & \mathcal{C}_{/c'} \ar[d]^U\\
\mathcal{D}_{/Uc} \ar[r]^{Uf_*} & \mathcal{D}_{/Uc'}
} 
\]
of $\infty$-categories is a pullback. We consider the larger commutative diagram
\begin{equation}\label{bigsquare}
\xymatrix{
\mathcal{C}_{/c} \ar[r]^{f_*}\ar[d]^U & \mathcal{C}_{/c'} \ar[d]^U\\
\mathcal{D}_{/Uc} \ar[r]^{Uf_*} \ar[d]^R& \mathcal{D}_{/Uc'}\ar[d]^R \\
\mathcal{C}_{/RUc} \ar[r]^{RUf_*}& \mathcal{C}_{/RUc'}
}  \ .
\end{equation}
We will show that the lower square is a pullback. Then the upper square is a
pullback if the outer square is a pullback which is evidently the case if and only
if the
square \eqref{square1} is a pullback. To see that the lower square  in \eqref{bigsquare} is
a pullback we first note that the vertical functors $R$ are fully faithful since the functor
$R: \Dscr \to \Cscr$ is. The left hand vertical functor identities $\mathcal{D}_{/Uc}$ with
the full subcategory of $\mathcal{C}_{/RUc}$ consisting of morphisms $c \to  RUc$ such that
$c$ is in the essential image of $R$. A similar description holds for the essential image of
$\mathcal{D}_{/Uc'}$ in $\mathcal{C}_{/RUc'}$. From this description it is obvious that the
square is a pullback if one uses that pullbacks of fully faithful subcategory inclusions are
fully faithful and given by the obvious preimage.

We now want to argue that $U\colon \Cscr \to \Dscr$ is Cartesian. Thus assume that we have
$c' \in \Cscr$ and a morphism $f': d \to Uc'$ in $\Dscr$. We need to find a Cartesian
morphism $f: c \to c'$ in $\Cscr$ with $Uc \simeq d$ and $Uf \simeq f'$.   We extend the
diagram
\[
\xymatrix{
& c' \ar[d] \\
Rd\ar[r]^{Rf'} & RUc' 
}
\]
to a pullback
\[
\xymatrix{
c\ar[r]^f\ar[d] & c' \ar[d] \\
Rd\ar[r]^{Rf'} & RUc' 
}
\]
and let $f$ be the upper horizontal morphism. First we claim that the morphism $RUc \to d$
induced from the left vertical map is an equivalence.  This follows by applying the exact
functor $RU$ to the whole diagram and noting that $RUR = R$. This shows that the whole
diagram is equivalent to a diagram
\[
\xymatrix{
c\ar[r]^f\ar[d] & c' \ar[d] \\
RUc\ar[r]^{RUf} & RUc' 
}
\]
From this we conclude that $f$ is Cartesian and that $Uf = f'$ which finishes
the proof.
\end{proof}

\begin{definition}
    Assume that $U: \Cscr \to\Dscr$ exhibits a recollement and $d \in\Dscr$. We define the Tate
    object $T(d) \in \ker(U)$ to be the cofiber of the canonical map $L(d) \to R(d)$ where $L$
    is the left adjoint of $U$ and $R$ is the right adjoint of $R$.
\end{definition}

We now give our criterion for a pullback. This is similar to the main result of \cite{BarSaul}. 

\begin{proposition}\label{criterion}
Assume that we have a commutative square 
\[
\xymatrix{
\mathcal{C} \ar[r]^F \ar[d]_{U} & \mathcal{C}' \ar[d]^{U'} \\
\mathcal{D} \ar[r]^G & \mathcal{D}'
}
\]
of stable $\infty$-categories and exact functors. If $U$ and $U'$ exhibit recollements then
the square is a pullback if and only of the following two conditions are satisfied:
\begin{enumerate}
    \item[{\rm (1)}] the vertical kernels agree, i.e. the canonical map $\ker(U) \to \ker(U')$ is an equivalence;
    \item[{\rm (2)}] under this equivalence the Tate object $T(d)$ is taken to the Tate
    object $T'(Gd)$, more precisely for every $d \in \Dscr$ the canonical map
$FT(d) \to T'(Gd)$ is an equivalence in $\ker(U')$. 
\end{enumerate}
\end{proposition}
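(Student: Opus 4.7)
My plan is to prove Proposition \ref{criterion} by splitting into the two implications and then using a gluing description of recollements.

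\emph{Forward direction.} Assume the square is a pullback of stable $\infty$-categories, so $\Cscr \simeq \Cscr' \times_{\Dscr'} \Dscr$ with $F$ and $U$ the projections. The kernel of $U$ consists of objects whose $\Dscr$-component is zero, forcing $U'c' = G(0) = 0$, so $F$ restricts to an equivalence $\ker(U) \simeq \ker(U')$, giving (1). Moreover, since $L'$ is fully faithful so $U'L' \simeq \id$, the left adjoint of $U$ is computed componentwise by $L(d) = (L'Gd, d, U'L'Gd \simeq Gd)$, and similarly for $R$. Hence $FL \simeq L'G$ and $FR \simeq R'G$, and taking cofibers yields $FT \simeq T'G$, i.e.\ (2).

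\emph{Reverse direction.} The key input is a gluing description of any recollement $U: \Cscr \to \Dscr$ with kernel $K$: the $\infty$-category $\Cscr$ is equivalent to the lax pullback
\[
\Cscr \simeq \Dscr \times_K \Fun(\Delta^1, K)
\]
built from the Tate functor $T: \Dscr \to K$ (on the left) and the endpoint evaluation $\Fun(\Delta^1, K) \to K$ (on the right). An object $c \in \Cscr$ corresponds to the pair $\bigl(Uc,\, c^\perp \to T(Uc)\bigr)$, where $c^\perp \in K$ is the fiber of the unit $c \to RUc$ and the map $c^\perp \to T(Uc) \simeq \Sigma c^\perp$ is identified via the fracture square
\[
\xymatrix{
c \ar[r]\ar[d] & RUc \ar[d] \\
LUc \ar[r] & T(Uc).
}
\]
Applying this description to both $U$ and $U'$, the pullback $\Cscr' \times_{\Dscr'} \Dscr$ inherits a lax pullback description $\Dscr \times_{\ker(U')} \Fun(\Delta^1, \ker(U'))$ built from the composite Tate functor $T' \circ G: \Dscr \to \ker(U')$. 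The comparison functor $\Cscr \to \Cscr' \times_{\Dscr'} \Dscr$ acts by applying $F$ on the kernel factor and the canonical comparison $FT \to T'G$ on the Tate target. Conditions (1) and (2) say precisely that these maps are equivalences, so the comparison functor is an equivalence termwise in the lax pullback description, hence an equivalence of $\infty$-categories.

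The main obstacle will be establishing the lax pullback description of a recollement at full $\infty$-categorical rigor. This is classical at the level of triangulated categories (Beilinson--Bernstein--Deligne) and is available in various forms in the $\infty$-categorical literature; perhaps the cleanest route is to straighten the Cartesian fibration $U\colon \Cscr \to \Dscr$ provided by Lemma \ref{cartcoCart} and identify the resulting functor $\Dscr^{\op} \to \widehat{\Cat}_\infty$ as the functor sending $d$ to the slice $K_{/T(d)}$, at which point the reverse direction reduces to a comparison of straightenings for which conditions (1) and (2) supply exactly the required equivalences.
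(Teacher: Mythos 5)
Your strategy for the reverse direction---reconstruct $\Cscr$ from $\Dscr$, $\ker(U)$ and the Tate functor, do the same for the fiber product, and conclude from (1) and (2)---is viable and is essentially the Barwick--Glasman gluing description of stable recollements, but the gluing data you write down is wrong in detail, and these are not cosmetic slips. There is no natural map $c \to LUc$ (since $L$ is a \emph{left} adjoint the counit goes $LUc \to c$), so the square you call the fracture square does not exist as drawn. The correct fracture square exhibits $c$ as the pullback of $RUc \to T(Uc) \leftarrow \cofib(LUc \to c)$, so the kernel component of the gluing datum must be the reflection $\cofib(LUc \to c)$ of $c$ into $\ker(U)$, together with its canonical map to $T(Uc)$; it is not $c^{\perp}=\fib(c \to RUc)$, which is the coreflection. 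Your claimed identification $T(Uc)\simeq \Sigma c^{\perp}$ is also false in general: from $LUc \to c \to RUc$ one only gets a cofiber sequence $\cofib(LUc\to c) \to T(Uc) \to \Sigma c^{\perp}$. (If you insist on using the coreflection, the correct datum is a map $\Omega T(Uc) \to c^{\perp}$, not a map $c^{\perp}\to T(Uc)$.) With the data as you state it the comma-category comparison would simply not be an equivalence, so the reverse direction as written fails. On top of this, the gluing theorem itself---that $c\mapsto (Uc,\ \cofib(LUc\to c)\to T(Uc))$ is an equivalence onto the lax pullback, functorially enough to compare the two sides and to see that the fiber product is glued along $T'\circ G$---is exactly the key input, and you defer it; you also assert, but do not check, that the comparison functor $\Cscr\to\Cscr'\times_{\Dscr'}\Dscr$ is compatible with the two gluing descriptions.

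For comparison, the paper avoids the full gluing theorem and instead argues fiberwise, which is close to the fallback you sketch in your last paragraph: by Lemma \ref{cartcoCart} both $U$ and the projection $\Cscr'\times_{\Dscr'}\Dscr\to\Dscr$ are (co)Cartesian fibrations, so it suffices to show the comparison functor is an equivalence on fibers over each $d\in\Dscr$ and preserves coCartesian edges; the fiber of $U$ over $d$ is identified (via the correct fracture square) with the slice $\ker(U)_{/T(d)}$, the fiber of the pullback with $\ker(U')_{/T'(Gd)}$, the induced functor is determined by $F$ on kernels and the canonical map $FT(d)\to T'(Gd)$, so (1) and (2) give fiberwise equivalences, and the coCartesian-edge check is the naturality square for $FT\to T'G$ applied to $d\to d'$. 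Your forward direction (computing the adjoints of the projection componentwise from full faithfulness of $L'$ and $R'$, hence $FL\simeq L'G$, $FR\simeq R'G$ and so $FT\simeq T'G$) is sound and is a useful complement, since the paper does not spell that implication out; you should just verify that the resulting equivalence is the canonical comparison map appearing in condition (2).
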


\begin{proof}
    The functor $U: \mathcal{C} \to \mathcal{D}$ is a (co)Cartesian fibration.
    The functor   $U': \mathcal{C}' \to \mathcal{D}'$ also is a coCartesian
    fibration, thus so is the pullback $\mathcal{C}' \times_{\mathcal{D}'}
    \mathcal{D}\rightarrow\Dscr$. Therefore, in order to show that  the functor
    $\mathcal{C} \to  \mathcal{C}' \times_{\mathcal{D}'} \mathcal{D}$ is an
    equivalence, it suffices to show that it is a fiberwise equivalence over $\mathcal{D}$, or equivalently that
    the functor $\mathcal{C} \to \mathcal{C}'$ induces an equivalence $\mathcal{C}_d \to \mathcal{C}'_{Gd}$, and that the functor preserves coCartesian lifts.
    An easy adjunction argument shows that the fiber over an object $d \in \mathcal{D}$ is equivalent to $\ker(U)_{/T(d)}$. Similarly for the fibration
  the fiber over $Gd$ is given by 
    $\ker(U')_{/T'(Gd)}$. Under these identifications the map on fibers is given by the evident map
    \[
    \ker(U)_{/T(d)} \to \ker(U')_{/T'(Gd)}
    \]
    coming from the functor $F: \ker(U) \to \ker(U')$ and the map of Tate objects $FT(d) \to T'(Gd)$. 
    This immediately implies that the map is a fiberwise equivalence under the assumptions of the Proposition.

    To see that the functor $\Cscr \to \Cscr'$ preserves coCartesian lifts we note that for a map $d \to d'$ in $\mathcal{D}$ the induced map on fibers of 
    $\mathcal{C} \to \mathcal{D}$ is given by the functor
    \[
    \ker(U)_{/T(d)}  \to \ker(U)_{/T(d')} 
    \]
    induced from the map $Td \to Td'$. This follows immediately from the  way the fibers are
    identified and from the description of coCartesian lifts in the proof of Lemma
    \ref{cartcoCart}. Similar the map on fibers $\mathcal{C}' \to \mathcal{D}'$ induced from
    the map $Gd \to Gd'$ is described by
     \[
    \ker(U')_{/T(Gd)}  \to \ker(U')_{/T(Gd')} 
    \]
    induced from the map $T(Gd) \to T(G'd'')$. Then the claim follows from the fact that in the commutative square
    \[
    \xymatrix{
    FT(d) \ar[r]\ar[d] & FT(d') \ar[d] \\
        T(Gd) \ar[r] & T(Gd') 
    }
    \]
    the vertical maps are equivalences by assumption.
    \end{proof}
    
Below, $\bT\Sp_{C_{p^{n}}}^\gen$ is the $\infty$-category of genuine $S^1$-spectra with respect to
the family of subgroup of $C_{p^n}$.

\begin{theorem}\label{pullback}
The square 
\begin{align}
\xymatrix{
\bT\Sp_{C_{p^{n+1}}}^\gen \ar[r]^{(-)^{C_p}}\ar[d]_{U_{n+1}} &
\bT\Sp_{C_{p^{n}}}^\gen \ar[d]^{U_n}\\
 \bT\Sp_{C_{p^{n}}}^\gen \ar[r]^{(-)^{C_p}}  & \bT\Sp_{C_{p^{n-1}}}^\gen
 }
\end{align}
is a pullback of stable $\infty$-categories.
\end{theorem}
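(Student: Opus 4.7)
The plan is to verify the hypotheses of Proposition~\ref{criterion} applied to this square, with $U = U_{n+1}$, $U' = U_n$, and $F = G = (-)^{C_p}$. By Example~\ref{ex_tate}, both $U_{n+1}$ and $U_n$ exhibit recollements, with left and right adjoints given by the explicit $\mathrm{Fr}$ and $B$ functors described there. So it suffices to check the two conditions: that the kernels agree, and that the Tate objects correspond under $(-)^{C_p}$.

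For condition (1), the kernel $\ker(U_{n+1}) \subseteq \bT\Sp^\gen_{C_{p^{n+1}}}$ consists of those $X$ with $X^{C_{p^k}} \we 0$ for $k \leq n$, and the geometric fixed point functor $(-)^{\Phi C_{p^{n+1}}} = (-)^{C_{p^{n+1}}}$ identifies $\ker(U_{n+1})$ with the $\infty$-category of spectra with residual $S^1/C_{p^{n+1}}$-action. Similarly, $\ker(U_n) \we \Sp^{B(S^1/C_{p^n})}$. For $X \in \ker(U_{n+1})$, the functor $(-)^{C_p}$ produces an object $X^{C_p}$ with $(X^{C_p})^{C_{p^k}} \we X^{C_{p^{k+1}}}$, which vanishes for $k < n$ and agrees with $X^{C_{p^{n+1}}}$ for $k = n$. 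Hence the induced functor on kernels is, under the canonical identifications $S^1/C_{p^n} \we S^1 \we S^1/C_{p^{n+1}}$, the identity, and in particular an equivalence.

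For condition (2), we compute both Tate objects using the explicit formulas of Example~\ref{ex_tate}. For $d \in \bT\Sp^\gen_{C_{p^n}}$, the object $T(d) = \cofib(\mathrm{Fr}(d) \to B(d))$ in $\bT\Sp^\gen_{C_{p^{n+1}}}$ has all fixed points at proper subgroups vanishing and top fixed points
\[
T(d)^{C_{p^{n+1}}} \we \cofib\bigl((d^{C_{p^n}})_{hC_p} \to (d^{C_{p^n}})^{hC_p}\bigr) \we (d^{C_{p^n}})^{tC_p}.
\]
Similarly, $T'(d^{C_p}) \in \bT\Sp^\gen_{C_{p^n}}$ has top fixed points
\[
T'(d^{C_p})^{C_{p^n}} \we \bigl((d^{C_p})^{C_{p^{n-1}}}\bigr)^{tC_p} \we (d^{C_{p^n}})^{tC_p},
\]
while lower fixed points vanish. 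The canonical comparison map $T(d)^{C_p} \to T'(d^{C_p})$ is compatible with both descriptions, since on top fixed points both sides compute the same Tate construction $(d^{C_{p^n}})^{tC_p}$ via the same natural identification arising from $\mathrm{Fr}$ and $B$. Hence condition (2) holds.

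The main obstacle is just bookkeeping of the residual circle actions and fixed point formulas; the substance is entirely contained in the recollement structure of Example~\ref{ex_tate}, and once those explicit formulas are in hand, the verification of both conditions is direct. Applying Proposition~\ref{criterion} then concludes the proof that the square is a pullback of stable $\infty$-categories.
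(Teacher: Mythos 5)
Your proposal is correct and follows essentially the same route as the paper: both apply Proposition~\ref{criterion} using the recollements of Example~\ref{ex_tate}, identify both kernels with $\Sp^{BS^1}$ via top fixed points, and compute that both Tate objects are $(d^{C_{p^n}})^{tC_p}$, so that condition (2) holds. Your verification of condition (1) and the explicit fixed-point bookkeeping for the Tate objects is, if anything, slightly more detailed than the paper's own argument.
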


\begin{proof}
    By Example~\ref{ex_tate}, the vertical maps $U_{n+1}$ and $U_n$ exhibit
    recollements. We want to apply the criterion given in Proposition
    \ref{criterion}. The vertical fibers are both equivalent to $\Sp^{BS^1}$, where
    the equivalence $\ker(U_{n+1}) \to \Sp^{BS^1}$ is induced by taking
    ${C_{p^{n+1}}}$-fixed points for the left vertical map and similarly by taking  ${C_{p^{n}}}$-fixed points  for
    the right vertical map. Using the
    description of the adjoint given in Example \ref{ex_tate}, it follows
    immediately that the Tate objects are $T(X) \we (X^{C_{p^n}})^{tC_p} \in  \Sp^{BS^1}$ for $X$ in
    $\bT\Sp_{C_{p^{n}}}^\gen$ and  $T(X) \we (X^{C_{p^{n-1}}})^{tC_p} \in \Sp^{BS^1}$ for $X$ in
    $\bT\Sp_{C_{p^{n-1}}}^\gen$. This together with the description of the
    functors implies that criterion (2) of Proposition~\ref{criterion} is
    satisfied.
\end{proof}

\subsection{Consequences}\label{consequences}

Now we can deduce the following statement from the results of Section \ref{genuineTCart}.

\begin{theorem}\label{equivalencequestion}
We have an adjunction
\[
L: 
\xymatrix{
\TCart_p \ar[r]<2pt> & \CycSp^\gen_p \ar[l]<2pt> 
}:  \TR
\]
with $(LM)^{C_{p^n}} = M/V^{n+1}$ and $\TR(X) = \lim_{n,R}
X^{C_{p^n}}$. The unit of the adjunction is given by the `naive' $V$-completion
$M \to \lim  M/V^{n+1}$ and the counit is given on underlying spectra by the map $\TR(X)/V \to X$. 
\end{theorem}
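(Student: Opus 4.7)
The plan is to assemble the theorem from three results already established: the equivalence $\TCart_p \simeq \TCart_p^\gen$ from Proposition \ref{prop_forget}, the genuine adjunction of Proposition \ref{gen_adjunction}, and the fixed-point computation of Corollary \ref{corfixed}. Specifically, Proposition \ref{gen_adjunction} gives an adjunction
\[
L: \TCart_p^\gen \rightleftarrows \CycSp_p^\gen : \overline{\TR}
\]
with $LM = \colim(M \to M^{\Phi C_p} \to M^{\Phi C_{p^2}} \to \cdots)$ and $\overline{\TR}(X) = \lim(\cdots \to X^{C_{p^2}} \to X^{C_p} \to X)$. Transporting this along the equivalence of Proposition \ref{prop_forget} produces the desired adjunction $L \dashv \TR$ with source $\TCart_p$, and the formula $\overline{\TR}(X) = \lim_{n,R} X^{C_{p^n}}$ from Proposition \ref{gen_adjunction} provides the stated formula for $\TR$.

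Next I would identify the value of $L$ on fixed points. The formula $(LM)^{C_{p^n}} = M/V^{n+1}$ is exactly Corollary \ref{corfixed}, once we recall that under the equivalence $\TCart_p\simeq\TCart_p^\gen$ the Verschiebung $V$ of a topological Cartier module $M$ corresponds to the composite $M_{hC_{p^{n+1}}} \to M^{C_{p^{n+1}}} \xto{\simeq} M$ appearing in the statement of that corollary. In particular $(LM)^{C_{p^0}} = M/V$, which matches the functor $(-)/V$ of Example~\ref{ex:tcm2cyc} restricted to the underlying spectrum, confirming compatibility with the earlier adjunction on bounded below objects from Theorem~\ref{thm:boundedbelow}.

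For the unit and counit, I would simply evaluate. The unit at $M\in \TCart_p$ is the map
\[
M \to \TR(LM) = \lim_n (LM)^{C_{p^n}} = \lim_n M/V^{n+1},
\]
which is precisely the naive $V$-completion map appearing in Definition \ref{def_complete} and Remark \ref{modV}. The counit at $X \in \CycSp_p^\gen$ is a map $L\TR(X) \to X$ in $\CycSp_p^\gen$; evaluating on the trivial subgroup $C_{p^0}$ and applying Corollary \ref{corfixed} with $n = 0$ gives the map $\TR(X)/V \to X$ on underlying spectra.

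The only point requiring care is to match the functor $\overline{\TR}$ of Proposition \ref{gen_adjunction} with the notation $\TR$ used elsewhere in the paper, where $\TR$ was previously defined (on non-genuine cyclotomic spectra, as in Remark \ref{rem:tc}) via the iterated pullback. The reconciliation is that on a genuine cyclotomic spectrum the isotropy separation sequence together with the structural equivalence $X^{\Phi C_p}\we X$ identifies $X^{C_{p^n}}$ with the iterated pullback $\TR^{n+1}$ of the underlying cyclotomic spectrum; thus the two descriptions of $\TR$ agree under the natural comparison. This is the only step with genuine content, but it is essentially built into the definition of a genuine cyclotomic spectrum and follows from standard manipulations of isotropy separation sequences, so no substantial obstacle arises.
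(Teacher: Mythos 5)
Your proposal is correct and follows essentially the same route as the paper: it combines the genuine adjunction of Proposition~\ref{gen_adjunction} with the equivalence $\TCart_p^\gen\simeq\TCart_p$ of Proposition~\ref{prop_forget}, identifies the fixed points of $LM$ via Corollary~\ref{corfixed} after noting that under this equivalence the map $M_{hC_{p^{n+1}}}\to M^{C_{p^{n+1}}}\simeq M$ is $V^{n+1}$, and reads off the unit and counit. The only caveat concerns your final paragraph: the identification of the genuine fixed points $X^{C_{p^n}}$ with the iterated pullback $\TR^{n+1}$ of the underlying cyclotomic spectrum is not a formal consequence of isotropy separation but requires $X$ to be bounded below (cf.\ Remark~\ref{rem:tc}); however, since the theorem defines $\TR(X)=\lim_{n,R}X^{C_{p^n}}$ directly via genuine fixed points, this reconciliation is not actually needed, so no gap results.
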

\begin{proof}
By Proposition \ref{gen_adjunction} we have an adjunction
    \[
    L: 
    \xymatrix{
    \TCart^\gen_p \ar[r]<2pt> & \CycSp^\gen_p \ar[l]<2pt> 
    }: \overline{\TR}
    \]
which we compose with the equivalence $U: \TCart_p^\gen \xto{\simeq} \TCart_p$
of Proposition \ref{prop_forget} which extracts the `underlying' $p$-typical topological
Cartier module. The value of the composite $U \circ \overline{\TR}$ at $X$ has by definition of
$\overline{\TR}$ the  underlying spectrum $\TR(X) \simeq \lim_{n,R}
X^{C_{p^n}}$ as classically defined (we have used that taking the underlying spectrum commutes with taking limits of genuine spectra). 

Now we need to determine the fixed points of the spectrum associated with a
$p$-typical topological Cartier module $M \in \TCart_p$. First, by the way the equivalence
$U: \TCart_p^\gen \xto{\simeq} \TCart_p$ we find that the map $M_{hC_p^{n+1}}
\to M^{C_{p^{n+1}}}$ (where we have abusively identified $U^{-1}M$ with $M$) is
given by the composite
\[
V^{n+1}: M_{hC_{p^{n+1}}} \to \cdots \to M_{hC_p} \to M.
\]
Thus the claim follows from Corollary \ref{corfixed}.
\end{proof} 

So far, we have only defined what it means to be $V$-complete for bounded below
$p$-typical topological Cartier modules. In general we shall consider the Bousfield
localization of $\TCart_p$ at the mod $V$-equivalences. For this definition it
does not matter if one considers $(-)/V$ as taking values in $p$-typical cyclotomic spectra, genuine
$p$-typical cyclotomic spectra, or spectra. The local objects for this
Bousfield localization are in the bounded below case precisely the $V$-complete
$p$-typical topological Cartier modules which follows from Proposition~\ref{thm:tr} and
Theorem \ref{thm:boundedbelow}.

\begin{corollary}\label{cor_equiv}
For every genuine $p$-typical cyclotomic spectrum $X$ the induced $p$-typical topological Cartier module
$\TR(X)$ is complete with respect to the Bousfield localization at the mod $V$
equivalences (see Remark \ref{modV}). Moreover, there is an induced adjunction
\[
L: 
\xymatrix{
\TCartV \ar[r]<2pt> & \CycSp^\gen_p \ar[l]<2pt> 
}:  \TR,
\]
which is an equivalence on bounded below objects, where $\TCartV$ denotes the Bousfield localization at the mod $V$-equivalences.
\end{corollary}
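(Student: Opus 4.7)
The plan is to leverage Theorem~\ref{equivalencequestion} to reduce everything to a single technical input: that the left adjoint $L\colon\TCart_p\rightarrow\CycSp_p^\gen$ annihilates any $M$ with $M/V\simeq 0$. Assuming this input, mod $V$-locality of $\TR(X)$ is immediate from the adjunction, since
\[
\MapSp_{\TCart_p}(M,\TR(X)) \simeq \MapSp_{\CycSp_p^\gen}(LM,X) \simeq 0
\]
for every mod $V$-acyclic $M$. Once $\TR(X)$ is known to be mod $V$-local, the induced adjunction is formal: $L$ inverts mod $V$-equivalences and so factors uniquely through the Bousfield localization $\TCart_p\rightarrow\TCartV$, while $\TR$ factors through the inclusion $\TCartV\hookrightarrow\TCart_p$ because its essential image consists of local objects. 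The resulting pair of functors on $\TCartV$ is an adjunction by transport of the unit and counit.

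For the key input, I would combine conservativity of the family $\{(-)^{C_{p^n}}\}$ on $\bT\Sp_p^\gen$ with the formula $(LM)^{C_{p^n}}\simeq M/V^{n+1}$ from Theorem~\ref{equivalencequestion} to reduce the problem to showing $M/V^n\simeq 0$ for all $n\geq 1$ whenever $M/V\simeq 0$. The factorization $V^n = V^{n-1}\circ V_{hC_{p^{n-1}}}\colon M_{hC_{p^n}}\rightarrow M_{hC_{p^{n-1}}}\rightarrow M$ together with the octahedral axiom yields a cofiber sequence
\[
(M/V)_{hC_{p^{n-1}}} \longrightarrow M/V^n \longrightarrow M/V^{n-1},
\]
using that homotopy orbits commute with cofibers. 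Induction on $n$ starting from $M/V^1\simeq 0$ then gives $M/V^n\simeq 0$ for all $n$. This inductive step, though short, is the technical heart of the argument and is the main (minor) obstacle.

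Finally, for the equivalence on bounded below objects, the equivalence $(\CycSp_p^\gen)^-\simeq\CycSp_p^-$ established after Corollary~\ref{cor:gencyct} lets us transport Theorem~\ref{thm:boundedbelow}, which identifies $\TR\colon\CycSp_p^-\rightarrow\TCart_p^-$ as a fully faithful $t$-exact functor with essential image the bounded below $V$-complete topological Cartier modules. By Remark~\ref{modV}, bounded below $V$-complete objects are precisely the bounded below mod $V$-local objects inside $\TCart_p^-$, so the induced adjunction restricts to an equivalence on bounded below objects of $\TCartV$ as claimed.
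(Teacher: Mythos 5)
Your treatment of the first two claims is correct and complete: the cofiber sequence $(M/V)_{hC_{p^{n-1}}}\to M/V^n\to M/V^{n-1}$, obtained from the octahedral axiom applied to $V^n=V^{n-1}\circ V_{hC_{p^{n-1}}}$ together with exactness of homotopy orbits, does show that $L$ kills mod $V$-acyclics; combined with $(LM)^{C_{p^n}}\simeq M/V^{n+1}$ and joint conservativity of the fixed point functors this yields locality of $\TR(X)$ and the formal descent of the adjunction to $\TCartV$. (You could even shortcut the induction: $(LM)^{C_{p^0}}\simeq M/V$, and a map of genuine cyclotomic spectra is an equivalence as soon as it is one on underlying spectra, since all geometric fixed points of a cyclotomic object are identified with the underlying spectrum.) The paper treats these two claims as essentially formal consequences of Theorem \ref{equivalencequestion} and Remark \ref{modV}, so here you are, if anything, more explicit.

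The gap is in the bounded below equivalence. Theorem \ref{thm:boundedbelow} concerns the simplistic adjunction $(-)/V\dashv\TR$ between $\TCart_p$ and $\CycSp_p$; to transport it along $(\CycSp_p^\gen)^-\simeq\CycSp_p^-$ and conclude full faithfulness and the essential image statement for the genuine right adjoint $\TR\colon\CycSp_p^\gen\to\TCart_p$, you need to know that the two adjunctions are compatible, i.e.\ that the forgetful functor $\CycSp_p^\gen\to\CycSp_p$ carries $LM$ to $M/V$ equipped with the cyclotomic structure of Example \ref{ex:tcm2cyc} (equivalently, that on bounded below objects the genuine $\TR$, with its $V$ and $F$, is identified with the simplistic $\TR$ of the underlying cyclotomic spectrum). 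Theorem \ref{equivalencequestion} gives this identification only on underlying spectra, and the paper asserts the compatibility, as ``clear,'' only in the remark \emph{after} this corollary; your proposal neither proves it nor cites anything that does, so as written the appeal to Theorem \ref{thm:boundedbelow} does not apply to the genuine $\TR$. The paper instead argues natively in the genuine setting: the counit is an equivalence for bounded below $X$ because the cofiber sequences $\TR^n(X)_{hC_p}\to\TR^{n+1}(X)\to X$ pass to the limit, homotopy orbits commuting with the limit of a uniformly bounded below tower, while the unit is the naive $V$-completion map $M\to\lim_n M/V^{n+1}$ from Theorem \ref{equivalencequestion}, which is an equivalence for $V$-complete (equivalently, bounded below local) $M$. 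Either supply the compatibility of the two adjunctions or replace the transport by this direct verification; note also that the paper uses this corollary to give an independent proof of Theorem \ref{thm:boundedbelow}, an independence your route would forfeit.
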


\begin{proof}
We have to verify that $\TR(X)/V \to X$ is an equivalence for $X$ bounded
below. But this follows from the cofiber sequences $\TR^n(X)_{hC_p} \to \TR^{n+1}(X) \to X$
by passing to the limit and noting that homotopy orbits
commutes with this limit since it is uniformly bounded below.

Moreover from the description in Theorem \ref{equivalencequestion} it follows
that both functors restrict to an adjunction between bounded below objects.
Now,
if $M$ is derived $V$-complete, then by definition the unit of the adjunction is
an equivalence.
\end{proof}

\begin{remark}
    In Proposition~\ref{thm:tr} we have proven that there is an adjunction
    \[
        (-)/V\colon 
    \xymatrix{
    \TCart_p \ar[r]<2pt> & \CycSp_p \ar[l]<2pt> 
    }:  \TR
    \]
    for the `simplistic' version of $p$-typical cyclotomic spectra. Clearly the functor
    $(-)/V$ factors over the forgetful functor $\CycSp^\gen_p \to \CycSp_p$ which preserves
    colimits and is therefore left adjoint as well. Thus we get the adjunction of
    Proposition~\ref{thm:tr} as the composite of the two adjunctions
    \[
    \xymatrix{
    \TCart_p \ar[r]<2pt> & \CycSp_p^\gen \ar[l]<2pt> \ar[r]<2pt> &  \CycSp_p\ar[l]<2pt>.
    }
    \]
    Both of these adjunctions induce equivalences on subcategories of bounded
    below objects, where on the left we look at the full subcategory of
    $V$-complete bounded below objects. The left hand one as just shown in Corollary~\ref{cor_equiv} and the right hand one as shown in \cite{nikolaus-scholze}. The hardest part of the latter is in fact understanding the right adjoint to the forgetful functor $ \CycSp_p^\gen  \to \CycSp_p$. Thus Corollary \ref{cor_equiv} also gives an independent proof of Theorem \ref{thm:boundedbelow}.
    Alternatively, Corollary~\ref{cor_equiv} and Theorem~\ref{thm:boundedbelow}
    can be used to reprove the $p$-typical part of~\cite{nikolaus-scholze}*{Theorem~II.3.8}.
\end{remark}

It is now natural to ask if the adjunction of Corollary~\ref{cor_equiv} is an
equivalence. For the simplistic adjunction 
$
(-)/V: 
\xymatrix{
\TCart_p \ar[r]<2pt> & \CycSp_p \ar[l]<2pt> 
}:  \TR
$
this is obviously not the case since we have used the Tate orbit lemma in a
crucial way. The genuine
version has a much better chance of inducing an equivalence. The following
result shows that the genuine adjunction is not an equivalence.

\begin{proposition}\label{prop_not}
    The adjunction of Corollary~\ref{cor_equiv} does not form an equivalence of stable $\infty$-categories. 
\end{proposition}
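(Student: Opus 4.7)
Since Corollary~\ref{cor_equiv} establishes that the adjunction $L \dashv \TR$ becomes an equivalence when restricted to bounded below objects, any counterexample must involve objects without a lower connectivity bound. My plan is to exhibit such an $X \in \CycSp_p^\gen$ for which the counit $L\TR(X) \to X$ fails to be an equivalence, taking advantage of the fact that the sequential limit computing $\TR$ does not interact well with infinite colimits of non-uniformly bounded below diagrams.

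First I would use Construction~\ref{const:tr} to recognize that $\TR(X)/V \to X$ is an equivalence if and only if the natural map $\pi^{tC_p}\colon\TR(X)^{tC_p} \to X^{tC_p}$ is an equivalence, and recall that the proof of Lemma~\ref{lem:trbounded} in the bounded below case rests crucially on the Tate orbit lemma. Outside the bounded below regime the Tate orbit lemma fails, so one expects the counit to fail as well. Equivalently, $L$ preserves all colimits as a left adjoint, so if $L \dashv \TR$ were an equivalence then $\TR$ would be inverse to $L$ and would also preserve all colimits; the strategy is to contradict this by computing $\TR$ on a carefully chosen colimit.

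Concretely, I would consider an infinite direct sum
\[
X \;=\; \bigoplus_{i \geq 0} \Sigma^{-i} Y \ \in\ \CycSp_p^\gen
\]
for some nontrivial bounded below genuine cyclotomic spectrum $Y$ (for instance $Y = \THH(\FF_p)$ or $Y = \SS^\triv_p$). On each finite partial sum the counit is an equivalence by the bounded below case. For the full direct sum, however, Corollary~\ref{corfixed} shows that the candidate $L\TR(X)$ has fixed points computed by the naive $V$-completion of $\TR(X)$, while
\[
\TR(X) \;\simeq\; \lim_{n,R}\bigoplus_i \bigl(\Sigma^{-i} Y\bigr)^{C_{p^n}},
\]
and this sequential limit does not commute with the infinite direct sum because the spectra $(\Sigma^{-i} Y)^{C_{p^n}}$ are not uniformly bounded below in $i$, violating the hypothesis of Lemma~\ref{lem:littlelimits}(b). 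Thus $\TR(X)$ disagrees with the $V$-completion of $\bigoplus_i \Sigma^{-i}\TR(Y)$, the value one would obtain were $\TR$ to preserve the colimit.

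The main obstacle will be extracting this general non-commutation principle into a concrete discrepancy in homotopy groups. This amounts to comparing $\pi_*(\TR(X)/V)$ with $\pi_*X$ in sufficiently negative degrees, using the known homotopy of $\TR(Y)$ for the chosen $Y$ and tracking the $\mathrm{lim}^1$-type contributions that the sequential limit picks up from an unboundedly connected direct sum but which are absent from the naive direct sum $\bigoplus_i \Sigma^{-i}Y$. This discrepancy gives the desired non-equivalence and completes the proof.
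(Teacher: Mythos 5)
Your overall framing is reasonable: since Corollary~\ref{cor_equiv} is an equivalence on bounded below objects, you must produce a non-bounded-below $X$ for which the counit fails, and on underlying spectra this amounts to showing that homotopy orbits do not commute with the inverse limit $\lim_n X^{C_{p^n}}$. But the crucial step has a genuine gap. Your argument for failure is that the terms $\bigoplus_i(\Sigma^{-i}Y)^{C_{p^n}}$ are not uniformly bounded below, ``violating the hypothesis of Lemma~\ref{lem:littlelimits}(b)''. That lemma gives a \emph{sufficient} condition for commutation; violating its hypothesis proves nothing, so no contradiction has yet been produced. The colimit-preservation version of the argument has the same problem in a sharper form: the putative equivalence is with the localization $\TCartV$, where colimits are mod-$V$ localizations of colimits in $\TCart_p$, and Remark~\ref{modV} stresses that this localization is not understood for non-bounded-below objects. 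So even after checking that $\colim_N\TR(\bigoplus_{i\le N}\Sigma^{-i}Y)\to\TR(\bigoplus_i\Sigma^{-i}Y)$ is not an equivalence, you would still have to show it is not a mod-$V$ equivalence, i.e.\ genuinely show the counit $\TR(X)/V\to X$ fails --- and this is exactly the step your proposal defers, with no indication of which homotopy group detects the discrepancy or why it survives $(-)/V$. Note also that the heuristic ``the Tate orbit lemma fails, so one expects the counit to fail'' is misplaced here: Corollary~\ref{cor_equiv} is proved \emph{without} the Tate orbit lemma (only uniform boundedness below is used), which is precisely why the genuine adjunction is the one with a real chance of being an equivalence; the criterion via $\TR(X)^{tC_p}\to X^{tC_p}$ from Construction~\ref{const:tr} belongs to the simplistic setting, not this one.

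The paper's proof shows how much actual work the deferred step requires, and why the example is chosen to make it computable. It takes $X=\KU^\triv$: by the tom Dieck splitting, $\TR^{n+1}(\KU^\triv)\simeq\bigoplus_{k=0}^{n}\KU\otimes BC_{p^k}$ with the $R$-maps the projections, so $\TR(\KU^\triv)\simeq\prod_{k}\KU\otimes BC_{p^k}$ is a clean infinite product --- unlike the limit of non-uniformly shifted direct sums in your example, which contains ``diagonal'' classes and whose homotopy orbits are much harder to access. The failure of $\bigl(\prod_k\KU\otimes BC_{p^k}\bigr)_{hC_p}\to\prod_k\bigl(\KU\otimes BC_{p^k}\bigr)_{hC_p}$ is then detected on $\pi_1$ using Greenlees's computation of $\pi_*(\KU_{hC_{p^k}})$ and the collapse of the homotopy-orbit spectral sequence: for every $k$ the filtration coming from the Postnikov tower contributes nontrivially to $\pi_1$ in infinitely many stages, so no single truncation stage surjects onto the colimit, and the colimit/product interchange fails. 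To salvage your approach you would need an analogous explicit computation for $\bigoplus_i\Sigma^{-i}Y$ exhibiting a class that the other side cannot hit; with $Y=\THH(\FF_p)$ or the cyclotomic sphere this is not set up anywhere in your plan, and it is not even clear the counit fails for those choices, so as it stands the proposal is an outline of where to look rather than a proof.
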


\begin{proof}
If the adjunction of Corollary~\ref{cor_equiv} were an equivalence, then the counit of the adjunction
$\TR(X)/V \to X$ would be an equivalence for every genuine $p$-typical cyclotomic spectrum $X$. This is equivalent to the assertion that the inverse limit defining $\TR(X)$ commutes with taking homotopy orbits (since it commutes with taking cofibers and $X$ is the cofiber of the `commuted' map). 
We will show that this is not the case for $X = \KU^\triv$ where $\KU^\triv$
refers to the genuine $S^1$-spectrum given by the inflation of $\KU$ along the projection $\bT \to e$.
This carries a canonical $p$-typical cyclotomic structure, since the geometric fixed points are given by $\KU$ itself. More precisely there is a functor
\[
(-)^\triv: \Sp \to \CycSp^\gen_p
\]  
which is uniquely determined by requiring that it is left adjoint (i.e.
preserves all colimits) and sends the sphere to the genuine $p$-typical cyclotomic sphere. In other words we have that $\KU^\triv \simeq \SS^\triv \otimes \KU$. The composite 
\[
\Sp \to \CycSp^\gen_p \to \bT\Sp_p^\gen \xto{(-)^{C_{p^n}}} \Sp 
\] then also preserves all colimits for each $n$. Thus, we find using the tom Dieck splitting that 
\begin{align*}
\TR^{n+1}(\KU^\triv) &= (\KU^\triv)^{C_{p^n}} 
\\&\simeq \KU \otimes \SS^{C_{p^n}} \\
 &\simeq \KU \otimes (\bigoplus_{k = 0}^n \SS \otimes {BC_{p^k}}) 
  \simeq \bigoplus_{k = 0}^n \KU \otimes BC_{p^k}.
\end{align*}
The transition maps $R: \TR^{n+1}(\KU^\triv) \to \TR^{n}(\KU^\triv)$ are under
these identifications given by the projections away from the last
factor. As a result we find that
\[
\TR(\KU^\triv) \simeq \prod_{k= 0}^\infty (\KU \otimes BC_{p^k}) \ .
\]
Now we will prove that the canonical map
\begin{equation}\label{mappp}
\big(\prod_{k= 0}^\infty \KU \otimes BC_{p^k} \big)_{hC_p} \to \prod_{k= 0}^\infty \left(\KU \otimes BC_{p^k}\right)_{hC_p} 
\end{equation}
is not surjective on $\pi_1$. We write $\KU \otimes BC_{p^k}$ as the colimit of its negative
Postnikov tower $ \colim \tau_{\geq n} (\KU \otimes BC_{p^k})$ for $n \to -\infty$.
Then
\[
\big(\prod_{k= 0}^\infty \KU \otimes BC_{p^k} \big)_{hC_p} \simeq \colim_n \prod_{k= 0}^\infty  \tau_{\geq n}\left(\KU \otimes BC_{p^k} \right)_{hC_p} 
\]
since for uniformly bounded below spectra we can commute the homotopy orbits with the
infinite product. Similarly we get that the right hand side of \eqref{mappp} is given as
\[
 \prod_{k= 0}^\infty \left(\KU \otimes BC_{p^k}\right)_{hC_p}  \simeq  \prod_{k= 0}^\infty \colim_n \tau_{\geq n}\left(\KU \otimes BC_{p^k}\right)_{hC_p} 
\]
so that the map \eqref{mappp} is the canonical map that commutes the filtered colimit with the infinite product. On $\pi_1$ this map induces the map
\[
 \colim_n\prod_{k= 0}^\infty M_{k,n}  \to  \prod_{k= 0}^\infty \colim_n M_{k,n}
\]
with $M_{k,n} := \pi_1\left( \tau_{\geq n}\left(\KU \otimes BC_{p^k}\right)_{hC_p} \right)$.
Algebraically such a map is a surjective precisely if there exists an $n_0$ such that for
almost all $k$ the map
\[
 M_{k,n_0} \to \colim_n M_{k,n}
\]
is surjective. We will show that in our case none of the map $ M_{k,n_0} \to \colim_n  M_{k,n}$ is surjective. 

We begin by determining the homotopy groups of $\KU_{hC_{p^k}}$. Greenlees \cite{MR1217070} has
shown that they are isomorphic to the local cohomology groups of the
representation ring (and explicitly the homotopy groups had  been computed by
Wilson \cite{MR0328963} and Knapp \cite{MR0470961} before). We get that
\[
\pi_*\left(\KU_{hC_{p^k}}\right) = \begin{cases}
\ZZ & *  \text{ even}\\
I(C_{p^k}) \otimes \QQ_p/\ZZ_p & * \text{ odd}
\end{cases}
\]
where $I(C_{p^k}) \subseteq R(C_{p^k})$ is the augmentation
ideal\footnote{Really we should have the dual $I(C_{p^k})$ of the augmentation
ideal in odd degrees but we will only need that $I(C_{p^k}) = \ZZ^{\oplus(p^k-1)}$ as
abelian groups.} in the representation ring of $C_{p^k}$ which is isomorphic to
the group ring $\Z[C_{p^k}^\vee]$. This can also be seen by a direct
computation using the cofiber sequence
\[\KU_{hC_{p^k}} \to \KU^{hC_{p^k}} \to \KU^{tC_{p^k}}
\]
(the second and third term are even and the last term is rational) or by using
the universal coefficients theorem for $\KU$ and the Atiyah--Segal completion theorem. Now the filtration in question comes from the horizontal lines in the 
homotopy orbits spectral sequence
\begin{equation}\label{Tate_SS}
\E^2_{i,j} =   \H_{i}\left(BC_{p}, \pi_j(\KU_{hC_{p^k}})\right) \Rightarrow
\pi_{i + j} \left( \left(\KU_{hC_{p^k}}\right)_{hC_p} \right) 
\end{equation}
which is conditionally convergent. Recall that
\[
\H_i(BC_{p},\ZZ ) = \begin{cases}
\ZZ & * = 0, \\
\FF_p & *>0 \text{ odd}, \\
0 & \text{ else}
\end{cases}
\qquad  \text{and} \qquad
\H_i(BC_{p},\QQ_p/\ZZ_p ) = \begin{cases}
\QQ_p/\ZZ_p & * = 0, \\
\FF_p & *>0 \text{ even}, \\
0 & * \text{ odd}.
\end{cases}
\]
Thus
the homotopy orbits spectral sequence is in homological Serre grading a right
half plan spectral sequence  which is for $i> 0$ (i.e. right of the axis)
concentrated in odd total degree where each entry is a sum of copies of
$\FF_p$'s. There cannot be any differentials in this spectral sequence since
the only differentials that are possible for degree reasons would have to map a
$p$-torsion group to $\ZZ$.  Thus the whole spectral sequence collapses at
$E^2$ and is strongly convergent.\footnote{Note that we know the homotopy
groups of $\Sigma \left(\KU_{hC_{p^k}}\right)_{hC_p} = \Sigma
\left(\KU_{hC_{p^{k+1}}}\right)$ by the above computation. It is interesting how
the extensions  work out to lead to this result.} For every value of $j$ there
are elements in total degree 1 that are detected at the
horizontal line through
$-j$. Since this line corresponds to elements in the image of $M_{k,j} \to
\colim_n M_{k,n}$, this finishes the proof. 
\end{proof}

\section{$\THH$ of schemes and the cyclotomic $t$-structure}\label{sec:schemes}

In this section, we give a sampling of applications of the cyclotomic $t$-structure to the study of $\THH$
of commutative rings and to schemes. We will continue an abuse of notation by where we view
$\pi_n^\cyc \THH(X)$ simultaneously as an object of the abelian category $\CartV$ and as a
cyclotomic spectrum (with homotopy groups given by Figure~\ref{fig:homotopygroups} on
page~\pageref{fig:homotopygroups}).

\subsection{$\THH$ of ring spectra and schemes}

Our first result follows from a computations of Hesselholt--Madsen and
Hesselholt.

\begin{theorem}\label{thm:pi0}
    If $R$ is a connective $\EE_1$-ring spectrum, then
    $\pi_0^\cyc\THH(R)\iso\pi_0^\cyc\THH(\pi_0R)\iso W(\pi_0R)$ as a $p$-typical
    Cartier module, where $W(R)$ is equipped with the Witt vector Frobenius and
    Verschiebung maps.
\end{theorem}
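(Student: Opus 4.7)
The plan is to reduce the statement to the classical calculation of $\pi_0\TR$ by Hesselholt--Madsen using the machinery already developed in the paper.

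First, I would use Theorem \ref{thm:boundedbelow} to identify $\pi_0^\cyc\THH(R)$ concretely: since $\THH(R)$ is bounded below (in fact connective) for any connective $\EE_1$-ring $R$, and since $\TR\colon\CycSp_p^-\rightarrow\TCart_p^-$ is $t$-exact and fully faithful, the cyclotomic homotopy group $\pi_0^\cyc\THH(R)$ is just $\pi_0\TR(\THH(R))$, viewed as a derived $V$-complete $p$-typical Cartier module via the $V$, $F$ operations of Construction~\ref{const:tr}.

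Second, I would reduce to the discrete case by showing that the natural map $R\rightarrow\pi_0R$ of connective $\EE_1$-ring spectra induces an isomorphism $\pi_0^\cyc\THH(R)\iso\pi_0^\cyc\THH(\pi_0R)$. The map $\THH(R)\rightarrow\THH(\pi_0R)$ is a map in $\CycSp_p$ whose underlying map of spectra is an isomorphism on $\pi_0$ (both are $\pi_0R$). Hence its fiber $F$ has $1$-connective underlying spectrum and so lies in $(\CycSp_p)_{\geq 1}$ by definition of the cyclotomic $t$-structure. It follows that $\pi_0^\cyc F=0$, proving the desired isomorphism.

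Third, for $R$ discrete (and even only required to be commutative), invoke Hesselholt--Madsen~\cite{hesselholt-madsen-1}*{Theorem~F}, which identifies $\pi_0\TR(R)\iso W(R)$, the ring of $p$-typical Witt vectors, and further identifies the operators induced by the inclusions of fixed points and by the transfer maps as the Witt vector Frobenius and Verschiebung, respectively. Under our identification in Construction~\ref{const:tr} of the topological Cartier module structure on $\TR$, these are exactly the maps that become $F$ and $V$ on $\pi_0$.

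The main subtlety is ensuring that the $F$ and $V$ operations on $\pi_0\TR(R)$ induced from the topological Cartier module structure (which in turn comes from the pullback decomposition \eqref{eq:trpullback}) literally agree with the operators considered by Hesselholt--Madsen on the classical $\TR$. Both are constructed from the inclusion of fixed points and the $C_p$-norm/transfer, so this is essentially a definitional check, but it must be done carefully by tracing through Construction~\ref{const:tr}. Given this identification, the final assertion of Corollary~\ref{mt:pi0} about $\CycSp_{\THH(k)}^\heart$ is then immediate from the module-over-$\pi_0^\cyc\THH(k)$ description.
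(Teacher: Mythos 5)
Your plan is essentially the paper's own proof: reduce to the discrete ring $\pi_0R$ by a connectivity argument in the cyclotomic $t$-structure, quote the identification of $\pi_0\TR$ with Witt vectors, and use Theorem~\ref{thm:boundedbelow} to see that $\pi_0^\cyc\THH(R)$ is $\pi_0\TR$ of $\THH(R)$ with the $F$ and $V$ of Construction~\ref{const:tr}. However, your reduction step contains an invalid inference: from the fact that $\THH(R)\to\THH(\pi_0R)$ is an isomorphism on $\pi_0$ you conclude that its fiber is $1$-connective. For a general map of connective spectra this fails, since $\pi_0$ of the fiber is $\coker\bigl(\pi_1\THH(R)\to\pi_1\THH(\pi_0R)\bigr)$; and if the fiber were only connective, the long exact sequence of cyclotomic homotopy objects would give surjectivity, but not injectivity, of $\pi_0^\cyc\THH(R)\to\pi_0^\cyc\THH(\pi_0R)$. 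The correct input is that the map is $1$-connected: the fiber of $R\to\pi_0R$ is $1$-connective, hence so is the fiber of each level $R^{\otimes(n+1)}\to(\pi_0R)^{\otimes(n+1)}$ of the cyclic bar construction, and since geometric realization preserves cofiber (hence fiber) sequences and connectivity, the fiber of $\THH(R)\to\THH(\pi_0R)$ lies in $(\CycSp_p)_{\geq 1}$, after which the long exact sequence gives the desired isomorphism.

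The second gap is the base case: since $R$ is only assumed $\EE_1$, the discrete ring $\pi_0R$ is an arbitrary associative ring, so your parenthetical ``(and even only required to be commutative)'' points the wrong way. The reference \cite{hesselholt-madsen-1}*{Theorem~F} covers precisely the commutative case and no more; for noncommutative $\pi_0R$ one needs Hesselholt's Witt vectors of noncommutative rings, \cite{hesselholt-noncommutative}*{Theorem~2.2.9} together with \cite{hesselholt-noncommutative-correction}, or the exposition in \cite{krause-nikolaus}*{Corollary~10.2} --- which is exactly how the paper completes the argument. With these two repairs your proof coincides with the paper's, including the (correct, and in the paper implicit) check that the $V$ and $F$ of Construction~\ref{const:tr} induce on $\pi_0$ the classical transfer and fixed-point inclusion operators, i.e.\ the Witt vector Verschiebung and Frobenius.
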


\begin{proof}
    Since $\pi_0\THH(R)\iso\pi_0\THH(\pi_0R)\iso\pi_0R$, we see from the long
    exact sequence in cyclotomic homotopy groups that
    $\pi_0^\cyc\THH(R)\iso\pi_0^\cyc\THH(\pi_0R)$. Now, for any associative
    ring $A$, $\pi_0\TR(A)\iso W(A)$. In the commutative case, this is the
    content of~\cite{hesselholt-madsen-1}*{Theorem~F}. In the noncommutative
    case, see~\cite{hesselholt-noncommutative}*{Theorem~2.2.9}
    and~\cite{hesselholt-noncommutative-correction}. A recent exposition is
    given in~\cite{krause-nikolaus}*{Corollary~10.2}. The theorem now follows from Theorem~\ref{mt:td}.
\end{proof}

\begin{remark}
    \begin{enumerate}
        \item[(a)] As explained in Section~\ref{sub:wittmodules}, Theorem~\ref{thm:pi0}
            gives a topological proof of Theorem~\ref{thm:witttensor}, i.e., that $W(R)\widehat\boxtimes
            W(S)\cong W(R\otimes S)$ for rings $R$ and $S$.
        \item[(b)] It follows from Theorem~\ref{thm:pi0} that for any cyclotomic spectrum $X$,
            the cyclotomic homotopy groups $\pi_i^\cyc X$ are modules over $W(\ZZ)$ and
            hence over the cyclotomic spectra $\ZZ^\triv$ and $\THH(\ZZ)$. Indeed, both
            maps $\ZZ^\triv\leftarrow\SS^\triv\rightarrow\THH(\ZZ)$ induce equivalences on
            $\pi_0^\cyc$.
    \end{enumerate}
\end{remark}

Recall from Variant~\ref{var:t} that there is a $t$-structure
on $\CycSp_{\THH(R)}=\Mod_{\THH(R)}(\CycSp_p)$, the $\infty$-category of
$\THH(R)$-modules in cyclotomic spectra. We identify the heart in the next
corollary.

\begin{corollary}
    If $R$ is a connective $\EE_\infty$-ring spectrum, then the heart 
    $\CycSp_{\THH(R)}^\heart$ is equivalent to
    $\Mod_{W(\pi_0R)}(\CycSp_p^{\heart})\we\Mod_{W(\pi_0R)}(\CartV)$, the abelian
    category of $W(\pi_0R)$-modules in derived $V$-complete $p$-typical
    Cartier modules with respect to the $\widehat{\boxtimes}$-symmetric
    monoidal structure.
\end{corollary}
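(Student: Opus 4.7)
The plan is to use the general principle that if $\Cscr$ is a presentable symmetric monoidal stable $\infty$-category equipped with a compatible accessible $t$-structure, and if $A$ is a connective $\EE_1$-algebra in $\Cscr$, then the natural $t$-structure on $\Mod_A(\Cscr)$ (whose connective part is the preimage of $\Cscr_{\geq 0}$) has heart equivalent to $\Mod_{\pi_0 A}(\Cscr^\heart)$, where $\pi_0 A$ is a (unital, associative) algebra object in $\Cscr^\heart$ with respect to the induced symmetric monoidal structure $\otimes^\heart$ on the heart.

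First I would verify the hypotheses. The cyclotomic $t$-structure on $\CycSp_p$ is accessible and compatible with the symmetric monoidal structure by Theorem \ref{thm:cyclotomict}, and by Variant \ref{var:t} the $t$-structure on $\Mod_{\THH(R)}(\CycSp_p)$ is the one whose connective part is the full subcategory of $\THH(R)$-modules with connective underlying spectrum with $S^1$-action. Since $\THH(R)$ is a connective $\EE_\infty$-algebra in $\CycSp_p$, the unit and tensor structure on $(\CycSp_p)_{\geq 0}$ restrict to make $\THH(R)$ act on the heart, and $\pi_0^\cyc \THH(R)$ is a commutative algebra object in $\CycSp_p^\heart$.

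Next I would carry out the identification. Given $M \in \Mod_{\THH(R)}(\CycSp_p)^\heart$, the module action $\THH(R) \otimes M \rightarrow M$ lives in the heart and, applying $\pi_0^\cyc$ (which is symmetric monoidal by Corollary \ref{cor_symmheart}), produces a module structure $\pi_0^\cyc \THH(R) \otimes^\heart M \rightarrow M$, i.e., a $\pi_0^\cyc \THH(R)$-module structure. This defines a functor $\Mod_{\THH(R)}(\CycSp_p)^\heart \rightarrow \Mod_{\pi_0^\cyc \THH(R)}(\CycSp_p^\heart)$. Conversely, a $\pi_0^\cyc \THH(R)$-module $M$ in the heart has the structure of a $\THH(R)$-module via the unit map $\THH(R) \rightarrow \pi_0^\cyc\THH(R)$ in connective cyclotomic algebras, and one checks that $\THH(R) \otimes M \rightarrow M$ factors through $\pi_0^\cyc(\THH(R) \otimes M) \we \pi_0^\cyc\THH(R) \otimes^\heart M$ using that $M$ is in the heart and that $\pi_0^\cyc$ is right $t$-exact. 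These two constructions are mutually inverse. Finally, Theorem \ref{thm:pi0} gives $\pi_0^\cyc\THH(R) \iso W(\pi_0 R)$ as a commutative algebra in $\CycSp_p^\heart$, and Theorem \ref{mainsymm} identifies $\otimes^\heart$ on $\CycSp_p^\heart \simeq \CartV$ with $\widehat{\boxtimes}$, giving the claimed equivalence with $\Mod_{W(\pi_0 R)}(\CartV)$.

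The main obstacle is really a bookkeeping one: verifying that the general principle above applies in the asserted form when the base category $\Cscr$ is not the derived category of a ring but has a potentially complicated symmetric monoidal $t$-structure. The key input is that the heart inclusion $\CycSp_p^\heart \hookrightarrow (\CycSp_p)_{\geq 0}$ admits the symmetric monoidal left adjoint $\pi_0^\cyc$ (Corollary \ref{cor_symmheart}), and that module structures descend along this symmetric monoidal localization — this is formal once one has Corollary \ref{cor_symmheart} and the compatibility of the $t$-structure with $\otimes$. One could alternatively package this through the identification $\pi_0^\cyc\colon\Alg(\CycSp_{\geq 0}) \rightarrow \Alg(\CycSp_p^\heart)$ and the standard fact that passing to modules commutes with reflective symmetric monoidal localizations, applied to $\THH(R) \rightarrow \pi_0^\cyc\THH(R) = W(\pi_0 R)$.
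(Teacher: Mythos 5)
Your proposal is correct and follows essentially the same route as the paper: the paper's proof simply cites Proposition~\ref{prop:permanence}(3) (the general fact that $\Mod_A(\Cscr)^\heart\we\Mod_{\pi_0A}(\Cscr^\heart)$ for a compatible $t$-structure, which you spell out via the symmetric monoidal localization $\pi_0^\cyc$), together with Theorem~\ref{thm:pi0} to identify $\pi_0^\cyc\THH(R)\iso W(\pi_0R)$ and Theorem~\ref{mainsymm} to identify $\otimes^\heart$ with $\widehat{\boxtimes}$ on $\CartV$. The extra detail you supply for the descent of module structures along $\pi_0^\cyc$ is exactly the "general fact about symmetric monoidal localizations" invoked in the proof of Proposition~\ref{prop:permanence}(3).
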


\begin{proof}
    This follows from Theorem~\ref{mainsymm}, Theorem~\ref{thm:pi0}, and Proposition~\ref{prop:permanence}.
\end{proof}

\begin{remark}
    Lemma~\ref{lemma_dieudonne} gives a concrete description of the objects of this abelian category.
\end{remark}

Now, we examine the cyclotomic homotopy groups of schemes in general. In
Section~\ref{sub:charp}, we will give more precise results in the case of
regular $\FF_p$-schemes.

It has been shown in~\cite{bms2}*{Corollary~3.3} that
$\THH(-)$ is a $\Sp$-valued fpqc sheaf on $\CAlg$, the category of commutative rings. They also note
in~\cite{bms2}*{Remark~3.4} that the proof extends to show that
$\THH(-)^{tC_p}$ is a $\Sp$-valued fpqc sheaf on $\CAlg$. This is enough to
prove that in fact $\THH(-)$ is a $\CycSp_p$-valued fpqc sheaf on $\CAlg$. We
expand on this and establish pro-\'etale hyperdescent for $\THH$ in $\CycSp_p$. We claim no
originality in our proof, which closely follows the argument
of~\cite{bms2}.

For details on the pro-\'etale topology, see~\cite{bhatt-scholze-proetale}.
We give a brief summary here. A map $R\rightarrow S$ is {\bf weakly \'etale} if
$R\rightarrow S$ and $S\otimes_RS\rightarrow S$ are both flat. We let
$\CAlg_R^{\proet}$ be the full subcategory of $\CAlg_R$ on the weakly \'etale
$R$-algebras. Note that any map $S\rightarrow T$ of weakly \'etale $R$-algebras
is itself weakly \'etale. The faithful weakly \'etale maps make
$\CAlg_R^{\proet}$ into a site, the pro-\'etale site of $R$. For the next
proof, we will need only to know that for every weakly \'etale map
$R\rightarrow S$ one has $\L_{S/R}\we 0$
(see~\cite{bhatt-scholze-proetale}*{Proposition~2.3.3(2)}, which follows
from~\cite{gabber-ramero}*{Theorem~2.5.36}).

\begin{proposition}\label{prop:hypercomplete}
    Write $\THH(\Oscr)$ for the presheaf which sends a commutative ring $R$
    to $\THH(R)$.
    \begin{enumerate}
        \item[{\rm (a)}] The presheaf $\THH(\Oscr)$ is a hypercomplete
            $\Sp$-valued pro-\'etale sheaf.
        \item[{\rm (b)}] The presheaf $\THH(\Oscr)$ is a hypercomplete
            $\CycSp_p$-valued pro-\'etale sheaf.
    \end{enumerate}
\end{proposition}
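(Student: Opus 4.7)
The plan is to prove (a) first and then bootstrap to (b) using the lax equalizer description of $\CycSp_p$.

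For (a), I would start from the two key inputs already in the literature: (i) $\THH(\Oscr)$ is an fpqc sheaf of spectra by~\cite{bms2}*{Corollary~3.3}, and (ii) for any weakly \'etale map $R\to S$ the cotangent complex $\L_{S/R}$ vanishes, hence base change commutes with $\THH$, i.e.\ $\THH(R)\otimes_R S\we\THH(S)$. Since the pro-\'etale topology is coarser than the fpqc topology, (i) already gives the sheaf property. To upgrade to hypercompleteness, I would argue via the Postnikov tower: because $\THH(R)$ is connective, $\THH(R)\we\lim_n\tau_{\leq n}\THH(R)$, and it suffices to show each $\tau_{\leq n}\THH(\Oscr)$ is a hypercomplete pro-\'etale sheaf. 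This reduces inductively via the fiber sequences $\pi_n\THH(R)[n]\to\tau_{\leq n}\THH(R)\to\tau_{\leq n-1}\THH(R)$ to showing that each $\pi_n\THH(\Oscr)$ is a hypercomplete pro-\'etale sheaf of abelian groups. Since $\pi_n\THH(R)$ is a quasi-coherent $R$-module (base change along flat maps commutes with $\pi_n$), this is the standard pro-\'etale hyperdescent for quasi-coherent modules from~\cite{bhatt-scholze-proetale}*{Section~8}.

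For (b), I would exploit the fact that $\CycSp_p=\LEq(\id,(-)^{tC_p}\colon\Sp^{BS^1}\rightrightarrows\Sp^{BS^1})$ is a limit of $\infty$-categories in which we have to provide, for each ring $R$, the $S^1$-spectrum $\THH(R)$ together with an $S^1$-equivariant map $\varphi_p\colon\THH(R)\to\THH(R)^{tC_p}$. Consequently a $\CycSp_p$-valued hypercomplete pro-\'etale sheaf is the same data as two $\Sp^{BS^1}$-valued hypercomplete pro-\'etale sheaves $X,Y$ together with a natural transformation $X\to Y$, in our case with $Y(R)=X(R)^{tC_p}$. The first sheaf follows from (a) carried out $S^1$-equivariantly, which works identically since everything commutes with the $S^1$-action on $\THH$. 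For the second, I would use Lemma~\ref{lem:littlelimits}(b): since $\THH(R)$ is uniformly connective over any pro-\'etale hypercover, the totalization defining the sheaf condition for $\THH(-)^{tC_p}$ is a limit of uniformly bounded below spectra with $S^1$-action, and $(-)^{tC_p}$ commutes with such limits. Thus $\THH(-)^{tC_p}$ inherits the hypercomplete sheaf property from $\THH(-)$.

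The main obstacle is the pro-\'etale hyperdescent statement for the Postnikov pieces in the proof of (a), since pro-\'etale covers are not finitary and one must check that the spectral sequence from the Postnikov filtration genuinely converges on each hypercover; concretely, one needs that the tower $\{\tau_{\leq n}\THH(-)\}$ consists of hypercomplete sheaves and that inverse limits interact correctly with the cover. This is handled by the inductive reduction to quasi-coherent sheaves above, where pro-\'etale hyperdescent is available, together with the uniform connectivity bound that lets the Postnikov limit be computed sectionwise. Once this is in place, (b) is essentially formal from the lax equalizer definition and Lemma~\ref{lem:littlelimits}(b).
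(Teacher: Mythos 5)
Your part (a) is essentially the alternative route the paper alludes to (via \'etale base change for $\THH$) rather than the paper's own argument, and it is workable, but the step ``$\L_{S/R}\we 0$, hence $\THH(R)\otimes_RS\we\THH(S)$'' is asserted, not proved: the cited base-change theorem of Mathew is for \'etale maps, while the morphisms in $\CAlg_R^{\proet}$ are only weakly \'etale, so you need an extra argument (e.g.\ reduce to $\HH(S/R)\we S$ using connectivity and conservativity of $-\otimes_{\THH(R)}R$ on connective modules, or use the Bhatt--Scholze structure theorem to reduce to the ind-\'etale case) before you may conclude that $\pi_n\THH(\Oscr)$ is quasi-coherent on the pro-\'etale site. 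The paper deliberately sidesteps this by tensoring with $\tau_{\leq n}\THH(\ZZ)$ over $\THH(\ZZ)$ and reducing to $\HH(-/\ZZ)$ and then, via the HKR filtration, to the quasi-coherent sheaves $\Lambda^n\L_{\Oscr/\ZZ}$, for which only the transitivity triangle is needed.

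The genuine gap is in (b). You invoke Lemma~\ref{lem:littlelimits}(b) to claim that $(-)^{tC_p}$ commutes with the totalization over a pro-\'etale hypercover because the terms are uniformly connective. But that lemma has a second hypothesis: the limit functor $\lim_I$ must have finite cohomological dimension, i.e.\ there must exist $d$ with $\lim_I(\Sp^I_{\geq 0})\subseteq\Sp_{\geq -d}$. For $I=\Delta$ (cosimplicial totalizations) no such $d$ exists, so the lemma does not apply, and indeed $(-)^{tC_p}$ does \emph{not} commute with totalizations of uniformly connective objects in general --- if it did, part (b) would be nearly formal and the delicate reductions in the subject (e.g.\ around the Tate orbit lemma) would be unnecessary. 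What is actually needed, after reducing via \cite{nikolaus-scholze}*{Proposition~II.1.5} to showing $\THH(R)^{tC_p}\we\lim_\Delta\THH(S^\bullet)^{tC_p}$, is the paper's chain of reductions: first along the tower $\THH(-)\otimes_{\THH(\ZZ)}\tau_{\leq n}\THH(\ZZ)$, where Lemma~\ref{lem:littlelimits}(a) does apply because this is a \emph{sequential} limit along increasingly connected maps, reducing to $\HH(-/\ZZ)^{tC_p}$; then along the HKR filtration, reducing to $(\Lambda^n\L_{-/\ZZ})^{tC_p}$; and finally splitting the Tate construction by the fiber sequence of homotopy orbits, homotopy fixed points, and Tate, where $(-)^{hC_p}$ commutes with all limits and the homotopy-orbit term $(\Lambda^n\L_{-/\ZZ})_{hC_p}$ is quasi-coherent, so faithfully flat (hyper)descent applies. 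Without some argument of this kind, your assertion that $\THH(-)^{tC_p}$ ``inherits'' hyperdescent is unsupported, and the proof of (b) does not go through as written.
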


\begin{proof}
    One can use the main result of~\cite{mathew-thh}, which says that
    $S\otimes_R\THH(R)\we\THH(S)$ for an \'etale map $R\rightarrow S$, to give
    a proof of part (a). We will give a different proof, which will
    serve to motivate our proof of (b). Let $\CAlg_R^\proet$ be the category of
    weakly \'etale
    $R$-algebras.
    As mentioned above, $\THH(\Oscr)$ is a $\Sp$-valued pro-\'etale sheaf
    by~\cite{bms2}*{Corollary~3.3}. To prove (a), consider the
    presheaf
    $\THH(\Oscr)\otimes_{\THH(\ZZ)}\tau_{\leq n}\THH(\ZZ)$ (where $\tau_{\leq n}\THH(\ZZ)$
    is the ordinary (not cyclotomic) truncation), which sends a
    $S\in\CAlg_R^\proet$ to $\THH(S)\otimes_{\THH(\ZZ)}\tau_{\leq n}\THH(\ZZ)$. 
    For each $S\in\CAlg_R^\proet$, we have that
    $\THH(S)\we\lim_n\THH(S)\otimes_{\THH(\ZZ)}\tau_{\leq n}\THH(\ZZ)$.
    In particular,
    $\THH(\Oscr)\we\lim_n\THH(\Oscr_X)\otimes_{\THH(\ZZ)}\tau_{\leq
    n}\THH(\ZZ)$ as $\Sp$-valued presheaves. Since hypercomplete sheaves are
    closed under limits, it is enough to see that each
    $\THH(\Oscr)\otimes_{\THH(\ZZ)}\tau_{\leq n}\THH(\ZZ)$ is hypercomplete.
    As in the proof of~\cite{bms2}*{Corollary~3.3}, this reduces to showing
    that each $\THH(\Oscr)\otimes_{\THH(\ZZ)}\pi_n\THH(\ZZ)$ is hypercomplete
    and then to showing that $\THH(\Oscr)\otimes_{\THH(\ZZ)}\ZZ$ is
    hypercomplete, since
    $$\THH(\Oscr)\otimes_{\THH(\ZZ)}\pi_n\THH(\ZZ)\we\THH(\Oscr)\otimes_{\THH(\ZZ)}\ZZ\otimes_\ZZ\pi_n\THH(\ZZ).$$
    However, $\THH(\Oscr)\otimes_{\THH(\ZZ)}\ZZ\we\HH(\Oscr/\ZZ)$, the
    presheaf which sends $S\in\CAlg_R^\proet$ to $\HH(S/\ZZ)$, the Hochschild
    homology of $S$. Thus, we
    are reduced to proving that Hochschild homology is hypercomplete as a
    $\Sp$-valued pro-\'etale presheaf. Now, $\HH(\Oscr/\ZZ)$ admits a complete decreasing
    $\NN$-indexed filtration $\F^\star_{\HKR}\HH(\Oscr/\ZZ)$ with graded
    pieces given by
    $$\gr^n_{\HKR}\HH(\Oscr/\ZZ)\we\Lambda^n\L_{\Oscr/\ZZ}[n].$$
    In particular, $\gr^0_{\HKR}\HH(\Oscr/\ZZ)\we\Oscr$, which is
    hypercomplete. Since each $\Lambda^n\L_{\Oscr/\ZZ}$ is quasi-coherent (we
    use here that $\L_{S/R}\we 0$ for $R\rightarrow S$ pro-\'etale) and
    since quasi-coherent sheaves are hypercomplete (see for
    example~\cite{sag}*{Proposition~2.2.6.1}), it follows by induction that each
    $\F^n_{\HKR}\HH(\Oscr/\ZZ)$ is a hypercomplete pro-\'etale sheaf of spectra.
    Part (a) follows since the HKR filtration is complete.

    Now, we would like to prove that $\THH(\Oscr)$ is a hypercomplete $\CycSp_p$-valued
    pro-\'etale sheaf. Since the forgetful functor $\CycSp_p\rightarrow\Sp$
    does not commute with limits in general, this is not an immediate
    consequence of the previous paragraph.
    We
    invoke~\cite{nikolaus-scholze}*{Proposition~II.1.5}, which implies that it
    is enough for
    $\THH(R)^{tC_p}\rightarrow\lim_{\Delta}\THH(S^\bullet)^{tC_p}$ to be an
    equivalence for every pro-\'etale hypercover $R\rightarrow S^\bullet$.
    Since $(-)^{tC_p}$ commutes with sequential limits of increasingly
    connected maps (by Lemma~\ref{lem:littlelimits}(a)), we can reduce as in the
    previous paragraph to checking that
    $\HH(R)^{tC_p}\rightarrow\lim_{\Delta}\HH(S^\bullet)^{tC_p}$ is an
    equivalence. And, again, we can use the HKR filtration to reduce to checking that
    $(\Lambda^n\L_{R/\ZZ})^{tC_p}\we\lim_\Delta(\Lambda^n\L_{S^\bullet/\ZZ})^{tC_p}$
    is an equivalence for each $n\in\NN$. Since $(-)^{hC_p}$ commutes with limits, it is enough to
    prove that
    $(\Lambda^n\L_{R/\ZZ})_{hC_p}\rightarrow\lim_\Delta(\Lambda^n\L_{S^\bullet/\ZZ})_{hC_p}$
    is an equivalence. This follows from faithfully flat descent since $(\L_{R/\ZZ})_{hC_p}$ is
    quasicoherent.
\end{proof}

\begin{corollary}
    The presheaf $\TR(\Oscr)$ which sends a commutative ring $R$ to $\TR(R)$ is a
    hypercomplete $\TCart_p$-valued pro-\'etale sheaf.
\end{corollary}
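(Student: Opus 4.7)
The plan is to deduce this corollary directly from Proposition~\ref{prop:hypercomplete}(b) by transporting along the right adjoint $\TR\colon\CycSp_p\to\TCart_p$. Recall from Proposition~\ref{thm:tr} that $\TR$ is right adjoint to $(-)/V$, so in particular $\TR$ commutes with all small limits. The presheaf $\TR(\Oscr)$ is by definition the composite of $\THH(\Oscr)$ with $\TR$.

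So, first, I would check the formal principle: if $F\colon\CAlg^{\op}\to\Dscr$ is a hypercomplete $\Dscr$-valued presheaf (i.e., $F(R)\we\lim_\Delta F(S^\bullet)$ for every pro-\'etale hypercover $R\to S^\bullet$) and $G\colon\Dscr\to\Escr$ is a limit-preserving functor, then $G\circ F$ is a hypercomplete $\Escr$-valued presheaf, because $GF(R)\we G(\lim_\Delta F(S^\bullet))\we\lim_\Delta GF(S^\bullet)$. Applied to $F=\THH(\Oscr)$ (hypercomplete in $\CycSp_p$ by Proposition~\ref{prop:hypercomplete}(b)) and $G=\TR$, this immediately gives that $\TR(\Oscr)$ is a hypercomplete $\TCart_p$-valued pro-\'etale sheaf.

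There is essentially no obstacle here: the substantive descent work is entirely contained in Proposition~\ref{prop:hypercomplete}. One might additionally remark, for clarity, that the forgetful functor $\TCart_p\to\Sp^{BS^1}$ preserves limits by Proposition~\ref{prop:tdforget}, and the forgetful functor $\Sp^{BS^1}\to\Sp$ likewise preserves limits, so the descent equivalence on $\TR(\Oscr)$ is witnessed at the level of underlying spectra — consistent with the classical equivalence $\TR(R)\we\lim_{n,R}\TR^{n+1}(R)$ being built from $\THH(R)^{hC_{p^k}}$'s and their Tate-type analogues, all of which satisfy pro-\'etale hyperdescent by the argument given in the proof of Proposition~\ref{prop:hypercomplete}.
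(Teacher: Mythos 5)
Your argument is exactly the paper's: the corollary follows immediately from Proposition~\ref{prop:hypercomplete}(b) because $\TR$ is a right adjoint (Proposition~\ref{thm:tr}) and hence preserves limits, so postcomposing the hypercomplete $\CycSp_p$-valued sheaf $\THH(\Oscr)$ with $\TR$ preserves the descent equivalences. The extra remarks about underlying spectra are harmless but not needed.
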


\begin{proof}
    This is immediate as $\TR\colon\CycSp_p\rightarrow\TCart_p$ preserves limits
    by Proposition~\ref{thm:tr}.
\end{proof}

It follows that $\THH(\Oscr)$ and $\TR(\Oscr)$ are hypercomplete for any topology coarser
than the pro-\'etale topology as well. We will be mainly interested in the Zariski
topology below.

\begin{definition}
    Let $X$ be a quasi-compact and quasi-separated scheme.
    Let $\tau_{\leq n}\TR(\Oscr_X)$ denote the Zariski sheafification
    of $U\mapsto\tau_{\leq n}\TR(U)$ in the $\infty$-category
    $\TCart_p$. Similarly, let $\tau_{\geq n}\TR(\Oscr_X)$ denote the
    Zariski sheafification of $U\mapsto\tau_{\geq n}\TR(U)$.
\end{definition}

\begin{remark}
    \begin{itemize}
        \item[(a)] The forgetful functor $\TCart_p\rightarrow\Sp$ preserves limits and
            colimits, so in what follows we could as well work with presheaves of spectra.
        \item[(b)] In particular, $\tau_{\leq n}\TR(\Oscr_X)$ and $\tau_{\geq
            n}\TR(\Oscr_X)$ are both hypercomplete since the inclusion of hypercomplete
            sheaves of spectra into all sheaves of spectra is $t$-exact.\footnote{Let $\Xscr$ be an
            $\infty$-topos, $\Xscr^{\mathrm{hyp}}$ its hypercompletion, and consider the
            adjunction
            $f^*\colon\Shv_{\Sp}(\Xscr)\rightleftarrows\Shv_{\Sp}(\Xscr^{\mathrm{hyp}})\colon
            f_*$. As for general geometric morphisms of $\infty$-topoi, $f^*$ is
            $t$-exact and $f_*$ is left $t$-exact (see~\cite{sag}*{1.3.2.8}). We want
            to show that $f_*$ is right $t$-exact as well in this case. Thus, let
            $\Fscr\in\Shv_{\Sp}(\Xscr^{\mathrm{hyp}})_{\geq 0}$ and consider the cofiber
            sequence $$\tau_{\geq 0}f_*\Fscr\rightarrow f_*\Fscr\rightarrow\tau_{\leq
            -1}f_*\Fscr.$$ Applying $f^*$ again, we
            see that $$f^*\tau_{\geq 0}f_*\Fscr\we\tau_{\geq
            0}f^*f_*\Fscr\we\tau_{\geq 0}\Fscr\we\Fscr$$ using the $t$-exactness of
            $f^*$ and the fully faithfulness
            of $f_*$ (see~\cite{sag}*{1.3.3.2}). Thus, $f^*\tau_{\leq
            -1}f_*\Fscr\we 0$. In other words, $\tau_{\leq -1}f_*\Fscr$ is both
            $\infty$-connective and bounded above. Hence, since the $t$-structure on
            $\Shv_{\Sp}(\Xscr)$ is right complete by~\cite{sag}*{1.3.2.7}, $\tau_{\leq -1}f_*\Fscr\we 0$
            so $f_*\Fscr$ is connective.}
    \end{itemize}
\end{remark}

\begin{lemma}\label{lem:hypercomplete}
    If $X$ is a quasi-compact scheme of finite Krull dimension, then
    the natural map $\TR(\Oscr_X)\rightarrow\lim_n\tau_{\leq n}\TR(\Oscr_X)$ is an
    equivalence.
\end{lemma}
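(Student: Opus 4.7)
The plan is to reduce to a statement about Postnikov convergence for a sheaf of spectra on the small Zariski site of $X$, and then deduce it from the fact that this site has bounded cohomological dimension.

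First, I would reduce to working with the underlying sheaf of spectra. The forgetful functor $\TCart_p\to\Sp^{BS^1}\to\Sp$ preserves all limits by Proposition~\ref{prop:tdforget} and is $t$-exact by Proposition~\ref{prop:tdt}. Consequently, both the Postnikov truncations $\tau_{\leq n}$ and the inverse limit appearing in the statement are detected after passage to underlying spectra. By the previous corollary, $\TR(\Oscr)$ is a hypercomplete Zariski sheaf of spectra on $X$, and the sheaf $\tau_{\leq n}\TR(\Oscr_X)$ of the definition coincides with the truncation of $\TR(\Oscr_X)$ in the $\infty$-topos of Zariski sheaves of spectra, since truncation in that $\infty$-topos is given by pointwise truncation followed by sheafification.

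Second, the key input is Grothendieck's classical vanishing theorem: if $X$ is a quasi-compact scheme of finite Krull dimension $d$, then for every abelian Zariski sheaf $\mathcal{A}$ on $X$ and every $i>d$, one has $\H^i(X,\mathcal{A})=0$. Combined with the descent spectral sequence $\E_2^{p,q}=\H^p(X,\pi_{-q}\Gscr)\Rightarrow\pi_{-p-q}\R\Gamma(X,\Gscr)$, this implies that if $\Gscr$ is an $n$-connective sheaf of spectra on $X$, then $\R\Gamma(X,\Gscr)$ is $(n-d)$-connective; the same estimate holds for every quasi-compact open $U\subseteq X$ since these are themselves qcqs of Krull dimension at most $d$, and these opens form a basis for the Zariski topology.

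Third, I would apply this to the fiber of the canonical map $\TR(\Oscr_X)\to\tau_{\leq n}\TR(\Oscr_X)$, which is the $(n+1)$-connective sheaf $\tau_{\geq n+1}\TR(\Oscr_X)$. By the previous paragraph, its sections over any quasi-compact open $U$ of $X$ form an $(n+1-d)$-connective spectrum. Passing to the inverse limit as $n\to\infty$, the limit of these fibers vanishes, so $\TR(U)\xrightarrow{\simeq}\lim_n(\tau_{\leq n}\TR(\Oscr_X))(U)$ for every quasi-compact open $U$. Taking $U=X$ yields the lemma, and in fact yields the corresponding local statement as well.

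The only subtlety is verifying that the descent spectral sequence converges strongly to compute $\pi_*\R\Gamma$ in this setting; this follows from the uniform bound on cohomological dimension $d$ together with the fact that $\R\Gamma$ commutes with the limit defining $\tau_{\leq n}\Gscr$ out of its Postnikov stages in the bounded below range. No new ideas are required beyond these standard facts.
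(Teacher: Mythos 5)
There is a genuine gap at the heart of your argument: the claim that Grothendieck vanishing plus ``the descent spectral sequence'' yields the estimate that an $n$-connective sheaf of spectra $\Gscr$ has $(n-d)$-connective sections. The descent spectral sequence you invoke is the one built from the Postnikov filtration of $\Gscr$; it conditionally converges to the homotopy groups of $\lim_k\R\Gamma(U,\tau_{\leq k}\Gscr)$, i.e.\ to the sections of the \emph{Postnikov completion} of $\Gscr$, not of $\Gscr$ itself. Identifying these two is exactly the Postnikov-completeness statement the lemma asserts, so your deduction is circular precisely where it matters: you apply the estimate to the non-truncated sheaf $\tau_{\geq n+1}\TR(\Oscr_X)$, and for such a sheaf Grothendieck vanishing only controls the truncated stages. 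The ``only subtlety'' you defer at the end --- that $\R\Gamma$ commutes with the limit of the Postnikov tower --- is not a technical aside; it is the entire content of the lemma. Note also that hypercompleteness of $\TR(\Oscr)$, which you do have from the preceding corollary, does not by itself rescue the argument: hypercompleteness is strictly weaker than Postnikov completeness, and passing from a cohomological-dimension bound to convergence of Postnikov towers is a theorem, not a formal consequence of the spectral sequence.

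The estimate you want is true, but its proof is the nontrivial input the paper cites: by Clausen--Mathew the Zariski $\infty$-topos of a quasi-compact scheme of finite Krull dimension has finite \emph{homotopy} dimension ($\leq d$), and finite homotopy dimension implies both the connectivity estimate for sections of connective sheaves and Postnikov completeness (\cite{htt}*{7.2.1.10}). So either quote that theorem directly (as the paper does), or replace your second paragraph by a correct cohomological-dimension argument, e.g.\ the statement that an $\infty$-topos which is locally of cohomological dimension $\leq d$ has Postnikov-complete hypercompletion, combined with the hypercompleteness of $\TR(\Oscr_X)$; in either case the bridge from $\H^i(U,\mathcal{A})=0$ for $i>d$ to convergence of the tower $\lim_n\tau_{\leq n}\TR(\Oscr_X)$ requires one of these genuine theorems rather than the conditionally convergent spectral sequence alone.
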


\begin{proof}
    In this case, the $\infty$-topos of sheaves of spaces on the Zariski site
    of $X$ has finite homotopy dimension by~\cite[Theorem~3.17]{clausen-mathew}.
    This implies that every sheaf of spaces on the Zariski site is Postnikov
    complete by~\cite[Proposition~7.2.1.10]{htt}. By taking infinite loop
    spaces, this implies that every connective sheaf of spectra on the Zariski
    site of $X$ is Postnikov complete.\footnote{Note that the $\infty$-category of
    connective sheaves of spectra is equivalent to the $\infty$-category of
    sheaves of connective spectra.}
\end{proof}

\begin{definition}
    Write $\pi_n^\cyc\THH(\Oscr_X)$ for the Zariski sheafification of
    $U\mapsto\pi_n^\cyc\THH(U)\iso\pi_n\TR(U)$, which is naturally a sheaf in the abelian
    category $\Cart_p$.
\end{definition}

\begin{example}
    By Theorem~\ref{thm:pi0}, we see that $\pi_0^\cyc\THH(\Oscr_X)\we W(\Oscr_X)$,
    Serre's Witt vector sheaf.
\end{example}

\begin{proposition}\label{prop:descentss}
    Let $X$ be a quasi-separated scheme of finite Krull dimension.
    There is a conditionally convergent Zariski descent spectral sequence
    $$\E_2^{s,t}\iso\H^{-s}(X,\pi_t^\cyc\THH(\Oscr_X))\Rightarrow\pi_{s+t}^\cyc\THH(X)$$
    in $\Cart_p$.
\end{proposition}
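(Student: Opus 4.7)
The plan is to combine hypercomplete Zariski descent for $\TR(\Oscr)$, which was just established, with the Postnikov tower of the sheaf $\TR(\Oscr_X)$ taken in the $t$-structure on $\TCart_p$-valued sheaves. First, since $\TR\colon\CycSp_p^-\rightarrow\TCart_p^-$ is fully faithful and $t$-exact by Theorem~\ref{thm:boundedbelow}, we have $\pi_i^\cyc\THH(U)\iso\pi_i\TR(U)$ for each open $U\subseteq X$, and sheafifying identifies $\pi_i^\cyc\THH(\Oscr_X)$ with $\pi_i\TR(\Oscr_X)$ as sheaves of Cartier modules. Hypercomplete Zariski descent (which is just the previous corollary applied to the good cover by affines) then yields an equivalence $\TR(X)\we\R\Gamma_{\Zar}(X,\TR(\Oscr_X))$ in $\TCart_p$.

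Next, by Lemma~\ref{lem:hypercomplete} the sheaf $\TR(\Oscr_X)$ is Postnikov complete, so $\TR(\Oscr_X)\we\lim_n\tau_{\leq n}\TR(\Oscr_X)$. Applying the limit-preserving functor $\R\Gamma(X,-)$ produces a tower
$$\cdots\rightarrow\R\Gamma(X,\tau_{\leq n}\TR(\Oscr_X))\rightarrow\R\Gamma(X,\tau_{\leq n-1}\TR(\Oscr_X))\rightarrow\cdots$$
in $\TCart_p$ whose inverse limit is $\TR(X)$ and whose $n$th fiber is $\R\Gamma(X,\pi_n^\cyc\TR(\Oscr_X))[n]$. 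The Bousfield--Kan spectral sequence for this tower has $\E_2^{s,t}\iso\H^{-s}(X,\pi_t^\cyc\TR(\Oscr_X))$ (using that on a quasi-separated scheme of finite Krull dimension the homotopy groups of $\R\Gamma(X,\Fscr)$ for an abelian sheaf $\Fscr$ compute ordinary sheaf cohomology), and it converges conditionally to $\pi_{s+t}\TR(X)\iso\pi_{s+t}^\cyc\THH(X)$ thanks to the limit identification.

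It remains to observe that the spectral sequence lives in $\Cart_p$ rather than merely in abelian groups. For this, every term and every structure map of the tower is a morphism in $\TCart_p$, and $\pi_n$ of a $\TCart_p$-object is a Cartier module; thus the $\E_r$-differentials are automatically $(V,F)$-equivariant. The main subtlety is bookkeeping: one must check that the Zariski cohomology groups appearing on the $\E_2$-page, obtained as $\pi_*$ of $\R\Gamma(X,\pi_t^\cyc\TR(\Oscr_X))$ in $\TCart_p$, agree with classical sheaf cohomology of $\pi_t^\cyc\TR(\Oscr_X)$ as a sheaf of abelian groups, compatibly with the inherited $V$ and $F$ operators. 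This compatibility is formal because the forgetful functor $\TCart_p\rightarrow\Sp^{BS^1}$ is $t$-exact and preserves limits by Proposition~\ref{prop:tdforget}, so the underlying-spectrum computation of global sections carries the Cartier module structure along automatically.
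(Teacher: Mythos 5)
Your argument is correct and is essentially the paper's proof: the paper likewise builds the descent spectral sequence from the (complete) Whitehead/Postnikov filtration of the sheaf $\TR(\Oscr_X)$ in $\TCart_p$, invoking Lemma~\ref{lem:hypercomplete} for convergence and the identification $\pi_{s+t}^\cyc\THH(X)\iso\pi_{s+t}\TR(X)$ from Theorem~\ref{thm:boundedbelow}. The only difference is cosmetic: you filter by the truncations $\tau_{\leq n}$ while the paper uses the tower $\tau_{\geq t}\TR(\Oscr_X)$, which yields the same spectral sequence, and your final remarks on the $\Cart_p$-linearity just make explicit what the paper leaves implicit.
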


\begin{proof}
    This is the spectral sequence associated to the complete decreasing
    $\NN$-indexed Whitehead tower $\tau_{\geq t}\TR(\Oscr_X)$ for $\TR(\Oscr_X)$.
    We use that $\pi_{t-s}^\cyc\THH(X)\iso\pi_{t-s}\TR(X)$.
\end{proof}

\begin{example}
    Suppose that $X$ has Krull dimension $d$.
    The differentials $d_r$ in the descent spectral sequence have bidegree $(r,r-1)$, from which we see that the
    bottom cyclotomic homotopy group of $\THH(X)$ is given by
    $$\pi_{-d}^\cyc\THH(X)\iso\H^d(X,\pi_0^\cyc\THH(\Oscr_X))\iso\H^d(X,W(\Oscr_X)),$$
    where $W(\Oscr_X)$ is Serre's sheaf of Witt vectors.
\end{example}

\subsection{$\THH$ of smooth schemes over perfect rings in characteristic $p$}\label{sub:charp}

The following theorem follows easily from the description of the $t$-structure
in terms of $\TR$ and the work of Hesselholt and Madsen.

\begin{theorem}\label{thm:discrete}
    Suppose that $k$ is a perfect ring of characteristic $p$. Then,
    $\THH(k)\in\CycSp_p^\heart$. The associated Cartier module
    $\pi_0^\cyc\THH(k)$ is isomorphic to $W(k)$ with the Witt vector Frobenius
    $F$ and Verschiebung $V$.
\end{theorem}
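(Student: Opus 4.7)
The plan is to leverage Theorem~\ref{thm:boundedbelow} to convert the question about the cyclotomic $t$-structure into a question about the much more classical invariant $\TR$. Since $\THH(k)$ is connective and hence bounded below, and since $\TR\colon\CycSp_p^-\rightarrow\TCart_p^-$ is fully faithful and $t$-exact by Theorem~\ref{thm:boundedbelow}, to show $\THH(k)\in\CycSp_p^\heart$ it suffices to show that $\TR(\THH(k))\in\TCart_p^\heart$, i.e.\ that the underlying spectrum of $\TR(\THH(k))$ is discrete.

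Next, I would invoke the classical calculation of Hesselholt--Madsen~\cite{hesselholt-madsen-1} (for perfect fields) and its extension by Hesselholt~\cite{hesselholt-ptypical} (for general perfect rings of characteristic $p$), which asserts that for a perfect ring $k$ of characteristic $p$ one has $\TR^{n+1}(k)\simeq \H W_{n+1}(k)$, an Eilenberg--MacLane spectrum concentrated in degree $0$, with the restriction map $R\colon\TR^{n+1}(k)\to\TR^n(k)$ corresponding to the truncation $W_{n+1}(k)\twoheadrightarrow W_n(k)$. Taking the limit over $n$ gives $\TR(\THH(k))\simeq \H W(k)$, which is discrete. (Morally, this reflects the fact that even though $\pi_*\THH(k)=k[b]$ is polynomial, the combined effect of the Nikolaus--Scholze pullback formula and the cyclotomic Frobenius collapses the Bökstedt class in the limit; for perfect rings one bootstraps from the case of $\FF_p$ using the vanishing of $\L_{k/\FF_p}$ as in Bökstedt's theorem.)

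Having established discreteness, the identification of the Cartier module follows from Theorem~\ref{thm:pi0}, which gives $\pi_0^\cyc\THH(k)\cong W(\pi_0 k) = W(k)$ with the Witt vector Frobenius $F$ and Verschiebung $V$, since the $V$ and $F$ operators on $\pi_0^\cyc\THH(k) = \pi_0\TR(k)$ are by construction (see Construction~\ref{const:tr}) the transfer and restriction operators, which coincide with the Witt vector operations under the Hesselholt--Madsen identification.

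The main obstacle is purely bibliographical rather than mathematical: one must cite the correct form of the Hesselholt--Madsen calculation that gives discreteness of the full $\TR$, not merely the known statement that $\pi_i\TC(k)=0$ for $i>0$, and one must ensure the cited form applies to arbitrary perfect rings of characteristic $p$ rather than only to perfect fields. Once this input is in hand, the rest of the argument is a direct application of the formal adjunction machinery developed in Section~\ref{sec:tcm}.
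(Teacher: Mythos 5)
Your reduction via Theorem~\ref{thm:boundedbelow} and your treatment of the perfect \emph{field} case are exactly the paper's: it suffices to show $\TR(k)$ is discrete with $\pi_0\TR(k)\iso W(k)$, and for perfect fields this is \cite{hesselholt-madsen-1}*{Theorem~5.5}. But one intermediate claim is false as stated: $\TR^{n+1}(k)$ is \emph{not} an Eilenberg--MacLane spectrum $\H W_{n+1}(k)$. Already for $k=\FF_p$ one has $\pi_*\TR^{n+1}(k)\iso W_{n+1}(k)[b_{n+1}]$ with $|b_{n+1}|=2$, and the restriction map sends $b_{n+1}$ to $pb_n$ up to a unit (this is Hesselholt's Theorem~B, as recalled in the proof of Corollary~\ref{cor:derhamwitt2}); discreteness only emerges after passing to the limit, where the positive-degree classes become infinitely $p$-divisible and die. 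For perfect fields this is repairable, since the limit statement is precisely the Hesselholt--Madsen theorem you cite.

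The genuine gap is the passage to an arbitrary perfect ring $k$, which you dismiss as ``purely bibliographical.'' The cited literature does not cover it: \cite{hesselholt-ptypical}*{Theorem~C} concerns \emph{smooth} algebras over a perfect field, a perfect ring is neither smooth nor in general ind-smooth, and even in ind-smooth situations one cannot simply pass to the colimit because $\TR$ does not commute with filtered colimits (compare the proof of Corollary~\ref{cor:derhamwitt2}, which has to reprove the ind-smooth case by hand). The paper therefore supplies an actual argument at exactly this point: by Theorem~\ref{thm:pi0} there is a map of commutative algebra objects $\THH(k)\rightarrow\pi_0^\cyc\THH(k)\we W(k)/V$ in $\CycSp_p$; both sides have homotopy ring $k[b]$ with $|b|=2$ (the source by \cite{bms2}*{Theorem~6.1}, the target by Figure~\ref{fig:homotopygroups} together with perfectness of $k$, since $W(k)/VW(k)\iso W(k)/p\iso k$ and $W(k)$ is $V$- and $p$-torsion free); and the map carries $b$ to $b$ up to a unit by naturality from the known case $k=\FF_p$, hence is an equivalence, so $\THH(k)$ lies in the heart. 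Some argument of this kind is required; a citation alone will not close the case of general perfect rings. Your final step identifying the Cartier module structure via Theorem~\ref{thm:pi0} and Construction~\ref{const:tr} is correct and agrees with the paper.
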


\begin{proof}
    By Theorem~\ref{thm:boundedbelow}, it suffices to show that $\TR(R)$ is concentrated in degree zero and that
    $\TR_0(k)\iso W(k)$. When $k$ is a perfect field of characteristic $p$, this is the content
    of~\cite{hesselholt-madsen-1}*{Theorem~5.5}. When $k$ is an arbitrary
    perfect ring of characteristic $p$, there is a map $\THH(k)\rightarrow\pi_0^\cyc\THH(k)\we
    W(k)/V$ of commutative algebra objects in $\CycSp_p$ by Theorem~\ref{thm:pi0}. We already know that $\pi_*\THH(k)\iso
    k[b]$ where $b$ has degree $2$ (for instance by~\cite{bms2}*{Theorem~6.1}). Since $k$ is perfect, $\pi_* W(k)/V\iso
    k[b]$ as well. Thus, it is enough to see that $b$ maps to $b$ up to a unit.
    But, this follows by the commutative diagram
    $$\xymatrix{\THH(\FF_p)\ar[r]\ar[d]&\THH(k)\ar[d]\\
    W(\FF_p)/V\ar[r]&W(k)/V}$$ and the fact that we know the result when
    $k=\FF_p$.
\end{proof}

\begin{remark}
    \begin{enumerate}
        \item[(i)] The fact that $\THH(\FF_p)\in\CycSp_p^\heart$ is equivalent
            to B\"okstedt's original calculation via Theorem~\ref{thm:pi0}.
            We have already seen one direction as B\"okstedt's calculation is
            used to prove~\cite{hesselholt-madsen-1}*{Theorem~5.5}. So, assume
            that $\THH(\FF_p)\in\CycSp_p^\heart$. By Theorem~\ref{thm:pi0}, we
            find that in fact $\THH(\FF_p)\we W(\FF_p)/V$. But, additively,
            $\pi_*W(\FF_p)/V\iso\FF_p[b]$ where $|b|=2$ using
            Figure~\ref{fig:homotopygroups} on
            page~\pageref{fig:homotopygroups}. Thus, it is enough to determine
            the multiplicative structure. For this, we use that $\THH(\FF_p)$
            is a $\ZZ_p^\triv$-module in cyclotomic spectra as the trace map
            $\K(\FF_p)\rightarrow\THH(\FF_p)$ factors through $\tau_{\geq
            0}\TC(\FF_p)\we\ZZ_p$. Thus, we obtain a commutative diagram
            $$\xymatrix{
            \ZZ_p^\triv\ar[r]\ar[d] &   \THH(\FF_p)\ar[r]\ar[d] & W(\FF_p)/V\ar[d]\\
            \ZZ_p^{tC_p}\ar[r]&\THH(\FF_p)^{tC_p}\ar[r]&(W(\FF_p)/V)^{tC_p}}$$
            of commutative algebra objects in $\Sp^{BS^1}$. We know that
            $\ZZ_p^{tC_p}\we W(\FF_p)^{tC_p}\rightarrow (W(\FF_p)/V)^{tC_p}$ is an equivalence
            by the Tate orbit lemma. Thus, the composition of the bottom arrows
            is an equivalence. Since we are assuming that $\THH(\FF_p)\we
            W(\FF_p)/V$, this means that both bottom arrows are equivalences.
            On the other hand, this means that the middle and right vertical
            arrows are both $0$-truncated by Proposition~\ref{prop:segalcondition}.
            Since $\pi_1\ZZ_p^{tC_p}=0$ and because the map
            $\THH(\FF_p)\rightarrow\THH(\FF_p)^{tC_p}$ is a ring map, we see
            that in fact $\THH(\FF_p)\we\tau_{\geq
            0}\THH(\FF_p)^{tC_p}\we\tau_{\geq 0}\ZZ_p^{tC_p}$. This shows that
            $\pi_*\THH(\FF_p)\we\FF_p[b]$ multiplicatively as
            well.
        \item[(ii)]
            Our original proof, prior to the discovery of $p$-typical topological
            Cartier modules, used only the fact that $\pi_*\THH(k)\iso
            k[b]$, where $b$ has degree $2$ when $k$ is a perfect ring of
            characteristic $p$. Indeed, one can construct a tower of
            spectra with $S^1$-action whose limit is $\tau_{\geq 1}^\cyc X$ for any
            $p$-typical cyclotomic spectrum $X$. Studying this tower for $X\we\THH(k)$
            where $k$ is a perfect ring of characteristic $p$ gives an alternative proof of
            Theorem~\ref{thm:discrete}.
    \end{enumerate}
\end{remark}

\begin{remark}\label{rem:dieudonne}
    \begin{enumerate}
        \item[(a)]
            Recall from Example~\ref{dieudonne} that if $R=\FF_p$, then the
            objects of $\Mod_{W(\FF_p)}(\TCart_p)$ are called Dieudonn\'e
            modules. Thus, $\CycSp_{\THH(\FF_p)}^\heart$ is the abelian category of derived
            $V$-complete Dieudonn\'e modules.
        \item[(b)]
            Since $\THH(\FF_p)$ is a $\ZZ^\triv$-module in $\CycSp_p$, it follows that
            $\eta=0$ on $\pi_n^\cyc X$ for any $\THH(\FF_p)$-module $X$ in $p$-typical
            cyclotomic spectra.
    \end{enumerate}
\end{remark}

\begin{definition}\label{def:dieudonne}
    A {\bf Dieudonn\'e complex} is a $p$-typical Cartier complex (see
    Definition~\ref{def_Cartier}) with $VF=p$ and $\eta=0$. Saturated Dieudonn\'e complexes in the sense
    of~\cite{blm1}*{Definition~2.2.1} naturally admit the structure of
    Dieudonn\'e complexes in our sense by~\cite{blm1}*{Proposition~2.2.4}. 
\end{definition}

Combining the previous remarks, Theorem~\ref{thm:discrete}, and
Lemma~\ref{lem:cartiercomplexes}, we see that for
any $X\in\CycSp_{\THH(\FF_p)}$, the graded abelian group $\pi_*^\cyc X$
naturally admits the structure of a Dieudonn\'e complex.

\begin{theorem}\label{thm:derhamwitt}
    Let $k$ be a perfect field of characteristic $p$. If $R$ is
    a smooth $k$-algebra, then $\pi_\ast^\cyc\THH(R)\iso\W\Omega^\ast_R$, as Dieudonn\'e
    complexes, where $\W\Omega^\ast_R$ is the de Rham--Witt complex.
    In particular, if $R$ has relative dimension at most $d$ over $k$, then
    $\THH(R)\in(\CycSp_p)_{[0,d]}$.
\end{theorem}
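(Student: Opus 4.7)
The plan is to reduce the identification of $\pi_*^\cyc\THH(R)$ to a classical computation of $\pi_*\TR(R)$ due to Hesselholt, using the adjunction $(-)/V \dashv \TR$ established in Section~\ref{sec:tcm}. Since $\THH(R)$ is a bounded below (in fact connective) $p$-typical cyclotomic spectrum, Theorem~\ref{thm:boundedbelow} gives a natural isomorphism $\pi_i^\cyc\THH(R) \iso \pi_i\TR(R)$ of $p$-typical Cartier modules, where the $V$ and $F$ on the right are the topological Cartier module operations constructed in Example~\ref{TrDieu} and Construction~\ref{const:tr}. Moreover, the Connes operator $d$ on $\pi_*\TR(R)$, which is part of the Cartier complex structure by Lemma~\ref{lem:cartiercomplexes}, is induced by the $S^1$-action on $\TR(R)$ and corresponds under the adjunction to the Connes operator on $\THH(R)$.

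Next, I would verify that $\pi_*^\cyc\THH(R)$ is a Dieudonn\'e complex in the sense of Definition~\ref{def:dieudonne}. Since $R$ is a $k$-algebra and $k$ is perfect of characteristic $p$, there is a unit map $\THH(k)\rightarrow\THH(R)$ of commutative algebras in $\CycSp_p$, and by Theorem~\ref{thm:discrete} combined with Theorem~\ref{thm:pi0} this exhibits $\pi_*^\cyc\THH(R)$ as a $W(k)$-module in $\TCart_p^\heart$, which forces $VF = p$ by Example~\ref{dieudonne}. Similarly, the $\ZZ^\triv$-module (hence $\THH(\FF_p)$-module) structure on $\THH(R)$ gives $\eta = 0$ on $\pi_*^\cyc\THH(R)$ by Remark~\ref{rem:dieudonne}(b), so the Cartier complex structure reduces to that of a Dieudonn\'e complex.

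The core input is then Hesselholt's theorem~\cite{hesselholt-ptypical}*{Theorem~C}, which for smooth $k$-algebras $R$ provides a natural isomorphism $\pi_*\TR(R) \iso \W\Omega^*_R$ of Witt complexes, i.e., compatible with the de Rham--Witt $F$, $V$, and $d$. Combining this with the previous paragraph, the identification $\pi_*^\cyc\THH(R) \iso \W\Omega^*_R$ holds as Dieudonn\'e complexes. The anticipated main obstacle is the careful bookkeeping to ensure that the Dieudonn\'e complex structure induced from the cyclotomic $t$-structure (Verschiebung via the transfer, Frobenius via the cyclotomic structure map together with the inclusion of fixed points, and Connes differential from the $S^1$-action) matches term-by-term with Hesselholt's structure on $\pi_*\TR(R)$; this is essentially by construction but deserves explicit verification.

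For the boundedness statement, when $R$ has relative dimension at most $d$, the vanishing $\W\Omega^i_R = 0$ for $i > d$ is classical, so $\pi_i^\cyc\THH(R) = 0$ for $i > d$. Combined with the connectivity $\THH(R)\in(\CycSp_p)_{\geq 0}$ and the left completeness of the cyclotomic $t$-structure from Theorem~\ref{thm:cyclotomict} (which implies right separatedness, so that an object with vanishing $\pi_i^\cyc$ for $i > d$ lies in $(\CycSp_p)_{\leq d}$), we conclude $\THH(R) \in (\CycSp_p)_{[0,d]}$.
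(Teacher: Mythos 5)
Your proposal is correct and takes essentially the same route as the paper: identify $\pi_*^\cyc\THH(R)$ with $\pi_*\TR(R)$ via Theorem~\ref{thm:boundedbelow}, invoke Hesselholt's Theorem~C for $\TR_*(R)\iso\W\Omega^*_R$ with its $F$, $V$, $d$ operations, and obtain the Dieudonn\'e-complex structure from the $\THH(\FF_p)$-module structure as in the discussion preceding the theorem. One terminological nit: the separation property you need to deduce $\THH(R)\in(\CycSp_p)_{\leq d}$ from the vanishing of $\pi_i^\cyc$ for $i>d$ is \emph{left} separatedness in the paper's convention (infinitely connective objects vanish), which does indeed follow from the left completeness established in Theorem~\ref{thm:cyclotomict}.
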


\begin{proof}
    At the level of $\TR(R)$, this is the content
    of~\cite{hesselholt-ptypical}*{Theorem~C}, which says that, with the $F$,
    $V$, and $d$ operations, $\TR_*(R)\iso\W\Omega^*_R$.
\end{proof}

\begin{remark}
    In particular, we find that $\W\Omega^n_R$ is derived $V$-complete for
    all $n$. Since this is an unusual filtration to consider, we note that this
    is easy to deduce classically. Let $p^\star\W\Omega^n_R$ be the $p$-adic
    filtration on $\W\Omega^n_R$ and let
    $V^\star\W\Omega^n_R+dV^\star\W\Omega^n_R\subseteq\W\Omega^n_R$ be the
    submodule generated by the images of $V^\star$ and $dV^\star$. Then,
    $p^\star\W\Omega^n_R\subseteq
    V^\star\W\Omega^n_R\subseteq
    V^\star\W\Omega^n_R+dV^\star\W\Omega^{n-1}_R$. Since $\W\Omega^n_R$ is complete with
    respect to $p^\star\W\Omega^n_R$ and to
    $V^\star\W\Omega^n_R+dV^\star\W\Omega^{n-1}_R$, it is complete with respect to
    $V^\star\W\Omega^n_R$. Since $V$ is injective, this is the same as derived
    $V$-completeness. By Proposition~\ref{prop:derivedcanonical}, $\TR_*(R)$ is 
    also complete with respect to the derived version of the
    $(V+dV)$-filtration. In fact, in this case, the derived and
    non-derived filtrations are pro-equivalent.
\end{remark}

\begin{corollary}\label{cor:derhamwitt2}
    Let $k$ be a perfect field of characteristic $p$ and let $S$ be an
    ind-smooth $k$-algebra. Then, $\TR_*(S)\iso\W\Omega^*_S$.
\end{corollary}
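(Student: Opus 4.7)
The plan is to reduce to the smooth case treated in Theorem~\ref{thm:derhamwitt} by a filtered colimit argument. Write $S \simeq \colim_\alpha R_\alpha$ as a filtered colimit of smooth $k$-algebras. Since $\THH$ preserves filtered colimits as a functor to $\CycSp_p$, we have $\THH(S) \simeq \colim_\alpha \THH(R_\alpha)$, and each of these is connective, hence bounded below. Under the equivalence $\CycSp^{\gen}_{p,\geq 0} \simeq (\CycSp_p)_{\geq 0}$ of Section~\ref{sub:genuinet}, the genuine $C_{p^n}$-fixed points $\TR^{n+1}(X) \simeq \THH(-)^{C_{p^n}}$ are corepresentable and hence commute with all colimits of bounded below objects. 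Thus, for each $n \geq 0$,
\[
    \TR^{n+1}(\THH(S)) \;\simeq\; \colim_\alpha \TR^{n+1}(\THH(R_\alpha)).
\]

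Next, I would invoke the computation of Hesselholt--Madsen identifying $\TR^{n+1}_*(R_\alpha)$ with the truncated de Rham--Witt complex $\W_{n+1}\Omega^*_{R_\alpha}$ in the smooth case (this is what underlies Theorem~\ref{thm:derhamwitt} via~\cite{hesselholt-ptypical}*{Theorem~C}). The construction of $\W_{n+1}\Omega^*_{(-)}$ is left adjoint-like and in particular commutes with filtered colimits of commutative algebras, so passing to $\pi_*$ in the displayed equation gives
\[
    \TR^{n+1}_*(\THH(S)) \;\cong\; \colim_\alpha \W_{n+1}\Omega^*_{R_\alpha} \;\cong\; \W_{n+1}\Omega^*_S
\]
for every $n \geq 0$, where the second isomorphism uses that the ind-smooth de Rham--Witt complex is itself the filtered colimit of those of its smooth pieces.

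Finally, one passes to the limit over the restriction maps. We have $\TR(\THH(S)) \simeq \lim_n \TR^{n+1}(\THH(S))$, and the resulting Milnor exact sequence for $\pi_*$ degenerates because the restriction maps $R\colon \W_{n+1}\Omega^*_{R_\alpha} \to \W_n \Omega^*_{R_\alpha}$ are surjective for each smooth $R_\alpha$ (a standard property of the de Rham--Witt complex), and this surjectivity is preserved by the filtered colimit in $\alpha$; hence the pro-system $\{\W_{n+1}\Omega^*_S\}_n$ is Mittag-Leffler and $\lim^1$ vanishes. This yields $\TR_*(S) \cong \lim_n \W_{n+1}\Omega^*_S \cong \W\Omega^*_S$, as desired.

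The main obstacle will be justifying the commutation of $\TR^{n+1}$ with the filtered colimit cleanly: the cyclotomic structure involves Tate constructions that in general do not commute with filtered colimits, and the argument hinges on uniformly using the genuine fixed-point description on bounded below objects via Section~\ref{sub:genuinet}. The rest, including the interchange of limit and colimit at the end, is controlled by the surjectivity of the restriction maps and is standard once the $\TR^{n+1}$-level identification is in place.
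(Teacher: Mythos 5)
Your overall strategy—reduce to finite levels $\TR^{n+1}$, commute these with the filtered colimit, and then pass up the $R$-tower—is the same as the paper's, and the commutation step itself is fine (the paper likewise uses that each $\THH(-)^{C_{p^{n-1}}}$ commutes with sifted, in particular filtered, colimits of commutative rings, together with the fact that $\W_n\Omega^*_{(-)}$ commutes with filtered colimits by Illusie). The problem is the finite-level identification you feed into it. It is not true that $\TR^{n+1}_*(R)\iso\W_{n+1}\Omega^*_R$ for $R$ smooth over a perfect field: Hesselholt's Theorem~B (the input actually used in the paper) gives $\pi_*\THH(R)^{C_{p^{n-1}}}\iso\W_n\Omega^*_R[b_n]$, a polynomial algebra on a degree-two class $b_n$ over the truncated de Rham--Witt complex. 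Already at level one this is visible: $\pi_*\THH(R)\iso\Omega^*_{R/k}[b]$ by B\"okstedt plus HKR, not $\Omega^*_{R/k}$. So after commuting with the colimit you obtain $\TR^{n+1}_*(S)\iso\W_{n+1}\Omega^*_S[b_{n+1}]$, not $\W_{n+1}\Omega^*_S$.

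Consequently your final step is not just a Mittag--Leffler/$\lim^1$ argument for the pro-system of truncated de Rham--Witt complexes: that argument never sees the extra polynomial generators, and without further input the limit could retain contributions from them. What makes the corollary true is the additional fact, also from Hesselholt's Theorem~B, that the restriction map $R\colon\THH(S)^{C_{p^{n-1}}}\rightarrow\THH(S)^{C_{p^{n-2}}}$ sends $b_n$ to $pb_{n-1}$ up to a unit; it is this divisibility that kills the $b$-classes upon forming $\lim_{n,R}$ and yields $\TR_*(S)\iso\W\Omega^*_S$, exactly as in the deduction of Hesselholt's Theorem~C from Theorem~B. So the gap is concrete: replace your claimed identification $\TR^{n+1}_*(R_\alpha)\iso\W_{n+1}\Omega^*_{R_\alpha}$ by the correct one with the generator $b_{n+1}$, check (as the paper does) that the formula and the behavior $R(b_n)=pb_{n-1}$ persist after the filtered colimit, and then carry out the limit argument tracking the $b$-classes rather than appealing only to surjectivity of $R$ on $\W_\bullet\Omega^*_S$.
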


\begin{proof}
    Because $\TR$ does not commute with filtered colimits as a functor to
    $\TCart_p$, this theorem is not a
    formal consequence of Theorem~\ref{thm:derhamwitt}. To correct this,
    we need to dig into the proof of~\cite{hesselholt-ptypical}*{Theorem~C}.
    Hesselholt's result follows from a finer
    result~\cite{hesselholt-ptypical}*{Theorem~B}, which says that if $R$ is
    a smooth $k$-algebra, then
    $\pi_*\THH(R)^{C_{p^{n-1}}}\iso\W_n\Omega^*_R[b_n]$ and that moreover the map
    $R\colon\THH(R)^{C_{p^{n-1}}}\rightarrow\THH(R)^{C_{p^{n-2}}}$ sends $b_n$ to
    $pb_{n-1}$ (up to a unit). Passing up the tower, one obtains the
    computation of $\TR$.

    Now, each $\THH(-)^{C_{p^{n-1}}}$ commutes with sifted colimits of
    commutative rings and in
    particular filtered colimits. Suppose that $S\we\colim_{i\in I}S_i$ where
    $I$ is a filtered category and each $S_i$ is a smooth $k$-algebra.
    Then, we have
    $$\pi_*\THH(S)^{C_{p^{n-1}}}\iso\colim_i\pi_*\THH(S_i)^{C_{p^{n-1}}}\iso\colim_i\W_n\Omega_{S_i}^*[b_n]\iso\W_n\Omega_S^*[b_n]$$
    for each $n\geq 1$, as $\W_n\Omega_-^*$ commutes with filtered colimits
    (see~\cite{illusie-derham-witt}*{I.1.10}).
    We still have that
    $R\colon\THH(S)^{C_{p^{n-1}}}\rightarrow\THH(S)^{C_{p^{n-2}}}$ sends $b_n$ to $pb_{n-1}$ up to a unit, so taking the limit gives the result.
\end{proof}

\begin{example}
    \begin{enumerate}
        \item[(1)] By Popescu's theorem, every regular noetherian $\FF_p$-algebra is a
            filtered colimit of smooth $\FF_p$-algebras. So, the theorem applies in
            particular to all regular noetherian $\FF_p$-algebras.
        \item[(2)] We can say somewhat more in the special case when $R$ is a
            filtered colimit of smooth $k$-algebras of a uniformly bounded
            dimension. For example, suppose that $K=k(x_1,\ldots,x_d)$. Then,
            $\THH(K)\in(\CycSp_p)_{[0,d]}$. Indeed, in that case, each
            $\W_n\Omega^\ast_K$ is concentrated in degrees $0$ to $k$ and hence so is the limit.
        \item[(3)] If $R$ is a smooth $\FF_p$-algebra, then the theorem applies
            pro-\'etale locally on the pro-\'etale site of $R$. Indeed, for
            every weakly \'etale map $R\rightarrow S$ there is a faithfully
            flat ind-\'etale
            map $S\rightarrow T$ such that $R\rightarrow T$ is ind-\'etale. In
            particular, $k\rightarrow T$ is ind-smooth.
            See~\cite{bhatt-scholze-proetale}*{Theorem~2.3.4}. We will use this
            below to compare the BMS filtration on $\TC$ with the filtration
            coming from the cyclotomic $t$-structure.
    \end{enumerate}
\end{example}

\begin{corollary}
    Let $k$ be a perfect field of characteristic $p$ and let $X$ be a smooth
    quasi-compact $k$-scheme. There is a convergent spectral sequence
    $$\E_2^{s,t}=\H^s(X,\W\Omega^t_X)\Rightarrow\pi_{t-s}^\cyc\THH(X)$$
    in the abelian category of derived $V$-complete $p$-typical Cartier
    modules. If $X$ has Krull dimension at most $d$,
    then $\E_2^{s,t}=0$ for $t>d$ and $s>d$.
\end{corollary}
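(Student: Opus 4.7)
The plan is to deduce this from Proposition~\ref{prop:descentss} and Theorem~\ref{thm:derhamwitt}, with only a small amount of bookkeeping for reindexing, convergence, and sheafification.

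First I would invoke Proposition~\ref{prop:descentss}: since $X$ is smooth and quasi-compact over a field, it is of finite type, hence noetherian, and in particular of finite Krull dimension, so the hypotheses of the proposition are satisfied. This yields a conditionally convergent descent spectral sequence
\[
\widetilde{\E}{}_2^{s,t} \iso \H^{-s}(X,\pi_t^\cyc\THH(\Oscr_X)) \Rightarrow \pi_{s+t}^\cyc\THH(X)
\]
in the abelian category $\CartV$. Reindexing $(s,t) \mapsto (-s,t)$ turns this into the desired $\E_2$-page $\H^s(X,\pi_t^\cyc\THH(\Oscr_X)) \Rightarrow \pi_{t-s}^\cyc\THH(X)$.

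Next, I would identify the coefficient sheaf $\pi_t^\cyc\THH(\Oscr_X)$ with $\W\Omega^t_X$. By definition, $\pi_t^\cyc\THH(\Oscr_X)$ is the Zariski sheafification of $U \mapsto \pi_t^\cyc\THH(U) \iso \pi_t\TR(U)$ on the small Zariski site of $X$. For an affine open $U = \Spec R \subseteq X$, the ring $R$ is smooth over $k$ (since smoothness is local and $X$ is smooth), so Theorem~\ref{thm:derhamwitt} gives a natural isomorphism $\pi_t\TR(R) \iso \W\Omega^t_R$ as $p$-typical Cartier modules (in fact as Dieudonné complexes). Naturality in $R$ with respect to localizations identifies the sheafification with the Zariski sheaf $\W\Omega^t_X$ associated to the presheaf $U \mapsto \W\Omega^t_{\Oscr(U)}$; this is the standard definition of the de Rham--Witt sheaf on a smooth scheme.

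Finally, I would upgrade conditional convergence to strong convergence and establish the vanishing ranges. By Theorem~\ref{thm:derhamwitt} applied affine-locally, $\W\Omega^t_R = 0$ for $t > d$ whenever $R$ is smooth of relative dimension $\leq d$ over $k$, so $\W\Omega^t_X = 0$ for $t > d$, giving the vanishing of $\E_2^{s,t}$ for $t > d$. For cohomological vanishing, since $X$ is noetherian of Krull dimension $\leq d$, Grothendieck's vanishing theorem gives $\H^s(X,\Fscr) = 0$ for $s > d$ and any abelian sheaf $\Fscr$; applied to each Cartier-module-structured sheaf $\W\Omega^t_X$ (viewed sectionwise as sheaves of abelian groups, with the Cartier structure passed through the cohomology by functoriality) this yields $\E_2^{s,t} = 0$ for $s > d$. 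The spectral sequence thus lives in the bounded strip $0 \leq s \leq d$, which automatically upgrades the conditional convergence of Proposition~\ref{prop:descentss} to strong convergence of a finite filtration.

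No step is a genuine obstacle; the only mildly delicate point is checking that the identification $\pi_t^\cyc\THH(\Oscr_X) \iso \W\Omega^t_X$ is natural as sheaves of $p$-typical Cartier modules (rather than only pointwise), so that the spectral sequence differentials are maps in $\CartV$. This is handled by performing the identification functorially in $R$ using the naturality built into Hesselholt's theorem as stated in Theorem~\ref{thm:derhamwitt}, and then sheafifying in the $\infty$-category $\TCart_p$ as done already in the construction of $\tau_{\geq n}\TR(\Oscr_X)$ preceding Proposition~\ref{prop:descentss}.
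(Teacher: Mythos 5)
Your proposal is correct and follows essentially the same route as the paper: the paper's proof is literally to combine Proposition~\ref{prop:descentss} with Theorem~\ref{thm:derhamwitt}, obtaining convergence because the spectral sequence is confined to a bounded range (it collapses above the dimension of each connected component), which is exactly your argument via $\W\Omega^t_X=0$ for $t>d$ and Grothendieck vanishing for $s>d$. Your extra care about sheafifying the identification $\pi_t^\cyc\THH(\Oscr_X)\iso\W\Omega^t_X$ naturally in $\Cart_p$ is a fair elaboration of what the paper leaves implicit, not a different method.
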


\begin{proof}
    Combine Proposition~\ref{prop:descentss} and Theorem~\ref{thm:derhamwitt}. In this
    case, the spectral sequence convergences because it collapses at some
    finite stage for each connected component of $X$ (above the
    dimension of the connected components).
\end{proof}

Now, we can compare the BMS filtration and the cyclotomic $t$-structure filtrations on
$\TC(R)$ when $X=\Spec R$ is a smooth affine scheme over a perfect field $k$.
The BMS filtration is a decreasing multiplicative $\NN$-indexed filtration
$\F^\star_{\BMS}\TC(R)$ on $\TC(R)$ with graded pieces given by
$\gr^n_\BMS\TC(R)\we\ZZ_p(n)(R)[2n]\we\W\Omega^n_{X,\mathrm{log}}(R)[n]$
by~\cite{bms2}*{Theorems~1.12 and~1.15}. It is defined in the following way. If
$S$ is a quasiregular semiperfect $\FF_p$-algebra, then $\TC^-(S)$ and $\TP(S)$
are concentrated in even degrees. One sets $\F^\star_{\BMS}\TC^-(S)=\tau_{\geq
2\star}\TC^-(S)$ and $\F^\star_{\BMS}\TP(S)=\tau_{\geq 2\star}\TP(S)$. The canonical
and Frobenius maps $\TC^-(S)\rightarrow\TP(S)$ preserve the BMS filtration
on each side and hence define a filtration $\F^\star_{\BMS}\TC(S)$. One now
constructs the BMS filtration on a general quasisyntomic $\FF_p$-algebra by
quasisyntomic descent.

Note that the filtrations $\F^\star_{\BMS}\TC^-(R)$ and
$\F^\star_{\BMS}\TP(R)$ do not come from a cyclotomic filtration on $\THH(R)$.
Indeed, if $R=\FF_p$ then $\gr^n_{\BMS}\TP(\FF_p)\we\ZZ_p[2n]$ whereas $\TP$ of
any cyclotomic $\THH(\FF_p)$-module will be $2$-periodic. However, we will see
that if $R$ is smooth, then the filtration on $\TC(R)$ does come from a
cyclotomic filtration on $\THH(R)$.

Consider the decreasing multiplicative $\NN$-indexed
filtration $\TC(\tau_{\geq\star}^\cyc\THH(R))$ on $\TC(R)$. Since
$$\TC(\tau_{\geq\star}^\cyc\THH(R))\we\fib\left(\tau_{\geq\star}\TR(R)\xrightarrow{1-F}\tau_{\geq\star}\TR(R)\right),$$
we see from Theorem~\ref{thm:derhamwitt} that the graded pieces are given by
$$\fib\left(\W\Omega^n_R[n]\xrightarrow{1-F}\W\Omega_R^n[n]\right)\we\fib\left(\W\Omega^n_R\xrightarrow{1-F}\W\Omega^n_R\right)[n].$$

\begin{theorem}\label{thm:syntomic}
    Let $X=\Spec R$ be a smooth affine scheme over a perfect field $k$ of
    characteristic $p$. There is a natural equivalence
    $\TC(\tau_{\geq\star}^\cyc\THH(R))\rightarrow\F^\star_{\BMS}\TC(R)$
    of filtered spectra.
\end{theorem}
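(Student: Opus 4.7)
The plan is to reduce the assertion to a local statement on the pro-étale site of $R$, then match graded pieces using Theorem~\ref{thm:derhamwitt} together with the classical Artin--Schreier--Illusie sequence, and to build the natural transformation from the $\TP$-level comparison established in Corollary~\ref{mt:crystalline}.

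First, I would verify that both filtrations define hypercomplete pro-étale sheaves of filtered spectra on the category of smooth $k$-algebras. The cyclotomic $t$-structure side inherits hyperdescent from Proposition~\ref{prop:hypercomplete} applied termwise to the cyclotomic Whitehead tower, followed by the fact that $\TC$ preserves limits; the BMS filtration is pro-étale local by its construction via quasisyntomic descent. It therefore suffices to construct the comparison pro-étale locally on $\Spec R$, where after passing to a suitable pro-étale cover we may assume $R$ is ind-smooth over a large enough perfect field and that $1-F\colon \W\Omega^n_R\to\W\Omega^n_R$ is surjective for every $n$.

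Next, I would compute the graded pieces. By Theorem~\ref{thm:derhamwitt}, $\pi_n^\cyc\THH(R)\iso \W\Omega^n_R$ as a $p$-typical Cartier module with its natural Frobenius. For any $M\in\CycSp_p^\heart$, the identification $\TR(M)\simeq M$ (concentrated in degree zero) together with Corollary~\ref{mt:tate} gives $\TC(M)\simeq \fib(1-F\colon M\to M)$. Hence the $n$-th associated graded of $\TC(\tau_{\geq\star}^\cyc\THH(R))$ is $\fib(1-F\colon \W\Omega^n_R\to\W\Omega^n_R)[n]$. Under our pro-étale local hypothesis, the Artin--Schreier--Illusie sequence identifies this fiber with $\W\Omega^n_{R,\mathrm{log}}[n]\simeq \ZZ_p(n)(R)[2n]$, which matches the $n$-th graded piece of the BMS filtration on $\TC(R)$.

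To produce a natural transformation of filtered spectra, I would use the presentation $\TC(X) = \eq(X^{hS^1}\rightrightarrows (X^{tC_p})^{hS^1})$ from Remark~\ref{rem:tc} applied to the filtered object $\tau_{\geq\star}^\cyc\THH(R)$. This presents $\TC(\tau_{\geq\star}^\cyc\THH(R))$ as the fiber (of $\can-\varphi^{hS^1}$) of the induced filtrations on $\TC^-(R)$ and $\TP(R)$. Corollary~\ref{mt:crystalline} identifies the Beilinson refinement of the cyclotomic Whitehead filtration on $\TP(R)$ with the BMS filtration; an analogous statement holds for $\TC^-(R)$ with the Nygaard-filtered crystalline cohomology. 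The key point special to $\TC$ is that taking the equalizer with $\can-\varphi$ annihilates the $2$-periodic Tate-class contributions that forced the Beilinson refinement for $\TP$ and $\TC^-$ individually, so on $\TC$ the cyclotomic Whitehead filtration itself already maps to the BMS filtration without any further refinement. Combined with completeness and exhaustiveness of both filtrations, the graded-piece equivalence of Step~2 then upgrades to an equivalence of filtered spectra.

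The main obstacle is constructing the comparison map cleanly and functorially rather than merely identifying associated graded pieces: unlike the $\TP$-case, where Corollary~\ref{mt:crystalline} supplies a ready equivalence only after passing to Beilinson-filtered objects, here one must show that the 2-periodic discrepancy between the cyclotomic Whitehead and BMS filtrations on $\TP$ (and on $\TC^-$) is killed precisely by the $\can-\varphi$ equalizer. Making this cancellation explicit at the level of filtered spectra, rather than just on homotopy, is the technical heart of the argument; once achieved, descent and the graded-piece computation complete the proof.
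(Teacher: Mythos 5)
Your identification of the graded pieces is correct and agrees with the paper: using Theorem~\ref{thm:derhamwitt}, the $n$-th graded piece of $\TC(\tau_{\geq\star}^\cyc\THH(R))$ is $\fib(\W\Omega^n_R\xrightarrow{1-F}\W\Omega^n_R)[n]$, and Illusie's pro-\'etale-local surjectivity of $1-F$ together with the identification of the kernel with $\W\Omega^n_{\log}$ is exactly what matches this with $\gr^n_{\BMS}\TC$. But the central step --- actually producing the map of filtered spectra --- is missing from your argument, and your proposed route for it does not work as stated. You want to present $\TC$ of the cyclotomic Whitehead tower as the equalizer of filtered versions of $\TC^-$ and $\TP$, feed in Corollary~\ref{mt:crystalline} for $\TP$ and an ``analogous statement for $\TC^-$ with the Nygaard filtration,'' and then argue that the $\can-\varphi$ equalizer kills the $2$-periodic discrepancy. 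The $\TC^-$/Nygaard analogue of Theorem~\ref{thm:drw} is not established anywhere in the paper, and the cancellation claim --- which you yourself flag as the unresolved technical heart --- is precisely the hard point: as noted just before the theorem, the BMS filtrations on $\TC^-(R)$ and $\TP(R)$ are \emph{not} induced by any cyclotomic filtration on $\THH(R)$, so there is no filtered map on those two terms to pass through the equalizer, and matching associated graded pieces of two complete filtrations does not by itself produce a map between them. A smaller but related issue: your descent reduction treats $S\mapsto\TC(\tau_{\geq n}^\cyc\THH(S))$ as a hypercomplete sheaf ``termwise from Proposition~\ref{prop:hypercomplete},'' but cyclotomic truncation does not commute with the limits appearing in descent, so the source filtration is a priori only a presheaf; one can only exploit the sheaf property of the target.

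The paper's construction of the map is different and much more economical, and it closes exactly the gap you left open. Since $\F^\star_{\BMS}\TC$ is a hypercomplete pro-\'etale sheaf (it is bounded above on $\CAlg_R^{\proet}$) and, pro-\'etale-locally, the BMS filtration on $\TC$ is just the Whitehead tower by \cite{bms2}*{Theorem~1.15}, mapping the presheaf $\TC(\tau_{\geq\star}^\cyc\THH(-))$ into it reduces, by the universal property of Postnikov truncation, to a connectivity statement: pro-\'etale-locally $\TC(\tau_{\geq n}^\cyc\THH(S))$ is $n$-connective. This is exactly the pro-\'etale-local surjectivity of $1-F$ on $\W\Omega^n$ (Illusie, via \cite{bms2}*{Proposition~8.4}), which you already invoke but only to identify graded pieces. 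Once the map exists, the identification of the induced map on graded pieces with $\fib(\W\Omega^n_R\xrightarrow{1-F}\W\Omega^n_R)\rightarrow\W\Omega^n_{\log}(R)$, an equivalence by \cite{geisser-hesselholt-1}*{Lemma~4.1.3}, finishes the proof. So the correct fix is to abandon the $\TC^-$/$\TP$ cancellation strategy and construct the map directly by this local connectivity argument.
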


\begin{proof}
    The BMS filtration satisfies pro-\'etale descent, by definition, since it
    satisfies quasisyntomic descent.
    In other words, the assignment which sends an pro-\'etale $R$-algebra $S$ to
    $\F^n_{\BMS}\TC(S)$ is a pro-\'etale sheaf. In fact, it is hypercomplete as it
    is bounded above on $\CAlg_R^{\proet}$. Indeed, if $R$ has Krull dimension
    $d$, then for any pro-\'etale $R$-algebra $S$, $\F^n_{\BMS}\TC(S)$ is
    $d$-truncated. Moreover, pro-\'etale-locally, the BMS
    filtration reduces to the Whitehead tower on $\TC$
    by~\cite{bms2}*{Theorem~1.15}. Thus, to map in to $\F^\star_\BMS\TC(R)$ it
    is enough to prove that pro-\'etale locally $\TC(\tau_{\geq n}^\cyc\THH(R))$ is
    $n$-connective. For this, we need to know that
    $\W\Omega^n_X\xrightarrow{1-F}\W\Omega^n_X$ is pro-\'etale-locally
    surjective. As shown in the proof of~\cite{bms2}*{Proposition~8.4}, this follows from a result of
    Illusie~\cite{illusie-derham-witt}*{Th\'eor\`em~I.5.7.2}. Moreover, they
    show that the kernel (in pro-\'etale sheaves) is $\W\Omega^n_{X,\log}$.
    This proves both that there is a map of filtered objects
    $\TC(\tau_{\geq\star}^\cyc\THH(R))\rightarrow\F^\star_{\BMS}\TC(R)$ and
    that the map on graded pieces is the natural map
    $$\fib\left(\W\Omega^n_R\xrightarrow{1-F}\W\Omega^n_R\right)\rightarrow\W\Omega^n_{X,\mathrm{log}}(R),$$
    which is an equivalence (see for
    example~\cite{geisser-hesselholt-1}*{Lemma~4.1.3} and the following discussion).
\end{proof}

\subsection{Crystalline cohomology and $\TP$}

\newcommand{\DF}{\mathrm{DF}}
\newcommand{\DBF}{\mathrm{DBF}}

Let $k$ be a perfect ring of characteristic $p$.
Let $R$ be a smooth commutative $k$-algebra. We show in this section how to use
Theorem~\ref{thm:derhamwitt} to extract the de Rham--Witt complex from $\TP(R)$.

Recall that when $R$ is smooth over a perfect field $k$, then the crystalline cohomology
$\R\Gamma_{\crys}(R/W(k))$ has a canonical cochain complex model given by the de
Rham--Witt complex
$$0\rightarrow\W\Omega^0_R\rightarrow\W\Omega^1_R\rightarrow\cdots,$$
which is a Dieudonn\'e complex in the sense of Definition~\ref{def:dieudonne}.

Our next theorem complements a result of Bhatt, Morrow, and Scholze. They prove
in~\cite{bms2}*{Theorem~1.10} that if $R$ is a smooth $k$-algebra for a perfect
field $k$, then there is a filtration $\F^\star_{\BMS}\TP(R)$ with graded
pieces $$\gr^n_\BMS\TP(R)\we\R\Gamma_{\crys}(R/W(k))[2n].$$ Moreover, each
$\F^n_{\BMS}\TP(R)$ is itself equipped with a filtration and the maps
$\F^{n+1}_\BMS\TP(R)\rightarrow\F^n_\BMS\TP(R)$ are compatible with this
secondary filtration. On graded pieces, one obtains the Nygaard filtration on
$\R\Gamma_\crys(R/W(k))[2n]$.

We recover the BMS filtration on $\TP(R)$ in the next theorem by using the
cyclotomic $t$-structure. Our filtration also comes equipped with a secondary
filtration as well, but this time the induced filtration on the graded pieces is the
Hodge, or Hodge--Witt, filtration, which lets us say that the graded pieces are given by
shifts of the de Rham--Witt complex, as opposed to an object of the derived
$\infty$-category equivalent to the de Rham--Witt complex.

\begin{theorem}\label{thm:drw}
    Let $k$ be a perfect field of characteristic $p$ and let $R$ be an ind-smooth $k$-algebra and let
    $\TP(\tau_{\geq\star}^\cyc\THH(R))$ be the filtration on $\TP(R)$ induced by the
    cyclotomic Whitehead tower. The induced Whitehead tower with respect to the
    Beilinson $t$-structure on filtered spectra defines
    a natural complete exhaustive multiplicative decreasing
    $\ZZ$-indexed filtration
    $\F^\star_\B\TP(R)$ in filtered spectra with graded pieces given by
    $$\gr^i_\B\TP(R)\we\W\Omega_R^\bullet[2i]$$ for all $i$. Moreover, this
    filtration agrees with the BMS filtration on $\TP(R)$ after forgetting the
    secondary filtrations.
\end{theorem}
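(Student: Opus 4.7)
The plan is to apply $\TP(-)$ to the cyclotomic Whitehead tower $\tau^\cyc_{\geq\star}\THH(R)$, obtaining a filtered spectrum $F^\star:=\TP(\tau^\cyc_{\geq\star}\THH(R))$ with underlying object $\TP(R)$, and then to apply the Beilinson $t$-structure Whitehead tower on filtered spectra to $F^\star$ to obtain the desired secondary filtration $\F^\star_\B\TP(R)$. By Theorem~\ref{thm:derhamwitt}, $\pi^\cyc_n\THH(R)\iso\W\Omega^n_R$ as derived $V$-complete Cartier modules, so the graded pieces of $F^\star$ are $\gr^n F^\star\we\TP(\W\Omega^n_R)[n]$. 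To compute $\TP$ of a derived $V$-complete Cartier module $M$ viewed as a cyclotomic spectrum, I would use that its underlying $S^1$-spectrum is $\cofib(M_{hC_p}\xrightarrow{V}M)$ with $M$ discrete (hence carrying trivial $S^1$-action by Example~\ref{example_classical}); applying $(-)^{tS^1}$ and invoking \cite{nikolaus-scholze}*{Lemma~II.4.2} together with the Tate orbit lemma to conclude $(M_{hC_p})^{tS^1}\simeq 0$ $p$-adically, one obtains $\TP(M)\simeq M^{tS^1}\simeq M((u))$ with $|u|=-2$. Hence $\gr^n F^\star\simeq\W\Omega^n_R((u))[n]$.

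Next I would apply the Beilinson $t$-structure Whitehead tower to $F^\star$. The associated Beilinson spectral sequence has $E_1^{n,j}=\pi_j(\gr^n F^\star)=\W\Omega^n_R$ when $j-n$ is even and vanishes otherwise, converging to $\pi_j\TP(R)$. The crucial identification is of the $d_1$-differential $E_1^{n,j}\to E_1^{n+1,j-1}$, which is controlled by the first $k$-invariant of the cyclotomic Postnikov tower of $\THH(R)$, i.e., the map $\pi^\cyc_n\THH(R)[n]\to\pi^\cyc_{n+1}\THH(R)[n+2]$. This $k$-invariant encodes the Connes-type $d$-operator on the Cartier complex $\pi^\cyc_\star\THH(R)$ discussed in Section~\ref{sub:cc}, which by Theorem~\ref{thm:derhamwitt} is precisely the de Rham--Witt differential. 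The Beilinson Whitehead tower then organizes these differentials into cochain complexes and yields $\gr^i_\B\TP(R)\we\W\Omega^\bullet_R[2i]$; the inherited secondary filtration coming from the cyclotomic Whitehead filtration on each $\F^i_\B\TP(R)$ identifies with the stupid (Hodge) filtration on the de Rham--Witt complex. Completeness and exhaustiveness follow from standard properties of the Beilinson $t$-structure applied to filtrations whose graded pieces are uniformly bounded in the cyclotomic $t$-structure (using that $\W\Omega^n_R=0$ for $n$ larger than the relative dimension in the finite type case, and reducing to that case by filtered colimits in the ind-smooth setting).

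For the comparison with the BMS filtration, \cite{bms2}*{Theorem~1.10} gives $\gr^i_\BMS\TP(R)\we\R\Gamma_\crys(R/W)[2i]\we\W\Omega^\bullet_R[2i]$ in the smooth case, agreeing with the Beilinson graded pieces. To promote this identification on graded pieces to an equivalence of filtrations on $\TP(R)$, I would exploit that both filtrations are pro-\'etale hypercomplete sheaves (Proposition~\ref{prop:hypercomplete} for our construction, and the analogous syntomic/pro-\'etale descent on the BMS side), reducing to a local comparison. The cleanest route is probably to combine this with Theorem~\ref{thm:syntomic}, which already provides agreement of the two filtrations on $\TC(R)$, and transfer to $\TP(R)$ via the canonical fiber sequence relating $\TC$, $\TC^-$, and $\TP$ together with the parallel Beilinson and BMS filtrations on $\TC^-(R)$ (whose graded pieces in the smooth case are the Nygaard-filtered de Rham--Witt complex and hence can be compared in the same way).

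The main obstacle I anticipate is the identification of the $d_1$-differentials of the Beilinson spectral sequence with the de Rham--Witt differentials: this requires tracing the origin of the Connes operator through the truncations of the cyclotomic $t$-structure and matching the resulting $k$-invariants with the $d$-operator of the Dieudonn\'e complex structure on $\pi^\cyc_\star\THH(R)$; a secondary but genuine difficulty will be making the comparison of filtrations (as opposed to graded pieces) with the BMS filtration completely precise, since the two constructions approach $\TP$ from quite different directions.
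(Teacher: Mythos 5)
Your construction of $\F^\star_\B\TP(R)$ and the identification of its graded pieces is essentially the paper's own argument: apply $(-)^{tS^1}$ to the cyclotomic Whitehead tower, use Corollary~\ref{mt:tate} together with Corollary~\ref{cor:derhamwitt2} to identify the graded pieces as $(\W\Omega^n_R[n])^{tS^1}$, and take the double-speed Whitehead tower for the Beilinson $t$-structure; the identification of the resulting cochain complex with $\W\Omega^\bullet_R$ via the Connes $B$-operator (which equals the de Rham--Witt differential by Theorem~\ref{thm:derhamwitt}) is exactly what the paper does, with the bookkeeping delegated to \cite{antieau-derham}. One caveat: completeness and exhaustiveness should not be obtained by ``reducing to the finite-dimensional case by filtered colimits'' --- $\TR$ and $\TP$ do not commute with filtered colimits, and for an ind-smooth algebra $\W\Omega^n_R$ need not vanish for large $n$; the paper instead gets these properties from connectivity estimates for the Beilinson Whitehead tower of a complete filtration.

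The genuine gap is in the comparison with the BMS filtration: you never construct a map of filtered spectra from $\F^\star_\B\TP(R)$ to $\F^\star_{\BMS}\TP(R)$, and neither of your proposed mechanisms supplies one. The local-to-global strategy that proves Theorem~\ref{thm:syntomic} does not carry over to $\TP$: on $\TC$ the BMS graded pieces are $\W\Omega^n_{\log}[n]$, which are pro-\'etale-locally concentrated in a single degree, so locally both filtrations are literally the Whitehead tower of $\TC$; on $\TP$ (and on $\TC^-$) the graded pieces are $\R\Gamma_\crys(-/W)[2n]$ (resp.\ their Nygaard-filtered versions), and crystalline/de Rham cohomology in characteristic $p$ does not vanish locally, so neither filtration is locally a Postnikov filtration and there is nothing to glue. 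The transfer along the fiber sequence $\TC\rightarrow\TC^-\rightarrow\TP$ also does not typecheck: the filtration on $\TC$ that agrees with BMS is the primary one, $\TC(\tau^\cyc_{\geq\star}\THH(R))$, while the filtration on $\TP$ to be compared is the secondary Beilinson one, and there is no evident fiber sequence relating $\TC(\tau^\cyc_{\geq n}\THH(R))$, a filtration on $\TC^-(R)$, and $\F^n_\B\TP(R)$; even granting compatible sequences, agreement on fibers plus abstract isomorphisms of graded pieces does not produce an equivalence of filtrations without an actual map. This is exactly where the paper has to work: it left Kan extends $\TP$, $\TR(-)_{hS^1}$ and the Beilinson filtration from polynomial algebras to all simplicial $\FF_p$-algebras, uses the map $\TP\rightarrow\T_v(\TR(-)_{hS^1})$ and completeness/connectivity estimates to pass to the completed filtration $\F^\star_\B\widehat{\L\TP}(S)$, and then on quasiregular semiperfect $S$ uses the $p$-adic evenness of $\L\W\Omega_S[2n]$ to force a filtered map to $\tau_{\geq 2\star}\TP(S)\we\F^\star_{\BMS}\TP(S)$; quasisyntomic descent then yields the comparison map for smooth $R$, and a K\"unneth plus \'etale descent argument reduces the equivalence check to $R=\FF_p[x]$. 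Some substitute for this construction of the comparison map is what your proposal is missing.
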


\begin{proof}
    Applying the $S^1$-Tate construction to the cyclotomic Whitehead tower of
    $\THH(R)$, we obtain a complete decreasing $\NN$-indexed filtration
    $(\tau_{\geq\star}^\cyc\THH(R))^{tS^1}$ on $\TP(R)$.
    The graded pieces are
    $$(\pi_\star^\cyc\THH(R)[\star])^{tS^1}\we(\W\Omega^\star_R[\star])^{tS^1}$$
    by Corollaries~\ref{mt:tate} and~\ref{cor:derhamwitt2}.
    We let $\F^\star_\B\TP(R)$ denote the double-speed Whitehead tower with respect
    to the Beilinson $t$-structure (see~\cite{bms2}*{Section~5}) on filtered spectra
    associated to the filtered spectrum $(\tau_{\geq\star}^\cyc\THH(R))^{tS^1}$.
    This is a complete exhaustive $\ZZ$-indexed filtration on $\TP(R)$
    (see for example~\cite{antieau-derham}*{Lemma~3.2}). Moreover,
    $\gr^n_\B\TP(R)[-2n]$ is an object of the heart of
    $\Fun(\ZZ^\op,\Dscr(W(\FF_p)))$, the
    $\infty$-category of filtered objects of the derived $\infty$-category of
    $W(\FF_p)$. Thus, $\gr^n_\B\TP(R)[-2n]$ is canonically a cochain complex; it
    is not hard to see that it is of the form
    $$0\rightarrow\W\Omega_R^0\rightarrow\W\Omega_R^1\rightarrow\cdots,$$
    where the differential is Connes' $B$-operator. By Theorem~\ref{thm:derhamwitt},
    the $B$-operator is given by the differential in the de Rham--Witt complex.
    This completes the proof of the first part of the theorem. For more details, see~\cite{antieau-derham}*{Example~2.4}.

    To continue, we must produce a map
    $\F^\star_\B\TP(R)\rightarrow\F^\star_{\mathrm{BMS}}\TP(R)$ of filtered
    spectra. To do so, we first Kan extend so that we can compare in the
    quasiregular semiperfect case, and then we descend back to the smooth case.

    As in the first paragraph, 
    if $R$ is smooth over $\FF_p$, we can produce a complete filtration
    $\F^\star_\B\TR(R)_{hS^1}$ on $\TR(R)_{hS^1}$ with graded pieces
    $\gr^n_\B\TR(R)_{hS^1}\we\W\Omega_R^{\bullet\leq n}[2n]$. The natural map
    $\TP(R)\rightarrow\TR(R)_{hS^1}[2]$ induces a filtered map
    $\F^\star_\B\TP(R)\rightarrow\F^\star_\B\TR(R)_{hS^1}[2]$ which on graded pieces
    is the natural quotient
    $\W\Omega_R^{\bullet}[2n]\rightarrow\W\Omega^{\bullet\leq n-1}_R[2n]$ of chain complexes.

    Fix a generator $v\in\pi_{-2}\ZZ^{hS^1}\iso\ZZ$. Recall that if
    $X\in\Fun(BS^1,\D(\ZZ))$ is a chain complex with $S^1$-action, then
    $X_{hS^1}$ is a module over $\ZZ^{hS^1}$ and the natural map
    $X^{tS^1}\rightarrow X_{hS^1}[2]$ induces an equivalence
    $$X^{tS^1}\we\lim\left(\cdots\rightarrow
    X_{hS^1}[2n-2]\xrightarrow{v}X_{hS^1}[2n]\xrightarrow{v}X_{hS^1}[2n+2]\rightarrow\cdots\right).$$
    In the case of $\TR(R)_{hS^1}$, $v$ induces a filtered map
    $\F^\star_\B\TR(R)_{hS^1}\rightarrow\F^{\star-1}_\B\TR(R)_{hS^1}[2]$
    which on graded pieces identifies with the quotient map
    $\W\Omega^{\bullet\leq n}[2n]\rightarrow\W\Omega^{\bullet\leq n-1}[2n]$.

    For a general object $X\in\Mod_{\ZZ^{hS^1}}$, we let
    $\T_vX=\lim\left(\cdots\rightarrow
    X[2n-2]\xrightarrow{v}X[2n]\xrightarrow{v}X[2n+2]\rightarrow\cdots\right)$.
    This construction defines a functor
    $\Mod_{\ZZ^{hS^1}}\rightarrow\Mod_{\ZZ^{tS^1}}$.

    Let $\L\TP(-)$ denote the left Kan
    extension of $\TP(-)$ from polynomial $\FF_p$-algebras to all simplicial
    commutative $\FF_p$-algebras as a functor with values in
    $\Mod_{\ZZ^{tS^1}}$. We let $\L\TR(-)$ denote the left Kan extension of
    $\TR$ as a functor with values in $\Fun(BS^1,\D(\ZZ))$. We let
    $\L(\TR(-)_{hS^1})$ be the left Kan extension of $\TR(-)_{hS^1}$ as a functor
    with values in $\D(\ZZ^{hS^1})$. Since taking homotopy orbits commutes with
    colimits, there is an equivalence of functors
    $(\L\TR(-))_{hS^1}\we\L(\TR(-)_{hS^1})$. We write $\L\TR(-)_{hS^1}$ for this common
    functor.

    Consider the commutative diagram
    $$\xymatrix{
        \L\TP(S)\ar[r]\ar[d]&\T_v\L\TR(S)_{hS^1}\ar[d]\\
        \TP(S)\ar@{=}[r]&\T_v\TR(S)_{hS^1}
    }$$
    in $\Mod_{\ZZ^{tS^1}}$. The spectrum $\L\TP(S)$ carries the Kan extended
    Beilinson filtration $\F^\star_\B\L\TP(S)$ with graded pieces
    $\L\W\Omega_S$. The spectrum $\T_v\TR(S)_{hS^1}$ carries the inverse limit
    of the Kan extended Beilinson filtration on $\TR(S)_{hS^1}$. This is a
    filtration $\F^\star_\B\T_v\L\TR(S)_{hS^1}$ with graded pieces given by
    $\widehat{\L\W\Omega}_S$, the completion of $\L\W\Omega_S$ with respect to
    the Hodge filtration $\L\W\Omega^{\geq\star}_S$.

    The Kan extended filtration $\F^\star\TR(S)_{hS^1}$ is complete. Indeed, if
    $S$ is smooth, $\gr^n_\B\TR(S)_{hS^1}\we\W\Omega^{\leq n}[2n]$ is
    in $\D(\ZZ)_{[n,2n]}$. Thus, since the filtration is complete in this case,
    $\F^n_\B\TR(S)_{hS^1}$ is in $\D(\ZZ)_{\geq n}$. Hence, the Kan extension
    $\F^n\L\TR(S)_{hS^1}$ is in $\D(\ZZ)_{\geq n}$. In particular, the Kan
    extended Beilinson filtration is complete on $\TR(S)_{hS^1}$ for any $S$.
    Since $\F^\star_\B\T_v\TR(S)_{hS^1}$ is an inverse limit of complete
    filtrations, it is complete.

    It follows that the map
    $\F^\star_\B\L\TP(S)\rightarrow\F^\star_\B\T_v\L\TR(S)_{hS^1}$ factors
    through the completion $\widehat{\L\TP}(S)$ of $\L\TP(S)$ with respect to
    the Beilinson filtration. On $\widehat{\L\TP}(S)$ we have the completed
    Beilinson filtration $\F^\star_\B\widehat{\L\TP}(S)$ with graded pieces
    $\gr^n_\B\widehat{\L\TP}(S)\we\L\W\Omega_S[2n]$.

    We thus have obtained a map $\widehat{\L\TP}(S)\rightarrow\TP(S)$ for any
    commutative ring $S$. If $S$ is quasiregular semiperfect, then
    $\L\W\Omega_S[2n]$ is $p$-adically concentrated in degree $2n$
    (see~\cite[Theorem~8.14]{bms2}). The completeness of the
    filtration implies that $\F^n_\B\widehat{\L\TP}(S)$ is $p$-adically
    $2n$-connective.
    Thus, since $\TP(S)$ is $p$-complete, the map $\widehat{\L\TP}(S)\rightarrow\TP(S)$ automatically upgrades
    to a filtered map $\F^\star_\B\widehat{\L\TP}(S)\rightarrow\tau_{\geq
    2\star}\TP(S)\we\F^\star_{\mathrm{BMS}}\TP(S)$.

    Now, by
    quasisyntomic descent, it follows that there exists a natural map
    $\F^\star_\B\TP(R)\rightarrow\F^\star_{\BMS}\TP(R)$ for any smooth
    $\FF_p$-algebra $R$. Standard arguments using the K\"unneth isomorphism in
    crystalline cohomology and
    \'etale descent reduce us to checking that it is
    an equivalence for $R=\FF_p[x]$, the ring of functions on $\AA^1_{\FF_p}$.
    But, one sees in this case that since the crystalline cohomology is
    concentrated in (homological) degrees $0$ and $-1$, both filtrations satisfy
    $\F^n_\B\TP(R)\we\tau_{\geq 2n-1}\TP(R)\we\F^n_{\BMS}\TP(R)$, so we are
    done.
\end{proof}

\begin{remark}
    We expect that there is a common refinement of the Beilinson and BMS
    filtrations on $\TP(R)$ when $R$ is smooth over a perfect field $k$ of
    characteristic $p$. This
    would say that there is a filtration $\F^\star\TP(R)$ on $\TP(R)$ where
    each $\F^n\TP(R)$ is a bifiltered spectrum with one filtration reducing to
    the Nygaard filtration on the graded pieces and the other reducing to the
    de Rham--Witt filtration on the graded pieces.
\end{remark}
 
In conclusion, we will briefly discuss three spectral sequences computing $\TP(X)$ when $X$ is smooth and
proper over a perfect ring $k$ of characteristic $p$: the Hesselholt spectral sequence, the
Bhatt--Morrow--Scholze spectral sequence (or BMS spectral sequence for short),
and a new spectral sequence arising from the cyclotomic Postnikov tower of
$\THH(X)$.

Hesseholt's spectral sequence, which is given
in~\cite{hesselholt-tp}*{Theorem~6.8} when $k$ is a perfect field of
characteristic $p$, is obtained Zariski locally from the inverse
limit of the Postnikov filtrations of $\THH(\Oscr_X)^{tC_p^n}$. It has the form
\begin{equation}\label{ss:hesselholt}
    \E^2_{i,j}=\bigoplus_{m\in\ZZ}\lim_{n,F}\H^{-i}(X,W_n\Omega_X^{j+2m})\Rightarrow\TP_{i+j}(X).
\end{equation}
The differentials $d^r$ have bidegree $(-r,r-1)$.
Note that, as explained in~\cite{hesselholt-tp}*{Section~5}, the terms in this
spectral sequence arise also in the conjugate spectral sequence
\begin{equation}\label{ss:conjugate}
    \E_2^{s,t}=\lim_{n,F}\H^s(X,W_n\Omega_X^t)\Rightarrow\H^{s+t}_{\mathrm{crys}}(X/W(k))
\end{equation}
computing the crystalline cohomology of $X$ over $W(k)$. The point is that
$\lim_{n,F}\H^s(X,W_n\Omega_X^t)$ is isomorphic to
$\H^s(X,\Hscr^t_{\mathrm{crys}})$ by the Cartier isomorphism, where
$\Hscr^t_{\mathrm{crys}}$ is the Zariski sheafification of
$U\mapsto\H^t_{\crys}(U/W(k))$.

The BMS spectral sequence arises because of a quasisyntomic-local filtration 
on $\TP$. The filtration then has $n$th graded piece
$$\gr^n_{\BMS}\TP(X)\we\R\Gamma_\crys(X/W(k))\{n\}[2n].$$ The twist $\{n\}$ is used
in~\cite{bms2} to keep track of the twist of the action of Frobenius. In this
case, it means that the action of Frobenius is given by $p^{-n}$ times the
action of Frobenius on $\R\Gamma_\crys(X/W(k))$. The spectral sequence of the
filtration then takes the form
\begin{equation}\label{ss:bms}
    \E_2^{s,t}=\H^{s-t}_{\crys}(X/W(k))\{-t\}\Rightarrow\TP_{-s-t}(X)
\end{equation}
(see~\cite{bms2}*{Theorem~1.12}).

Our spectral sequence for $\TP$ is induced from the
cyclotomic Whitehead tower. Specifically, the cyclotomic Postnikov tower gives
a complete exhaustive decreasing multiplicative $\NN$-indexed filtration
$\F^t_\cyc\THH(X)=\tau_{\geq t}^\cyc\THH(X)$ of $\THH(X)$ with associated graded pieces
$\mathrm{gr}^t_{\cyc}\THH(X)\we\R\Gamma(X,\W\Omega^t)[t]$.

\begin{theorem}
    Let $X$ be a smooth and quasi-compact $k$-scheme where $k$ is a
    perfect field of characteristic $p$. There is a complete decreasing multiplicative
    $\NN$-indexed filtration
    $\F^\star_\cyc\TP(X)$ on $\TP(X)$ with
    associated gradeds $\gr^j_\cyc\TP(X)\we\left(\pi_j^\cyc\THH(X)[j]\right)^{tS^1}$. The associated
    multiplicative and conditionally convergent\footnote{The filtration might in general not be finite. But if the scheme is of finite dimension it is, so that the spectral sequence is strongly convergent.} spectral sequence takes the form
    \begin{equation}\label{ss:cyclotomic}
        \E^1_{i,j}=\pi_*\left(\R\Gamma(X,\W\Omega^j_X)\right)[t^\pm]\Rightarrow\TP_{i+j}(X).
    \end{equation}
    where the bidegree of $t$ is $(-2,0)$ and the elements in $\pi_i(\R\Gamma(X,\W\Omega^j_X))$ sit in bidegree $(i,j)$. 
\end{theorem}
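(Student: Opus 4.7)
The plan is to construct the filtration at the sheaf level by pushing the pointwise cyclotomic Whitehead tower of $\THH(\Oscr_X)$ through $S^1$-Tate. Define
\[
\F^t_\cyc\TP(X) := \R\Gamma\bigl(X,(\tau_{\geq t}^\cyc\THH(\Oscr_X))^{tS^1}\bigr)
\]
for $t \geq 0$. Since $\THH(\Oscr_X)$ is locally cyclotomically connective (as $\THH$ of a smooth algebra lies in $(\CycSp_p)_{\geq 0}$ by Theorem~\ref{thm:derhamwitt}), the filtration starts with $\F^0_\cyc\TP(X)\we\TP(X)$, giving the claimed $\NN$-indexing. The pointwise cyclotomic Whitehead tower of the commutative algebra sheaf $\THH(\Oscr_X)$ is a filtered commutative algebra, and since $(-)^{tS^1}\colon\CycSp_p\to\Sp$ is exact and lax symmetric monoidal (see~\cite{nikolaus-scholze}*{Section~IV.2}), $\F^\star_\cyc\TP(X)$ inherits a decreasing multiplicative structure. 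Exactness of $(-)^{tS^1}$ applied to the cofiber sequences $\tau_{\geq j+1}^\cyc\THH(\Oscr_X) \to \tau_{\geq j}^\cyc\THH(\Oscr_X) \to \pi_j^\cyc\THH(\Oscr_X)[j]$ identifies the graded pieces as claimed (reading $\pi_j^\cyc\THH(X)$ as $\R\Gamma(X,\pi_j^\cyc\THH(\Oscr_X))$).

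For completeness and convergence, $X$ is smooth quasi-compact over $k$, hence Noetherian of finite Krull dimension $d$. Theorem~\ref{thm:derhamwitt} gives $\pi_t^\cyc\THH(\Oscr_X)\we\W\Omega^t_{\Oscr_X}$, which vanishes for $t>d$. Combined with the bounded cohomological dimension of $X$, the filtration $\F^\star_\cyc\TP(X)$ is bounded (and in particular complete), and the associated spectral sequence converges strongly in the finite-dimensional case; conditional convergence in general follows from exhaustiveness and completeness.

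For the $E_1$-page, I would identify $(\W\Omega^j_{\Oscr_X}[j])^{tS^1}$ at the sheaf level using Corollary~\ref{mt:tate}. The sheaf $\W\Omega^j_{\Oscr_X}\we\pi_j^\cyc\THH(\Oscr_X)$ lies in $\CycSp_p^\heart$, and under the equivalence of Theorem~\ref{thm:boundedbelow} its image $\TR(\W\Omega^j_{\Oscr_X})$ is the discrete Cartier module $\W\Omega^j_{\Oscr_X}$ with trivial $S^1$-action on its underlying spectrum. Corollary~\ref{mt:tate} then yields a $p$-adic equivalence
\[
\bigl(\W\Omega^j_{\Oscr_X}\bigr)^{tS^1} \we \TR\bigl(\W\Omega^j_{\Oscr_X}\bigr)^{tS^1} \we \W\Omega^j_{\Oscr_X}[t^\pm], \qquad |t|=-2,
\]
using that $S^1$-Tate of a discrete spectrum with trivial action is $2$-periodic. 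Applying $[j]$ and $\R\Gamma(X,-)$ gives
\[
\gr^j_\cyc\TP(X)\we\R\Gamma(X,\W\Omega^j_X)[j][t^\pm],
\]
whose homotopy groups yield the $E_1$-page in the stated form, with $\pi_i\R\Gamma(X,\W\Omega^j_X)$ placed in bidegree $(i,j)$ and $t$-multiplication of bidegree $(-2,0)$.

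The main obstacle is controlling the interchange of $\R\Gamma(X,-)$ with $(-)^{tS^1}$, both on $\THH(\Oscr_X)$ and on each graded piece, as well as the commutation of $\R\Gamma(X,-)$ with the $2$-periodization $(-)[t^\pm]$. Since $\THH(\Oscr_X)$ is a sheaf of uniformly bounded below cyclotomic spectra on a Noetherian scheme of finite cohomological dimension, all these interchanges are guaranteed by Lemma~\ref{lem:littlelimits}(b). Multiplicativity of the resulting spectral sequence then descends from the multiplicative filtration structure.
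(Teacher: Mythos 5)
Your proof is correct and takes essentially the same route as the paper: both arguments rest on Theorem~\ref{thm:derhamwitt}, Corollary~\ref{mt:tate}, and the observation that the graded pieces are discrete Cartier modules whose $S^1$-action is canonically trivialized, so that the $S^1$-Tate construction becomes $2$-periodic and the Tate spectral sequence degenerates multiplicatively. The only differences are bookkeeping: you apply $(-)^{tS^1}$ sheafwise before $\R\Gamma(X,-)$ and then interchange, whereas the paper trivializes the action on the derived sections and applies the Tate construction afterwards; for your interchanges note that Lemma~\ref{lem:littlelimits}(b) is stated for $C_p$, so you should either invoke its $S^1$-analogue (proved identically, using that $BS^1$ has finite skeleta) or reduce to the $C_p$-case via the $p$-adic equivalence $(-)^{tS^1}\we((-)^{tC_p})^{hS^1}$ of \cite{nikolaus-scholze}*{Lemma~II.4.2}, which suffices since all objects in sight are $p$-complete.
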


\begin{proof}
    It follows immediately from Theorem~\ref{thm:derhamwitt} and
    Corollary~\ref{mt:tate} that we get a spectral sequence of the form
    \[
    \E^1_{i,j}=\pi_i\left(\R\Gamma(X,\W\Omega^j_X)^{tS^1}\right)\Rightarrow\TP_{i+j}(X).
    \]
     Now the action of $S^1$ on $\W\Omega^j_X\we\pi_j\TR(\Oscr_X)$ is
    trivial since there is just a single homotopy groups. Thus the action on the derived sections is trivial as well (in fact, canonically trivialized) and the Tate spectral sequence for
    $\R\Gamma(X,\W\Omega^j_X)^{tS^1}$ degenerates for all $j$. Moreover the trivialization is multiplicative which gives the result. 
\end{proof}

\begin{remark}
    Theorem~\ref{thm:drw} implies that the spectral sequence~\eqref{ss:cyclotomic}
    agrees with~\eqref{ss:bms} when $X$ is affine from the $\E_2$-page forward (up to reindexing).
    In general, they do not agree on the $\E_2$-page.
\end{remark}

\begin{remark}
    If $X$ is smooth and proper over a perfect field $k$ of characteristic $p$,
    then the Hesselholt, BMS, and cyclotomic spectral sequences for $\TP$
    degenerate rationally. To prove degeneration of the spectral sequences for $\TP$ in the smooth proper
    case, it is enough to check it for one spectral sequence since rationally all
    three spectral sequences start with the same ranks contributing to
    $\TP_n(X)$ (using rational degeneration of the Hodge and conjugate spectral
    sequences for crystalline cohomology due to~\cite{illusie-raynaud}).
    However, an argument of Scholze proves rational degeneration of the BMS
    spectral sequence for smooth {\em affine} schemes over perfect fields
    (see~\cite{elmanto-tc}). This
    comes from a canonical splitting due to the action Adams operations and
    thus extends to smooth proper schemes, giving the desired degeneration.
\end{remark}

\appendix

\section{Background on $t$-structures}\label{sec:tbackground}

For further background on $t$-structures~\cite{ha}*{Chapter~1} or the original
source~\cite{bbd}.
For prestable and especially Grothendieck prestable $\infty$-categories, see~\cite{sag}*{Appendix C}.

\subsection{Right complete $t$-structures}\label{sub:rightcomplete}

The purpose of this section is to introduce some terminology on $t$-structures
and especially right complete $t$-structures.

\begin{definition}\label{def:t}
    Let $\Cscr$ be a stable $\infty$-category. A {\bf $t$-structure} on $\Cscr$
    is a pair $(\Cscr_{\geq 0},\Cscr_{\leq 0})$ of full subcategories of
    $\Cscr$ such that
    \begin{enumerate}
        \item[{\rm (1)}] $\Cscr_{\geq 0}[1]\subseteq\Cscr_{\geq 0}$ and $\Cscr_{\leq 0}[-1]\subseteq\Cscr_{\leq 0}$;
        \item[{\rm (2)}] if $X\in\Cscr_{\geq 0}$ and $Y\in\Cscr_{\leq 0}$, then
            the mapping space $\Map_{\Cscr}(X,Y[-1])$ is contractible;
        \item[{\rm (3)}] for every $X\in\Cscr$ there is a fiber sequence
            $\tau_{\geq 0}X\rightarrow X\rightarrow\tau_{\leq -1}X$ where
            $\tau_{\geq 0}X\in\Cscr$ and $\tau_{\leq -1}X[1]\in\Cscr_{\leq 0}$.
    \end{enumerate}
\end{definition}

\begin{remarks}
    \begin{enumerate}
        \item[(a)] It is useful to let $\Cscr_{\geq n}=\Cscr_{\geq 0}[n]$ and
            $\Cscr_{\leq n}=\Cscr_{\leq 0}[n]$ for integers $n$.
        \item[(b)] A $t$-structure is determined by either $\Cscr_{\geq 0}$
            or $\Cscr_{\leq 0}$. In other words, if $(\Cscr_{\geq
            0},\Cscr_{\leq 0})$ is a $t$-structure on $\Cscr$, then
            $\Cscr_{\leq 0}$ is the full subcategory of objects $Y\in\Cscr$
            such that $\Map_\Cscr(X,Y[-1])\we 0$ for all $X\in\Cscr_{\geq 0}$;
            similarly, $\Cscr_{\geq 0}$ is the left orthogonal to $\Cscr_{\leq
            -1}$. Therefore, we will often think of a $t$-structure as a pair
            consisting of a stable $\infty$-category $\Cscr$ and a full
            subcategory $\Cscr_{\geq 0}$ such that $(\Cscr_{\geq 0},\Cscr_{\leq
            0})$ defines a $t$-structure in the sense of
            Definition~\ref{def:t}, where $\Cscr_{\leq 0}$ is the right
            orthogonal to $\Cscr_{\geq 1}$.
        \item[(c)] If $(\Cscr_{\geq 0},\Cscr_{\leq 0})$ is a $t$-structure on
            $\Cscr$, then $\Cscr_{\geq 0}$ is often called the {\bf aisle} and
            $\Cscr_{\leq 0}$ the co-aisle. In this case, $\Cscr_{\geq 0}$ is an
            example of a {\bf prestable $\infty$-category}, a notion studied
            in~\cite{sag}*{Appendix~C} that abstracts
            the properties of the $\infty$-category of connective spectra.
        \item[(d)] The truncations of condition (3) are functorial: the
            inclusion $\Cscr_{\geq 0}\subseteq\Cscr$ admits a right adjoint
            $\tau_{\geq 0}$, and $\Cscr_{\leq 0}\subseteq\Cscr$ admits a left
            adjoint $\tau_{\leq 0}$. For $X\in\Cscr$, $\tau_{\geq
            0}X\rightarrow X$ is the counit of the adjunction for $\Cscr_{\geq
            0}\subseteq\Cscr$ and $X\rightarrow\tau_{\leq 0}X$ is the unit for
            the adjunction for $\Cscr_{\leq 0}\subseteq\Cscr$.
        \item[(e)] The {\bf heart} of a $t$-structure is the full subcategory
            $\Cscr^\heart=\Cscr_{\geq 0}\cap\Cscr_{\leq 0}$ of $\Cscr$. It is
            an abelian category, by~\cite{bbd}. It also coincides with the full
            subcategory of $0$-truncated objects in $\Cscr_{\geq 0}$,
            by~\cite{ha}*{1.2.1.9}.
    \end{enumerate}
\end{remarks}

\begin{definition}
    Let $\Cscr,\Dscr$ be stable $\infty$-categories equipped with
    $t$-structures $(\Cscr_{\geq 0},\Cscr_{\leq 0})$ and $(\Dscr_{\geq
    0},\Dscr_{\leq 0})$, respectively. An exact functor
    $F\colon\Cscr\rightarrow\Dscr$ is {\bf right $t$-exact} if
    $F(X)\in\Dscr_{\geq 0}$ whenever $X\in\Cscr_{\geq 0}$. Similarly, the exact
    functor $F$ is {\bf left $t$-exact} if $F(X)\in\Dscr_{\leq 0}$ whenever
    $X\in\Cscr_{\leq 0}$. An exact functor $F$ is {\bf $t$-exact} if it is left
    and right $t$-exact.
\end{definition}

Now, we review some essentially well-known facts about right completions
of $t$-structures.

\begin{definition}
    Let $\Cscr$ be a stable $\infty$-category equipped with a $t$-structure
    $(\Cscr_{\geq 0},\Cscr_{\leq 0})$.
    \begin{itemize}
        \item   We say that $\Cscr$ is {\bf right separated} if the full
            subcategory of $\infty$-coconnective objects
            $$\bigcap_{n}\Cscr_{\leq n}$$ is contractible.
        \item We say that $\Cscr$ is {\bf right complete} if the natural
            map $$\Cscr\rightarrow\lim_n\Cscr_{\geq
            n}\we\lim\left(\cdots\rightarrow\Cscr_{\geq n}\xrightarrow{\tau_{\geq
            n+1}}\Cscr_{\geq n+1}\rightarrow\cdots\right)$$ is an equivalence.
        \end{itemize}
\end{definition}

\begin{remark}
    {\bf Left separated} and {\bf left complete} $t$-structures are defined in the
    analogous way.
\end{remark}

\begin{lemma}
    Let $\Cscr$ be a stable $\infty$-category with a $t$-structure
    $(\Cscr_{\geq 0},\Cscr_{\leq 0})$. Assume that $\Cscr$ admits countable
    coproducts and that $\Cscr_{\leq 0}\subseteq\Cscr$ is closed under countable
    coproducts. Then, $\Cscr$ is right complete if and only it is right
    separated.
\end{lemma}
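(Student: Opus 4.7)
The ``right complete implies right separated'' direction is immediate: if $X\in\bigcap_n\Cscr_{\leq n}$, then $\tau_{\geq n}X\simeq 0$ for every $n$, so the image of $X$ under the equivalence $\Cscr\we\lim_n\Cscr_{\geq n}$ is the zero system, forcing $X\simeq 0$.

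For the converse, I would construct a right inverse $\Phi$ to the canonical functor $\Psi\colon\Cscr\rightarrow\widehat{\Cscr}:=\lim_n\Cscr_{\geq n}$ directly. An object of $\widehat{\Cscr}$ is a compatible system $\{X_n\}_{n\in\ZZ}$ with $X_n\in\Cscr_{\geq n}$ and counit-induced transitions $X_{n+1}\rightarrow X_n$. The idea is to set
$$\Phi(\{X_n\}):=\colim\bigl(X_0\rightarrow X_{-1}\rightarrow X_{-2}\rightarrow\cdots\bigr),$$
which exists in $\Cscr$ because a sequential colimit in a stable $\infty$-category can be computed as the cofiber of a self-map of $\bigsqcup_{m\geq 0}X_{-m}$, and countable coproducts exist by hypothesis.

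The crux is to check that $\tau_{\geq n}\Phi(\{X_n\})\simeq X_n$ for every $n$. For each $m\geq -n$, the cofiber of the natural map $X_n\rightarrow X_{-m}$ is $\tau_{\leq n-1}X_{-m}\in\Cscr_{\leq n-1}$, since the transition equivalences identify $X_n\simeq\tau_{\geq n}X_{-m}$. Passing to the colimit in $m$, the cofiber of $X_n\rightarrow\Phi(\{X_n\})$ is a sequential colimit of objects of $\Cscr_{\leq n-1}$. The hypothesis that $\Cscr_{\leq 0}$ is closed under countable coproducts implies by shifting that so is $\Cscr_{\leq n-1}$, and since $\Cscr_{\leq n-1}$ is automatically closed under cofibers in any $t$-structure (being the right orthogonal to $\Cscr_{\geq n}$), it is therefore closed under sequential colimits computed by the Milnor cofiber construction. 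Hence the cofiber lies in $\Cscr_{\leq n-1}$, identifying $X_n\simeq\tau_{\geq n}\Phi(\{X_n\})$ and giving $\Psi\Phi\simeq\id$.

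For $\Phi\Psi\simeq\id$, I would show that the natural map $\colim_m\tau_{\geq -m}Y\rightarrow Y$ is an equivalence for every $Y\in\Cscr$. Its cofiber is $\colim_m\tau_{\leq -m-1}Y$; by cofinality of the tail subtower this colimit lies in $\Cscr_{\leq -N-1}$ for every $N$, again using coproduct closure, and thus in $\bigcap_n\Cscr_{\leq n}$, which vanishes by right separateness. The main obstacle in executing the plan is precisely this coproduct-closure argument, which is what ensures that sequential colimits of coconnective objects remain coconnective; once that is in place, everything else reduces to bookkeeping with truncation triangles.
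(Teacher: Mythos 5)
Your overall strategy is sound and is essentially a direct dualization of Lurie's proof of \cite{ha}*{1.2.1.19} (the paper itself disposes of the lemma by simply citing the dual of that result), but one step is justified by a false claim. You assert that $\Cscr_{\leq n-1}$ is ``automatically closed under cofibers in any $t$-structure (being the right orthogonal to $\Cscr_{\geq n}$)'' and hence closed under sequential colimits formed by the Milnor construction. Being a right orthogonal makes $\Cscr_{\leq n-1}$ closed under limits, fibers, and extensions, but \emph{not} under cofibers: for a cofiber sequence $A\to B\to C$ with $A,B\in\Cscr_{\leq n-1}$ and $X\in\Cscr_{\geq n}$, the obstruction to the vanishing of $\Map_\Cscr(X,C)$ lives in $\pi_{-1}\MapSp_\Cscr(X,A)\iso\Hom_{h\Cscr}(X[-1],A)$, which need not vanish. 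Concretely, in $\Sp$ with the Postnikov $t$-structure the cofiber of $\H\ZZ\to 0$ is $\H\ZZ[1]\notin\Sp_{\leq 0}$. Moreover, if coconnective objects really were closed under telescopes you would in effect be assuming that the $t$-structure is compatible with countable filtered colimits, which is strictly stronger than the stated hypothesis that $\Cscr_{\leq 0}$ is closed under countable coproducts.

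Fortunately the gap is local and repairable without new ideas. What the hypotheses do give is that the cofiber of a map between objects of $\Cscr_{\leq n-1}$ lies in $\Cscr_{\leq n}$ (it is an extension of a $1$-shift by a coconnective object), so the telescope of the $\tau_{\leq n-1}X_{-m}$ lies only in $\Cscr_{\leq n}$. This weaker estimate still suffices: it shows that $X_n\to\Phi(\{X_n\})$ becomes an equivalence after applying $\tau_{\geq n+1}$, i.e.\ $X_{n+1}\we\tau_{\geq n+1}\Phi(\{X_n\})$, and since $n$ is arbitrary you may reindex to conclude $\tau_{\geq n}\Phi(\{X_n\})\we X_n$ for every $n$. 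The same one-degree loss is harmless in your last step: for each $N$ the cofiber $\colim_m\tau_{\leq -m-1}Y$ is, by cofinality of the tail, a telescope of objects of $\Cscr_{\leq -N-1}$ and hence lies in $\Cscr_{\leq -N}$, which is all you need to place it in $\bigcap_n\Cscr_{\leq n}$ and invoke right separatedness. With this correction (and the routine verification that $\Phi$ and the comparison equivalences are natural), your direct argument is complete; it trades the paper's one-line citation for an explicit construction of the inverse via telescopes of Whitehead towers, with the only real input being exactly the coproduct-closure hypothesis you isolate.
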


\begin{proof}
    This is the right complete version of~\cite{ha}*{1.2.1.19}.
\end{proof}

\begin{definition}
    If $\Cscr$ is a stable $\infty$-category with a $t$-structure $(\Cscr_{\geq
    0},\Cscr_{\leq 0})$, then the $\infty$-category $$\lim\left(\cdots\rightarrow\Cscr_{\geq n}\xrightarrow{\tau_{\geq
    n+1}}\Cscr_{\geq n+1}\rightarrow\cdots\right)$$ is the {\bf right
    completion} of $\Cscr$.
\end{definition}

\begin{lemma}
    If $\Cscr$ is a stable $\infty$-category with a $t$-structure $(\Cscr_{\geq
    0},\Cscr_{\leq 0})$, then the
    right completion is naturally equivalent to $\Sp(\Cscr_{\geq 0})$, the
    $\infty$-category of spectrum objects in $\Cscr_{\geq 0}$.
\end{lemma}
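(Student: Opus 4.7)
The plan is to construct an equivalence between the two limit descriptions by exhibiting the defining towers as the same tower (up to reindexing). The right completion is, by definition,
\[
\lim\bigl(\cdots \to \Cscr_{\geq n} \xrightarrow{\tau_{\geq n+1}} \Cscr_{\geq n+1} \to \cdots\bigr),
\]
while $\Sp(\Cscr_{\geq 0})$ is the inverse limit
\[
\lim\bigl(\cdots \xrightarrow{\Omega} \Cscr_{\geq 0} \xrightarrow{\Omega} \Cscr_{\geq 0}\bigr)
\]
in $\Cat_\infty$, where $\Omega$ denotes the loops functor on $\Cscr_{\geq 0}$ regarded as a prestable $\infty$-category (using that $\Cscr_{\geq 0}$ is the connective part of a $t$-structure, hence prestable in the sense of~\cite{sag}*{Appendix~C}). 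My strategy is to show that the shift equivalences $[-n]\colon \Cscr_{\geq n} \xrightarrow{\simeq} \Cscr_{\geq 0}$ assemble into an equivalence of diagrams from the first tower to (a reindexing of) the second.

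First, I would observe that for each $n \in \ZZ$, suspension/desuspension gives a canonical equivalence of $\infty$-categories $S_n = [-n]\colon \Cscr_{\geq n} \xrightarrow{\simeq} \Cscr_{\geq 0}$. Next, I would verify that under these equivalences, the structure map $\tau_{\geq n+1}\colon \Cscr_{\geq n} \to \Cscr_{\geq n+1}$ is identified with the prestable loops functor $\Omega\colon \Cscr_{\geq 0} \to \Cscr_{\geq 0}$. The key computation is that for $Y \in \Cscr_{\geq 0}$,
\[
S_{n+1}\bigl(\tau_{\geq n+1}(Y[n])\bigr) \;=\; \tau_{\geq 1}(Y)[n][-n-1] \;=\; \tau_{\geq 1}(Y)[-1],
\]
which agrees with $\tau_{\geq 0}(Y[-1])$ by a direct check on homotopy objects. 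Since $\Cscr_{\geq 0} \hookrightarrow \Cscr$ admits $\tau_{\geq 0}$ as right adjoint, finite limits in the prestable $\infty$-category $\Cscr_{\geq 0}$ are computed by taking the ambient limit in $\Cscr$ followed by $\tau_{\geq 0}$; in particular $\Omega Y = \tau_{\geq 0}(Y[-1])$, so this agrees with the prestable loops. After reindexing $n \mapsto -n$, the tower $(\Cscr_{\geq n}, \tau_{\geq n+1})$ becomes the tower $(\Cscr_{\geq 0}, \Omega)$ defining $\Sp(\Cscr_{\geq 0})$, yielding the desired equivalence on limits.

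The main obstacle is upgrading the pointwise equivalence of the two towers to a coherent equivalence of functors $\ZZ \to \Cat_\infty$. Concretely, one must produce a coherent system of natural isomorphisms $S_{n+1} \circ \tau_{\geq n+1} \simeq \Omega \circ S_n$ that is compatible with composition in the tower. At the level of objects the computation above gives the required equivalence, but promoting this to a morphism of diagrams requires producing natural transformations coming from the adjunction $(\tau_{\geq 0}, \mathrm{incl})$ and the shift autoequivalences of $\Cscr$, and checking that the resulting diagrams of natural transformations commute up to coherent homotopy. Alternatively, one can avoid this by invoking the universal property: the right completion is itself stable and carries a right complete $t$-structure with connective part $\Cscr_{\geq 0}$, and $\Sp(\Cscr_{\geq 0})$ is characterized as the universal such stable $\infty$-category (cf.~\cite{sag}*{Appendix~C}), so comparison of connective parts forces the equivalence.
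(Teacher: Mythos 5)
Your proposal is correct and follows essentially the same route as the paper: rewrite the tower $(\Cscr_{\geq n},\tau_{\geq n+1})$ via the shift equivalences as the tower $(\Cscr_{\geq 0},\Omega)$ and then identify its limit with $\Sp(\Cscr_{\geq 0})$, which the paper does by citing~\cite{ha}*{1.4.2.24} and which you take as the working description of spectrum objects. The only difference is one of emphasis: the paper asserts the reindexing in a single line, while you spell out the check $\tau_{\geq 0}(Y[-1])\simeq(\tau_{\geq 1}Y)[-1]$ and flag the coherence of the identification of diagrams, which is a reasonable (and correctly resolvable) point of care rather than a divergence in method.
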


\begin{proof}
    Write $\iota_{n+1}$ for the inclusion $\Cscr_{\geq n+1}\subseteq\Cscr_{\geq
    n}$. In other words, $\iota_{n+1}$ is the left adjoint to $\tau_{\geq n+1}$.
    We may rewrite the limit $$\lim\left(\cdots\rightarrow\Cscr_{\geq n}\xrightarrow{\tau_{\geq
    n+1}}\Cscr_{\geq
    n+1}\rightarrow\cdots\right)\we\lim\left(\cdots\rightarrow\Cscr_{\geq
    -2}\xrightarrow{\tau_{\geq -1}}\Cscr_{\geq
    -1}\xrightarrow{\tau_{\geq 0}}\Cscr_{\geq 0}\right)$$ as
    \begin{equation}\label{eq:rightlimit}
        \lim\left(\cdots\rightarrow\Cscr_{\geq 0}\xrightarrow{\Omega}\Cscr_{\geq
        0}\xrightarrow{\Omega}\Cscr_{\geq
        0}\right).
    \end{equation}
    If $\Dscr$ is an $\infty$-category with finite limits,
    Lurie defines $\Sp(\Dscr)$ in~\cite{ha}*{1.4.2.8} as the full
    subcategory of reduced, excisive functors
    $\Sscr^{\mathrm{fin}}_\ast\rightarrow\Cscr_{\geq 0}$, where
    $\Sscr^{\mathrm{fin}}_\ast$ is the full subcategory of pointed spaces
    on those spaces that can be built out of finite colimits from
    a point.\footnote{Thus, the idempotent completion of
    $\Sscr^{\mathrm{fin}}_\ast$ is $\Sscr^\omega_\ast$.}
    Lurie later shows, in~\cite{ha}*{1.4.2.24}, that $\Sp(\Dscr)$
    is equivalent to the limit
    $\cdots\rightarrow\Dscr\xrightarrow{\Omega}\Dscr\xrightarrow{\Omega}\Dscr$. Since
    $\Cscr_{\geq 0}$ has finite limits, we see that $\Sp(\Cscr_{\geq 0})$ is
    the right completion, as claimed.
\end{proof}

The right completion $\Sp(\Cscr_{\geq 0})$
can be identified with the full subcategory of $\Fun(\ZZ,\Cscr)$ consisting of
those functors $X(\star)\colon\ZZ\rightarrow\Cscr$ such that $X(m)\in\Cscr_{\geq m}$
for all $m$
and $X(m)\rightarrow X(n)$ induces an equivalence $\tau_{\geq n}X(m)\we X(n)$
for $m\leq n$. Let $\Sp(\Cscr_{\geq 0})_{\geq 0}\subseteq\Sp(\Cscr_{\geq 0})$ denote the full
subcategory of those objects $X(\star)$ where, additionally, $X(m)\in\Cscr_{\geq 0}$
for all $m$. Let $\Sp(\Cscr_{\geq 0})_{\leq 0}$ be the full subcategory of
those sequences $X(\star)$ in $\Sp(\Cscr_{\geq 0})$ where, additionally,
$X(m)\in\Cscr_{\leq 0}$ for all $m$.

\begin{lemma}
    Let $\Cscr$ be a stable $\infty$-category with a $t$-structure
    $(\Cscr_{\geq 0},\Cscr_{\leq 0})$.
    \begin{enumerate}
        \item[{\rm (a)}] The right completion $\Sp(\Cscr_{\geq 0})$ is stable.
        \item[{\rm (b)}] The full subcategories $(\Sp(\Cscr_{\geq 0})_{\geq
            0},\Sp(\Cscr_{\geq 0})_{\leq 0})$ define a right complete
            $t$-structure on $\Sp(\Cscr_{\geq 0})$.
        \item[{\rm (c)}] The natural functor $\Sp(\Cscr_{\geq
            0})\rightarrow\Cscr$ is right $t$-exact (and in particular exact).
        \item[{\rm (d)}] The functor in (c) induces an equivalence $\Cscr_{\geq
            0}\we\Sp(\Cscr)_{\geq 0}$.
    \end{enumerate}
\end{lemma}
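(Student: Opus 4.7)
The plan is to work throughout with the explicit model of $\Sp(\Cscr_{\geq 0})$ as the full subcategory of $\Fun(\ZZ,\Cscr)$ consisting of sequences $X(\star)$ with $X(m)\in\Cscr_{\geq m}$ and $\tau_{\geq n}X(m)\simeq X(n)$ for $m\leq n$, under which limits (and finite colimits) are computed levelwise. For (a), I would note that $\Cscr_{\geq 0}$ is pointed and admits finite limits: finite products coincide with those in $\Cscr$, and fibers are given by $\fib_{\Cscr_{\geq 0}}(f)=\tau_{\geq 0}\fib_{\Cscr}(f)$. Hence \cite{ha}*{Corollary~1.4.2.17} gives that $\Sp(\Cscr_{\geq 0})$ is stable.

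For (b), I would define the candidate truncation functors levelwise by $(\tau_{\geq 0}X)(m):=\tau_{\geq 0}X(m)$ and $(\tau_{\leq -1}X)(m):=\tau_{\leq -1}X(m)$. That these land in $\Sp(\Cscr_{\geq 0})$ is a direct consequence of the identities $\tau_{\geq a}\tau_{\geq b}=\tau_{\geq\max(a,b)}$ and $\tau_{\geq a}\tau_{\leq b}=\tau_{\leq b}\tau_{\geq a}$ for the $t$-structure on $\Cscr$. The cofiber sequence $\tau_{\geq 0}X\to X\to\tau_{\leq -1}X$ and the orthogonality $\Map(Y,W[-1])=0$ for $Y\in\Sp(\Cscr_{\geq 0})_{\geq 0}$, $W\in\Sp(\Cscr_{\geq 0})_{\leq 0}$, both reduce to their levelwise counterparts in $\Cscr$. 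Right separatedness is easy: any $X\in\bigcap_n\Sp(\Cscr_{\geq 0})_{\leq n}$ satisfies $X(m)\in\Cscr_{\geq m}\cap\Cscr_{\leq m-1}=0$ for every $m$. For right completeness I would argue directly, exhibiting the inverse to the canonical map $\Sp(\Cscr_{\geq 0})\to\lim_n\Sp(\Cscr_{\geq 0})_{\geq -n}$: a compatible tower $(X_n)_{n\geq 0}$ with $X_n\in\Sp(\Cscr_{\geq 0})_{\geq -n}$ and $\tau_{\geq -(n-1)}X_n\simeq X_{n-1}$ reassembles into a single spectrum object via $X(m):=X_n(m)$ for any $n\geq -m$, the assignment being independent of $n$ since the compatibility forces $X_n(m)\simeq X_{n-1}(m)$ whenever $m\geq-(n-1)$.

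For (c), I would construct the natural functor $F\colon\Sp(\Cscr_{\geq 0})\to\Cscr$ either abstractly from the universal property of stabilization \cite{ha}*{Corollary~1.4.2.23}, applied to the pointed left-exact inclusion $\Cscr_{\geq 0}\hookrightarrow\Cscr$, or concretely as the projection $X\mapsto X(0)$. Exactness follows from the universal property, and right $t$-exactness is immediate since $X(0)\in\Cscr_{\geq 0}$ whenever $X\in\Sp(\Cscr_{\geq 0})_{\geq 0}$. For (d), the candidate inverse $\Cscr_{\geq 0}\to\Sp(\Cscr_{\geq 0})_{\geq 0}$ sends $c$ to the sequence $m\mapsto\tau_{\geq m}c$ (which equals $c$ for $m\leq 0$). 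The composite $\Cscr_{\geq 0}\to\Sp(\Cscr_{\geq 0})_{\geq 0}\xrightarrow{F}\Cscr_{\geq 0}$ is tautologically the identity; in the other direction, any $X\in\Sp(\Cscr_{\geq 0})_{\geq 0}$ satisfies $X(m)\simeq\tau_{\geq 0}X(m)\simeq\tau_{\geq 0}X(0)\simeq X(0)$ for $m\leq 0$ (using $X(m)\in\Cscr_{\geq 0}$ together with the spectrum-object condition) and $X(m)\simeq\tau_{\geq m}X(0)$ for $m\geq 0$ by the spectrum-object condition, so $X$ is naturally equivalent to the sequence built from $X(0)$. The main obstacle is the right completeness claim in (b): since $\Sp(\Cscr_{\geq 0})$ need not admit countable coproducts, \cite{ha}*{Proposition~1.2.1.19} does not apply directly, and one has to give the hands-on identification with the limit tower sketched above. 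Once this is in place, the remaining statements are essentially formal.
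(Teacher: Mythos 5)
Parts (a), (b) and (d) of your argument are essentially correct, and your route is genuinely different from the paper's, whose entire proof is a citation: stability via \cite{ha}*{1.4.2.21} and the rest by dualizing the left-completion statement \cite{ha}*{1.2.1.17}. Your levelwise truncations are legitimate (the needed checks are exactly the identities you cite, together with the fact that $\Cscr_{\geq m}$ is closed under extensions, so truncations of $m$-connective objects stay $m$-connective), orthogonality and separatedness do reduce to levelwise statements, and the reassembly argument for right completeness is the right idea; to make the inverse functorial it is cleaner to note that (d), shifted, gives equivalences $\Cscr_{\geq -n}\we\Sp(\Cscr_{\geq 0})_{\geq -n}$ compatible with the truncation transition maps, whence $\lim_n\Sp(\Cscr_{\geq 0})_{\geq -n}\we\lim_n\Cscr_{\geq -n}\we\Sp(\Cscr_{\geq 0})$. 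One correction to your preamble: only finite colimits are computed levelwise in $\Cscr$; limits are computed levelwise in the $\Cscr_{\geq m}$'s (the fiber at level $m$ is $\tau_{\geq m}$ of the fiber in $\Cscr$). Your actual uses of levelwise (co)fiber sequences survive this, but the distinction matters below.

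The genuine gap is in (c): neither of your constructions produces an exact functor. The inclusion $\Cscr_{\geq 0}\hookrightarrow\Cscr$ is \emph{not} left exact---it preserves finite products but not fibers, precisely because the fiber in $\Cscr_{\geq 0}$ is $\tau_{\geq 0}$ of the fiber in $\Cscr$---so \cite{ha}*{1.4.2.23} cannot be applied to it (and that result produces exact functors \emph{into} a stabilization out of a left exact functor, not functors out of $\Sp(\Cscr_{\geq 0})$). Evaluation at level $0$ is not exact either: $(\Sigma X)(0)\we\Sigma X(-1)$ while $\Sigma(X(0))\we\Sigma\tau_{\geq 0}X(-1)$, and for $\Cscr=\Sp$ this functor is $\tau_{\geq 0}$ under $\Sp(\Sp_{\geq 0})\we\Sp$; so the sentence ``exactness follows from the universal property'' has nothing backing it. In the stated generality (no colimit hypotheses on $\Cscr$) the comparison that the dual of \cite{ha}*{1.2.1.17} actually provides goes the other way: $X\mapsto(\tau_{\geq m}X)_m$ from $\Cscr$ to $\Sp(\Cscr_{\geq 0})$. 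This is exact because each $\tau_{\geq m}\colon\Cscr\rightarrow\Cscr_{\geq m}$ is a right adjoint and hence preserves finite limits, while limits in $\Sp(\Cscr_{\geq 0})$ are computed levelwise in the $\Cscr_{\geq m}$'s; it is visibly right $t$-exact, and its restriction to $\Cscr_{\geq 0}$ is exactly the inverse you wrote down in (d), so (d) is unaffected. If you insist on a functor $\Sp(\Cscr_{\geq 0})\rightarrow\Cscr$ as literally stated, you need $\Cscr$ to admit sequential colimits and should take $X(\star)\mapsto\colim_{m\to-\infty}X(m)$, of which your $X(0)$ is only the connective cover (for $\Cscr$ the category of bounded below spectra, $\Sp(\Cscr_{\geq 0})\we\Sp$ and no evident exact functor in that direction exists). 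So repair (c) by proving exactness and right $t$-exactness of $X\mapsto(\tau_{\geq m}X)_m$, or add a colimit hypothesis; the rest of your proof then stands.
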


\begin{proof}
    Part (a) follows from~\cite{ha}*{1.4.2.21} since $\Cscr_{\geq 0}$ has
    finite limits. The remainder is a right complete version
    of~\cite{ha}*{1.2.1.17}.
\end{proof}

\subsection{Compatibility with symmetric monoidal structures}\label{sub:compatible}

In this section, we discuss the interaction of $t$-structures and tensor
products.

\begin{definition}
    Let $\Cscr$ be a stable $\infty$-category equipped with a $t$-structure
    $(\Cscr_{\geq 0},\Cscr_{\leq 0})$ and a symmetric monoidal structure
    $\Cscr^\otimes$. We say that the $t$-structure is {\bf compatible} with the
    symmetric monoidal structure if the following conditions hold:
    \begin{enumerate}
        \item[(i)] the tensor product $\Cscr\times\Cscr\xrightarrow{-\otimes-}\Cscr$ is
            exact in each variable;
        \item[(ii)] the unit $\1_\Cscr$ is in $\Cscr_{\geq 0}$;
        \item[(iii)] $X\otimes Y\in\Cscr_{\geq 0}$ whenever $X,Y\in\Cscr_{\geq 0}$.
    \end{enumerate}
\end{definition}

\begin{example}
    The Postnikov $t$-structure on the $\infty$-category of spectra is
    compatible with the smash product symmetric monoidal structure. Indeed, the
    tensor product commutes with all colimits in each variable, the
    sphere spectrum is connective, and the smash product of connective spectra
    is connective.
\end{example}

In general, if $(\Cscr_{\geq 0},\Cscr_{\leq 0})$ is a $t$-structure compatible
with $\Cscr^\otimes$, then the $\infty$-category $\Cscr_{\geq 0}$ and the fully
faithful inclusion functor $\Cscr_{\geq 0}\rightarrow\Cscr$ inherit unique
$t$-structures. For example, $\Cscr_{\geq 0}^\otimes\subseteq\Cscr^\otimes$ is
the full subcategory spanned by the objects
$(X_1,\ldots,X_n)\in\Cscr^\otimes_{\langle n\rangle}\we\Cscr^n$ such that $X_i\in\Cscr_{\geq 0}$
for each $i$.

\begin{lemma}\label{lem:monoidalloc}
    Let $\Cscr$ be a stable $\infty$-category equipped with a symmetric
    monoidal structure $\Cscr^\otimes$ and a $t$-structure $(\Cscr_{\geq
    0},\Cscr_{\leq 0})$ that is compatible with $\Cscr^\otimes$. Then, there is
    a unique symmetric monoidal structure on $\Cscr^\heart$ and a unique
    symmetric monoidal structure on the functor $\pi_0\colon\Cscr_{\geq
    0}\rightarrow\Cscr^\heart$.
\end{lemma}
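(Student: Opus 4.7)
The plan is to derive both statements from Lurie's criterion~\cite{ha}*{Proposition~2.2.1.9} for reflective localizations of symmetric monoidal $\infty$-categories. First I would observe that $\Cscr_{\geq 0}$ inherits a symmetric monoidal structure $\Cscr_{\geq 0}^\otimes$ from $\Cscr^\otimes$, since by hypothesis the unit lies in $\Cscr_{\geq 0}$ and $\Cscr_{\geq 0}$ is closed under tensor products; indeed, $\Cscr_{\geq 0}^\otimes \subseteq \Cscr^\otimes$ is defined as the full subcategory on tuples of connective objects, and this is automatically symmetric monoidal (cf.~\cite{ha}*{Proposition~2.2.1.2}). Next I would record that the inclusion $\Cscr^\heart \hookrightarrow \Cscr_{\geq 0}$ is the inclusion of the $0$-truncated objects and admits $\tau_{\leq 0} = \pi_0$ as a left adjoint, so $\pi_0 \colon \Cscr_{\geq 0} \to \Cscr^\heart$ is a reflective localization in the sense of~\cite{ha}*{Chapter~5}.

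To apply~\cite{ha}*{Proposition~2.2.1.9}, I need to verify that the class of $\pi_0$-equivalences in $\Cscr_{\geq 0}$ is stable under tensoring with any object of $\Cscr_{\geq 0}$. A morphism $f \colon X \to Y$ in $\Cscr_{\geq 0}$ is a $\pi_0$-equivalence precisely if $\pi_0 f$ is an isomorphism in $\Cscr^\heart$; since $X, Y \in \Cscr_{\geq 0}$, this happens if and only if $\cofib(f) \in \Cscr_{\geq 1}$.

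The key (and only nontrivial) step is then the following. Given such an $f$ and any $Z \in \Cscr_{\geq 0}$, exactness of $-\otimes Z$ (which is part of the compatibility hypothesis) gives $\cofib(f \otimes \id_Z) \simeq \cofib(f) \otimes Z$. Writing $\cofib(f) = C'[1]$ with $C' \in \Cscr_{\geq 0}$ and using that $-\otimes Z$ commutes with suspension, we find $\cofib(f) \otimes Z \simeq (C' \otimes Z)[1]$; and $C' \otimes Z \in \Cscr_{\geq 0}$ by the assumption that $\Cscr_{\geq 0}$ is closed under the tensor product. Hence $\cofib(f \otimes \id_Z) \in \Cscr_{\geq 1}$, so that $f \otimes \id_Z$ is again a $\pi_0$-equivalence.

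With this verification in hand, Lurie's proposition produces a (uniquely determined) symmetric monoidal structure on $\Cscr^\heart$ together with a symmetric monoidal refinement of $\pi_0$, which is the content of the lemma. I do not anticipate a real obstacle here: the argument is a direct instance of the general machinery for monoidal Bousfield localization, and the only computation needed is the shift trick in the preceding paragraph, which is immediate from compatibility of the $t$-structure with $\otimes$.
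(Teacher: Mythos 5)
Your overall strategy is the same as the paper's: view $\pi_0\colon\Cscr_{\geq 0}\to\Cscr^\heart$ as a symmetric monoidal localization (the paper invokes \cite{nikolaus-scholze}*{Theorem~I.3.6} rather than \cite{ha}*{2.2.1.9}, but this is the same mechanism), so that the whole content is the verification that $\pi_0$-equivalences are stable under tensoring with objects of $\Cscr_{\geq 0}$. However, your verification has a genuine gap: the claimed equivalence ``$\pi_0 f$ is an isomorphism if and only if $\cofib(f)\in\Cscr_{\geq 1}$'' is false, and only the forward implication holds. From the long exact sequence one has $\pi_0\cofib(f)\iso\coker(\pi_0 f)$, so a $1$-connective cofiber records only the surjectivity of $\pi_0 f$; injectivity can fail because $\pi_1\cofib(f)$ may map nontrivially to $\pi_0 X$. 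Concretely, in $\Dscr(\ZZ)$ with its standard $t$-structure the map $\ZZ/p\to 0$ has cofiber $\ZZ/p[1]\in\Dscr(\ZZ)_{\geq 1}$ but is not a $\pi_0$-equivalence. Your final inference --- ``$\cofib(f\otimes\id_Z)\in\Cscr_{\geq 1}$, hence $f\otimes\id_Z$ is a $\pi_0$-equivalence'' --- uses exactly the false converse, so what you have actually shown is only that $\pi_0(f\otimes\id_Z)$ is an epimorphism.

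The missing injectivity is precisely the content of the paper's Lemma~\ref{lem:tensorheart}: tensoring the two fiber sequences $\tau_{\geq 1}X\to X\to\pi_0X$ and $\tau_{\geq 1}Y\to Y\to\pi_0Y$ and observing that every term involving a $\tau_{\geq 1}$ lies in $\Cscr_{\geq 1}$ gives $\pi_0(X\otimes Z)\iso\pi_0(\pi_0X\otimes\pi_0Z)$, from which stability of $\pi_0$-equivalences under $-\otimes Z$ is immediate. Equivalently, the connectivity trick you want is on fibers, not cofibers: if $\fib(g)\in\Cscr_{\geq 1}$ then $\fib(g\otimes\id_Z)\we\fib(g)\otimes Z\in\Cscr_{\geq 1}$, and a $1$-connective fiber forces $\pi_0(g\otimes\id_Z)$ to be both injective and surjective; a general $\pi_0$-equivalence $f$ is then handled by the zig-zag $X\to\pi_0X\iso\pi_0Y\leftarrow Y$, whose outer legs have $1$-connective fibers. (An alternative repair, localizing at the generating class $\{Z\to 0: Z\in\Cscr_{\geq 1}\}$ and appealing to strong saturation, would require accessibility or presentability hypotheses that the lemma does not assume.) So your proof needs this additional lemma-level input before Lurie's criterion can be applied.
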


\begin{proof}
    The proof is the same as that of~\cite{nikolaus-scholze}*{Theorem~I.3.6}
    since $\Cscr_{\geq 0}\rightarrow\Cscr^\heart$ is a localization at the
    class $W$ of maps $X\rightarrow Y$ such that $\pi_0X\rightarrow\pi_0Y$ is
    an isomorphism in $\Cscr^\heart$. We just have to check that if $f\colon
    X\rightarrow Y$ is in the class $W$ and if $Z$ is an arbitrary object of
    $\Cscr_{\geq 0}$, then $X\otimes Z\xrightarrow{f\otimes\id_Z}Y\otimes Z$ is
    in $W$. This is a trivial consequence of the next lemma.
\end{proof}

\begin{lemma}\label{lem:tensorheart}
    In the situation of Lemma~\ref{lem:monoidalloc},
    for any pair $X,Y\in\Cscr_{\geq 0}$, the natural map $\pi_0(X\otimes Y)\rightarrow\pi_0(\pi_0 X\otimes \pi_0 Y)$ is an
    isomorphism in $\Cscr^\heart$.
\end{lemma}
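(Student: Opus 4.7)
The plan is to use the standard truncation argument twice, exploiting that the tensor product is exact in each variable and that $\Cscr_{\geq 0}$ is closed under $\otimes$.

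First I would reduce the statement to showing that tensoring a $1$-connective object with a $0$-connective one yields a $1$-connective object. Precisely, for $W \in \Cscr_{\geq 1}$ and $Z \in \Cscr_{\geq 0}$, write $W \cong W'[1]$ with $W' \in \Cscr_{\geq 0}$. Since the tensor product is exact in each variable (condition (i) of compatibility), it commutes with the shift functor, so $W \otimes Z \cong (W' \otimes Z)[1]$. By condition (iii), $W' \otimes Z \in \Cscr_{\geq 0}$, hence $W \otimes Z \in \Cscr_{\geq 1}$.

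Next I would apply this twice to the fiber sequences coming from the $t$-structure truncations. Starting with the fiber sequence $\tau_{\geq 1}X \to X \to \pi_0 X$ and tensoring with $Y \in \Cscr_{\geq 0}$ (again using exactness of $-\otimes Y$) yields a fiber sequence
\begin{equation*}
\tau_{\geq 1}X \otimes Y \to X \otimes Y \to \pi_0 X \otimes Y
\end{equation*}
whose first term lies in $\Cscr_{\geq 1}$ by the observation above. Applying $\pi_0$ and using that $\pi_0$ vanishes on $\Cscr_{\geq 1}$ (and is right exact on $\Cscr_{\geq 0}$), we obtain an isomorphism $\pi_0(X \otimes Y) \xrightarrow{\cong} \pi_0(\pi_0 X \otimes Y)$. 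Now repeat the same argument with the fiber sequence $\tau_{\geq 1}Y \to Y \to \pi_0 Y$, tensored with $\pi_0 X \in \Cscr_{\geq 0}$, to produce an isomorphism $\pi_0(\pi_0 X \otimes Y) \xrightarrow{\cong} \pi_0(\pi_0 X \otimes \pi_0 Y)$.

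Finally, composing these two isomorphisms gives the desired isomorphism $\pi_0(X \otimes Y) \xrightarrow{\cong} \pi_0(\pi_0 X \otimes \pi_0 Y)$, and one checks that the composite agrees with the natural map induced by the two units $X \to \pi_0 X$ and $Y \to \pi_0 Y$. There is essentially no obstacle here; the only point to be careful about is the interaction of the tensor product with the shift, which is handled by exactness in each variable.
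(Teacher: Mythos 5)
Your proof is correct and follows essentially the same route as the paper: both rest on the observation that tensoring a $1$-connective object with a connective one stays $1$-connective, applied to the truncation fiber sequences for $X$ and $Y$. The only difference is cosmetic — you run the argument in two sequential one-variable steps, while the paper assembles both truncations into a single $3\times 3$ diagram and reads off the isomorphism from its bottom-right square.
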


\begin{proof}
    To see this, consider the $3\times 3$ diagram obtained by tensoring
    $\tau_{\geq 1} X\rightarrow X\rightarrow\pi_0 X$ with the corresponding
    sequence for $Y$. We get a commutative diagram
    $$\xymatrix{
        \tau_{\geq 1} X \otimes \tau_{\geq 1} Y\ar[r]\ar[d]  & X\otimes \tau_{\geq 1} Y \ar[r]\ar[d]&\pi_0 X\otimes\tau_{\geq 1} Y\ar[d]\\
        \tau_{\geq 1} X\otimes Y\ar[r]\ar[d]  & X\otimes Y\ar[r]\ar[d]&\pi_0X\otimes
    Y\ar[d]\save"2,2"."3,3"*[magenta]\frm<8pt>{.}\restore \\
    \tau_{\geq 1} X\otimes \pi_0 Y\ar[r]  & X\otimes \pi_0 Y \ar[r]&\pi_0 X\otimes\pi_0 Y,\\
    }$$
    where each row and column is a fiber sequence in $\Cscr$. Any term with at least one $\tau_{\geq
    1}$ is in $\Cscr_{\geq 1}$. Hence, applying $\pi_0$, we see using the
    bottom right four objects that $$\pi_0(X\otimes Y)\we\pi_0(\pi_0 X\otimes\pi_0 Y),$$
    as desired.
\end{proof}

\begin{definition}
    We say that a $t$-structure on $\Cscr$ is {\bf compatible with countable
    products} if $\Cscr$ admits countable products and $\Cscr_{\geq
    0}\subseteq\Cscr$ is closed under countable products. Similarly, we say that
    $\Cscr$ is {\bf compatible with countable coproducts} if $\Cscr$ admits countable
    coproducts and $\Cscr_{\leq 0}\subseteq\Cscr$ is closed under countable
    coproducts. We also say that a $t$-structure on $\Cscr$ is {\bf
    compatible with products} if $\Cscr$ admits products and $\Cscr_{\geq
    0}\subseteq\Cscr$ is closed under products.
\end{definition}

For the next proposition, let $\P$ be one of the following properties of a
$t$-structure: left separated, right separated, compatible with countable
products, or compatible with products.

\begin{proposition}\label{prop:permanence}
    Let $\Cscr$ be a presentably symmetric monoidal stable $\infty$-category
    equipped with a compatible accessible $t$-structure $(\Cscr_{\geq 0},\Cscr_{\leq 0})$.
    Let $A\in\Alg_{\EE_1}(\Cscr_{\geq 0})$.
    \begin{enumerate}
        \item[{\rm (1)}] Then, $\Mod_A(\Cscr)$ admits an accessible
            $t$-structure with $\Mod_A(\Cscr)_{\geq 0}\we\Mod_A(\Cscr_{\geq
            0})$.
        \item[{\rm (2)}] If the $t$-structure on $\Cscr$ satisfies property
            $\P$, then so does the $t$-structure on $\Mod_A(\Cscr)$.
        \item[{\rm (3)}] There is a natural equivalence
            $\Mod_A(\Cscr)^\heart\we\Mod_{\pi_0A}(\Cscr^\heart)$.
        \item[{\rm (4)}] If $A$ is in $\Alg_{\EE_\infty}(\Cscr_{\geq 0})$, then
            $\Mod_A(\Cscr)$ admits a presentably symmetric monoidal structure
            and the $t$-structure is compatible with the symmetric monoidal
            structure.
    \end{enumerate}
\end{proposition}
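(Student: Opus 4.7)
The plan is to reduce everything to properties of the forgetful functor $U\colon\Mod_A(\Cscr)\to\Cscr$, which preserves and reflects limits and colimits (by~\cite{ha}*{4.2.3.3}) and is conservative. For~(1), I would define $\Mod_A(\Cscr)_{\geq 0}:=U^{-1}(\Cscr_{\geq 0})$. This is presentable: since $U$ is an accessible limit-preserving functor between presentable $\infty$-categories, and since $\Cscr_{\geq 0}\subseteq\Cscr$ is presentable and closed under colimits, the pullback $\Mod_A(\Cscr)\times_\Cscr\Cscr_{\geq 0}$ is presentable and closed under colimits in $\Mod_A(\Cscr)$. It is moreover closed under extensions because $U$ is exact and $\Cscr_{\geq 0}$ is closed under extensions. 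Thus by~\cite{ha}*{1.4.4.11} there is a unique accessible $t$-structure on $\Mod_A(\Cscr)$ with the prescribed connective part, and the identification $\Mod_A(\Cscr)_{\geq 0}\simeq\Mod_A(\Cscr_{\geq 0})$ holds by construction (using that $A\in\Cscr_{\geq 0}$ ensures $\Mod_A(\Cscr_{\geq 0})$ makes sense as a full subcategory). By construction $U$ is right $t$-exact, and conservativity together with the cofiber sequence $\tau_{\geq 0}M\to M\to\tau_{\leq -1}M$ in $\Mod_A(\Cscr)$ mapped down to $\Cscr$ shows that $U$ is also left $t$-exact, hence $t$-exact.

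For~(2), I would transfer each property along $U$. Left and right separatedness are immediate because $\bigcap_n\Mod_A(\Cscr)_{\leq n}$ and $\bigcap_n\Mod_A(\Cscr)_{\geq n}$ map under the conservative $t$-exact functor $U$ into the corresponding subcategories of $\Cscr$. Compatibility with (countable) products follows because $U$ preserves and reflects products and is $t$-exact, so a product of connective $A$-modules is connective iff its underlying product in $\Cscr$ is connective. For~(3), any object $M\in\Mod_A(\Cscr)^\heart$ has underlying object in $\Cscr^\heart$, and the action map $A\otimes M\to M$ in $\Mod_A(\Cscr)_{\geq 0}$ becomes, after applying $\pi_0^{\Cscr}$, a map $\pi_0(A\otimes M)\to M$. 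Applying Lemma~\ref{lem:tensorheart} gives $\pi_0(A\otimes M)\simeq\pi_0A\otimes^\heart M$, and the associativity/unit data for the $A$-action pass to $\pi_0A$-action data in $\Cscr^\heart$ because the truncation $\pi_0^\Cscr\colon\Cscr_{\geq 0}\to\Cscr^\heart$ is symmetric monoidal (Lemma~\ref{lem:monoidalloc}). The converse construction, promoting a $\pi_0A$-module in $\Cscr^\heart$ to an $A$-module in $\Cscr$ via $A\to\pi_0A$, produces a functor $\Mod_{\pi_0A}(\Cscr^\heart)\to\Mod_A(\Cscr)^\heart$ which is readily checked to be inverse.

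For~(4), when $A\in\Alg_{\EE_\infty}(\Cscr_{\geq 0})$ the $\infty$-category $\Mod_A(\Cscr)$ inherits a presentably symmetric monoidal structure with tensor product the relative tensor product $\otimes_A$, by~\cite{ha}*{4.5.2.1}. The unit is $A$, which lies in $\Mod_A(\Cscr)_{\geq 0}$ by assumption. For closure of connective modules under $\otimes_A$, I would write $M\otimes_AN$ as the geometric realization of the two-sided bar construction with $n$-simplices $M\otimes A^{\otimes n}\otimes N$; each such simplex lies in $\Cscr_{\geq 0}$ by compatibility of the $t$-structure on $\Cscr$ with $\otimes$, and since $\Cscr_{\geq 0}$ is closed under colimits, so does the realization. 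Thus $\otimes_A$ restricts to $\Mod_A(\Cscr)_{\geq 0}$, completing the proof of compatibility. The main technical point to watch is~(1), specifically verifying that $\Mod_A(\Cscr)_{\geq 0}$ is both presentable and closed under extensions so that Lurie's criterion applies; everything else cascades from the $t$-exactness and conservativity of $U$ and the symmetric monoidality of $\pi_0^\Cscr$.
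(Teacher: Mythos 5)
Your overall route coincides with the paper's: part (1) via \cite{ha}*{1.4.4.11} after checking presentability and closure under colimits and extensions, part (2) by transporting each property along the conservative $t$-exact forgetful functor $U\colon\Mod_A(\Cscr)\to\Cscr$, part (3) via the monoidal truncation $\pi_0\colon\Cscr_{\geq 0}\to\Cscr^\heart$ of Lemmas~\ref{lem:monoidalloc} and~\ref{lem:tensorheart}, and part (4) via \cite{ha}*{4.5.2.1}; your bar-construction argument for closure of connective modules under $\otimes_A$ is a correct explicit supplement to the citation.

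The one step that does not hold up as written is your justification that $U$ is left $t$-exact. You argue "by conservativity together with the cofiber sequence $\tau_{\geq 0}M\to M\to\tau_{\leq -1}M$ mapped down to $\Cscr$," but to identify the image of that triangle with the truncation triangle of $UM$ in $\Cscr$ you would already need to know that $U\tau_{\leq -1}M$ lies in $\Cscr_{\leq -1}$, which is exactly the assertion of left $t$-exactness; as stated the argument is circular. The correct (and short) argument, which is the one the paper uses, is by adjunction: the extension-of-scalars functor $A\otimes-\colon\Cscr\to\Mod_A(\Cscr)$ is right $t$-exact because $A$ is connective and the $t$-structure on $\Cscr$ is compatible with the tensor product, so its right adjoint $U$ is left $t$-exact. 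Concretely, if $M\in\Mod_A(\Cscr)_{\leq -1}$ and $X\in\Cscr_{\geq 0}$, then $\Map_{\Cscr}(X,UM)\simeq\Map_{\Mod_A(\Cscr)}(A\otimes X,M)$ is contractible since $A\otimes X$ is connective, and taking $X=\tau_{\geq 0}UM$ forces $\tau_{\geq 0}UM\simeq 0$. Note that this is precisely where the connectivity of $A$ and the compatibility of the $t$-structure with $\otimes$ genuinely enter (in your write-up they are only used to make sense of $\Mod_A(\Cscr_{\geq 0})$ and in part (4)). Since your arguments for (2) and for the heart identification in (3) lean on $t$-exactness of $U$, this step needs the repair; with it, the rest of your proposal goes through and is essentially the paper's proof.
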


\begin{proof}
    For (1) it is enough by~\cite{ha}*{1.4.4.11} to note that $\Mod_A(\Cscr_{\geq
    0})\subseteq\Mod_A(\Cscr)$ are presentable (see~\cite{ha}*{4.2.3.7}), the
    inclusion preserves colimits, and
    that $\Mod_A(\Cscr)$ is closed under extensions in $\Mod_A(\Cscr)$.
    For (2), we will use that the forgetful functor
    $\Cscr\leftarrow\Mod_A(\Cscr)$ is conservative and $t$-exact. Indeed, right
    $t$-exactness follows by definition and left $t$-exactness follows from the
    fact that the forgetful functor is right adjoint to the right $t$-exact
    extension of scalars functor. Now, (2) for left or right separatedness
    follows from conservativity and $t$-exactness. For compatibility with
    (countable) products, we use conservativity, $t$-exactness, and preservation
    of limits. Part (3) is a general fact about symmetric monoidal localizations. Part (4)
    follows from~\cite{ha}*{4.5.2.1}.
\end{proof}

\begin{remark}\label{rem:completeness}
    By~\cite{ha}*{1.2.1.19}, compatibility with countable products and left
    separated together imply left complete. Similarly, compatibility with
    countable coproducts and right separated together imply right complete.
\end{remark}

In one way or another, we will typically be starting with a $t$-structure on
spectra with an $S^1$-action. This is a special case of a $t$-structure on
parametrized spectra, which we introduce below.

Let $X$ be a space and let $\Sp^X=\Fun(X,\Sp)$ be the $\infty$-category of
spectra parametrized over $X$. There is a natural $t$-structure on $\Sp^X$, the
{\bf Postnikov} $t$-structure, where $(\Sp^X)_{\geq 0}=\Fun(X,\Sp_{\geq 0})$
and $(\Sp^X)_{\leq 0}=\Fun(X,\Sp_{\leq 0})$. The heart $\Sp^{X,\heart}$ is
naturally equivalent to $\prod_{x\in\pi_0X}\Mod_{\ZZ[\pi_1(X,x)]}^\heart$, the
product over the connected components of $X$ of the abelian group of discrete
$\ZZ[\pi_1(X,x)]$-modules, where $\ZZ[\pi_1(X,x)]$ is the group algebra of
$\pi_1(X,x)$.

\begin{proposition}\label{prop:xsp}
    Let $X$ be a space. The Postnikov $t$-structure on $\Sp^X$ is
    \begin{enumerate}
        \item[{\rm (a)}] accessible,
        \item[{\rm (b)}] left and right complete,
        \item[{\rm (c)}] compatible with products and filtered colimits, and
        \item[{\rm (d)}] compatible with the pointwise symmetric monoidal structure
            \[
            \left(\Sp^{X}\right)^{\otimes}=\Fun(X,\Sp^\otimes) \times_{\Fun(X, \operatorname{Fin}_*)} \operatorname{Fin}_*
            \]
            on $\Sp^X$.
    \end{enumerate}
\end{proposition}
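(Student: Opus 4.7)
The plan is to reduce all five statements to the analogous facts about the Postnikov $t$-structure on $\Sp$ itself, exploiting that $X$ is a space (equivalently, an $\infty$-groupoid), so the functor $\infty$-category $\Sp^X = \Fun(X,\Sp)$ inherits limits, colimits, truncation, and tensor product \emph{pointwise} from $\Sp$. Concretely, for any small diagram $K \to \Sp^X$, both the limit and colimit are detected in $\Sp$ under each of the evaluation functors $\ev_x \colon \Sp^X \to \Sp$ for $x \in X$; and by definition a functor $F\colon X \to \Sp$ lies in $(\Sp^X)_{\geq 0}$ (resp.\ $(\Sp^X)_{\leq 0}$) if and only if $F(x) \in \Sp_{\geq 0}$ (resp.\ $\Sp_{\leq 0}$) for every $x \in X$.

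For part (a), $\Sp^X$ is presentable by \cite{htt}*{5.5.3.6}, and $(\Sp^X)_{\geq 0} = \Fun(X, \Sp_{\geq 0})$ is presentable for the same reason. I would first verify the axioms of a $t$-structure: the mapping space characterization of $(\Sp^X)_{\leq 0}$ reduces pointwise using the end formula for mapping spaces in $\Sp^X$, and the truncation fiber sequence $\tau_{\geq 0} F \to F \to \tau_{\leq -1} F$ is defined by applying the Postnikov truncations of $\Sp$ at each point of $X$. Accessibility then follows from presentability of $(\Sp^X)_{\geq 0}$ and the fact that the inclusion into $\Sp^X$ preserves colimits (computed pointwise).

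For (c), closure of $(\Sp^X)_{\geq 0}$ under all products and filtered colimits, and closure of $(\Sp^X)_{\leq 0}$ under filtered colimits, reduces pointwise to the well-known corresponding facts for $\Sp$. Given (c), left and right separatedness (an object in $\bigcap_n (\Sp^X)_{\geq n}$ or $\bigcap_n (\Sp^X)_{\leq -n}$ is pointwise zero) combine with closure of $(\Sp^X)_{\geq 0}$ under countable products and of $(\Sp^X)_{\leq 0}$ under countable coproducts to yield (b) via \cite{ha}*{1.2.1.19} (cf.\ Remark~\ref{rem:completeness}). Alternatively, one can directly identify $\Sp^X \simeq \lim_n \Fun(X, \Sp_{\leq n})$ and $\Sp^X \simeq \Sp((\Sp^X)_{\geq 0})$ by commuting $\Fun(X,-)$ with the relevant limits.

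For (d), the pointwise symmetric monoidal structure acts as $(F \otimes G)(x) = F(x) \otimes G(x)$, so compatibility is immediate from compatibility of the smash product with the Postnikov $t$-structure on $\Sp$: the unit is the constant functor at the (connective) sphere spectrum, the tensor product is biexact pointwise, and the pointwise tensor of two pointwise connective functors is pointwise connective. I expect no serious obstacle in this proof; the only mildly delicate point is the verification of the $t$-structure axioms from pointwise truncation data, which requires knowing that $\Fun(X,-)$ preserves the relevant localization/colocalization adjunctions, and this is standard since $\Fun(X,-)$ preserves limits and (by pointwise computation) the relevant colimits.
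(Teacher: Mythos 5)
Your proof is correct, but it follows a genuinely different route from the paper's. You argue pointwise throughout: since limits, colimits, truncations, and the tensor product in $\Sp^X=\Fun(X,\Sp)$ are all computed objectwise, you verify the $t$-structure axioms directly (the orthogonality reducing via the end/limit formula for mapping spaces to contractibility in $\Sp$), deduce (a) from presentability of $\Fun(X,\Sp_{\geq 0})$, get (c) and the separatedness plus countable (co)product closure needed for (b) from the corresponding facts in $\Sp$, invoking \cite{ha}*{1.2.1.19} exactly as in Remark~\ref{rem:completeness}, and observe (d) pointwise. The paper instead handles (d) by inspection and then, for (a)--(c), reduces to path-connected $X$ (each property being stable under products of stable $\infty$-categories), identifies $\Sp^X\we\Mod_{\SS[\Omega_xX]}$ by Morita theory using that $x_!\SS$ is a compact generator (\cite{nikolaus-scholze}*{Theorem~I.4.1}), and then cites \cite{ha}*{7.1.1.13}, which gives all the listed properties at once for module categories over a connective $\EE_1$-ring, checking only that the two $t$-structures coincide. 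Your argument is more elementary and more general --- it works verbatim for $\Fun(K,\Sp)$ over any small $\infty$-category $K$ and avoids Morita theory --- at the cost of verifying the axioms and the various closure conditions by hand; the paper's argument is shorter because it outsources everything to a single strong result about connective ring spectra, but it genuinely uses that $X$ is a space (to get the compact generator and the group algebra $\SS[\Omega_xX]$). The one point to make explicit in your write-up is the closure of $(\Sp^X)_{\leq 0}$ under countable coproducts needed for right completeness; this is again immediate pointwise, since $\Sp_{\leq 0}$ is closed under coproducts in $\Sp$, but it is not literally a filtered-colimit statement, so it deserves its own sentence.
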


\begin{proof}
    Statement (d) follows immediately from the definitions. For the rest, note
    that each property is stable under products of stable $\infty$-categories. Thus, it is
    enough to check the case when $X$ is path-connected. Let $x\in X$ be a
    point. Then, $\Sp^X\xrightarrow{x^*}\Sp$ admits a left adjoint $x_!$.
    Moreover, $x_!\SS$ is a compact generator of $\Sp^X$~\cite{nikolaus-scholze}*{Theorem~I.4.1}.
    Thus, $\Sp^X\we\Mod_{\EndSp_{\Sp^X}(x_!\SS)}$ by Morita theory, where
    $\EndSp_{\Sp^X}(x_!\SS)$ denotes the endomorphism algebra spectrum of
    $x_!\SS$. Now, $\EndSp_{\Sp^X}(x_!\SS)\we\Map_\Sp(\SS,x^*x_!\SS)\we\SS[\Omega_xX]$, the
    spherical group algebra of the grouplike $\EE_1$-space $\Omega_xX$.
    Since $\SS[\Omega_xX]$ is a connective $\EE_1$-algebra,
    $\Mod_{\SS[\Omega_xX]}$ admits an accessible, left and right complete
    $t$-structure which is compatible with products and filtered colimits
    by~\cite{ha}*{Proposition~7.1.1.13}. Thus, it is enough to note that the
    two $t$-structures agree, which follows from the fact $M\in\Sp^X_{\geq 0}$
    if and only if $x^*M$ is in $\Sp_{\geq 0}$.
\end{proof}


\addcontentsline{toc}{section}{References}

\begin{bibdiv}
\begin{biblist}


\bib{antieau-derham}{article}{
   author={Antieau, Benjamin},
   title={Periodic cyclic homology and derived de Rham cohomology},
   journal={Ann. K-Theory},
   volume={4},
   date={2019},
   number={3},
   pages={505--519},
   issn={2379-1683},
}

%
%

\bib{amn1}{article}{
author={Antieau, Benjamin},
author={Mathew, Akhil},
author={Nikolaus, Thomas},
title={On the Blumberg-Mandell K\"{u}nneth theorem for TP},
journal={Selecta Math. (N.S.)},
volume={24},
date={2018},
number={5},
pages={4555--4576},
issn={1022-1824},
}

\bib{an2}{article}{
    author={Antieau, Benjamin},
    author={Nikolaus, Thomas},
    title={Crystals and cyclotomic spectra},
    note = {In preparation},
}

\bib{artin-mazur}{article}{
    author={Artin, M.},
    author={Mazur, B.},
    title={Formal groups arising from algebraic varieties},
    journal={Ann. Sci. \'Ecole Norm. Sup. (4)},
    volume={10},
    date={1977},
    number={1},
    pages={87--131},
}

\bib{barwick-spectral}{article}{
    author={Barwick, Clark},
    title={Spectral Mackey functors and equivariant algebraic $K$-theory
    (I)},
    journal={Adv. Math.},
    volume={304},
    date={2017},
    pages={646--727},
    issn={0001-8708},
}

\bib{barwick-glasman-cyclonic}{article}{
    author={Barwick, Clark},
    author={Glasman, Saul},
    title={Cyclonic spectra, cyclotomic spectra, and a conjecture of Kaledin},
    journal = {ArXiv e-prints},
    eprint = {http://arxiv.org/abs/1602.02163},
    year = {2016},
}

 \bib{BarSaul}{article}{
    author={Barwick, Clark},
    author={Glasman, Saul},
    title={A note on stable recollements},
    journal = {ArXiv e-prints},
    eprint =  {http://arxiv.org/abs/1607.02064},
    year = {2016},
}

%
%
\bib{bbd}{article}{
    author={Be{\u\i}linson, A. A.},
    author={Bernstein, J.},
    author={Deligne, P.},
    title={Faisceaux pervers},
    conference={
    title={Analysis and topology on singular
    spaces, I},
    address={Luminy},
    date={1981},
    },
    book={
    series={Ast\'erisque},
    volume={100},
    publisher={Soc.
    Math.
    France,
    Paris},
    },
    date={1982},
    pages={5--171},
}

\bib{blm1}{article}{
    author={Bhatt, Bhargav},
    author={Lurie, Jacob},
    author={Mathew, Akhil},
    title={Revisiting the de Rham--Witt complex},
    journal = {ArXiv e-prints},
    eprint =  {http://arxiv.org/abs/1805.05501},
    year = {2018},
}


\bib{bms2}{article}{
   author={Bhatt, Bhargav},
   author={Morrow, Matthew},
   author={Scholze, Peter},
   title={Topological Hochschild homology and integral $p$-adic Hodge
   theory},
   journal={Publ. Math. Inst. Hautes \'{E}tudes Sci.},
   volume={129},
   date={2019},
   pages={199--310},
   issn={0073-8301},
}

\bib{bhatt-scholze-proetale}{article}{
    author={Bhatt, Bhargav},
    author={Scholze, Peter},
    title={The pro-\'etale topology for schemes},
    journal={Ast\'erisque},
    number={369},
    date={2015},
    pages={99--201},
    issn={0303-1179},
}

\bib{blumberg-mandell-cyclotomic}{article}{
    author={Blumberg, Andrew J.},
    author={Mandell, Michael A.},
    title={The homotopy theory of cyclotomic spectra},
    journal={Geom. Topol.},
    volume={19},
    date={2015},
    number={6},
    pages={3105--3147},
    issn={1465-3060},
}



\bib{bokstedt-hsiang-madsen}{article}{
    author={B\"okstedt, M.},
    author={Hsiang, W. C.},
    author={Madsen, I.},
    title={The cyclotomic trace and algebraic $K$-theory of
    spaces},
    journal={Invent. Math.},
    volume={111},
    date={1993},
    number={3},
    pages={465--539},
    issn={0020-9910},
}

\bib{buchstaber-lazarev}{article}{
author={Buchstaber, Victor},
author={Lazarev, Andrey},
title={Cartier modules and $p$-divisible groups associated
with Morava
$K$-theory of Eilenberg--Mac Lane spaces},
journal={Algebr. Geom. Topol.},
volume={7},
date={2007},
pages={529--564},
issn={1472-2747},
}

%
%
%

\bib{clausen-mathew}{article}{
  title={Hyperdescent and etale K-theory},
  author={Clausen, Dustin},
  author={Mathew, Akhil},
  eprint={http://arxiv.org/abs/1905.06611},
  year={2019}
}

\bib{demazure}{book}{
    author={Demazure, Michel},
    title={Lectures on $p$-divisible groups},
    series={Lecture Notes in Mathematics, Vol. 302},
    publisher={Springer-Verlag, Berlin-New York},
    date={1972},
    pages={v+98},
}

\bib{elmanto-tc}{article}{
    author={Elmanto, Elden},
    title={Topological periodic cyclic homology of smooth $\FF_p$-algebras},
    year={2018},
    eprint={https://eelmanto.files.wordpress.com/2018/05/oberwolfach-report-elden-elmanto-may-241.pdf},
    journal={Oberwolfach Reports 15/2018, Arbeitsgemeinschaft: Topological cyclic homology},
    pages={126-131},
}

\bib{gabber-ramero}{book}{
    author={Gabber, Ofer},
    author={Ramero, Lorenzo},
    title={Almost ring theory},
    series={Lecture Notes in Mathematics},
    volume={1800},
    publisher={Springer-Verlag, Berlin},
    date={2003},
    pages={vi+307},
    isbn={3-540-40594-1},
}

\bib{geisser-hesselholt-1}{article}{
author={Geisser, Thomas},
author={Hesselholt, Lars},
title={Topological cyclic homology of schemes},
conference={
title={Algebraic $K$-theory},
address={Seattle, WA},
date={1997},
},
book={
series={Proc. Sympos. Pure Math.},
volume={67},
publisher={Amer. Math. Soc., Providence, RI},
},
date={1999},
pages={41--87},
}



%

\bib{goerss-hopf}{article}{
author={Goerss, Paul G.},
title={Hopf rings, Dieudonn\'e modules, and $E_*\Omega^2S^3$},
conference={
title={Homotopy invariant algebraic structures},
address={Baltimore, MD},
date={1998},
},
book={
series={Contemp. Math.},
volume={239},
publisher={Amer.
Math. Soc.,
Providence,
RI},
},
date={1999},
pages={115--174},
}

%
\bib{MR1217070}{article}{
   author={Greenlees, J. P. C.},
   title={$K$-homology of universal spaces and local cohomology of the
   representation ring},
   journal={Topology},
   volume={32},
   date={1993},
   number={2},
   pages={295--308},
   issn={0040-9383},
}

\bib{MayGuillou}{article}{
    author={Guillou, Bertrand},
    author={May, Peter},
    title={Models of G-spectra as presheaves of spectra},
    journal = {ArXiv e-prints},
    eprint =  {http://arxiv.org/abs/1110.3571},
    year = {2011},
}

\bib{gunawardena}{article}{
    author={Gunwardena, J.H.C.},
    title={Segal's conjecture for cyclic groups of (odd) prime order},
    note={JT Knight Prize Essay},
    journal={Cambridge},
    volume={224},
    year={1980},
}

%
%
%
%
%

\bib{hesselholt-ptypical}{article}{
    author={Hesselholt, Lars},
    title={On the $p$-typical curves in Quillen's $K$-theory},
    journal={Acta Math.},
    volume={177},
    date={1996},
    number={1},
    pages={1--53},
    issn={0001-5962},
}

\bib{hesselholt-noncommutative}{article}{
    author={Hesselholt, Lars},
    title={Witt vectors of non-commutative rings and topological cyclic
    homology},
    journal={Acta Math.},
    volume={178},
    date={1997},
    number={1},
    pages={109--141},
    issn={0001-5962},
}

\bib{hesselholt-noncommutative-correction}{article}{
    author={Hesselholt, Lars},
    title={Correction to: `Witt vectors of non-commutative rings and
    topological cyclic homology'' [Acta Math. {\bf 178} (1997),
    no. 1,
    109--141; MR1448712]},
    journal={Acta Math.},
    volume={195},
    date={2005},
    pages={55--60},
    issn={0001-5962},
}

\bib{hesselholt-local}{article}{
    author={Hesselholt, Lars},
    title={On the topological cyclic homology of the algebraic closure of a
    local field},
    conference={
    title={An alpine anthology of homotopy theory},
    },
    book={
    series={Contemp. Math.},
    volume={399},
    publisher={Amer. Math. Soc., Providence, RI},
    },
    date={2006},
    pages={133--162},
}

\bib{hesselholt-big}{article}{
    author={Hesselholt, Lars},
    title={The big de Rham-Witt complex},
    journal={Acta Math.},
    volume={214},
    date={2015},
    number={1},
    pages={135--207},
    issn={0001-5962},
}

\bib{hesselholt-tp}{article}{
   author={Hesselholt, Lars},
   title={Topological Hochschild homology and the Hasse-Weil zeta function},
   conference={
      title={An alpine bouquet of algebraic topology},
   },
   book={
      series={Contemp. Math.},
      volume={708},
      publisher={Amer. Math. Soc., Providence, RI},
   },
   date={2018},
   pages={157--180},
}

\bib{hesselholt-madsen-1}{article}{
    author={Hesselholt, Lars},
    author={Madsen, Ib},
    title={On the $K$-theory of finite algebras over Witt vectors of perfect
    fields},
    journal={Topology},
    volume={36},
    date={1997},
    number={1},
    pages={29--101},
    issn={0040-9383},
}

\bib{hesselholt-madsen-local}{article}{
    author={Hesselholt, Lars},
    author={Madsen, Ib},
    title={On the $K$-theory of local fields},
    journal={Ann. of Math. (2)},
    volume={158},
    date={2003},
    number={1},
    pages={1--113},
    issn={0003-486X},
}

\bib{hesselholt-madsen-drw}{article}{
    author={Hesselholt, Lars},
    author={Madsen, Ib},
    title={On the De Rham-Witt complex in mixed characteristic},
    journal={Ann. Sci. \'Ecole Norm. Sup. (4)},
    volume={37},
    date={2004},
    number={1},
    pages={1--43},
    issn={0012-9593},
}

\bib{hkr}{article}{
    author={Hochschild, G.},
    author={Kostant, Bertram},
    author={Rosenberg, Alex},
    title={Differential forms on regular affine algebras},
    journal={Trans. Amer. Math. Soc.},
    volume={102},
    date={1962},
    pages={383--408},
    issn={0002-9947},
}
%
%
%

\bib{illusie-derham-witt}{article}{
    author={Illusie, Luc},
    title={Complexe de de\thinspace Rham-Witt et cohomologie
    cristalline},
    journal={Ann. Sci. \'Ecole Norm. Sup. (4)},
    volume={12},
    date={1979},
    number={4},
    pages={501--661},
}
\bib{illusie-raynaud}{article}{
    author={Illusie, Luc},
    author={Raynaud, Michel},
    title={Les suites spectrales associ\'ees au complexe de de
    Rham-Witt},
    journal={Inst. Hautes \'Etudes Sci. Publ. Math.},
    number={57},
    date={1983},
    pages={73--212},
}

\bib{kaledin-motivic}{article}{
    author={Kaledin, D.},
    title={Motivic structures in non-commutative geometry},
    conference={
    title={Proceedings of the International Congress of
    Mathematicians.
    Volume II},
    },
    book={
    publisher={Hindustan Book
    Agency, New Delhi},
    },
    date={2010},
    pages={461--496},
}

\bib{kaledin-cyclotomic}{article}{
    author={Kaledin, D.},
    title={Cyclotomic complexes},
    journal={Izv. Ross. Akad. Nauk Ser. Mat.},
    volume={77},
    date={2013},
    number={5},
    pages={3--70},
    issn={1607-0046},
    translation={
        journal={Izv. Math.},
        volume={77},
        date={2013},
        number={5},
        pages={855--916},
        issn={1064-5632},
    },
}

\bib{MR0470961}{article}{
   author={Knapp, Karlheinz},
   title={On the $K$-homology of classifying spaces},
   journal={Math. Ann.},
   volume={233},
   date={1978},
   number={2},
   pages={103--124},
   issn={0025-5831},
}
\bib{krause-nikolaus}{article}{
    author={Krause, Achim},
    author={Nikolaus, Thomas},
    title={Lectures on topological Hochschild homology and cyclotomic spectra},
    eprint =
    {https://www.uni-muenster.de/IVV5WS/WebHop/user/nikolaus/papers.html},
    year = {2018},
    note = {Version accessed 12 July 2018}
}


\bib{lin}{article}{
    author={Lin, Wen Hsiung},
    title={On conjectures of Mahowald, Segal and Sullivan},
    journal={Math. Proc. Cambridge Philos. Soc.},
    volume={87},
    date={1980},
    number={3},
    pages={449--458},
    issn={0305-0041},
}


%
%
\bib{htt}{book}{
      author={Lurie, Jacob},
       title={Higher topos theory},
      series={Annals of Mathematics Studies},
   publisher={Princeton University Press},
     address={Princeton, NJ},
        date={2009},
      volume={170},
        ISBN={978-0-691-14049-0; 0-691-14049-9},
}
\bib{ha}{article}{
    author={Lurie, Jacob},
    title={Higher algebra},
    eprint={http://www.math.harvard.edu/~lurie/},
    note={Version dated 18 September 2017},
    year={2017},
}
 
\bib{sag}{article}{
    author={Lurie, Jacob},
    title={Spectral algebraic geometry},
    eprint={http://www.math.harvard.edu/~lurie/},
    note={Version dated 3 February 2018},
    year={2018},
}


%


\bib{mathew-thh}{article}{
    author={Mathew, Akhil},
    title={THH and base-change for Galois extensions of ring spectra},
    journal={Algebr. Geom. Topol.},
    volume={17},
    date={2017},
    number={2},
    pages={693--704},
    issn={1472-2747},
}

\bib{mnn-descent}{article}{
   author={Mathew, Akhil},
   author={Naumann, Niko},
   author={Noel, Justin},
   title={Nilpotence and descent in equivariant stable homotopy theory},
   journal={Adv. Math.},
   volume={305},
   date={2017},
   pages={994--1084},
   issn={0001-8708},
}

\bib{nikolaus-scholze}{article}{
author={Nikolaus, Thomas},
author={Scholze, Peter},
title={On topological cyclic homology},
journal={Acta Math.},
volume={221},
date={2018},
number={2},
pages={203--409},
issn={0001-5962},
}

\bib{schwede-lectures}{article}{
    author={Schwede, Stefan},
    title={Lecture notes on equivariant stable homotopy theory},
    eprint={http://www.math.uni-bonn.de/people/schwede/equivariant.pdf},
    year={2018},
}
\bib{MR0328963}{article}{
   author={Wilson, George},
   title={$K$-theory invariants for unitary $G$-bordism},
   journal={Quart. J. Math. Oxford Ser. (2)},
   volume={24},
   date={1973},
   pages={499--526},
   issn={0033-5606},
}
	
\bib{zink}{book}{
    author={Zink, Thomas},
    title={Cartiertheorie kommutativer formaler Gruppen},
    series={Teubner-Texte zur Mathematik},
    volume={68},
    publisher={BSB B. G. Teubner Verlagsgesellschaft,
    Leipzig},
    date={1984},
    pages={124},
}

\end{biblist}
\end{bibdiv}

\vspace{20pt}
\scriptsize
\noindent
Benjamin Antieau\\
University of Illinois at Chicago\\
Department of Mathematics, Statistics, and Computer Science\\
851 South Morgan Street, Chicago, IL 60607\\
USA\\
\texttt{benjamin.antieau@gmail.com}

\vspace{10pt}
\noindent
Thomas Nikolaus\\
Universit\"at M\"unster\\
FB Mathematik und Informatik\\ 
Einsteinstr. 62 D-48149 M\"unster\\
Germany\\
\texttt{nikolaus@uni-muenster.de}

\end{document}